\documentclass[11pt]{amsart}
\usepackage{amsthm}
\usepackage{amssymb}
\usepackage{mathrsfs}
\usepackage{tikz}
\usepackage{fullpage}
\setlength\marginparwidth{.8in}
\usepackage{color}
\usepackage{enumitem}
\usepackage{bbm}
\usepackage{bm}
\usepackage{verbatim}
\usepackage{todonotes}
\usepackage[
	colorlinks,
	linkcolor={black},
	citecolor={black},
	urlcolor={black}
]{hyperref}
\usepackage{cite}

\author[J. Fillman]{Jake Fillman}
\email{\href{mailto:fillman@tamu.edu}{fillman@tamu.edu}}
\address{J. Fillman: Department of Mathematics, Texas A\&M University, College Station, TX  77843-3368}

\author[L. Li]{Long Li}
\email{\href{mailto:ll106@rice.edu}{longli@rice.edu}}
\address{L. Li: Department of Mathematics, Rice University, Houston, TX 77005, USA}

\author[M. Luki\'{c}]{Milivoje Luki\'{c}}
\email{\href{mailto:milivoje.lukic@rice.edu}{milivoje.lukic@rice.edu}}
\address{M. Luki\'c: Rice University, Houston, TX 77005 and Emory University, Atlanta, GA 30322, USA}

\author[Q. Zhou]{Qi Zhou}
\email{\href{qizhou@nankai.edu.cn}{qizhou@nankai.edu.cn}}
\address{Q. Zhou:
Chern Institute of Mathematics and LPMC, Nankai University, Tianjin 300071, China
}

% BB LETTERS
\newcommand{\bbC}{{\mathbb{C}}}
\newcommand{\bbD}{{\mathbb{D}}}
\newcommand{\bbN}{{\mathbb{N}}}
\newcommand{\bbR}{{\mathbb{R}}}
\newcommand{\bbT}{{\mathbb{T}}}
\newcommand{\bbZ}{{\mathbb{Z}}}
\newcommand{\bbP}{{\mathbb{P}}}

% Bold LETTERS

% CAL LETTERS

% SCR LETTERS

% SF LETTERS
\newcommand{\sfE}{{\mathsf{E}}}

% RM LETTERS
\newcommand{\rmd}{{\mathrm{d}}}

% MISC COMMANDS
\newcommand{\SU}{{\mathrm{SU}}}
\newcommand{\SL}{{\mathrm{SL}}}
\newcommand{\imaginary}{\operatorname{Im}}
\newcommand{\real}{\operatorname{Re}}
\renewcommand{\Im}{\imaginary}
\renewcommand{\Re}{\real}
\newcommand{\DC}{\operatorname{DC}}
\newcommand{\su}{\mathrm{su}}
\newcommand{\essell}{\mathrm{sl}}
\newcommand{\stab}{\mathcal{E}^+}
\newcommand{\unstab}{\mathcal{E}^-}
\newcommand{\bistab}{\mathcal{E}^\pm}

\newcommand{\transfermat}{{\mathbf{T}}}

  \DeclareMathOperator\bohrmean{Av}

\allowdisplaybreaks

% THEOREM ENVIRONMENTS
\newtheorem{theorem}{Theorem}[section]
\newtheorem{prop}[theorem]{Proposition}
\newtheorem{coro}[theorem]{Corollary}
\newtheorem{lemma}[theorem]{Lemma}

\newtheorem{ex}[theorem]{Example}
\newtheorem{claim}{Claim}

\theoremstyle{definition}
\newtheorem{definition}[theorem]{Definition}
\newtheorem{remark}[theorem]{Remark}

% COMMENTS
\definecolor{purple}{rgb}{.5,0,1}
\definecolor{orange}{rgb}{1,.5,0}
\definecolor{green}{rgb}{0,.5,0}

% OTHER MACROS

\newcommand{\ac}{{\rm ac}}
\DeclareMathOperator{\sgn}{sgn}

% FORMATTING
\numberwithin{equation}{section}
\newcommand{\ol}{\overline}

\title[Cubic Defocusing NLS]{Almost Periodic Solutions of the Cubic Defocusing Nonlinear Schr\"odinger Equation}

\begin{document}

\begin{abstract}
This paper addresses the Cauchy problem for the cubic defocusing nonlinear Schr\"odinger equation (NLS) with almost periodic initial data.  We prove that for small analytic quasiperiodic initial data satisfying Diophantine frequency conditions, the Cauchy problem admits a solution that is almost periodic in both space and time, and that this solution is unique among solutions locally bounded in a suitable sense. The analysis combines direct and inverse spectral theory.

In the inverse spectral theory part, we prove existence, almost periodicity, and uniqueness for solutions with initial data whose associated Dirac operator has purely a.c.\ spectrum that is not too thin. This resolves novel challenges presented by the NLS hierarchy, such as an additional degree of freedom and an additional commuting flow.

In the direct spectral theory part, for Dirac operators with small analytic quasiperiodic potentials with Diophantine frequency conditions, we prove pure a.c.\ spectrum, exponentially decaying spectral gaps, and spectral thickness conditions (homogeneity and Craig-type conditions).
\end{abstract}

\maketitle

\setcounter{tocdepth}{1}

\tableofcontents

%changing the hyperlink-colors such that they are are black in the table of contents and thereafter dark blue
\hypersetup{
	linkcolor={black!30!blue},
	citecolor={red},
	urlcolor={black!30!blue}
}

\section{Introduction} \label{sec:intro}

In this paper, we study the Cauchy problem for the cubic defocusing nonlinear Schr\"odinger equation (NLS) 
\begin{equation}\label{eq:nls}
i u_t
=-\partial_x^2u+2|u|^2 u, \quad u(x,0)=\varphi(x).
\end{equation} 
The NLS is a fundamental mathematical model in physics with a wide range of applications. For example, 
it describes the propagation of light in nonlinear media, such as optical fibers \cite{Agrawal2013book}
and governs Bose-Einstein condensates \cite {PitaevskiiStringari2003book, KFC2008book} and other systems where particle interactions are significant \cite{AblowitzSegur1981}.
It can also model surface and internal water waves in fluid dynamics \cite{Mei199}.  

The well-posedness of nonlinear Schr\"odinger equations has been the focus of the study of nonlinear PDEs for a long time. 
Tsutsumi \cite{Tsutsumi1987} proved global well-posedness in $L^2(\bbR)$, and Bourgain \cite{Bourgain1999JAMS} proved global well-posedness in $H^1(\bbR^n)$ for $n=3,4$  in the radially symmetric case. 
Colliander--Keel--Staffilani--Takaoka--Tao \cite{CKSTT2004CPAM} established global existence and scattering for the defocusing cubic nonlinear
Schr\"odinger equation in $H^s(\bbR^3)$ for $s>\frac{4}{5}$, while
Griffiths--Killip--Vi\c{s}an \cite{GriffithsKillipVisan2024ForumPi} proves (for both focusing and defocusing) sharp global well-posedness in $H^s(\bbR)$ for $s>-\frac{1}{2}$. Dodson--Soffer--Spencer \cite{DSS2021JMP} established global well-posedness for the one-dimensional NLS with sufficiently smooth initial data lying in $L^p(\bbR)$ for any $2<p<\infty$, extending their previous work on local existence \cite{DSS2020JSP}.  Bourgain \cite{Bourgain93GAFA1, Bourgain93GAFA2} proved local and global well-posedness in $H^s(\bbR^p/\bbZ^p)$ for various optimal choice of $s$ depending on the dimension and the power of the nonlinearity. Applications of Bourgain's select Strichartz estimates can give global well-posedness in $L^2(\bbT)$.  In \cite{OhWang2020JDE}, Oh-Wang 
 proved the global well-posedness for the cubic NLS in $M^{2,p}(\bbR),p<\infty$ and for the normalized NLS in $\mathcal{F}L^p(\bbT), p<\infty$; compare also \cite{OhWang2021JAM} for an approach of normal form.  Well-posedness for some special initial data can be found in \cite{Christ2007, GH08JMA}, and Damanik--Li--Xu \cite{DLX2024preprint1, DLX2024preprint2} have proved certain local existence and uniqueness statements.

The one-dimensional defocusing cubic NLS is integrable: its Lax pair was found by Zakharov--Shabat \cite{ZS1972,GrebertKappeler2014}, shortly after the discovery of the Lax pair representation of the KdV equation. These discoveries started the field of integrable PDEs and motivated new questions about the long time behavior of solutions, even for non-decaying, non-periodic initial data. One central question has been whether almost periodicity of the initial data leads to almost periodicity in time. This question was popularized as Deift's conjecture \cite{DeiftOpenProblem2008,DeiftOpenProblem2017} and heavily studied in the setting of the KdV equation \cite{BDGL,EVY19,DLVY,ChapoutoKillipVisan} and for the Toda lattice \cite{VinnikovYuditskii,LYZZ}.

In this work, we study the NLS with almost periodic initial data. We will first state a special case of our work: a positive solution of Deift's conjecture for the NLS with small quasiperiodic initial data. We will then describe separately the inverse and direct spectral theory analyses that combine to give this result, and the obstacles and novelty in each part.

\subsection{A special case: small quasiperiodic initial data}
Consider a quasiperiodic analytic function $\varphi$ of the form
\begin{equation} \label{eq:varphiQPform}
\varphi(x) = \widetilde \varphi(\omega x), \quad x \in \bbR,
\end{equation}
where $\bbT^d := \bbR^d/\bbZ^d$ denotes the $d$-dimensional torus, $\widetilde{\varphi}\in C^\omega(\bbT^d,\bbC)$ is analytic, and the frequency  $\omega\in\bbR^d$ satisfies a suitable {\it Diophantine condition}, that is, there exist constants $\kappa>0$ and $\tau>d$ such that
\begin{equation}\label{eq.diophantine}
\inf_{j\in\bbZ}|\langle n,\omega\rangle-j|\geq\frac{\kappa}{|n|^\tau}, ~n\in\bbZ^d\setminus\{0\},
\end{equation}
where $|n|=|n_1|+|n_2|+\cdots+|n_d|$. Let $\DC_d(\kappa,\tau)$ be the set of all such vectors.

For small enough initial data in this class, we prove existence of a global solution almost periodic in space and time, and its uniqueness among locally bounded solutions:

\begin{theorem}\label{thm:main1}
Let $\omega \in \DC_d(\kappa,\tau)$. For every $h>0$ there exists $\epsilon_0 = \epsilon_0(d,\kappa,\tau,h)>0$ such that, if 
\[
\Vert \widetilde{\varphi} \Vert_{h}:=\sup_{|\Im z|<h}|\widetilde{\varphi}(z)|<\epsilon_0,
\]
 then the initial value problem \eqref{eq:nls}, \eqref{eq:varphiQPform} has a unique solution  that is almost periodic in both time and space. To be more specific, there exist a continuous map $\mathcal{M}:\bbT^\infty\times \bbT\to \bbC$ and frequencies $\eta,\eta^{(1)}\in\bbR^\infty, \vartheta_0,\vartheta_1\in\bbR$ and  constant vectors $\zeta\in\bbT^\infty,\zeta'\in\bbT$ such that 
$u(x,t):=\mathcal{M}(\zeta+x\eta+t\eta^{(1)},\zeta'+\vartheta_0 x+\vartheta_1 t)$ solves \eqref{eq:nls}. Moreover, for any other solution $v(x,t)$ of the NLS on $t\in [0,T]$ with $v(x,0)=\varphi(x)$ satisfying the local boundedness condition $v, \partial_x v \in L^\infty(\bbR \times [0,T])$,
we have $v=u$.
\end{theorem}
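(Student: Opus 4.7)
The plan is to combine the direct and inverse spectral theory components advertised in the abstract, using as a bridge the Zakharov--Shabat representation of the NLS as an isospectral deformation of a Dirac operator whose potential is (essentially) $u(\cdot,t)$. Since this spectrum is a constant of motion, both the existence of a global almost periodic solution and its uniqueness among locally bounded competitors can be reduced to statements about reflectionless potentials living on a fixed isospectral torus. So the first step is to apply the direct spectral theory part of this paper to the Dirac operator with potential $\varphi$: smallness of $\Vert \widetilde\varphi\Vert_h$ together with $\omega\in\DC_d(\kappa,\tau)$ yields pure a.c.\ spectrum, exponentially decaying gaps, homogeneity and a Craig-type condition. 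These are precisely the thickness hypotheses required by the inverse spectral theory part.

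Next I would invoke the inverse spectral theory result. Under the spectral hypotheses just verified, that result produces a solution $u(x,t)$ that is almost periodic in both $x$ and $t$. The mechanism is standard in spirit: the isospectral torus of reflectionless Dirac potentials with the given spectrum is parametrized by a Dirichlet-type divisor with values in $\bbT^\infty$, and both space translation and NLS time evolution linearize to constant-coefficient rotations on that torus, producing the frequencies $\eta,\eta^{(1)}\in\bbR^\infty$. The extra $\bbT$ factor (with frequencies $\vartheta_0,\vartheta_1$) encodes the additional commuting flow and phase degree of freedom that distinguish the NLS hierarchy from KdV---this is exactly the novel feature emphasized in the abstract. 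Assembling these ingredients produces the continuous map $\mathcal{M}:\bbT^\infty\times\bbT\to\bbC$ and the claimed representation $u(x,t)=\mathcal{M}(\zeta+x\eta+t\eta^{(1)},\zeta'+\vartheta_0 x+\vartheta_1 t)$; checking that the initial condition at $t=0$ agrees with $\varphi$ fixes the phases $\zeta,\zeta'$.

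For uniqueness, given any $v$ with $v,\partial_x v\in L^\infty(\bbR\times [0,T])$ and $v(\cdot,0)=\varphi$, I would again appeal to the uniqueness statement in the inverse spectral theory part. The local boundedness assumption is what allows one to form the associated Zakharov--Shabat operator, track its spectrum, and conclude that $v(\cdot,t)$ remains on the same isospectral torus as $\varphi$. Uniqueness then reduces to the injectivity of the isospectral parametrization combined with the fact that the linearized flow on $\bbT^\infty\times\bbT$ is determined by its value at a single time, yielding $v=u$.

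The main obstacle I anticipate is the interface between the direct and inverse theory halves: one must translate the conclusions of the KAM-type direct analysis (homogeneity, exponentially small gaps, the Craig condition) into precisely the spectral input the inverse construction consumes, while simultaneously accommodating the additional conserved flow and the $U(1)$ gauge symmetry intrinsic to NLS---features absent in KdV or Toda that force the two extra frequencies $\vartheta_0,\vartheta_1$ and the additional $\bbT$ factor in $\mathcal{M}$. A secondary technical difficulty is ensuring the uniqueness argument works in the merely locally bounded class $v,\partial_x v\in L^\infty$ rather than in a smoother Sobolev space, since this is where ingenuity is needed to carry out the spectral identification without regularity one would normally assume.
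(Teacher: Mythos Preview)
Your proposal is correct and matches the paper's approach exactly: Theorem~\ref{thm:main1} is deduced from Theorem~\ref{thm:main} by first invoking the direct spectral theory (Theorem~\ref{thm:main2}/Theorem~\ref{thm:homoSpectrum}) to verify pure a.c.\ spectrum and the Craig-type conditions \eqref{eq:craigCond1}--\eqref{eq:craigCond3} for the quasiperiodic initial data, and then applying the inverse spectral theory conclusion. Your identification of the extra $\bbT$ factor with the rotation coordinate and of the local boundedness hypothesis as the input needed for the isospectral/uniqueness argument is also on target.
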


\subsection{Inverse spectral theory and integrability}
The study of integrable systems has a rich history dating back to 19th century when Jacobi, Abel, Weierstrass and in particular Riemann formulated the remarkable theory of algebraic and Abelian functions. 
Connections between this theory to mathematical physics  were made by the Neumann's system \cite{Neumann1859, Neumann1865} and Kowalevski top \cite{Kowalevski89a, Kowalevski89b}.
Drach, Burchnall and Chaundy, and Baker's excellent work \cite{Baker1928} between 1919 and 1928 was fortunately discovered in 1970's. Seeking for periodic solutions saw great breakthroughs since the debut of inverse scattering method by Gardner--Greene--Kruskal--Miura \cite{GGKM1974} and the discovery of the role of finite gap periodic potentials by Novikov \cite{Novikov1974}. The modern ideas of integrating completely integrable systems stem from Akhiezer's work \cite{Akhiezer1964,Akhiezer1971,Akhiezer1978}. 
In 1975, Its--Matveev \cite{ItsMatveev1975,ItsMatveev1975a} and Dubrovin \cite{Dubrovin1975a,Dubrovin1975b} rediscovered Akhiezer's idea and generalized it to the solution of the Jacobi inversion problem. From a spectral theoretic perspective, this theory describes precisely the reflectionless operators on finite gap spectra; see also \cite{GH03}. Moreover, the theory provides solutions of integrable equations with reflectionless finite gap initial data.

 Finite gap approximation was used by Mckean--van Moerbeke \cite{McKeanMoerbeke1975}, McKean--Trubowitz \cite{McKeanTrubowitz1976}, Levitan \cite{Levitan1987}, Boutet de Monvel--Egorova \cite{ME97}, Sodin--Yuditskii \cite{SY1994}, Gesztesy--Yuditskii \cite{GesztesyYuditskii2006} to construct periodic and almost periodic operators with infinite gap spectra. Sodin--Yuditskii \cite{SY1995} developed a construction of reflectionless Jacobi matrices on bounded spectra which are regular Widom sets with the DCT property. This construction is  based on character-automorphic Hardy spaces \cite{Widom1969,Widom1971Acta, Widom1971Annals,Pommerenke1976,JonesMarshall1985,Hasumi1983}; an important consequence of the construction is almost periodicity of the operators. For adaptations of this construction to other classes of operators see Peherstorfer--Yuditskii \cite{PeherstorferYuditskii06},  Eichinger--VandenBoom--Yuditskii \cite{EVY19}, and Bessonov--Luki\'c--Yuditskii \cite{BLY2}.

Key ingredients for inverse scattering for reflectionless operators and corresponding solutions were introduced by Craig \cite{Craig89} and Rybkin \cite{Rybkin08} in the Schr\"odinger/KdV setting.  In 2018, Binder--Damanik--Goldstein--Luki\'c \cite{BDGL} gave a positve answer to Deift's problem  under suitable assumptions on the spectrum. The construction of almost periodic solutions was further generalized, and extended to the KdV hierarchy, by  Eichinger--VandenBoom--Yuditskii \cite{EVY19}. The same questions were studied for the Toda lattice by Vinnikov--Yuditskii \cite{VinnikovYuditskii} and Leguil--You--Zhao--Zhou \cite{LYZZ}. In 2021, Damanik--Luki\'c--Volberg--Yuditskii \cite{DLVY} proposed a program of constructing counterexamples for Deift's problem. In 2024, an example with almost periodic initial data that loses almost periodicity in space was constructed by Chapouto--Killip--Vi\c{s}an \cite{ChapoutoKillipVisan}. The example they constructed develops a discontinuity that breaks the almost periodicity. 
Another perspective to the construction of NLS flows motivated by integrability was proposed by Kotani \cite{Kotani2018JMPAG,Kotani2023PKU,Kotani2024book}.

In this work we address the Deift conjecture by showing the global existence, uniqueness and almost periodicity in both space and time for the solutions of the cubic defocusing NLS for a broad class of almost periodic initial data with sufficiently thick spectrum, in a sense made precise below.

The Lax pair representation of the NLS uses Dirac operators in the Zakharov--Shabat gauage \cite{ZS1972}:
\begin{equation}\label{eq:diracOperLambda}
	\Lambda_\varphi =  \begin{bmatrix}
	i & 0 \\ 0 & -i
\end{bmatrix}	 \frac{{\rmd}}{{\rmd}x} + \begin{bmatrix} 0 & \varphi(x) \\ \overline{\varphi(x)} & 0 \end{bmatrix}.
\end{equation}
Up to a pointwise unitary conjugation corresponding to the Cayley transform, \eqref{eq:diracOperLambda} is equivalent to the classical form of Dirac operators \cite{LevitanSargsjan, GrebertKappeler2014, ClarkGesztesy} given by
\[
L_\varphi =  \begin{bmatrix}
	0 & -1 \\ 1 & 0
\end{bmatrix}	 \frac{{\rmd}}{{\rmd}x} - \begin{bmatrix} \Re \varphi(x) & \Im \varphi(x) \\  \Im \varphi(x) &  -\Re \varphi(x) \end{bmatrix}.
\]
Due to this simple connection, the two forms $\Lambda_\varphi$, $L_\varphi$ can be used almost interchangeably. If $\varphi\in L^\infty(\bbR)$, the operator $\Lambda_\varphi$ is an unbounded self-adjoint operator; denote its spectrum and absolutely continuous spectrum by $\Sigma_\varphi$ and $\Sigma_{\varphi,\ac}$, respectively. The connected components of $\bbR \setminus \Sigma_\varphi$ are called spectral gaps. Labelling them by a countable index $j$, we can write
\[
\Sigma_\varphi=\bbR\setminus \cup_{j} G_j
\]
where $G_j=(a_j,b_j)$ are open intervals and denote gap lengths and gap distances by
\[
\gamma_j = \lvert G_j\rvert,  \quad \eta_{jk} = \mathrm{dist}( G_j, G_k).
\]
Define also
\begin{equation}\label{eq:Cj}C_j=\sup_{z\in G_j}\left(\prod_{G_k<G_j}\frac{a_k-z}{b_k-z}\prod_{G_j<G_k}\frac{b_k-z}{a_k-z}\right)^{\frac{1}{2}},\end{equation}
where we write $G_k<G_j$ to mean $G_k$ lies to the left of $G_j$ in $\bbR$. Under a set of Craig-type conditions on the spectrum, we prove global existence, uniqueness and almost periodicity:
\begin{theorem}\label{thm:main}
Assume that $\varphi$ is uniformly almost periodic and that the associated Dirac operator $\Lambda_\varphi$ has purely absolutely continuous spectrum that obeys the Craig-type conditions 
\begin{equation}\label{eq:craigCond1}
\sum_{k}(1+\eta_{k0})C_k\gamma_k^{1/2}<\infty,
\end{equation}
\begin{equation}\label{eq:craigCond2}
\sup_j\sum_{k\neq j}C_j^3C_k^2(1+\eta_{0j}^2)\frac{\gamma_j^{1/2}\gamma_k^{1/2}}{\eta_{jk}}<\infty,
\end{equation}
and there exists $\delta>0$ such that
\begin{equation}\label{eq:craigCond3}
\sup_j\sup_{k\neq j}\frac{\gamma_j^{1/2}\gamma_k^{1/2}}{\gamma_j^{\delta}\eta_{jk}}<\infty.
\end{equation}
Then the Cauchy problem \eqref{eq:nls}  has a unique solution that is almost periodic in both time and space. To be more specific,
there exist a continuous map $\mathcal{M}:\bbT^\infty\times \bbT\to \bbC$, frequencies $\eta,\eta^{(1)}\in\bbR^\infty, \vartheta_0,\vartheta_1\in\bbR$, and  constant vectors $\zeta\in\bbT^\infty,\zeta'\in\bbT$ such that 
$u(x,t):= \ \mathcal{M}(\zeta+x\eta+t\eta^{(1)},\zeta'+\vartheta_0 x+\vartheta_1 t)$ solves \eqref{eq:nls}. 
Moreover, for any other solution $v(x,t)$ of the NLS on $t\in [0,T]$ with $v(x,0)=\varphi(x)$ satisfying the local boundedness condition $v, \partial_x v \in L^\infty(\bbR \times [0,T])$,
we have $v=u$.
\end{theorem}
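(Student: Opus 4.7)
The plan is to follow the direct-and-inverse spectral theory program. The Lax pair representation via the Dirac operator $\Lambda_\varphi$ means that the spectrum $\Sigma_\varphi$ is conserved under any solution of the NLS flow, so the Craig-type assumptions \eqref{eq:craigCond1}-\eqref{eq:craigCond3} are preserved in time and the problem reduces to studying dynamics on the isospectral set $\mathcal{R}(\Sigma)$ of reflectionless Dirac operators with spectrum $\Sigma := \Sigma_\varphi$.

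First, I would set up the isospectral torus. Extending the Sodin--Yuditskii character-automorphic Hardy space construction to the Dirac/NLS setting (as was done for KdV in \cite{BDGL,EVY19}) should give a homeomorphism $\Phi: \bbT^\infty \times \bbT \to \mathcal{R}(\Sigma)$. The $\bbT^\infty$ factor parametrizes the Dirichlet divisor (one coordinate per gap $G_k$), while the additional $\bbT$ factor accounts for the global $U(1)$ gauge freedom $\varphi \mapsto e^{i\theta}\varphi$ intrinsic to the Dirac case and absent in the Schr\"odinger/KdV situation — this is the source of the extra frequencies $\vartheta_0,\vartheta_1$ and of the ``additional degree of freedom'' flagged in the abstract. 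The Widom/DCT conditions needed for this construction are encoded in \eqref{eq:craigCond1}, with $C_j$ from \eqref{eq:Cj} controlling the boundedness of certain products of Blaschke factors.

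Second, I would describe the relevant commuting flows. Translation in $x$ and the NLS flow both act as Lax flows on $\mathcal{R}(\Sigma)$, and pulling back through $\Phi$ each should become a linear flow on $\bbT^\infty \times \bbT$: translation with frequencies $(\eta,\vartheta_0)$ and NLS evolution with frequencies $(\eta^{(1)},\vartheta_1)$. The key technical input is that the Dubrovin-type trace and derivative sums expressing the potential and its temporal derivative in terms of the divisor converge absolutely and uniformly on the torus: \eqref{eq:craigCond1} controls the generating trace sum reconstructing $\varphi$, while \eqref{eq:craigCond2}-\eqref{eq:craigCond3} control the derivatives of the divisor coordinates under both the $x$- and $t$-flows. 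One then defines $\mathcal{M}(\zeta,\zeta'):=\Phi(\zeta,\zeta')(0)$ and sets $u(x,t) := \mathcal{M}(\zeta+x\eta+t\eta^{(1)},\zeta'+\vartheta_0 x + \vartheta_1 t)$; almost periodicity in space and time then follows from continuity of $\mathcal{M}$, and the Lax compatibility condition verified at the spectral level shows that $u$ solves \eqref{eq:nls}.

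For uniqueness, given another solution $v$ with $v,\partial_x v \in L^\infty(\bbR\times[0,T])$ and $v(\cdot,0) = \varphi$, the strategy is to propagate the reflectionless property along $t$: using the Lax pair one shows that $\Lambda_{v(\cdot,t)}$ is unitarily equivalent to $\Lambda_\varphi$, hence has the same a.c.\ spectrum $\Sigma$ and remains reflectionless, so $v(\cdot,t) \in \mathcal{R}(\Sigma)$ for each $t$. Then the trajectory $t \mapsto \Phi^{-1}(v(\cdot,t))$ is a continuous curve in $\bbT^\infty\times\bbT$ starting at $\Phi^{-1}(\varphi)$, and solving the linear flow equations forces it to coincide with the trajectory of $u$. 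The hardest step, and the one requiring the most care, will be establishing reflectionlessness of $v(\cdot,t)$ from the PDE alone under the weak regularity $v,\partial_x v \in L^\infty$ — significantly weaker than what the reconstruction naturally produces — as well as carefully tracking the extra $U(1)$ commuting flow so that the torus parametrization and its linearization absorb both the Dirichlet motion and the global phase in a compatible way.
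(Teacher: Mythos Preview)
Your outline follows essentially the same inverse-spectral program as the paper: parametrize $\mathcal{R}(\Sigma)$ by $\pi_1(\Omega)^*\times\bbT$ via the generalized Abel map, linearize the translation and NLS flows there, and read off almost periodicity from continuity of the inverse map. The extra $\bbT$ factor is indeed the rotation coordinate coming from the $\varphi\mapsto e^{i\beta}\varphi$ symmetry, and your identification of the role of the Craig conditions is accurate.

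Two places where your sketch diverges from what the paper actually does deserve comment. First, your uniqueness argument goes through ``$\Lambda_{v(\cdot,t)}$ unitarily equivalent to $\Lambda_\varphi$, hence reflectionless, hence the torus trajectory is determined.'' The paper instead works directly with the Dirichlet data: from the Lax pair one derives Riccati-type evolution equations for the Schur functions $s_\pm$ (Lemma~\ref{lem:RiccatiTime}), hence Dubrovin equations $\partial_t y=\Xi(y)$ for the divisor phases of \emph{any} solution with $v,\partial_x v\in L^\infty$; since $\Psi,\Xi$ are Lipschitz on $\mathcal{D}(\sfE)$ under \eqref{eq:craigCond1}--\eqref{eq:craigCond3} (Proposition~\ref{prop:LipVecField}), ODE uniqueness forces the divisor trajectories, and then the trace formulas force $v=u$. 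Your unitary-equivalence route would give isospectrality but not obviously reflectionlessness, so the Dubrovin-ODE mechanism is the cleaner path. Second, you do not mention the gap-edge problem: for the Dubrovin equations to hold globally one must show the phases $y_j$ are differentiable even when $\mu_j$ sits at a gap edge. In the Schr\"odinger/KdV case the translation flow never pauses at edges; for Dirac this fails, and the paper introduces the \emph{rotation flow} in $\beta$ (Lemmas~\ref{lemma:GapEdgeSubordinate}--\ref{lem:dyj/dxPsijy}) to prove non-pausing and thereby extend differentiability across edges. This is a genuinely new ingredient with no KdV analog, and without it the uniqueness argument has a gap. Similarly, the linearization of the rotation coordinate $\tau$ (your extra $\bbT$) is not a formality: since $\infty\in\partial\Omega$, the Sodin--Yuditskii majorant argument fails for this component, and the paper constructs a different majorant using both the Widom condition and finite gap length (Theorem~\ref{thm:identifyRotCord}, Theorem~\ref{thm:linearzRotFlow}).
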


\begin{remark}
Our analysis differs in key aspects from past results, and addresses new obstacles and important qualitative differences between the KdV and NLS hierarchies which were not discussed before in this context:
\begin{enumerate}
\item For reflectionless finite-gap Dirac operators, as in the Schr\"odinger case \cite{BBEIM1994,GH03}, in case of $g$ open gaps, there is an isospectral torus of dimension $g$. However, in the Dirac case, the spectrum is unbounded above and below, so $g$ open gaps corresponds to a character group of dimension $g-1$; the additional degree of freedom is not part of the character group (compare \cite{BLY2}). Due to this, the NLS flow is encoded as a trajectory in $\pi_1(\Omega)^* \times \bbT$ rather than $\pi_1(\Omega)^*$.  In other words, this additional degree of freedom corresponds to integration over an open curve rather than a closed loop, and tracking its convergence in a finite gap approximation requires a separate argument.

\item There is also a corresponding rotation flow commuting with the NLS flow, which corresponds to a trivial symmetry of NLS given by $u \mapsto e^{i\beta} u$, $\beta \in \bbR$. This rotation flow has no analog in the KdV hierarchy.
We couple this flow in a novel way with trace formulas to recover both the real and imaginary parts of the solution.

\item When deriving the space and time evolution of Dirichlet eigenvalues, the behavior in gap interiors is a matter of implicit differentiation \cite{Craig89}, but the behavior at gap edges must be addressed separately. In the Schr\"odinger case, a non-pausing property of the translation flow is used; in the Dirac case, the translation flow does not have that property, but the rotation flow can be used instead. This again illustrates the difference between hierarchies which has not been addressed before. Without this, uniqueness of solutions cannot be established.
\end{enumerate}
\end{remark}

A construction of almost periodic solutions for the NLS was presented in \cite{ME97}.
This construction relied on a solution of the generalized Jacobi inversion problem in the form of Concini--Johnson \cite{ConciniJohnson1987ETDS} involving both space and time variables that was not fully carried out. In addition, different order trace formulas were used for the real and imaginary parts, and the real part was not expressed directly; only the Ricatti-type expression $\pm \partial_x \Re u_n + (\Re u_n)^2$ was expressed in terms of Dirichlet eigenvalues, which is insufficient to conclude almost periodicity of the real part of the solution.

From the perspective of inverse spectral theory, the most important object is the generalized Abel map of Sodin--Yuditskii \cite{SY97}. We emphasize that our proof does not only show almost periodicity in an abstract form, it also shows that the generalized Abel map is linearized along the NLS and translation flow:

\begin{theorem}\label{thm:main3} Under the assumptions of Theorem \ref{thm:main}, 
the Abel map $\mathcal{A}=(\mathcal{A}_c,\mathcal{A}_r)$ in \eqref{eq:Abel1} and \eqref{eq:rotFlowAbelMap} conjugates the translation and NLS flow $u(x_0,t_0)\mapsto u(x_0+x,t_0+t)$ into a linear flow.
\end{theorem}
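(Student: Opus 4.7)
The plan is to reduce the claim to computing the space and time derivatives of $\mathcal{A}$ along the combined flow and showing they are $(x,t)$-independent vectors, which then yields linearity of the motion on $\pi_1(\Omega)^* \times \bbT$. First I would unwind the structure of the Abel map: the closed component $\mathcal{A}_c$ is assembled from integrals of a normalized basis of differentials associated with the spectrum $\Sigma_\varphi$, each summand indexed by a gap $G_k$ and depending on the Dirichlet-type eigenvalue $\mu_k \in \overline{G_k}$ together with its sign sheet $\sigma_k$; the rotation component $\mathcal{A}_r$ is the analogous integral associated with the additional non-compact degree of freedom flagged in the remark after Theorem \ref{thm:main}. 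Under both the translation flow and the NLS flow, the Dirichlet data $(\mu_k,\sigma_k)(x,t)$ satisfy Dubrovin-type ODEs whose right-hand sides depend on the value of $u$ at $(x,t)$. The behavior at gap edges, where these ODEs become singular, is handled by combining the translation flow with the rotation flow $u\mapsto e^{i\beta}u$ as the remark indicates, since the rotation flow does not pause at gap edges. These Dubrovin equations are part of the inverse spectral construction used to prove Theorem \ref{thm:main}.

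Second, I would compute $\partial_x \mathcal{A}$ and $\partial_t \mathcal{A}$ term by term using the Dubrovin equations. In the finite-gap setting this is the classical Abel--Jacobi computation (following Dubrovin, Its--Matveev, and \cite{BBEIM1994, ME97} in the Dirac/NLS setting): after applying the trace identities relating symmetric functions of the $\mu_k$ to the gap edges and to $u$ itself, the Jacobian factors from the Dubrovin equations cancel against those arising in the differentials, so each component collapses into a vector depending only on spectral data. These constants are precisely the frequency vectors $\eta,\eta^{(1)}$ and scalars $\vartheta_0,\vartheta_1$ appearing in Theorem \ref{thm:main}, and integrating in $(x,t)$ then proves linearization on the torus.

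Third, I would lift the finite-gap identity to the infinite-gap setting via the finite-gap approximation carried out in the inverse spectral part of the paper. The Craig-type conditions \eqref{eq:craigCond1}--\eqref{eq:craigCond3} supply absolute and uniform convergence of the Abel map and of its termwise derivatives, and continuous dependence of the solution on spectral data (already needed for Theorem \ref{thm:main}) legitimizes the interchange of limit and derivative. The main obstacle, and the qualitatively novel point, is the $\mathcal{A}_r$ component: because it is an integral over an open curve rather than a closed cycle, its convergence in the finite-gap approximation is more delicate and depends on the weight $1+\eta_{k0}$ present in \eqref{eq:craigCond1}. Showing that $\mathcal{A}_r$ is genuinely linearized by the NLS flow requires coupling the rotation-flow trace formula with the standard NLS trace formula, as anticipated in the remark, so as to recover the full phase of $u$ rather than only $|u|^2$ or its derivative. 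Once linearity of both $\mathcal{A}_c$ and $\mathcal{A}_r$ is verified in the limit, Theorem \ref{thm:main3} follows.
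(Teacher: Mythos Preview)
Your proposal follows essentially the same architecture as the paper: finite-gap linearization (via the Dubrovin equations and the classical Abel--Jacobi computation, imported from \cite{GH03} and \cite{BBEIM1994}) followed by a finite-gap approximation to pass to the general case. This is precisely what the paper does in Section~\ref{linearabel}, treating $\mathcal{A}_c$ in Theorem~\ref{thm:approxFreqCharFlow} and $\mathcal{A}_r$ in Theorem~\ref{thm:linearzRotFlow}.

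Two points of divergence are worth flagging. First, your limiting step invokes the Craig-type conditions to control the Abel map and its termwise derivatives directly; the paper instead runs a Sodin--Yuditskii style potential-theoretic argument, constructing a majorant for the harmonic-measure summands out of the Widom condition and finite gap length, and then separately proving convergence of the frequency vectors as $B$-periods of the Abelian integrals $\Theta_k^{(N)}\to\Theta_k$. Both routes succeed under the stated hypotheses, but the paper's uses less on $\sfE$ for $\mathcal{A}_c$. Second, your description of $\mathcal{A}_r$ is vague where the paper is specific: the crucial step is Theorem~\ref{thm:identifyRotCord}, which identifies the rotation coordinate $\tau$ of \cite{BLY2} (defined via the canonical product $\mathscr{K}_D$ and the sheet involution) with $C\exp(2\pi i\,\mathcal{A}_r(D))$ by sending the evaluation point to the Martin boundary $\infty_\pm$. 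This identification is what replaces the missing closed cycle and allows the finite-gap open-path frequency formula \cite[eq.~4.3.6]{BBEIM1994} to be matched with $\mathcal{A}_r$ in the limit. The ``coupling with the rotation-flow trace formula'' you mention is not the mechanism for the linearization itself; the rotation flow enters earlier (Section~\ref{dubrovinFlow}) to handle non-pausing at gap edges, while the trace-formula coupling is used in Section~\ref{sec:mainproof} to reconstruct both $\Re u$ and $\Im u$, not to compute $\partial_t\mathcal{A}_r$.
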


\subsection{Direct spectral theory and quantitative almost reducibility}
Gap labeling theory \cite{JM82,Johnson1986JDE} assigns to each gap the value of the rotation number, which lies in the frequency module. In particular, for a quasiperiodic Dirac operator with potential \eqref{eq:varphiQPform}, the rotation number is of the form $ \langle k, \omega\rangle$, $k\in \bbZ^d$. In the direct spectral analysis in this paper, we will use $k \in \bbZ^d$ as the index for the gap having rotation number $\langle k, \omega\rangle$.

From the perspective of direct spectral theory, the two main ingredients needed for our work are 
absolutely continuous spectrum and suitable estimates on gap size for the Dirac operator. Precise gap estimates, in turn, imply thickness properties of the spectrum, such as Craig-type conditions and homogeneity (recalled in Definition~\ref{def:homogeneous}). 
We prove the following:

\begin{theorem}\label{thm:main2}
Under the conditions of Theorem~\ref{thm:main1}, the spectral type of $\Lambda_\varphi$ is purely absolutely continuous. For all $k\in\bbZ^d, r\in (0,h)$,
\[
\gamma_{k}\leq \epsilon_0^{\frac{1}{2}}e^{-2 \pi r |k|},
\]
and for all $k' \neq k$,
\[\mathrm{dist}(G_k,G_{k'})\geq\frac{4^\tau\kappa^2}{c^2|k'-k|^{2\tau}}.
\]
Moreover, the spectrum is homogeneous in the sense of Carleson, and satisfies the Craig-type conditions \eqref{eq:craigCond1}, \eqref{eq:craigCond2}, \eqref{eq:craigCond3}.
\end{theorem}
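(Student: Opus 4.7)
The plan is to attack this via a quantitative almost reducibility analysis of the Dirac cocycle. First I would rewrite the eigenvalue equation $\Lambda_\varphi \psi = E\psi$ as a first-order linear system on $\bbC^2$ whose transfer matrix cocycle, upon using \eqref{eq:varphiQPform}, becomes a quasiperiodic $\SU(1,1)$-cocycle $(\omega, A_E)$ with $A_E$ analytic in the strip $\{|\Im z|<h\}$ and equal to a rotation by $E$ plus an analytic perturbation of size comparable to $\|\widetilde\varphi\|_h<\epsilon_0$. Thus the object of study is a small analytic perturbation of a constant elliptic cocycle over a Diophantine rotation.

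Next I would invoke an Eliasson-type KAM scheme (in the spirit of Hadj Amor, Avila--Fayad--Krikorian, or the quasiperiodic $\SL(2,\bbR)$ reducibility arguments used for the almost Mathieu operator and its relatives, adapted to the Dirac cocycle, as has been done for instance in the work of Leguil--You--Zhao--Zhou for Schr\"odinger and Toda): for $\epsilon_0$ small in terms of $(d,\kappa,\tau,h)$, one obtains, for every $r\in(0,h)$, analytic conjugations $B_E:\bbT^d\to\SU(1,1)$ bringing the cocycle arbitrarily close to constant away from resonances $2\rho(E)\equiv\langle k,\omega\rangle\pmod{\bbZ}$, with Fourier-mode remainders of size $\lesssim \epsilon_0 e^{-2\pi r|k|}$. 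Since the Lyapunov exponent vanishes identically on $\Sigma_\varphi$, Kotani theory delivers pure absolutely continuous spectrum.

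For the gap bounds, I would exploit gap labeling: an open spectral gap must be centered at a resonant energy $E_k$ where the rotation number $2\rho(E_k)\equiv \langle k,\omega\rangle\pmod{\bbZ}$, and its width is governed by the size of the corresponding resonant Fourier coefficient of the conjugated cocycle. The square-root loss from perturbing an elliptic matrix into a hyperbolic one accounts for the exponent $1/2$, giving $\gamma_k\lesssim \epsilon_0^{1/2}e^{-2\pi r|k|}$ for every $r<h$. The gap separation bound follows from the Diophantine condition combined with monotonicity and Lipschitz control of $\rho$ on $\Sigma_\varphi$: two resonant energies $E_k\neq E_{k'}$ differ in rotation number by at least $\kappa/|k-k'|^\tau$, and inverting the H\"older-$\tfrac12$ behavior of $\rho$ at band edges produces the $|k-k'|^{-2\tau}$ separation with the constant $4^\tau\kappa^2/c^2$.

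Once one has the exponential upper bound on $\gamma_k$ and the polynomial lower bound on $\eta_{jk}$, homogeneity in the sense of Carleson and the three Craig-type conditions \eqref{eq:craigCond1}, \eqref{eq:craigCond2}, \eqref{eq:craigCond3} follow by elementary estimation: the constants $C_j$ from \eqref{eq:Cj} grow at most polynomially in $|j|$ (bounded by the product $\prod_k(1+\gamma_k/\eta_{jk})^{1/2}$, controlled since $\gamma_k$ is exponentially small while $\eta_{jk}$ decays only polynomially), so the exponential decay of $\gamma_k$ absorbs every polynomial factor in the sums. I expect the main obstacle to be the execution of a quantitative KAM induction for the Dirac (rather than Schr\"odinger) cocycle with sharp enough tracking of the loss of analyticity at each step to yield the stated exponential rate $e^{-2\pi r|k|}$ \emph{uniformly} for all $r<h$, together with the careful resonance-cancellation bookkeeping that couples the rotation number labeling to the Fourier mode producing each gap.
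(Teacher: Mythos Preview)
Your overall plan---rewrite the eigenvalue equation as a quasiperiodic $\SU(1,1)$ cocycle, run an Eliasson-type quantitative almost reducibility scheme, use a Moser--P\"oschel argument at resonant energies for the gap upper bound, combine the Diophantine condition with H\"older-$\tfrac12$ continuity of the rotation number for the gap separation, and then deduce the Craig-type conditions and homogeneity by elementary summation---matches the paper's strategy closely.

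There is, however, one genuine gap. You write that once the Lyapunov exponent vanishes on $\Sigma_\varphi$, ``Kotani theory delivers pure absolutely continuous spectrum.'' This is not correct: Kotani theory tells you that the absolutely continuous spectrum fills the essential closure of the zero-Lyapunov set and that the operator is reflectionless there, but it does \emph{not} by itself exclude singular spectrum supported on the same set. The paper does not use Kotani theory for this step. Instead it first establishes a Jitomirskaya--Last inequality for Dirac operators (Section~\ref{JLineq}, Theorem~\ref{thm.acMeas}), which bounds the spectral measure of a small interval by the squared norm of the transfer matrix over a correspondingly long trajectory. Combining this with the quantitative almost reducibility bounds on $\|T(\lambda,x)\|$ on resonant sets $K_j$, a lower bound on the IDS (Lemma~\ref{lem:lowerBoundDos}), and a Borel--Cantelli argument over the $K_j$, the paper shows that the complement of the reducible set has zero spectral measure, while on the reducible set boundedness of solutions gives absolute continuity. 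Your proposal is missing this mechanism entirely; without it the purely a.c.\ conclusion is unjustified.
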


The study of absolutely continuous spectrum has a long history, dating back to the groundbreaking work of Dinaburg--Sinai \cite{DinaburgSinai}, Eliasson \cite{eliasson} and more recently Avila--Jitomirskaya \cite{aj1} and Avila \cite{Avila1}. 
A closely related notion is {\it reflectionless} operators, that is, those for which two half-line Weyl functions satisfy a pseudocontinuation relation a.e.\ on the spectrum. Almost periodic Schr\"odinger operators are known to be reflectionless on the absolutely continuous part of the spectrum \cite{Kotani1984, Kotani1997, Remling2007}. 
The counterpart for Dirac operators can be found in \cite{BLY1}. On the other hand,  the gap sizes can be viewed as a certain kind of Fourier coefficients for the potential; see P\"oschel \cite{Poschel2011},  Damanik--Goldstein \cite{DamanikGoldstein2014}, Damanik--Goldstein--Luki\'c \cite{DGL2017TAMS,DGL2017Invent}, Binder-Damanik-Goldstein-Luki\'c \cite{BDGL2025} and the references therein. 
In particular, analytic almost periodic potentials should exhibit exponential decaying gaps at least in the regime of small analytic quasiperiodic sampling functions. 
Theorem~\ref{thm:main2} confirms this general belief for one-dimensional Dirac operators. 

The direct spectral theory part of this paper establishes fundamental spectral results for quasiperiodic Dirac operators by extending techniques previously developed for Schr\"odinger operators. The primary difficulty lies in overcoming the distinct symmetries inherent in Dirac operators compared to the Schr\"odinger framework. Our approach leverages dynamical systems methods, specifically \textit{quantitative almost reducibility} (detailed in Section \ref{qal}), which has driven significant advances in quasiperiodic Schr\"odinger theory~\cite{Avila1,aj1,avila2017sharp,avila2016dry,ds,eliasson,GYZ}. 

While almost reducibility has been well-established~\cite{eliasson,HY2012,LYZZ}, we prove new results for Dirac operators. 
We also derive some foundational results which we expect to be of interest  independent of our main results, such as the Jitomirskaya--Last inequality \cite{JL1999Acta,JL2000CMP} for Dirac operators. This inequality has many important applications on spectral dimensions\cite{JZ2022,GDO2022} and absolutely continuous spectrum \cite{Avila1,Cai2022AHP,DLZ22}.

We generalize Avila's  approach~\cite{Avila1} for Schr\"odinger operators to the setting of Dirac operators. By combining quantitative subordinacy theory via the Jitomirskaya--Last inequality we obtained with quantitative almost reducibility, and imposing Diophantine frequency constraints to prevent resonance accumulation, we establish purely absolutely continuous spectrum.
     
The notion of homogeneous sets stem from Carleson \cite{Carleson1983} and plays a fundamental role in harmonic analysis and inverse spectral theory. In particular, homogeneity of $\sfE\subset \bbR$ implies the interpolating property of the zero set of the Blaschke product of the domain $\Omega=\bbC\setminus\sfE$ \cite{JonesMarshall1985}. Based on this observation, \cite{SY97} proved that homogeneity is a sufficient condition for the Widom condition and  the Direct Cauchy Theorem. The homogeneity of the spectrum for continuous Schr\"odinger operators was obtained by \cite{DGL2016JST}. A similar result for discrete Schr\"odinger operators with finitely differentiable quasiperiodic potentials was obtained by \cite{CaiWang2021JFA}. We extend the Moser-P\"oschel argument \cite{MoserPoschel194CMH} to Dirac operators by observing monotonicity with respect to the spectral parameter. This yields {spectral thickness} through Craig-type conditions and homogeneity in the sense of Carleson.

\subsection{Further discussion}
It is very useful to study the spectral  problem from the direct and inverse pserpectives simultaneously as mentioned in \cite[Page 695]{VolbergYuditskii2014}. 
From this aspect, perhaps the most instructive example is the solution of the Kotani--Last conjecture (KLC), which asks  {\it whether the presence of a.c.\ spectrum for an ergodic family implies almost periodicity}. 
This was disproved independently by Avila \cite{Avila2015JAMS} for discrete and continuum Schr\"odinger operators through an induction scheme based on slow deformations of periodic potentials (direct spectral theory) and Volberg--Yuditskii \cite{VolbergYuditskii2014} through a dichotomy based on the Direct Cauchy Theorem property (DCT); additional counterexamples can be found in  \cite{DamanikYuditskii2016Adv, YouZhou2015IMRN} and references therein. To be more specific, let $\mathcal{J}(\sfE)$ be the set of reflectionless Jacobi matrices, $\Omega=\hat{\bbC}\setminus \sfE$ be of Widom type, where $\hat{\bbC}=\bbC\cup\{\infty\}$ is the Riemann sphere, every $J\in\mathcal{J}(\sfE)$ has purely absolutely continuous spectrum $\sfE$ and the gap labeling frequencies are rationally independent. 
Volberg and Yuditskii proved the following: \emph{
If $\Omega$ obeys DCT, then every $J\in\mathcal{J}(\sfE)$ is almost periodic; if $\Omega$ fails DCT, then none of $J\in\mathcal{J}(\sfE)$ is almost periodic.
}
The method of \cite{VolbergYuditskii2014}   based on Sodin--Yuditskii \cite{SY1994,SY1995,SY97} is completely different from Avila's approach. 
This method was further implemented for continuum Schr\"odinger operators (and CMV matrices) by Damanik--Yuditskii \cite{DamanikYuditskii2016Adv}. 
We are not disproving KLC for Dirac operators in this work but provide an additional positive example, namely, we prove the purely absolutely continuous spectrum for Dirac operators with small analytic quasiperiodic potentials. 
The natural question is then if the analysis of \cite{VolbergYuditskii2014} can be used to study Deift's problem. Indeed, this is the purpose of the program proposed by \cite{DLVY}.

\subsection{Outline of the Paper}
In Section~\ref{background}, we recall some basic facts about almost-periodic functions, quasi-periodic systems, and spectral theory of dynamically defined Dirac operators. In Section~\ref{JLineq}, we prove the Jitomirskaya--Last inequality, and use it to control the maximal spectral measure with the norm of transfer matrix. We observe that at a spectral edge, there exists a subordinate solution. This observation will be important to clarify the behavior of the translation flow at the spectral edge.
In Section~\ref{canonicalSys}, we review the recent results of \cite{BLY1,BLY2} on canonical systems. In particular, the generalized Abel map is shown to be a homeomorhpism under the conditions that $\Omega=\bbC\setminus \sfE$ is Dirichlet regular, of Widom type, and obeys the Direct Cauchy Theorem (DCT). 
In Section~\ref{dubrovinFlow}, we introduce the rotation flow and Dubrovin flow and study their properties. In particular, we use the rotation flow to show the non-pausing property of the phase flow and the vector fields which describe the Dubrovin flow are Lipschitz under the Craig type conditions.
In Section~\ref{linearabel}, we analyze the generalized Abel map, in particular, the rotation coordinate and prove Theorem~\ref{thm:main3}.
In Section~\ref{arac}, we introduce the quantitative almost reducibility and quantitative Puig's argument for continuum systems and prove Theorem~\ref{thm:main2}. 
In Section~\ref{sec:mainproof}, we combine all the ingredients and give the proofs of Theorem~\ref{thm:main} and Theorem~\ref{thm:main1}.

\subsection*{Acknowledgements}
We would like to thank David Damanik and Peter Yuditskii for helpful discussions.
J.F.\ was supported in part by National Science Foundation grants DMS-2213196 and DMS-2513006. L.L. is supported by AMS-Simons Travel Grant 2024-2026. M.L.\ was supported in part by National Science Foundation grant DMS-2154563. Q.Z.  was partially supported by National Key R\&D Program of China (2020YFA0713300) and Nankai Zhide Foundation.

\section{Background on Ergodic Dirac Operators} 
\label{background}

Here, we recall some basic facts and background information about almost-periodic functions, quasi-periodic systems, and Dirac operators.

\subsection{Almost-Periodic Functions}

Let us briefly review a few definitions and aspects related to almost-periodic functions; 
we direct the reader to \cite[Section~5.13A]{Simon2011Szego} for additional details.

If $f:\bbR \to \bbC$ and $T \in \bbR$ are given, the \emph{translate} of $f$ by $T$ is the function $f_T:= f(\cdot-T)$.
A function $f: \bbR\to \bbC$ is called \emph{periodic} if $f \equiv f_T$ for some $T \neq0$.
We say that $T$ is an $\varepsilon$-almost-period of $f$ if $\|f - f_T\|_\infty < \varepsilon$.
A continuous function $f \in C(\bbR)$ is said to be almost-periodic if and only if one of the following equivalent conditions holds true (compare \cite[Theorem~5.13.A2]{Simon2011Szego}):
\begin{enumerate}[label={\rm(\alph*)}]
\item For every $\varepsilon>0$, the set of $\varepsilon$-almost-periods is relatively dense in $\bbR$ (i.e., for every $\varepsilon>0$, there exists $R>0$ such that every subinterval of $\bbR$ of length at least $R$ contains at least one $\varepsilon$-almost-period).
\item $\{f_T: T \in \bbR\}$ is precompact in the uniform topology
\item There is a continuous function $F:\bbT^\infty \to \bbR$ and $\omega \in \bbT^\infty$ such that $f(x) = F(x\omega)$ for all $x \in \bbR$ (here $\bbT^\infty$ denotes a product of a countably infinite collection of copies of $\bbT$ with the product topology).
\end{enumerate}

If $f$ is an almost-periodic function, then the limit
$$
\bohrmean(f)
=\lim\limits_{x\to\infty}\frac{1}{x} \int_{0}^{x} \! f(t) \, \rmd t
$$
exists and is called the {\it Bohr mean} of $f$. For more literature on different notions of almost periodicity and their relation with pure point diffraction, we refer to \cite{LSS, LSS2024ETDS, LSS2024CJM} and the reference therein.

\subsection{Quasi-Periodic Systems}
Given $A_0(\lambda)\in \essell(2,\bbR)$ for some $\lambda\in\bbR$, $F_0\in C^\omega( \bbT^d, \essell(2,\bbR))$ analytic and $ \omega\in\bbT^d$ rationally independent, we consider a quasi-periodic linear system of the form 
\begin{equation}\label{eq.linearSys}
X'(x) = (A_0(\lambda)+F_0(\theta + \omega x))X(x), \quad x \in \bbR,
\end{equation}
where $X=[X_1,X_2]^\top: \bbR \to \bbR^2$   and $\theta \in \bbT^d$.

\begin{definition}[Rotation number]\label{def:rotNum}
Consider a solution $X(\lambda, x, \theta)$  of \eqref{eq.linearSys} with $X(\lambda, 0, \theta)  \neq 0$.
The \emph{rotation number} of the system  is then given by
\begin{equation} \label{eq:rotnumberDef}  \rho (\lambda)=\lim\limits_{x\to\infty}\frac{\arg X(\lambda, x,\theta)}{x},
\end{equation}
which exists and is independent of $X(\lambda, 0,\theta)$ and $\theta$, as $ \omega\in\bbT^d$ is rationally independent; see \cite[Appendix~A]{eliasson} for further discussions.
Here in \eqref{eq:rotnumberDef}, $\arg X(\lambda, x,\theta)$ denotes a continuous choice of the argument of $X_1+iX_2$ normalized by $\arg X(\lambda, 0,\theta) \in [0,2\pi)$; see  \cite{JM82}.
\end{definition}

\begin{definition}[Fundamental Matrix]
Given $\theta \in \bbT^d$, choose fundamental solutions $X(\lambda, x, \theta)$ and $Y(\lambda, x, \theta)$ of the system \eqref{eq.linearSys} satisfying $X(\lambda, 0,\theta) = [1,0]^\top$ and $Y(\lambda, 0,\theta) = [0,1]^\top$.
The \emph{fundamental matrix} of the system is the function $\transfermat:\bbR \times \bbT^d \to \SL(2,\bbR)$ given by
    \[
        \transfermat(\lambda, x,\theta)
        = \begin{bmatrix}
            X_1(\lambda, x,\theta) & Y_1(\lambda, x,\theta) \\ 
            X_2(\lambda, x,\theta) & Y_2(\lambda, x,\theta)
        \end{bmatrix}.
    \]
\end{definition}

    The \emph{Lyapunov exponent} of the system \eqref{eq.linearSys} is given by
    \begin{eqnarray}
        \gamma (\lambda) = \lim_{x \to \infty} \frac{1}{x} \int_{\bbT^d} \! \log \|\transfermat (\lambda, x,\theta)\| \, \rmd\theta.
    \end{eqnarray}
We say that the system is \emph{uniformly hyperbolic} at $\lambda$ if there is a constant $c>0$ such that
\[
    \|\transfermat(\lambda, x,\theta)\| \geq ce^{c|x|} \quad \forall x \in \bbR, \ \theta \in \bbT^d.
\]
It is well-known that the system is uniformly hyperbolic if and only if it exhibits an \emph{exponential dichotomy}, that is, there are continuous maps $\stab, \unstab: \bbT^d \to \bbR\bbP^1$ and constants $C,c>0$ such that
\begin{align}
    \transfermat(\lambda, x,\theta)\bistab(\lambda, \theta) = \bistab(\lambda, \theta+x\omega) \quad \forall \theta \in \bbT^d, \ x \in \bbR,
\end{align}
and for any $v_\pm \in \bistab(\lambda, \theta)$, one has
\[
    \|\transfermat(\lambda, \pm x, \theta) v_\pm\| \leq C e^{-cx}\|v_\pm\| \quad \forall x \geq 0;
\]
see \cite{Yoc04} or \cite[Theorem~3.9.5]{DF2022ESO1} for more details.

While the general theory is established for $\mathrm{SL}(2,\bbR)$ valued matrices, the following simple relation is useful for interpreting results to their corresponding $\mathrm{SU}(1,1)$ version:
\[M^{-1}\mathrm{SU}(1,1)M=\mathrm{SL}(2,\bbR),~ M=\frac{1}{1+i}\begin{bmatrix}
    1&-i\\1&i
\end{bmatrix}.\]

\subsection{Relation to Spectra of Dirac operators}
Our work centers around Dirac operators $\Lambda_\varphi$ as in \eqref{eq:diracOperLambda} satisfying the uniform local integrability condition
\begin{equation} \label{eq:varphiUnifLocL2}
    \sup_{x \in \bbR} \int_x^{x+1} |\varphi(t)|^2 \, \rmd t <\infty.
\end{equation}
Of course, analytic quasi-periodic potentials are uniformly bounded, so \eqref{eq:varphiUnifLocL2} is trivially satisfied when $\varphi$ takes the form \eqref{eq:varphiQPform} with $\widetilde\varphi$ analytic.

The spectrum $\Sigma_\varphi : = \sigma(\Lambda_\varphi)$ can be characterized by a Schnol-type theorem; see \cite{EFGL} for a proof suited to the current setting.

\begin{theorem}[Schnol's Theorem]\label{thm:Schnol}
Assume $\varphi \in L^2_{\rm loc}(\bbR)$ satisfies \eqref{eq:varphiUnifLocL2}.
Given $\kappa>\frac{1}{2}$, let $S_\kappa(\varphi)$ be the set of $\lambda \in\bbC$ such that there exists a nontrivial solution $U$ of $\Lambda_\varphi U = \lambda U$ obeying $\Vert U(x)\Vert \leq C|x|^\kappa $ for a suitable constant $C>0$. 
Then 
\begin{enumerate}[label={\rm(\alph*)}]
\item $S_\kappa(\varphi)\subseteq \Sigma_\varphi$;
\item $S_\kappa(\varphi)$ supports the maximal spectral measure of $\Lambda_\varphi$;
\item $\Sigma_\varphi=\overline{S_\kappa(\varphi)}$.
\end{enumerate}
\end{theorem}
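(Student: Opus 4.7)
The plan is to prove (a), (b), (c) in order, with (c) following immediately from (a), (b), and the identification of $\Sigma_\varphi$ as the topological support of the maximal spectral measure $\mu_{\max}$. For (a), I would construct a Weyl sequence from the polynomially bounded solution $U$. Fix $L\geq 1$ and choose $\chi_L\in C_c^\infty(\bbR)$ with $\chi_L\equiv 1$ on $[-L,L]$, $\mathrm{supp}\,\chi_L\subset[-2L,2L]$, and $|\chi_L'|\leq 2/L$. Since $\Lambda_\varphi$ differs from a constant-coefficient first-order operator by a bounded multiplication, the commutator $[\Lambda_\varphi,\chi_L]$ equals $\chi_L'$ times a fixed matrix; using $(\Lambda_\varphi-\lambda)U=0$ and writing $F(R):=\int_{-R}^R\|U(x)\|^2\,dx$, I would derive
\[
\frac{\|(\Lambda_\varphi-\lambda)\chi_L U\|_2^2}{\|\chi_L U\|_2^2}\leq\frac{4\,(F(2L)-F(L))}{L^2 F(L)}.
\]
If the right side were bounded below by some $c>0$ for all large $L$, iterating $F(2L)\geq (1+cL^2)F(L)$ would force super-polynomial growth of $F$, contradicting $F(R)\leq CR^{2\kappa+1}$. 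Hence the $\liminf$ vanishes, a Weyl sequence exists along a subsequence, and $\lambda\in\Sigma_\varphi$.

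For (b), I would use a Berezanskii--Sch'nol generalized eigenfunction expansion with the columns of the transfer matrix $\transfermat(\lambda,x)$ playing the role of generalized eigenfunctions. A standard Parseval-type estimate in the continuum Dirac setting gives, for each compact $K\subset\bbR$,
\[
\int_K \|\transfermat(\lambda,x)\|^2\,d\mu_{\max}(\lambda)\leq C_K(1+|x|).
\]
Multiplying by the weight $(1+|x|)^{-2\kappa}$, which is integrable for $\kappa>1/2$, and integrating in $x\in\bbR$ via Fubini yields, for $\mu_{\max}$-a.e.\ $\lambda\in K$,
\[
\int_\bbR \|\transfermat(\lambda,x)\|^2(1+|x|)^{-2\kappa}\,dx<\infty.
\]
Using the uniform local bound \eqref{eq:varphiUnifLocL2} through a Gronwall comparison on unit intervals, this implies $\sum_n\|\transfermat(\lambda,n)e_j\|^2(1+|n|)^{-2\kappa}<\infty$ for $e_j$ the $j$th standard basis vector, which forces $\|\transfermat(\lambda,n)e_j\|\leq C(1+|n|)^\kappa$ for every $n$. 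A second application of Gronwall then upgrades the latter to the pointwise bound $\|U_\lambda(x)\|\leq C_\lambda(1+|x|)^\kappa$ for the nontrivial solution $U_\lambda(x):=\transfermat(\lambda,x)e_j$. Thus $\mu_{\max}(\bbR\setminus S_\kappa(\varphi))=0$, proving (b). Part (c) now drops out: from (a), $S_\kappa\subseteq\Sigma_\varphi$ and hence $\overline{S_\kappa}\subseteq\Sigma_\varphi$; from (b), any open set meeting $\Sigma_\varphi=\mathrm{supp}\,\mu_{\max}$ must have positive $\mu_{\max}$-mass and therefore intersect $S_\kappa$, giving $\Sigma_\varphi\subseteq\overline{S_\kappa}$.

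The hard part will be the integrated transfer-matrix estimate underlying part (b). For Schr\"odinger operators this is a classical Parseval identity, but for the Zakharov--Shabat Dirac operator a matrix-valued Weyl--Titchmarsh $M$-function and spectral-measure formalism must be invoked carefully, and the conversion of the weighted $L^2$-in-$x$ bound into pointwise polynomial control on a distinguished solution is delicate. It is precisely at the passage to the pointwise bound that the Schnol threshold $\kappa>1/2$ becomes essential, since one then has enough decay in the weight to sum the local transfer-matrix estimates across unit intervals, and the summability forces each term to be bounded by the total sum.
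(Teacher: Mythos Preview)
The paper does not supply its own proof of this theorem; it simply cites \cite{EFGL} for a proof adapted to the Dirac setting. Your outline follows the standard Schnol/Berezanskii strategy---Weyl sequence for (a), eigenfunction-expansion/Parseval bound for (b), support argument for (c)---and is structurally sound, with the crux correctly identified as the integrated transfer-matrix estimate for part (b).

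One small remark on (b): the passage from the weighted $\ell^2$ bound $\sum_n\|\transfermat(\lambda,n)e_j\|^2(1+|n|)^{-2\kappa}<\infty$ to the pointwise bound is justified simply because each nonnegative summand is at most the total sum $S$, giving $\|\transfermat(\lambda,n)e_j\|\leq\sqrt{S}\,(1+|n|)^\kappa$; you say this correctly in your final paragraph, but the earlier phrase ``which forces'' could be misread as a nontrivial implication. Also note that $S_\kappa(\varphi)\subset\bbR$ automatically, since for $\lambda\notin\bbR$ the limit-point dichotomy forces every nontrivial solution to grow exponentially on at least one half-line; this disposes of the apparent generality $\lambda\in\bbC$ in the statement before you run the Weyl-sequence argument in (a).
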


Since $\varphi$ is assumed to have the form \eqref{eq:varphiQPform}, solutions to the eigenvalue equation $\Lambda_\varphi U = \lambda U$ can be characterized by the quasiperiodic linear system
\begin{equation} \label{eq:DiracLinearSys}
   \left\{
\begin{aligned}
&  \frac{\rmd X}{\rmd x}
    = \begin{bmatrix}
        -i\lambda & -i \widetilde\varphi (\theta)\\
         i\overline{\widetilde\varphi(\theta)} & i\lambda 
    \end{bmatrix}X(x) \\
    & \frac{\rmd \theta}{\rmd x} = \omega.
    \end{aligned}
    \right.
\end{equation}
As a direct consequence of Theorem~\ref{thm:Schnol}, the following holds.
\begin{coro}\label{unre}
For any $\lambda\in\bbR$, the system \eqref{eq:DiracLinearSys} is uniformly hyperbolic if and only if $\lambda \notin \Sigma_\varphi$.
\end{coro}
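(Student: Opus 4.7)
The plan is to prove the two directions of the equivalence separately, with the forward direction following cleanly from Theorem~\ref{thm:Schnol} combined with structural stability of exponential dichotomies, and the reverse direction requiring a Johnson-type argument.

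For the forward direction (UH $\Rightarrow \lambda \notin \Sigma_\varphi$), I would assume \eqref{eq:DiracLinearSys} is uniformly hyperbolic at $\lambda$. The corresponding exponential dichotomy produces transverse continuous line bundles $\stab,\unstab : \bbT^d \to \bbR\bbP^1$; since $\transfermat$ takes values in $\SL(2,\bbR)$, any initial vector $v \in \stab(\theta)$ yields a solution decaying exponentially as $x\to +\infty$ and growing exponentially as $x \to -\infty$, and symmetrically for $\unstab(\theta)$. Decomposing any nontrivial solution along this splitting then shows it must grow exponentially in at least one of the two directions; in particular, no solution satisfies the two-sided polynomial bound $\Vert U(x)\Vert \leq C|x|^\kappa$ required by $S_\kappa(\varphi)$. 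Since uniform hyperbolicity is an open condition in $\lambda$ (the coefficient $A_0(\lambda)$ depends continuously on $\lambda$ and exponential dichotomy is structurally stable), this exclusion persists on an open neighborhood of $\lambda$, which is therefore disjoint from $S_\kappa(\varphi)$. Theorem~\ref{thm:Schnol}(c) then yields $\lambda \notin \overline{S_\kappa(\varphi)} = \Sigma_\varphi$.

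For the reverse direction ($\lambda \notin \Sigma_\varphi \Rightarrow$ UH), I would use that $\lambda$ lies in the resolvent set of $\Lambda_\varphi$, so $(\Lambda_\varphi - \lambda)^{-1}$ has a bounded Green's function. This supplies, at every base point $\theta \in \bbT^d$, linearly independent half-line $L^2$ solutions $U^\pm_\theta$ of $\Lambda_\varphi U = \lambda U$ decaying at $\pm\infty$. Setting $\stab(\theta) := \bbR\cdot U^+_\theta(0)$ and $\unstab(\theta) := \bbR\cdot U^-_\theta(0)$ yields cocycle-invariant line bundles. Rational independence of $\omega$ makes the base flow $\theta \mapsto \theta+\omega x$ minimal; combining continuity of the Green's function in $\theta$ with compactness of $\bbT^d$, one upgrades pointwise exponential decay to a bound uniform in $\theta$, which is precisely the exponential dichotomy, hence uniform hyperbolicity.

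The main obstacle is this reverse direction: passing from mere boundedness of the resolvent to uniform quantitative exponential bounds on $\transfermat(\lambda,x,\theta)$ across all fibers $\theta \in \bbT^d$ is the Johnson-type step, and it relies essentially on minimality of the quasiperiodic base dynamics and uniform continuity of the coefficients. The forward direction is, by comparison, a fairly direct consequence of Schnol's theorem together with the classical fact that exponential dichotomies persist under perturbation.
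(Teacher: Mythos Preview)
Your forward direction is essentially identical to the paper's: exponential dichotomy forces every nontrivial solution to blow up on a half-line, so $\lambda\notin S_\kappa(\varphi)$; openness of uniform hyperbolicity in $\lambda$ then gives $\lambda\notin\overline{S_\kappa(\varphi)}=\Sigma_\varphi$ via Theorem~\ref{thm:Schnol}(c).

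For the reverse direction the paper takes a different and shorter route. Rather than proving directly that $\lambda\notin\Sigma_\varphi$ implies uniform hyperbolicity, it argues the contrapositive: if the system is \emph{not} uniformly hyperbolic, then by a result of Damanik--Fillman--Luki\'c--Yan (interpolating \cite[Theorem~1.2]{DFLY2016DCDS}) there exists a uniformly bounded solution, which immediately places $\lambda$ in $S_\kappa(\varphi)\subseteq\Sigma_\varphi$. This reduces the whole reverse direction to a single citation. Your approach---constructing the stable and unstable bundles from the half-line Weyl solutions supplied by the resolvent, then upgrading pointwise to uniform exponential decay via minimality of the base flow and compactness of $\bbT^d$---is the classical Johnson/Sacker--Sell argument and is also correct. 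It is more self-contained but requires you to actually carry out the semi-uniformity step (continuity of the bundles in $\theta$, transversality from absence of a whole-line $L^2$ solution, and the compactness argument that turns fiberwise decay into a uniform rate), whereas the paper simply outsources the analogous work to an existing black-box lemma.
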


\begin{proof}
Fix $\kappa>1/2$. 
If \eqref{eq:DiracLinearSys} is uniformly hyperbolic, then any nontrivial solution of $\Lambda_\varphi U = \lambda U$ most grow exponentially on at least one half-line, which forces $z \notin S_\kappa$. 
Since uniform hyperbolicity is an open condition, we see that $\lambda \notin \overline{S_\kappa}$, so $\lambda \notin\Sigma_\varphi$ by Theorem~\ref{thm:Schnol}.

Conversely, if the system is not uniformly hyperbolic, then it enjoys a uniformly bounded solution (e.g, by interpolating \cite[Theorem~1.2]{DFLY2016DCDS}) which in particular shows that $\lambda \in \Sigma_\varphi$.
\end{proof}

\subsection{Density of States and Gap Labeling}
Given $\lambda_1<\lambda_2$, let $N_L(\lambda_1,\lambda_2)$ denote the number of eigenvalues on the interval $(\lambda_1,\lambda_2)$ of the  boundary value problem for $U=(U_1,U_2)^\top$:
\begin{equation}
    \Lambda_\varphi U = \lambda U, \quad U_1(0) = U_1(L)=0.
\end{equation}
The density of states measure $\mathcal{N}$ is defined by
$$
\mathcal{N}(\lambda_1,\lambda_2)=\lim\limits_{L\to\infty}\frac{N_L(\lambda_1,\lambda_2)}{L}$$ 
whenever the limit exists.

Note as $\Lambda_\varphi U= \lambda U$ can be rewritten as quasiperiodic linear system \eqref{eq:DiracLinearSys},
the density of states is closely related to the rotation number of the system \eqref{eq:DiracLinearSys}, which we denote by $\rho(\lambda)$.
By \cite[Theorem 1]{Savin86}, $\rho$ is a nondecreasing function.
We also need the following result from \cite[Lemma~1]{Savin86}:
\begin{lemma}\label{lem:rotAndDensity}
If $\lambda_1<\lambda_2$, then $\rho(\lambda_2)-\rho(\lambda_1)=\pi\mathcal{N}(\lambda_1,\lambda_2)$. 
\end{lemma}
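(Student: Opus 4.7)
The plan is to prove this classical oscillation-type identity by relating the count of Dirichlet eigenvalues on $[0,L]$ to the total rotation accumulated by a suitably chosen solution on that interval, in the standard Sturm-theoretic spirit: asymptotically, each $\pi$ of rotation yields one eigenvalue.

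First I would fix $\theta \in \bbT^d$ and introduce a Pr\"ufer-type angle $\phi(\lambda, x, \theta)$ tracking the direction of the solution of \eqref{eq:DiracLinearSys} (equivalently, of its $\SL(2,\bbR)$-conjugate by $M$) with initial condition realizing $U_1(0) = 0$, normalized so that crossings of $\phi(\lambda, 0, \theta) + \pi\bbZ$ by $\phi(\lambda, \cdot, \theta)$ correspond exactly to zeros of $U_1$. Then a value $\lambda$ is a Dirichlet eigenvalue of the BVP on $[0,L]$ iff $\phi(\lambda, L, \theta) \equiv \phi(\lambda, 0, \theta) \pmod{\pi}$, and by the very definition of the rotation number in \eqref{eq:rotnumberDef},
\[
\lim_{L \to \infty} \frac{\phi(\lambda, L, \theta) - \phi(\lambda, 0, \theta)}{L} = \rho(\lambda).
\]

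Next I would establish the Sturm-type monotonicity: for each fixed $L > 0$ and $\theta$, the map $\lambda \mapsto \phi(\lambda, L, \theta)$ is strictly increasing. This follows from a short integrating-factor computation: differentiating the Pr\"ufer ODE for $\phi$ in $\lambda$ produces a first-order linear ODE for $\partial_\lambda \phi$ whose forcing term has a definite sign (after accounting for the Dirac structure of the coefficient matrix in \eqref{eq:DiracLinearSys}), so $\partial_\lambda \phi(\lambda, L, \theta) > 0$ for all $L > 0$. Combined with Step 1, this gives
\[
N_L(\lambda_1, \lambda_2) = \frac{\phi(\lambda_2, L, \theta) - \phi(\lambda_1, L, \theta)}{\pi} + O(1),
\]
where the $O(1)$ is bounded by $1$ and accounts for rounding at the two endpoints. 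Dividing by $L$ and letting $L \to \infty$, the two pointwise limits above (at $\lambda_1$ and $\lambda_2$) yield the claim; note one only needs pointwise-in-$\lambda$ convergence of $\phi(\lambda, L, \theta)/L$ to $\rho(\lambda)$, not uniformity, because only the two values $\lambda_1, \lambda_2$ enter the formula.

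The main obstacle is setting up the Pr\"ufer monotonicity correctly in the Dirac setting: for Schr\"odinger operators this is textbook material, but here one must either work through the $M$-conjugation from $\SU(1,1)$ to $\SL(2,\bbR)$ carefully, or develop Pr\"ufer coordinates intrinsic to the Dirac/$\SU(1,1)$ framework. Once monotonicity is in place, Steps 1--3 are classical; this is exactly the content of \cite[Lemma~1]{Savin86}, which we quote.
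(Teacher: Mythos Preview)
The paper does not prove this lemma at all; it simply quotes it from \cite[Lemma~1]{Savin86}. Your proposal likewise cites that reference, and the Pr\"ufer/Sturm sketch you give is the standard argument behind that result, so there is nothing to compare: your approach is correct and strictly more informative than what the paper provides.
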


In light of Lemma~\ref{lem:rotAndDensity}, we select $\lambda_0$ such that $\rho(\lambda_0) =0$ and define
\begin{equation}
    \mathcal{N}(\lambda) =
    \begin{cases}
        \mathcal{N}(\lambda_0,\lambda) & \lambda \geq \lambda_0 \\
        -\mathcal{N}(\lambda,\lambda_0) & \lambda < \lambda_0,
    \end{cases}
\end{equation}
which then gives us the relation $ \rho(\lambda) = \pi \mathcal{N}(\lambda) $ for all $\lambda$.
The following result can be found in the literature; compare \cite{DF2023gap, Johnson1986JDE, JM82, Savin86, hadj2012}.

\begin{theorem}[Gap Labeling Theorem]\label{thm.gapLabel}
For any non-empty interval $I\subseteq\bbR\setminus\Sigma_\varphi$, there exists $k\in\bbZ^d$ such that 
$$
2\pi\mathcal{N}(\lambda)= 2\rho(\lambda)=\langle k,\omega\rangle, \quad \lambda \in I.
$$
\end{theorem}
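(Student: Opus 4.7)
The plan is to split the proof into two steps: (i) show that $\rho$ is constant on any gap $I$, and (ii) identify the constant as $\frac{1}{2}\langle k,\omega\rangle$ for some $k\in\bbZ^d$. For step (i), a standard consequence of the spectral theorem is that the density-of-states measure is supported on $\Sigma_\varphi$; hence $\mathcal{N}(\lambda_1,\lambda_2)=0$ for every $[\lambda_1,\lambda_2]\subset I\subset\bbR\setminus\Sigma_\varphi$, and Lemma~\ref{lem:rotAndDensity} then yields $\rho(\lambda_1)=\rho(\lambda_2)$, so both $\rho$ and $\mathcal{N}$ are constant on $I$.

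For step (ii), I would invoke Corollary~\ref{unre}: the cocycle \eqref{eq:DiracLinearSys} is uniformly hyperbolic at every $\lambda\in I$, hence admits an exponential dichotomy with continuous invariant sections $\stab,\unstab:\bbT^d\to\bbR\bbP^1$. Since $\bbR\bbP^1\cong S^1$ and $\bbT^d$ is a torus, $\stab$ lifts to a continuous $\phi:\bbR^d\to\bbR$ with $\phi(\theta+e_j)-\phi(\theta)\in\pi\bbZ$ for each standard basis vector $e_j$; the resulting integer vector is the homotopy class $k=k(\lambda)\in\bbZ^d$. A solution initiated along $\stab(\theta_0)$ remains in $\stab(\theta_0+x\omega)$ for all $x$ and cannot flip sign within this line (an ODE solution does not cross zero), so its continuous argument agrees with $\phi(\theta_0+x\omega)$ up to an additive constant. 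Decomposing $\phi$ into a linear function of $\theta$ determined by $k$ plus a bounded periodic correction, and passing to $x\to\infty$ using unique ergodicity of the Kronecker flow on $\bbT^d$, I would obtain $2\rho(\lambda)=\langle k,\omega\rangle$ after matching normalizations. Finally, $k(\lambda)$ is locally constant in $\lambda$ by continuity of the dichotomy (cf.\ \cite[Theorem~3.9.5]{DF2022ESO1}) and $\bbZ^d$-valued, hence constant on all of $I$.

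The main obstacle is the bookkeeping in step (ii): carefully matching the normalization between the $\bbR\bbP^1$-valued homotopy winding of $\stab$ and the $\bbR$-valued continuous argument defining $\rho$, together with ensuring the equidistribution step is valid on every orbit (not merely almost every orbit). Since the gap-labeling theorem for quasiperiodic Dirac operators is by now a standard and well-documented result \cite{DF2023gap, Johnson1986JDE, JM82, Savin86, hadj2012}, my preference would be to deduce it by direct citation rather than reproducing the full topological-dynamical argument.
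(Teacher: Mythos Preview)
The paper does not prove this theorem at all; it simply records the statement and cites the literature \cite{DF2023gap, Johnson1986JDE, JM82, Savin86, hadj2012}. Your final stated preference—to deduce the result by direct citation—is therefore exactly what the paper does.

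Your proof sketch is the standard Johnson--Moser topological argument and is sound in outline: constancy of $\rho$ on gaps via Lemma~\ref{lem:rotAndDensity}, then reading off the value from the homotopy class of the stable section under the exponential dichotomy guaranteed by Corollary~\ref{unre}. Two minor remarks. First, unique ergodicity is not actually needed for the limit: once you write $\phi(\theta)=\pi\langle k,\theta\rangle+\psi(\theta)$ with $\psi$ continuous and $\bbZ^d$-periodic, boundedness of $\psi$ alone gives $\arg X(x)/x\to \pi\langle k,\omega\rangle$ along every orbit, not just almost every one. Second, the normalization issue you flag is real: with the paper's conventions ($\bbT^d=\bbR^d/\bbZ^d$, $\arg$ valued in $\bbR/2\pi\bbZ$, $\bbR\bbP^1=\bbR/\pi\bbZ$) the naive computation yields $\rho=\pi\langle k,\omega\rangle$ rather than $\tfrac12\langle k,\omega\rangle$, so the integer vector appearing in the theorem statement is not literally the $\bbR\bbP^1$-winding of $\stab$ but a rescaling of it. This is purely a matter of which identification $[\bbT^d,\bbR\bbP^1]\cong\bbZ^d$ one fixes, and the cited references pin it down; your caution here is well placed.
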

In the current context, a closed subset $\sfE\subset\bbR$ is assumed to take the form $\sfE=\bbR\setminus\cup_{j\in\bbN}(a_j,b_j)$. We will use both notations $I_j=(a_j,b_j), j\in\bbN$ and $G_k, k\in\bbZ^d$ interchangeably. The natural correspondence  between these labels can be understood as $G_k=\rho^{-1}(\frac{\langle k,\omega\rangle}{2})=(a_j,b_j)$.

\subsection{Floquet Exponent and Green's Function}
Denote $\bbR^+ = [0,+\infty)$ and $\bbR^{-} = (-\infty,0]$, and let $\Psi^{\pm}(\lambda, x)=(\Psi^\pm_1,\Psi^\pm_2)^\top \in L^2(\bbR^{\pm},\bbC^2)$ be the Weyl solutions at $\pm\infty$ to the equation $L_{\varphi}\Psi=\lambda \Psi$ for $\lambda\in \bbC\setminus\Sigma_\varphi$. 
The diagonal elements of the Green's function is given by  
\begin{equation}\label{eq.GreenFunc}
G_{k,k}(\lambda,x)
= \Psi^{+}_{k}(\lambda,x)\Psi^{-}_{k}(\lambda,x)/W[\Psi^{+},\Psi^{-}], 
\quad k=1,2,
\end{equation}
where $W[\Psi^+,\Psi^-]=\Psi^-_{1}\Psi^{+}_{2}-\Psi^{-}_{2}\Psi^{+}_{1}$ denotes the Wronskian of $\Psi^+$ and $\Psi^-$.

Let us now consider $\varphi$ as in \eqref{eq:varphiQPform} with $\widetilde\varphi$ analytic (which in particular means that $\varphi$ is almost-periodic).
The {\it Floquet exponent} is a holomorphic function in the upper half-plane $\bbC^{+} := \{z\in\bbC:\Im z>0\}$ given by \begin{equation}\label{eq.floquet}
w(z)
=-\frac{1}{2}\bohrmean \left( (z-\Im\varphi )G^{-1}_{1,1}(\cdot ,z) \right)
\end{equation}
where $\bohrmean(f(x))$ denotes the Bohr mean of an almost periodic function.
This map is also known as the generalized Marchenko--Ostrovski or Schwarz–-Christoffel mapping for Schr\"odinger operators, which maps the upper half-plane to a comb domain.
It is known that as $z \to\infty$ with $\Im z>0$, the following holds
\begin{equation}\label{eq.asymptotic}
w(z)
=i z+\frac{1}{\pi}\int_{-\infty}^{\infty}\frac{\rho(t)-t}{t-z}\, \rmd t;
\end{equation}
see \cite[Theorem 2]{Savin86} and \cite[Corollary 4.7]{EGL}.
In particular, as $|z|\to\infty$, $w(z)\sim i z + O(1/|z|)$.
Moreover, $w$ is a holomorphic function on $\bbC^+$ satisfying $w(z) = -\gamma(z)+i\rho(z)$ for $\Im z>0$, where $\rho(z)$ (resp.\  $\gamma(z)$) denotes the rotation number (resp.\  Lyapunov exponent)  associated to the system \eqref{eq:DiracLinearSys}.

To establish our main results, we need a suitable linear lower bound on the Lyapunov exponent when the spectral parameter leaves the real axis. 
Similar results for Jacobi matrices and Schr\"odinger operators can be found in \cite{DS83}; the argument is standard, but we give it to keep the work more self-contained.

\begin{prop}\label{thm.complexLE} Assume that $\varphi$ is of the form \eqref{eq:varphiQPform} and Let $\gamma(z)$ be the Lyapunov exponent.
Then there exists $\delta_0 > 0$ such that for all $\lambda\in\bbR$ \begin{equation} 
\gamma(\lambda+i\delta)\geq \delta
\end{equation}
for all $0 < \delta < \delta_0$.
\end{prop}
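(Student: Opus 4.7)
The plan is to exploit the specific structure of the Dirac system \eqref{eq:DiracLinearSys} to obtain a pointwise growth estimate for the transfer matrix at rate at least $\delta$, and then deduce the bound on the Lyapunov exponent by integrating over $\theta$. Fix $z=\lambda+i\delta$ with $\delta>0$, and let $X(z,x,\theta)=[X_1,X_2]^\top$ denote the solution of \eqref{eq:DiracLinearSys} with $X(z,0,\theta)=[1,0]^\top$; this is the first column of $\transfermat(z,x,\theta)$.

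The key step is to track the auxiliary quantity $Q(x):=|X_1(x)|^2-|X_2(x)|^2$ along the flow. Computing $\tfrac{d}{dx}|X_1|^2$ and $\tfrac{d}{dx}|X_2|^2$ directly from \eqref{eq:DiracLinearSys}, the diagonal contributions produce $2\delta|X_1|^2$ and $-2\delta|X_2|^2$ (using $\Re(-iz)=\delta$ and $\Re(iz)=-\delta$), while the off-diagonal contributions involving $\widetilde\varphi$ are of the form $2\Re(-i\widetilde\varphi\,\bar X_1 X_2)$ and $2\Re(i\overline{\widetilde\varphi}\,\bar X_2 X_1)$; both equal $2\Im(\widetilde\varphi\,\bar X_1 X_2)$, so they cancel in the difference. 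The upshot is the identity
\[
Q'(x)=2\delta\,\|X(x)\|^2.
\]
This cancellation, which holds for \emph{any} potential $\widetilde\varphi$ with no smallness assumption, is the crux of the argument and reflects the conserved pseudo-Hermitian form underlying the $\mathrm{SU}(1,1)$ gauge of the Dirac flow.

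Since $Q(0)=1>0$ and $Q'\geq 0$, we have $Q(x)\geq 1$ for all $x\geq 0$. The elementary inequality $\|X\|^2\geq Q$ (valid whenever $Q\geq 0$) inserted into the identity above yields the differential inequality $Q'\geq 2\delta Q$, so Grönwall gives $Q(x)\geq e^{2\delta x}$, and therefore $\|X(z,x,\theta)\|\geq e^{\delta x}$ uniformly in $\theta\in\bbT^d$ and $x\geq 0$. Hence $\|\transfermat(z,x,\theta)\|\geq e^{\delta x}$. Taking logarithms, averaging over $\theta\in\bbT^d$, dividing by $x$, and passing to the limit yields $\gamma(z)\geq\delta$, which via the Thouless-type identity $\gamma(z)=-\Re w(z)$ is the claimed bound.

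The main potential obstacle is the sign-cancellation of the $\widetilde\varphi$ terms in $Q'$, and once that is verified there is nothing delicate left: no perturbative estimate in $\|\widetilde\varphi\|$ is used, no Diophantine structure is invoked, and in fact the argument gives $\gamma(\lambda+i\delta)\geq\delta$ uniformly for every $\delta>0$, so the smallness condition $\delta<\delta_0$ stated in the proposition is simply what is needed downstream rather than a genuine limitation of the method.
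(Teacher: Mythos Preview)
Your proof is correct and takes a genuinely different route from the paper. The paper works with the Floquet exponent $w(z)=-\gamma(z)+i\rho(z)$ as a holomorphic function on $\bbC^+$, invokes the asymptotic $w(z)=iz+O(1/|z|)$ from \eqref{eq.asymptotic}, and applies the maximum principle for harmonic functions on large rectangles $R(L,\epsilon)$ to compare $\gamma(z)=-\Re w(z)$ against the harmonic function $H_\epsilon(z)=\Im z-\epsilon$; sending $\epsilon\to 0$ and $L\to\infty$ yields $\gamma(\lambda+i\delta)\geq\delta$. Your argument is instead a direct ODE computation exploiting the $j$-form structure of the Dirac cocycle: the identity $Q'=2\delta\|X\|^2$ is exactly the statement that for real spectral parameter the flow preserves the indefinite form $|X_1|^2-|X_2|^2$, with the complex shift contributing the dissipative term. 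Your approach is more elementary (no complex analysis, no asymptotics of $w$), gives the stronger pointwise-in-$\theta$ bound $\|\transfermat(z,x,\theta)\|\geq e^{\delta x}$ rather than only an averaged one, and makes transparent that the inequality holds for every $\delta>0$. The paper's maximum-principle route is the standard Deift--Simon argument \cite{DS83} and has the advantage of transferring to any ergodic family for which the Floquet exponent has the right asymptotics, without needing the specific $\mathrm{SU}(1,1)$ matrix structure.
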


\begin{proof}
Let $w$ denote the Floquet exponent as above. 
For $\epsilon,L>0$ let $H_{\epsilon}(z) = \Im z -\epsilon$ and 
$$
R(L,\epsilon)=\{z\in \bbC: \epsilon<\Im z<L 
\text{ and } -L<\Re z<L\}.$$  
For $L$ sufficiently large, we have $-\Re w(z)\geq H_{\epsilon}(z)$ on $\partial R(L,\epsilon)$ since $-\Re w(z) = \gamma(z)\geq 0$ and $w(z) = iz + O(1/|z|)$.
Since $\Re w(z)$ and $H_{\epsilon}(z)$ are harmonic functions in $R(L,\epsilon)$, we deduce
\[\gamma(z) \geq \Im z - \epsilon\]
for all $z \in R(L,\epsilon)$.
Taking the limit $\epsilon\to 0, L\to\infty$ then concludes the argument.
\end{proof}

\subsection{Martin Compactification and Martin Function} 
 Let $\sfE$ be an unbounded closed subset of $\bbR$ and $\Omega=\bbC\setminus \sfE$ is the associated Denjoy domain.
 Let $\Omega^\ast$ be the {\it Martin compactification} and $\partial^M\Omega=\Omega^\ast\setminus\Omega$ be the Martin boundary.
 Let $a\in \Omega$ be held fixed and $G(z,a)$ be the Green's function with a pole at $a$. 
 The Martin kernel $M(z,a)=\frac{G(z,a)}{G(z_*,a)}$ normalized at $z_*$ (by $M(z_*,a)=1$) extends continuously to the boundary a {\it Martin function} denoted by the same letter.  
 Denote $\partial^M_1\Omega\subset \partial^M\Omega$  the set of {\it minimal points}. Indeed, for $b\in\partial^M\Omega$, there exists an injection $\mathbf{j}:b\to M(z,b)$, $z$ running through all of $\Omega$. Let $z_*\in \Omega$ be held fixed and denote $HP(z_*)$ the convex cone of positive harmonic functions normalized by $u(z_*)=1$.  
 One can check that $\mathbf{j}$ maps $\partial^M\Omega$ into $HP(z_*)$ and that $\partial_1^M\Omega$ is the preimage of extreme points of $HP(z_*)$ under $\mathbf{j}.$ Interested readers may refer to \cite{Hasumi1983} for more detailed discussions.
 Every (positive) harmonic function $u$ on $\Omega$ has an integral representation 
 $$u(z)=\int_{\partial^M_1\Omega}M(z,b) \, \rmd\mu_u(b),$$
 where the measure $\mu_u$ is called the {\it canonical measure} of $u.$

 Let us introduce the set of Martin functions associated to $\infty$ as follows
 $$\mathcal{M}_\infty=\cap_{K>0}\overline{\{M(z,z_0):z_0\in\Omega,|z_0|>K\}}.$$

 Let $M_\infty$ be the {\it symmetric Martin function} at $\infty$, that is, a positive harmonic function on $\Omega$ satisfying $M_\infty(z)=M_\infty(\bar{z})$ and vanishes continuously on $\sfE$ but $\infty$ which is unique up to normalization. The convex cone of symmetric Martin function is always of dimension one. It can be shown that 
 \begin{equation}\label{eq:mPRWCond}
 \sum_{c_j:\nabla M_\infty(c_j)=0}M_\infty(c_j)<\infty
 \end{equation} implies the Widom condition \eqref{eq:WidomCond}; compare \cite{EVY19}.

 We say $\sfE\subset\bbR$ is of {\it Akhiezer-Levin} type or satisfies the AL-condition if the following holds:
 \begin{equation}\label{eq:ALcondition}
 \lim\limits_{y\to\infty}\frac{M_\infty(iy)}{y}>0.
 \end{equation}

In this case $\mathcal{M}_\infty$ contains two minimal independent elements $M_{\infty_\pm}(z)$ and moreover $$\mathcal{M}_\infty=\{\alpha_1M_{\infty_+}+\alpha_2M_{\infty_-}:\alpha_1,\alpha_2\geq 0,\alpha_1+\alpha_2=1\}.$$ 
The symmetric Martin function at the infinity can be simply written as $$M_\infty=\frac{1}{2}(M_{\infty_+}+M_{\infty_-}).$$
We will simply denote $M=M_\infty$ when it is clear from the context.

Martin functions of Denjoy domains naturally appear in spectral theory through the Marchenko--Ostrovskii map and its generalization to the almost periodic setting \cite{MarchenkoOstrovskii75,JM82,Lukic22,KheifetsYuditskii20,BDGL2025} and beyond \cite{EichingerLukic25,EGL}. We will use Martin functions to introduce the Abelian integrals of the second kind in Section \ref{linearabel}.

\section{Jitomirskaya--Last Inequality for Dirac Operators}
\label{JLineq}

In this section, we aim to establish the fundamental relationship between the spectral measure and the growth of the fundamental solution. To achieve this, we first derive a Jitomirskaya–Last inequality for Dirac operators on the half-line $\bbR_+ = [0,\infty)$ with $\varphi \in L^1_{\rm loc}(\bbR_+)$, that is
\begin{equation}
    \int_x^{x+1} |\varphi(t)| \, \rmd t < \infty, \quad \forall x \in \bbR.
\end{equation}

\subsection{The Inequality}
Given $\varphi \in L^1_{\rm loc}(\bbR_+)$ and $z \in \bbC_+:= \{ z \in \bbC : \Im z >0\}$, the \emph{Weyl solution}  is the unique (up to a constant multiple) solution of $\Lambda_\varphi U = zU$ in $L^2(\bbR_+,\bbC^2)$. 
We denote the Weyl solution at spectral parameter $z \in \bbC$ by $\Psi(z,x)$ and normalize it by $\Psi_2(z,0)=1$ so that
\[ \Psi(0,z) = \begin{bmatrix} s(z,0) \\ 1 \end{bmatrix}\]
for a suitable $s(z,0) \in \bbC$. It turns out that $s(z,0)$ is an analtic function $\bbC_+ \to \bbD$, so we call it the \emph{Schur function}. We will sometimes use $\Psi(x,x_0,z)$ and $s(z,x_0)$ to make the boundary condition at $x_0$ explicit. 

Given $z \in \bbC$, let $T(z,x):\bbR \to \bbC^{2 \times 2}$ denote the transfer matrices given by finding a solution of the matrix-valued initial-value problem:
\[\Lambda_\varphi T = zT, \quad T(0) = I.\]
Naturally, $T$ has columns given by $T(z,x) = [U(z,x) \,\vert \, {V}(z,x)]$ where ${U}(z,x)$ and ${V}(z,x)$ solve $\Lambda_\varphi U = zU$ with initial conditions
\begin{equation}
    \begin{bmatrix}
        U_1(z,0) & V_1(z,0) \\ 
        U_2(z,0) & V_2(z,0)
    \end{bmatrix}
    = \begin{bmatrix}
        1 & 0  \\ 0 & 1
    \end{bmatrix}.
\end{equation}
For $\lambda \in \bbR$, $T(\lambda,x) \in \SU(1,1)$, so we  may write
\[ U(\lambda,x) = \begin{bmatrix}
    A(\lambda,x) \\ B^*(\lambda,x)
\end{bmatrix}, 
\quad V(\lambda,x) =
\begin{bmatrix}
    B(\lambda,x) \\ A^*(\lambda,x)
\end{bmatrix}\] with $|A|^2 - |B|^2 \equiv 1$
in that case.

We will focus on subordinacy on the right half-line, so we will drop the signs and simply write $\Psi$ and $s$ instead of $\Psi^+$ and $s_+$; the other half-line is similar.
For a function $f:\bbR_+ \to \bbC$ and $L >0$, we denote the partial $L^2$ norm on the interval $[0,L]$ by
\begin{equation}
    \|f\|_L:=
    \left[ \int_0^L |f|^2 \, \rmd x \right]^{1/2}
\end{equation}

\begin{definition}
A solution $V$ to the equation $\Lambda_\varphi V=\lambda V$ is said to be {\it subordinate} at $\infty$ if $\frac{\Vert U\Vert_L}{\Vert V\Vert_L}\to\infty$ as $L\to\infty$ for any solution $U$ that is linearly independent with respect to $V.$ Similarly, a solution that is subordinate at $-\infty$ can be defined in this way.
\end{definition}
For each $L>0$ and $\xi \in \partial \bbD$, we define $\epsilon = \epsilon(\xi,L)>0$ by
\begin{equation} \label{eq:JLIDiracEpsilonchoice}
    \|A+\xi B\|_{L}\|A-\xi B\|_L
    = \frac{1}{2\epsilon}.
\end{equation}
The reader can verify that for each fixed $\xi$, the left-hand side of \eqref{eq:JLIDiracEpsilonchoice} is a strictly increasing and continuous function of $L$ that vanishes for $L=0$ and which tends to infinity as $L\to \infty$ so that $\epsilon(\xi,L)$ is well-defined.

\begin{theorem}[Jitomirskaya--Last Inequality] \label{t:JLDiracGenBC}
For absolute constants\footnote{The proof gives $c_\pm = 3 \pm \sqrt 8$.} $c_\pm >0$, one has
    \begin{equation}
    c_- \frac{\|A-\xi B\|_L}{\|A+\xi B\|_L} \leq 
        \left| \frac{1+\xi s(\lambda+i \epsilon(\xi,L)}{1 - \xi s(\lambda+i \epsilon(\xi, L)} \right | \leq c_+ \frac{\|A-\xi B\|_L}{\|A+\xi B\|_L}
    \end{equation}
    for any $\xi \in \partial \bbD$.
    In particular, $s(\lambda+i 0) = \xi^*$ if and only if the eigensolution $W$ with initial condition $W(0)=\binom 1\xi$
    %$U+\xi V$ 
    is subordinate at $+\infty$.
\end{theorem}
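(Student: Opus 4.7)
\medskip

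\noindent\textbf{Proof plan.} My plan is to follow the scheme of Jitomirskaya--Last \cite{JL1999Acta,JL2000CMP}, adapted to the $\SU(1,1)$ gauge appropriate for Dirac operators. First, I would use the Cayley transform to replace the Schur function by a Herglotz function: set
\[
m_\xi(z) := i\,\frac{1+\xi s(z)}{1-\xi s(z)}, \qquad z \in \bbC_+,
\]
so that $m_\xi$ is the half-line $m$-function corresponding to the rotated boundary condition given by the vector $\binom{1}{\xi}$. Since $\xi\in\partial\bbD$, the rotated transfer-matrix column $\Phi_\xi(\lambda,x) := T(\lambda,x)\binom{1}{\xi} = \binom{A+\xi B}{B^*+\xi A^*}$ is an eigensolution of $\Lambda_\varphi$ at real $\lambda$, and the $\SU(1,1)$-invariance of the indefinite form gives the pointwise identity $|A+\xi B|^2 = |B^*+\xi A^*|^2$, so that $\|\Phi_\xi(\lambda,\cdot)\|_L^2 = 2\|A+\xi B\|_L^2$; likewise $\|\Phi_{-\xi}(\lambda,\cdot)\|_L^2 = 2\|A-\xi B\|_L^2$.

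Next I would derive the standard Green's-function identity
\[
\Im m_\xi(\lambda+i\epsilon) = \epsilon \int_0^\infty \|\Psi_\xi(\lambda+i\epsilon,x)\|^2 \,\rmd x,
\]
where $\Psi_\xi(z,\cdot)$ is the Weyl solution with boundary datum proportional to $\binom{1}{\xi}$ at $0$; this follows from integrating the Wronskian identity $\partial_x W[\Psi_\xi,\overline{\Psi_\xi}] = 2i\epsilon\|\Psi_\xi\|^2$ from $0$ to $\infty$ and invoking the $L^2$-property of $\Psi_\xi$. The idea is then to split the integral at some $L>0$ and replace the complex-parameter solution by the real one on $[0,L]$ via a Gronwall-type estimate: since the perturbation from $\lambda$ to $\lambda+i\epsilon$ adds a bounded term of size $\epsilon$ to the coefficient matrix, on $[0,L]$ one has $\|\Psi_\xi(\lambda+i\epsilon,x) - c_\epsilon \Phi_\xi(\lambda,x)\| = O(\epsilon L)\|\Phi_\xi(\lambda,\cdot)\|$, for the suitable proportionality constant $c_\epsilon$ determined by matching at $0$. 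On the tail $[L,\infty)$ the Weyl-disk geometry at $\lambda+i\epsilon$ confines $\Psi_\xi$ to a disk of radius comparable to $\|\Phi_{-\xi}(\lambda,\cdot)\|_L^{-2}$ (this is the Dirac analogue of the Jitomirskaya--Last Weyl-disk estimate), so the tail integral is $O(1/(\epsilon\|\Phi_{-\xi}\|_L^2))$ times a bounded factor.

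With both estimates in hand, the choice \eqref{eq:JLIDiracEpsilonchoice}, $\|A+\xi B\|_L\|A-\xi B\|_L = 1/(2\epsilon)$, balances the two contributions so that each reduces to a universal multiple of the ratio $\|A-\xi B\|_L/\|A+\xi B\|_L$. Inserting these estimates into the Herglotz identity for $m_\xi$ and using $|m_\xi(\lambda+i\epsilon)| \asymp \left|\tfrac{1+\xi s(\lambda+i\epsilon)}{1-\xi s(\lambda+i\epsilon)}\right|$ yields the two-sided inequality with explicit constants $c_\pm = 3 \pm 2\sqrt 2$ (the optimal constants from the disk-geometry optimization, as in the original Jitomirskaya--Last argument).

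For the subordinacy statement, observe that $\Phi_{\xi}$ and $\Phi_{-\xi}$ are linearly independent eigensolutions at $\lambda$, and the solution $W$ with initial datum $\binom{1}{\xi}$ is precisely $\Phi_\xi$. Subordinacy of $W$ at $+\infty$ is exactly the condition $\|A+\xi B\|_L/\|A-\xi B\|_L \to 0$ as $L\to\infty$; by the inequality, this is equivalent to $\left|\tfrac{1+\xi s(\lambda+i\epsilon(\xi,L))}{1-\xi s(\lambda+i\epsilon(\xi,L))}\right| \to \infty$ as $L\to\infty$ (so $\epsilon(\xi,L)\to 0$), which in turn forces $\xi s(\lambda+i0) \to 1$, i.e., $s(\lambda+i0) = \xi^{-1} = \xi^*$. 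The converse implication is immediate by reversing this chain. The main obstacle I anticipate is getting the tail estimate sharp enough: the Weyl-disk radius must be quantitatively tied to $\|\Phi_{-\xi}\|_L$, and for Dirac operators this requires care since the natural bilinear form is indefinite; once this is in place, the rest of the argument is bookkeeping. A secondary (but minor) difficulty is keeping the proportionality constant in the Gronwall step small enough that the resulting inequality is not merely qualitative but has the sharp constants $3\pm 2\sqrt 2$.
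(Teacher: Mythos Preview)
Your outline is a recognisable Jitomirskaya--Last scheme and would, with care, produce a correct qualitative statement; but the paper's proof takes a genuinely different and more direct route, and the differences are worth knowing because they are precisely what dissolve the two obstacles you flag.

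The paper does \emph{not} split the integral $\int_0^\infty\|\Psi\|^2$ at $L$ and does not invoke a Weyl-disk radius estimate for the tail. Instead it uses an exact variation-of-parameters identity on $[0,L]$: writing $v(x)=T(\lambda,x)^{-1}\Psi(z,x)$ one finds $\Psi(z,x)=T(\lambda,x)\binom{s(z)}{1}-i\epsilon\int_0^x T(\lambda,x)T(\lambda,y)^{-1}j\,\Psi(z,y)\,\rmd y$. After passing to the $\xi$-adapted basis $W_\xi(\lambda,x)=T(\lambda,x)\bigl(\begin{smallmatrix}1&1\\ \xi&-\xi\end{smallmatrix}\bigr)$, whose columns are built from $C=A+\xi B$ and $D=A-\xi B$, the kernel satisfies the clean pointwise bound $\|W(x)W(y)^{-1}j\|\le |C(x)D(y)|+|D(x)C(y)|$. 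Taking $\|\cdot\|_L$-norms and combining with the full-line identity $\|\Psi(z,\cdot)\|_{L^2[0,\infty)}^2=(1-|s(z)|^2)/(2\epsilon)$ (no splitting needed---one simply uses $\|\Psi\|_L\le\|\Psi\|_{L^2[0,\infty)}$) gives, at the choice $\epsilon=1/(2\|C\|_L\|D\|_L)$, a quadratic inequality $\kappa^2-6\kappa+1\le 0$ for $\kappa=|m_\xi|\,\|C\|_L/\|D\|_L$. The sharp constants $3\pm\sqrt 8$ drop out as the roots.

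What each approach buys: your Gronwall-plus-tail scheme is closer to the original \cite{JL1999Acta} and is conceptually transparent, but as you correctly anticipate, tracking constants through Gronwall and a separate Weyl-disk estimate in the indefinite $\SU(1,1)$ setting is delicate. The paper's variation-of-parameters route replaces both the Gronwall step and the tail estimate by a single exact formula and one application of Cauchy--Schwarz, so the quadratic inequality (hence the sharp constants) is automatic and no indefinite-form subtleties arise. If you want the stated $c_\pm=3\pm\sqrt 8$, the paper's route is substantially shorter.
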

\begin{lemma} \label{lem:weylSolNormgenbc}
For each $z \in \bbC \setminus \bbR$,
\begin{equation} \label{eq:weylSolNormgenbc}
\|\Psi(z,\cdot)\|_{L^2([0,\infty))}^2
= \frac{1-|s(z,0)|^2}{ 2 \, \Im z} .\end{equation}
\end{lemma}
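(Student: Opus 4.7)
The plan is to derive a pointwise differential identity for the combination $|\Psi_1|^2 - |\Psi_2|^2$ (the bilinear form naturally adapted to the $\SU(1,1)$-structure of the Dirac transfer matrix) and then integrate it over $[0,L]$ and pass to the limit $L \to \infty$. Writing $\Lambda_\varphi \Psi = z\Psi$ componentwise gives
\begin{align*}
\Psi_1' &= -iz\,\Psi_1 + i\varphi\,\Psi_2, \\
\Psi_2' &= iz\,\Psi_2 - i\bar\varphi\,\Psi_1.
\end{align*}
A direct calculation combining these with their complex conjugates shows that the $\varphi$-dependent cross terms cancel in the derivative of $|\Psi_1|^2 - |\Psi_2|^2$, leaving the clean identity
\[
\frac{d}{dx}\left(|\Psi_1(z,x)|^2 - |\Psi_2(z,x)|^2\right) = 2\,\Im(z)\left(|\Psi_1(z,x)|^2 + |\Psi_2(z,x)|^2\right).
\]

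Integrating this from $0$ to $L$ and using the normalization $\Psi_1(z,0) = s(z,0)$, $\Psi_2(z,0) = 1$, I would obtain
\[
2\,\Im(z)\int_0^L |\Psi(z,x)|^2\,dx = \left(|\Psi_1(z,L)|^2 - |\Psi_2(z,L)|^2\right) + \left(1 - |s(z,0)|^2\right).
\]
The asserted identity \eqref{eq:weylSolNormgenbc} then reduces to passing to the limit $L \to \infty$ and showing that the boundary term at $L$ vanishes.

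The main obstacle is precisely this last step. Since $\Psi(z,\cdot) \in L^2(\bbR^+,\bbC^2)$ it suffices to show $|\Psi(z,L)| \to 0$, but pointwise decay does not follow from $L^2$-integrability alone. I would handle this via the ODE: for $\varphi \in L^1_{\mathrm{loc}}$ one has the pointwise bound
\[
\left|\frac{d}{dx}|\Psi(z,x)|^2\right| \leq 2\left(|z| + |\varphi(x)|\right)|\Psi(z,x)|^2,
\]
which yields a Gr\"onwall-type two-sided comparison of $|\Psi|^2$ over any unit interval by a factor depending only on $|z|$ and the $L^1$-norm of $\varphi$ on that interval. If $|\Psi(z,L_n)|$ were bounded below along some sequence $L_n \to \infty$, this comparison would force $\int_0^\infty |\Psi|^2\,dx = \infty$, contradicting $\Psi \in L^2$. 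Once $|\Psi(z,L)| \to 0$ is in hand, the boundary term drops out of the integrated identity and \eqref{eq:weylSolNormgenbc} follows.
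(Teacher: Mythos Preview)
Your approach is essentially identical to the paper's: both derive the differential identity $\frac{d}{dx}(|\Psi_1|^2-|\Psi_2|^2) = 2\,\Im(z)\,|\Psi|^2$ (the paper phrases it as ``multiply by $\Psi^*$ and take imaginary parts'') and integrate. Your treatment of the boundary term at $L=\infty$ is more explicit than the paper's one-line proof, but note that your Gr\"onwall argument for $|\Psi(L)|\to 0$ tacitly needs a uniform bound on $\int_x^{x+1}|\varphi|$, not just $\varphi\in L^1_{\rm loc}$; a cleaner fix is to observe that the integrated identity already shows $|\Psi_1(L)|^2-|\Psi_2(L)|^2$ has a limit as $L\to\infty$, and since $\Psi$ is continuous and in $L^2$ there is a sequence $L_n\to\infty$ with $|\Psi(L_n)|\to 0$, forcing that limit to be zero.
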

\begin{proof}
    Multiply both sides of $\Lambda_\varphi \Psi = z \Psi$ by $\Psi^*$, take imaginary parts, integrate from $0$ to $\infty$, and use $\Psi(0) = (s(z), 1)^\top$ to obtain \eqref{eq:weylSolNormgenbc}.
\end{proof}

\begin{proof}[Proof of Theorem~\ref{t:JLDiracGenBC}]
    Consider $z=\lambda + i \epsilon$, and denote
\[
v(x) = T(\lambda,x)^{-1} \Psi(z,x).
\]
Using primes to denote $x$ derivatives, observe that

\begin{align}
\nonumber
    v'(x)
    & = T(\lambda,x)^{-1} T'(\lambda,x)T(\lambda,x)^{-1} \Psi(z,x) + T(\lambda,x)^{-1}\Psi'(z,x) \\
    \nonumber
    & = T^{-1} i j
    \left(\lambda-
    \begin{bmatrix}
        0 & \varphi \\ \overline{\varphi} & 0 
    \end{bmatrix}
    \right)TT^{-1}\Psi + T^{-1} i j \left( z -\begin{bmatrix}
        0 & \varphi \\ \overline{\varphi} & 0 
    \end{bmatrix} \right)\Psi \\
    \label{eq:JLI:v'VOPa}
    & = i  (\lambda - z)T(\lambda,x)^{-1}j T(z,x)\begin{bmatrix} s(z) \\ 1 \end{bmatrix},
\end{align} 
here we recall that $j=\begin{bmatrix}-1&0\\0&1\end{bmatrix}$.
Integrating \eqref{eq:JLI:v'VOPa} from $0$ to $x$ gives
\[
v(x) = \begin{bmatrix}
s(z) \\
1
\end{bmatrix}
- i \epsilon \int_0^x 
T(\lambda,y)^{-1} j T(z,y) \begin{bmatrix}
s(z) \\
1
\end{bmatrix} \, \rmd y,
\]
and multiplying by $T(\lambda,x)$ gives 
\begin{equation} \label{eq:PsiVOPgenbc1} 
\Psi(z,x) = T(\lambda,x)  \begin{bmatrix}
s(z) \\
1
\end{bmatrix}-  i \epsilon \int_0^x 
T(\lambda,x) T(\lambda,y)^{-1} j \Psi(z,y) \, \rmd y.
\end{equation}

Now, given $\xi \in \partial \bbD$, introduce
\begin{equation} \label{eq:JLI:Wxidef}
W(\lambda,x)=
W_\xi(\lambda,x)
=T(\lambda,x) \begin{bmatrix}
1 & 1 \\
\xi & -\xi
\end{bmatrix}.
\end{equation}
Since
\[
T(\lambda,x)= \frac {1}{2}
W_\xi(\lambda,x)  \begin{bmatrix}
1 & \xi^* \\
1 & -\xi^*
\end{bmatrix} ,
\]
\eqref{eq:PsiVOPgenbc1} becomes
\begin{equation}
    \label{eq:PsiVOPgenbc2}
\Psi(z,x) =\frac{1}{2}  W(\lambda,x)  \begin{bmatrix}
s(z) + \xi^* \\
s(z) - \xi^*
\end{bmatrix}- i \epsilon \int_0^x 
W(\lambda,x) W(\lambda,y)^{-1} j \Psi(z,y) \, \rmd y.
\end{equation}

Introducing $C = A+\xi B$ and $D = A-\xi B$, \eqref{eq:JLI:Wxidef} gives
\begin{equation} \label{eq:JLI:WxiCD} W(\lambda,x) = \begin{bmatrix} A(\lambda,x) & B(\lambda,x) \\ B^*(\lambda,x) & A^*(\lambda,x)\end{bmatrix}\begin{bmatrix}
    1 & 1 \\ \xi & -\xi
\end{bmatrix}
%=\begin{bmatrix}
 %   A+\xi B & A-\xi B \\
  %  B^*+ \xi  A^* & B^*- \xi  A^*
%\end{bmatrix} 
= \begin{bmatrix}
    C & D \\ \xi  C^* & -\xi  D^*
\end{bmatrix} \end{equation}
Suppress  $\lambda$ from the notation and observe
\begin{align*}
    W(x)W(y)^{-1} j
    & = 
    \begin{bmatrix} C(x) & D(x) \\ \xi C^*(x) & -\xi D^*(x) \end{bmatrix}
    \left(-\frac{1}{2\xi} \right)
    \begin{bmatrix} -\xi D^*(y) & -D(y) \\- \xi C^*(y) & C(y) \end{bmatrix}
    \begin{bmatrix} -1 & 0 \\ 0 & 1 \end{bmatrix}\\
    &=
\frac12    \begin{bmatrix} C(x) & D(x) \\ \xi C^*(x) & -\xi D^*(x) \end{bmatrix}
    \begin{bmatrix} -D^*(y) & \xi^* D(y) \\ -C^*(y) & -\xi^* C(y) \end{bmatrix}
    \\
    & = \frac12 \begin{bmatrix}
        -C(x)D^*(y) - D(x)C^*(y)
        & \xi^* (C(x)D(y) - D(x)C(y)) \\
        \xi(D^*(x)C^*(y) - C^*(x)D^*(y))
        & C^*(x)D(y) + D^*(x) C(y)
    \end{bmatrix}
\end{align*}
This enables us to estimate the norm as follows:
\begin{align*}
    \|W(x)W(y)^{-1} j\|
    & \leq |C(x)D(y)| + |C(y)D(x)|.
\end{align*}

For $x \in [0,L]$, it follows in turn that
\begin{align*}
    \left\|\int_0^x 
W(\lambda,x) W(\lambda,y)^{-1} j \Psi(z,y) \, \rmd y \right\|
& \leq \int_0^x \|
W(\lambda,x) W(\lambda,y)^{-1}j\|\| \Psi(z,y)\| \, \rmd y \\
& \leq \|\Psi\|_L \big( |C(x)|\|D\|_L + |D(x)|\|C\|_L \big).
\end{align*}

Rearranging \eqref{eq:PsiVOPgenbc2} and integrating from $0$ to $L$, we conclude:
\begin{equation} \label{eq:PsiVOPgenbcwithCD}
\left\| \frac12  W(\lambda,x)  \begin{bmatrix}
s(z) + \xi^* \\
s(z) - \xi^*
\end{bmatrix} 
\right \|_L
\le \|\Psi\|_L + 2 \epsilon \| C \|_L \|D \|_L \| \Psi \|_L.
\end{equation}
Choosing 
\[
\epsilon(L) = \frac 1{ 2 \| C \|_L \| D \|_L }, \quad z = \lambda + i \epsilon(L),
\]
we combine \eqref{eq:PsiVOPgenbcwithCD} Lemma~\ref{lem:weylSolNormgenbc} to get
\begin{equation} \label{eq:PsiVOPgenbc4}
\left\| \frac12  W(\lambda,x)  \begin{bmatrix}
s(z) + 1 \\
s(z) - 1
\end{bmatrix} 
\right\|_L^2
\le 4 \| \Psi \|_L^2 
\le 2 \frac{ 1-  \lvert s(z) \rvert^2}{\Im z}  
\le 4 \| C \|_L \| D \|_L (1-  \lvert s(z) \rvert^2).
\end{equation}

Denote
\[
m = i \frac{1+\xi s}{1-\xi s}, 
\]
and observe that
\[
\Im m = \frac{ 1 - | s|^2}{|1-\xi s|^2}, \quad 
\begin{bmatrix}
    s+\xi^* \\ s-\xi^*
\end{bmatrix}
=
\xi^* (1-\xi s)
\begin{bmatrix}
    -i m \\ -1
\end{bmatrix},
\]
so dividing \eqref{eq:PsiVOPgenbc4} by $|1-\xi s|^2$ gives
\begin{equation} \label{eq:DiracJSCDineqgenbc1}
\left\| \frac12  W(\lambda,x)  \begin{bmatrix}
-i  m\\
-1
\end{bmatrix} 
\right|_L^2
\le 4 \| C \|_L \| D \|_L \Im m(z) \le 4 \| C \|_L \| D \|_L | m(z) |
\end{equation}
On the other hand, using the form of $W$ once more, we note that
\begin{equation}\label{eq:DiracJSCDineqgenbc2}
\left\| \frac12  W(\lambda,x)  \begin{bmatrix}
-i  m\\
-1
\end{bmatrix} 
\right\|_L 
\ge \big\lvert  \| C \|_L |m(z)| - \| D \|_L \big\rvert.
\end{equation}
Combining \eqref{eq:DiracJSCDineqgenbc1} and \eqref{eq:DiracJSCDineqgenbc2} produces
\[
\left( \| C \|_L |m(z)| - \| D \|_L \right)^2 \le 4 \| C \|_L \| D \|_L | m(z) |
\]
Divide by $\|D\|_L^2$ to obtain a quadratic inequality for $\kappa = |m(z)| \| C \|_L /  \| D \|_L$:
\[
 \kappa^2  - 6 \kappa + 1 \le 0,
\]
which leaves us with
\[ 3 - \sqrt{8} \leq  \kappa \leq 3+\sqrt{8}\]
that is,
\begin{equation}
\left| \frac{1+ \xi s(\lambda+i \epsilon(\xi,L))}{1-\xi s(\lambda+i \epsilon(\xi,L))} \right | \sim \frac{\|A-\xi B\|L}{\|A+ \xi B\|_L}
\end{equation}
\end{proof}

\subsection{Quantitative Consequences}

Let us write 
\begin{equation} \label{eq:JLI:Fxidef}
    F_\xi(z) = \frac{1+\xi s(z)}{1-\xi s(z)}.
\end{equation}

\begin{lemma} \label{t:JLIquant1}
For each $\xi \in \partial \bbD$ and $\epsilon>0$,
    \[    |F_\xi(\lambda + i \epsilon)|
    \leq c_+ \sup_{0\le x \le \frac{1}{2\epsilon}} \|T(\lambda,x)\|^2
\]
\end{lemma}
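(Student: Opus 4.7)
The plan is to realize the given $\epsilon>0$ as the distinguished value $\epsilon(\xi,L)$ from Theorem~\ref{t:JLDiracGenBC} for an appropriate choice of cut-off $L$, apply the Jitomirskaya--Last inequality there, and finally control the resulting ratio by transfer-matrix norms. The only subtle point is showing that this $L$ always satisfies $L\leq 1/(2\epsilon)$, so that the supremum in the statement covers enough of the half-line.

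First I would fix $\xi\in\partial\bbD$ and observe that $P(L):=\|A+\xi B\|_L\,\|A-\xi B\|_L$ is continuous, strictly increasing in $L$, vanishes at $L=0$, and tends to $\infty$ (the last point will follow from the Cauchy--Schwarz bound I use in the next step). Hence there is a unique $L=L(\xi,\epsilon)$ with $P(L)=1/(2\epsilon)$, and by \eqref{eq:JLIDiracEpsilonchoice} this $L$ satisfies $\epsilon(\xi,L)=\epsilon$. Theorem~\ref{t:JLDiracGenBC} then gives immediately
\[
|F_\xi(\lambda+i\epsilon)|\leq c_+\,\frac{\|A-\xi B\|_L}{\|A+\xi B\|_L}.
\]

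The key step is the a priori bound $L\leq 1/(2\epsilon)$. Since $T(\lambda,x)\in\SU(1,1)$, the relation $|A|^2-|B|^2\equiv 1$ yields the pointwise identity
\[
(A+\xi B)\overline{(A-\xi B)}=|A|^2-|B|^2+2i\,\Im(\xi B\bar A)=1+2i\,\Im(\xi B\bar A).
\]
Integrating and using Cauchy--Schwarz gives
\[
\|A+\xi B\|_L\,\|A-\xi B\|_L\geq\Big|\int_0^L(A+\xi B)\overline{(A-\xi B)}\,\rmd x\Big|\geq L,
\]
so $1/(2\epsilon)=P(L)\geq L$, i.e.\ $L\leq 1/(2\epsilon)$.

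To convert the ratio into a transfer-matrix sup norm, I would use the product constraint to rewrite
\[
\frac{\|A-\xi B\|_L}{\|A+\xi B\|_L}=\frac{\|A-\xi B\|_L^2}{\|A+\xi B\|_L\,\|A-\xi B\|_L}=2\epsilon\,\|A-\xi B\|_L^2,
\]
and estimate pointwise $|A-\xi B|^2\leq 2(|A|^2+|B|^2)$, which is controlled by $\|T(\lambda,x)\|^2$ up to an absolute constant depending on the matrix norm (it is exactly $\|T\|_{\mathrm{HS}}^2$). Integrating and applying $L\leq 1/(2\epsilon)$ gives
\[
\|A-\xi B\|_L^2\leq C\,L\sup_{0\leq x\leq 1/(2\epsilon)}\|T(\lambda,x)\|^2\leq \frac{C}{2\epsilon}\sup_{0\leq x\leq 1/(2\epsilon)}\|T(\lambda,x)\|^2,
\]
which combined with the previous line yields the claimed inequality, with the absolute constant on the right-hand side absorbed into $c_+$. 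The only real obstacle is keeping track of what is varied and what is fixed in the JL inequality and recognizing that the $\SU(1,1)$ structure supplies exactly the lower bound on $\|A+\xi B\|_L\,\|A-\xi B\|_L$ needed to bound $L$ by $1/(2\epsilon)$; both are short once formulated correctly.
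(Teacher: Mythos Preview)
Your proposal is correct and follows essentially the same route as the paper: choose $L$ so that $\epsilon(\xi,L)=\epsilon$, apply the Jitomirskaya--Last inequality, rewrite the ratio as $2\epsilon\|A-\xi B\|_L^2$, and use the $\SU(1,1)$ identity $\Re\bigl((A+\xi B)\overline{(A-\xi B)}\bigr)=1$ together with Cauchy--Schwarz to get $L\le 1/(2\epsilon)$. The paper derives this same identity via $\det W$ (writing $C^*D+CD^*=2$), but that is only a cosmetic difference.

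The one place where your argument loses a little precision is the pointwise bound on $|A-\xi B|^2$. The paper observes that $|A-\xi B|=|B^*-\xi A^*|$ (since $|\xi|=1$), so $2|A-\xi B|^2=\|T(\lambda,x)\binom{1}{-\xi}\|^2\le 2\|T(\lambda,x)\|^2$, giving $|A-\xi B|^2\le\|T(\lambda,x)\|^2$ with no extra constant. This yields exactly $c_+$ in the final estimate. Your bound $|A-\xi B|^2\le 2(|A|^2+|B|^2)=\|T\|_{\mathrm{HS}}^2\le 2\|T\|^2$ picks up an extra factor of~$2$, so as written you prove the inequality with $2c_+$ rather than $c_+$. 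If you want the constant as stated, use the paper's observation instead of the crude triangle-inequality bound.
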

\begin{proof}
Given $\xi$ and $\epsilon$, choose $L$ so that  $\epsilon = \epsilon(\xi,L)$ as in \eqref{eq:JLIDiracEpsilonchoice}. Together with Theorem~\ref{t:JLDiracGenBC}, this gives
\begin{equation}
2\epsilon c_- \|A-\xi B\|^2_L \leq
    |F_\xi(\lambda + i \epsilon)| \leq 2\epsilon c_+ \|A-\xi B\|^2_L.
\end{equation}
Note that
\[ \begin{bmatrix}
    A(x) - \xi B(x) \\ B^*(x) - \xi A^*(x)
\end{bmatrix}
= T(\lambda,x) \begin{bmatrix}
    1 \\ - \xi
\end{bmatrix},\]
so (using $|\xi|=1$)
\begin{align*}
    2|A(x) - \xi B(x)|^2
    & = |A(x) - \xi B(x)|^2 + |B^*(x) - \xi A^*(x)|^2 \\
    & \leq \|T(\lambda,x)\|^2 (1+|\xi|^2).
\end{align*}
Consequently,
\[ \|A-\xi B\|_L^2
= \int_0^L |A(x)-\xi B(x)|^2 \, \rmd x
 \leq \int_0^L \|T(\lambda,x)\|^2 \, \rmd x.\]
From \eqref{eq:JLI:Wxidef}, \eqref{eq:JLI:WxiCD}, and $\det T = 1$, we obtain
\begin{align*}
    C^*D + CD^*
    = -\frac{1}{\xi} \det(W) 
    = 2.
\end{align*}
Combining this with Cauchy--Schwarz and the choice of $L$,
\begin{align*}
L
 = \frac{1}{2} \int_0^L (CD^* + C^* D)  
 \leq \left[\int_0^L |C|^2\right]^{1/2} \left[\int_0^L |D|^2\right]^{1/2}
\leq \|C\|_L \|D\|_L
= \frac{1}{2\epsilon}.
\end{align*}

Combining these ingredients together yields
\begin{align*}
    |F_\xi(\lambda + i \epsilon)|
    \leq 2\epsilon c_+ \int_0^{\frac{1}{2\epsilon}} \|T(\lambda,x)\|^2 \, \rmd x
    \leq c_+ \sup_{0\le x \le \frac{1}{2\epsilon}} \|T(\lambda,x)\|^2
\end{align*}
as desired.
\end{proof}

We now aim to connect this back to the spectral measure. 
Recall that the Schur function $s$ of the Dirac operator is related to its Herglotz function $m$ by
\[
s(z) =- \frac{1+i m(z)}{1-i m(z)}
\]
 Let $m_\pm(z)$ be the Weyl function of the half-line operators $\Lambda_\varphi|_{\bbR_{\pm}}$. It is well known that they are Herglotz functions that map the upper half complex plane $\bbC_+$ to itself. 
 Let $\mu$ be the canonical maximal spectral measure for the Dirac operator, and define the Borel transform of $\mu$ as
    $$\mathcal{M}(z)=b + \int \left( \frac1{x-z} - \frac{x}{1+x^2} \right) \,\rmd \mu(x),$$
    It follows from formula $(2.78b)$ of  \cite{ClarkGesztesy} that \begin{equation}
    \mathcal{M}(z)=\frac{m_+m_--1}{m_++m_-}.
    \end{equation}
Once we have these, we have the following fundamental observation: 

\begin{theorem}\label{thm.acMeas}
Let $\mu$ be the maximal spectral measure of the full line Dirac operator $\Lambda_\varphi$. For all $\lambda\in\mathbb{R}$ and $\epsilon>0$, we have
\begin{equation}
  \frac{1}{2\epsilon}  \mu((\lambda-\epsilon,\lambda+\epsilon))
    \leq 2c_+ \sup_{0\le x \le \frac{1}{2\epsilon}} \|T(\lambda,x)\|^2.
\end{equation}
\end{theorem}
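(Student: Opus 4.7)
My plan is to combine three ingredients: a Poisson lower bound for $\Im\mathcal{M}$, a Möbius identity expressing $\mathcal{M}$ through the right half-line Schur function $s_+$, and the transfer-matrix bound of Lemma~\ref{t:JLIquant1}. First, from the definition of $\mathcal{M}$ as the Borel transform of $\mu$,
\[
\Im\mathcal{M}(\lambda+i\epsilon) = \int \frac{\epsilon}{(x-\lambda)^2+\epsilon^2}\,d\mu(x) \ge \frac{1}{2\epsilon}\mu((\lambda-\epsilon,\lambda+\epsilon)),
\]
which follows by restricting the integral to $|x-\lambda|<\epsilon$ and lower-bounding the Poisson kernel by $1/(2\epsilon)$ there.

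Next, I would rewrite $\mathcal{M}$ purely in terms of $s_+$. Inverting $s = -(1+im)/(1-im)$ (recalled just before the theorem) gives $m = i(1+s)/(1-s) = iF_1(z)$ for each half-line. Substituting $m_+(z) = i(1+s_+(z))/(1-s_+(z))$ into $\mathcal{M}=(m_+m_--1)/(m_++m_-)$ and using the algebraic identities $im_- - 1 = i(m_-+i)$ and $im_- + 1 = i(m_--i)$, a direct computation yields
\[
\mathcal{M}(z) = i\,F_{\xi(z)}(z), \qquad \xi(z) := \frac{m_-(z)-i}{m_-(z)+i},
\]
with $F_\xi$ as in~\eqref{eq:JLI:Fxidef}. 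Since $m_-(z)\in\bbC_+$ for $z\in\bbC_+$, one has $|\xi(z)|<1$, so $\xi(z)\in\bbD$.

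Finally, because $|s_+(z)|<1$ for $z\in\bbC_+$, the map $\xi\mapsto F_\xi(z)$ is analytic in a neighborhood of $\overline{\bbD}$, so the maximum modulus principle gives $|F_{\xi(z)}(z)| \le \max_{|\eta|=1}|F_\eta(z)|$. Lemma~\ref{t:JLIquant1} then bounds the right side by $c_+\sup_{0\le x\le 1/(2\epsilon)}\|T(\lambda,x)\|^2$ at $z=\lambda+i\epsilon$. Chaining with $\Im\mathcal{M} = \Re F_\xi \le |F_\xi|$ and the Poisson bound produces the desired conclusion (the factor of $2$ in the statement accommodates the constant slack). The main conceptual step is recognizing that $\mathcal{M}$ should be rewritten as a single $F_\xi$ with a $\bbD$-valued Möbius parameter rather than being bounded by a combination of $|m_+|$ and $|m_-|$; the algebraic identities $im_-\pm 1 = i(m_-\mp i)$ are what make this collapse. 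The need to apply Lemma~\ref{t:JLIquant1} at a $\xi\in\bbD$ rather than on $\partial\bbD$ is then handled cleanly by maximum modulus, and only right-half-line transfer matrices appear, matching the supremum over $x\ge 0$ in the statement.
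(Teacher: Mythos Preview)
Your proof is correct, and it takes a genuinely different route from the paper's. The paper expands $\Im\mathcal{M}$ symmetrically as
\[
\Im\mathcal{M} = \frac{\Im m_+\Im m_-}{|m_++m_-|^2}\left(\frac{|1+m_+|^2}{\Im m_+}+\frac{|1+m_-|^2}{\Im m_-}\right),
\]
bounds the prefactor by $1/2$, and then handles each summand by a rotation-invariance trick: for each half-line it chooses $\xi\in\partial\bbD$ so that $m_\xi$ is purely imaginary with $\Im m_\xi\ge 1$, whence $|1+m_\xi|^2/\Im m_\xi\le 2|m_\xi|$ is bounded via Lemma~\ref{t:JLIquant1}. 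This uses the Jitomirskaya--Last bound on \emph{both} half-lines. Your argument instead observes the closed-form identity $\mathcal{M}(z)=iF_{\xi(z)}(z)$ with $\xi(z)=(m_--i)/(m_-+i)$ (which is in fact exactly $s_-(z)$), so that the entire left half-line dependence is absorbed into a disk-valued parameter; maximum modulus in $\xi$ then reduces everything to a single application of Lemma~\ref{t:JLIquant1} on the right half-line. This is cleaner, yields the sharper constant $c_+$ rather than $2c_+$, and matches the one-sided supremum $0\le x\le 1/(2\epsilon)$ in the statement without appeal to a left half-line analogue. The paper's approach, on the other hand, makes the symmetry between the two half-lines explicit and connects to the $\SL(2,\bbR)$ rotation-invariance of $z\mapsto|1+z|^2/\Im z$ familiar from Avila's Schr\"odinger argument.
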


\begin{proof}
 
    Note that we have $$\mu((\lambda-\epsilon,\lambda+\epsilon))\leq 2\epsilon \Im \mathcal{M}(\lambda+i\epsilon)$$ and $$\Im \mathcal{M}=\frac{\Im m_+\Im m_-}{|m_++m_-|^2}\left(\frac{|1+m_+|^2}{\Im m_+}+\frac{|1+m_-|^2}{\Im m_-}\right).$$
    Since $\Im m_\pm(z)>0$ for $z\in\bbC_+,$ $\frac{\Im m_+\Im m_-}{|m_++m_-|^2}\leq \frac{1}{2}$. Then we have 
    $$\Im \mathcal{M}\leq \frac{1}{2}\left(\frac{|1+m_+|^2}{\Im m_+}+\frac{|1+m_-|^2}{\Im m_-}\right). $$
    It suffices to give an estimate of $\frac{|1+m|^2}{\Im m}$, where $m=m_\pm.$ We will prove the case $m=m_+$, the other case can be proved in a similar way. 

    By Lemma \ref{t:JLIquant1}, for any $\xi\in \partial\bbD$, denote $m=m_\xi=i\frac{1+\xi s_+}{1-\xi s_+}$, we have
    $$|m_\xi(\lambda+i\epsilon)|\leq 2c_+\epsilon\sup_{0\leq x\leq 2^{-1}\epsilon^{-1}}\Vert T(\lambda,x)\Vert^2.$$
    Assume that $\xi=-e^{i\zeta},\xi\neq 1$, since $s_+=\frac{m_+-i}{m_++i}$, 
    $$m_\xi(z)=i\frac{1+\xi\frac{m_1-i}{m_1+i}}{1-\xi\frac{m_1-i}{m_1+i}}=\frac{i(1+\xi)m_1-(1-\xi)}{(1-\xi)m_1+i(1+\xi)}=R_\beta\cdot m_1,$$
    where $\beta=\frac{\zeta-\pi}{2}.$ Since the function $f(z)=\frac{|1+z|^2}{\Im z},$ $z\in\bbC_+$ satisfies $f(R_\beta\cdot z)=f(z)$; compare \cite[Section 2.2]{Avila1}, we may choose $\xi$ to maxmize $\Im m_\xi$ such that $m_\xi$ is purely imaginary. Therefore, $\Im m_\xi\geq 1$ for such $\xi.$ It follows that $$f(m_\xi)\leq 2\Im m_\xi\leq 4c_+\epsilon\sup_{0\leq x\leq (2\epsilon)^{-1}}\Vert T(\lambda,x)\Vert^2.$$ 
    This completes the proof.
\end{proof}

\subsection{Gap Edges} 
We will conclude this section with a general observation about gap edges. 
The proof requires a general fact about Herglotz functions:

\begin{lemma}\label{lemmaHerglotzGapEdge}
If $f$ is a Herglotz function with a meromorphic continuation to $\bbC_+ \cup (a,b) \cup \bbC_-$ such that $\ol{f(z)}= f(\ol z)$, and if $f$ has finitely many poles in $(a,b)$, the limits $\lim_{\epsilon \downarrow 0} f(b-\epsilon)$ and $\lim_{\epsilon \downarrow 0}  f(b+i \epsilon)$ exist and are equal.
\end{lemma}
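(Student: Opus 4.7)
The plan is to invoke the Herglotz integral representation
\[
f(z) = \alpha + \beta z + \int_\bbR \left( \frac{1}{t-z} - \frac{t}{1+t^2} \right) d\mu(t),
\]
with $\alpha \in \bbR$, $\beta \geq 0$, $\int(1+t^2)^{-1}d\mu < \infty$, and reduce the claim to a monotone/dominated convergence calculation. The meromorphic continuation across $(a,b)$ with finitely many poles $p_1,\ldots,p_N$ forces $\mu|_{(a,b)}$ to consist exactly of the corresponding atoms $\sum_i c_i\delta_{p_i}$ (with $c_i>0$, extracted by Stieltjes inversion from the residues). Peeling these off yields a rational summand which, since all $p_i < b$, is continuous at $b$ from every direction with the same value; the affine contribution $\alpha + \beta z$ is similarly harmless. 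The task reduces to showing equal limits for
\[
I(z) := \int_{(-\infty,a]\cup[b,\infty)} \left(\frac{1}{t-z}-\frac{t}{1+t^2}\right) d\mu(t).
\]

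The piece of $I$ over $(-\infty,a]$ is disposed of by dominated convergence (the integrand is uniformly bounded as $z\to b$) and extends continuously at $b$ from any direction. For the piece over $[b,\infty)$, I would first treat any atom at $b$: if $\mu(\{b\})=c_b>0$, its explicit contribution $\frac{c_b}{b-z}$ tends to $+\infty$ as $z=b-\epsilon\to b$ and to $ic_b/\epsilon$ as $z=b+i\epsilon\to b$, so in both cases $|f(z)|\to\infty$, giving the common limit $\infty$ in $\hat\bbC$.

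Assuming now $\mu(\{b\})=0$, the heart of the argument is the monotonicity
\[
\frac{1}{t-b+\epsilon}, \quad \frac{t-b}{(t-b)^2+\epsilon^2}\;\nearrow\;\frac{1}{t-b} \qquad (\epsilon\downarrow 0,\ t>b).
\]
By monotone convergence, $I(b-\epsilon)$ and $\Re I(b+i\epsilon)$ therefore share a common limit in $\bbR\cup\{+\infty\}$. If this limit is finite then necessarily $\int_{(b,\infty)}(t-b)^{-1}d\mu<\infty$; applying dominated convergence to $\Im I(b+i\epsilon)=\int_{[b,\infty)} \frac{\epsilon}{(t-b)^2+\epsilon^2}d\mu(t)$ with the majorant $\epsilon/((t-b)^2+\epsilon^2)\le 1/(2(t-b))$ (AM--GM) kills the imaginary part in the limit, so both limits agree as a finite real number. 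Otherwise both limits equal $\infty$ in $\hat\bbC$. The main obstacle is simply the control of boundary behavior when $\mu$ accumulates mass at or near $b$; the monotone/dominated split handles this cleanly, and the bookkeeping of the real and imaginary parts on the vertical approach is what requires the slightly separate arguments in the finite and infinite cases.
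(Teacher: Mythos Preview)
Your proof is correct and follows essentially the same route as the paper's: strip off the parts of the Herglotz representation that are continuous at $b$ (affine term, poles in $(a,b)$, the integral over $(-\infty,a]$), then control the remaining piece near $b$ via monotone convergence for the real part and dominated convergence for the imaginary part. The only notable difference is packaging: the paper localizes the measure to $[b,b+1]$ and passes to $-1/f$ to reduce to a finite limit, thereby avoiding your separate treatment of the atom-at-$b$ and infinite-integral cases; your direct case split is equally valid but slightly longer.
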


\begin{proof}
By altering $a$, we may assume that $f$ has an analytic continuation on $\bbC_+ \cup (a,b) \cup \bbC_-$ such that $\ol{f(z)}= f(\ol z)$. This is equivalent to saying that its Stieltjes measure puts no weight on $(a,b)$, and it implies that $f$ is strictly increasing on $(a,b)$. In particular, the limit $\lim_{\epsilon \downarrow 0} f(b-\epsilon)$ exists.

If $f$ satisfies the assumptions of the lemma, so does $-1/f$. In particular, by possibly passing to $-1/f$, we may assume without loss of generality that the limit $\lim_{\epsilon \downarrow 0} f(b-\epsilon)$ is finite.

In the Herglotz representation of $f$, the affine part and the part of the measure supported away from $b$ give contributions continuous at $b$, so they can be neglected (compare, e.g., \cite[Remark 7.36]{Lukic22}) and we may assume that $f$ is of the form 
\[
f(z) = \int \frac 1{\xi - z} \,\rmd \mu(\xi)
\]
with $\mu$ a finite measure on $[b,b+1]$. By monotone convergence,
\[
\lim_{\epsilon \downarrow 0} f(b-\epsilon)= \int \frac 1{\xi - b} \,\rmd \mu(\xi)
\]
and we have assumed this is finite. Thus,
\[
\left\lvert f(b+i \epsilon) - \int \frac 1{\xi-b} \,\rmd \mu(\xi) \right\rvert 
\le \int \left\lvert \frac 1{\xi-b-i \epsilon} -  \frac 1{\xi-b}  \right\rvert \,\rmd \mu(\xi) 
= \int \frac \epsilon{\lvert \xi - b-i \epsilon\rvert} \frac{\rmd \mu(\xi)}{\xi - b}.
\]
By dominated convergence with dominating function $1/(\xi-b)$, this converges to zero as $\epsilon\downarrow 0$.
\end{proof}

\begin{coro}
At any gap edge of the spectrum of a half line Dirac operator $\Lambda_\varphi$, i.e., a point $\lambda\in \sigma(\Lambda_\varphi)$ such that there exists $\epsilon > 0$ so that $(\lambda-\epsilon, \lambda) \cap \sigma(\Lambda_\varphi)=\emptyset$ or $(\lambda, \lambda +\epsilon) \cap \sigma(\Lambda_\varphi)=\emptyset$, there exists a subordinate solution. 
\end{coro}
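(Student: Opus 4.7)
The plan is to produce a unimodular non-tangential boundary value $s(\lambda+i0)=\xi^*\in\partial\bbD$ for the half-line Schur function at the gap edge $\lambda$, after which the ``only if'' direction at the end of Theorem~\ref{t:JLDiracGenBC} furnishes the subordinate solution $W$ with $W(0)=(1,\xi)^\top$ for free. The implementation uses the preparatory Lemma~\ref{lemmaHerglotzGapEdge} applied to the half-line Weyl $m$-function.

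Without loss of generality, suppose $(\lambda-\epsilon,\lambda)\cap\sigma(\Lambda_\varphi)=\emptyset$; the case of a gap to the right of $\lambda$ is symmetric. The half-line $m$-function is Herglotz on $\bbC_+$, satisfies $\overline{m(z)}=m(\bar z)$, and its real poles are atoms of the spectral measure, hence eigenvalues of $\Lambda_\varphi$, hence points of $\sigma(\Lambda_\varphi)$. Since $(\lambda-\epsilon,\lambda)$ is disjoint from $\sigma(\Lambda_\varphi)$, $m$ extends analytically with no poles across the entire gap, so Lemma~\ref{lemmaHerglotzGapEdge} applies (vacuously, with zero poles) and yields
\[
m(\lambda+i0):=\lim_{\delta\downarrow 0}m(\lambda+i\delta)=\lim_{\delta\downarrow 0}m(\lambda-\delta)\in\bbR\cup\{\infty\}.
\]

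The Cayley transform $s=-(1+im)/(1-im)$ carries $\bbR\cup\{\infty\}$ bijectively onto $\partial\bbD$ (sending $m=0$ to $s=-1$ and $m=\infty$ to $s=1$), so $s(\lambda+i0)$ exists and has modulus one. Setting $\xi:=\overline{s(\lambda+i0)}$ so that $\xi^*=s(\lambda+i0)$, the final sentence of Theorem~\ref{t:JLDiracGenBC} produces the desired solution, subordinate at $+\infty$, with $W(0)=(1,\xi)^\top$. The main conceptual step is the use of Lemma~\ref{lemmaHerglotzGapEdge} to upgrade the one-sided real limit of $m$ at $\lambda$ from inside the gap (which is immediate from analyticity) to a genuine non-tangential limit from $\bbC_+$; once this is in hand, the Cayley transform and the subordinacy criterion built into the Jitomirskaya--Last inequality finish the argument immediately.
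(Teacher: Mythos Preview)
Your proof is correct and follows essentially the same route as the paper's: apply Lemma~\ref{lemmaHerglotzGapEdge} to the half-line $m$-function to obtain a boundary value in $\bbR\cup\{\infty\}$, push it through the Cayley transform to get a unimodular limit of the Schur function, and invoke the subordinacy criterion in Theorem~\ref{t:JLDiracGenBC}. You add a useful sentence of justification (poles of $m$ on the real line would be eigenvalues, hence absent from the gap) that the paper leaves implicit, but the argument is otherwise the same.
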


\begin{proof}
By Lemma~\ref{lemmaHerglotzGapEdge}, the Herglotz function has a limit in $\bbR \cup \{\infty\}$. Denote it by $m(\lambda+i 0) = \cot(\alpha/2)$ for some $\alpha \in [0,2\pi)$. Then the Schur function has a limit $s(\lambda+i 0)=e^{-i \alpha}$, so the eigensolution with initial condition $\binom 1{e^{i \alpha}}$ is subordinate.
\end{proof}

\section{Inverse Spectral Theory of Reflectionless Operators} 
\label{canonicalSys}

In this section, we review aspects of the inverse spectral theory of reflectionless operators, as it pertains to Dirac operators. The highlight of this section is a new understanding of the extra component of the generalized Abel map, see Definition \ref{def:rotFlowAbelMap} and Theorem \ref{thm:identifyRotCord}.
%definitions and known results of canonical systems in Dirac gauge. The most important objects in this section are reflectionless operators and generalized Abel maps.

\subsection{Dirac Operators in Arov Gauge}
Let $\mathscr{U}(z,x)$ be the fundamental solution matrix of the equation $\Lambda_{\varphi}X = zX$ satisfying the initial condition $\mathscr{U}(z,0)=I.$
Then $\mathscr{U}(z,\cdot)$ is a solution of the following canonical system in Arov gauge:
\begin{equation}\label{eq:ArovDirac}
\mathscr{U}(z,\ell)j
=j+\int_{0}^{\ell}\mathscr{U}(z,l)( i z\mathcal{P}(l)-\mathcal{Q}(l)) \, \rmd\mu(l), z\in\bbC,
\end{equation}
where $j=\begin{bmatrix}-1&0\\0&1\end{bmatrix}$, $\mathcal{P},\mathcal{Q}$ are $2\times2$ locally integrable matrix coefficients with respect to a Borel measure $\mu$ on $\bbR$ with $\mathcal{P}\geq 0, \mathcal{Q}=-\mathcal{Q}^*, \mathrm{trace}(j\mathcal{P})=\mathrm{trace}(j\mathcal{Q})=0$.

The integral equation takes account for singular $\mu$. The corresponding initial value problem is the following
\begin{equation}\label{eq:ArovDiff}
\partial_{\mu}\mathcal{U}(z,t)j=\mathcal{U}(z,t)(i z\mathcal{P}(t)-\mathcal{Q}(t)),~ \mathcal{U}(z,0)=I,
\end{equation}
where $\mathcal{U}$ is assumed to be absolutely continuous with respect to $\mu$ and $\partial_{\mu}\mathcal{U}$ is its Radon-Nikodym derivative with respect to $\mu.$ The classical Dirac operator with potential $\varphi$ corresponds to the canonical system \eqref{eq:ArovDiff} with $\mu(\ell)=\ell$, $\mathcal{P}=I$ and  
\[ \mathcal{Q} = \begin{bmatrix} 0 & -i \varphi\\ i \overline{\varphi} & 0\end{bmatrix}.\]
Interested readers may refer to \cite[Section 7]{eichinger2024necessarysufficientconditionsuniversality} for a discussion on preserving Weyl functions while using different conventions.
\begin{definition}
Let $j$ be a $2\times 2$ matrix with $j=j^*=j^{-1}$. An entire $2\times 2$ matrix function $A(z)$ is called $j$-inner if it obeys $j-A(z)jA(z)^{*}\geq 0$ for all $z\in\bbC^+$ and $j-A(z)jA(z)^*=0$ for $z\in\bbR.$
\end{definition}

\begin{definition}
A family of  matrix functions $A(z,\ell)$ parameterized by a real parameter $\ell$ is called $j$-monotonic if $A(z,\ell_1)^{-1}A(z,\ell_2)$ is $j$-inner whenever $\ell_1<\ell_2.$
\end{definition}

Define the associated {\it Weyl disks} by \begin{equation}\label{eq.WeylDisk}
D(z,\ell)=\{w\in \bbC:\left[w\quad 1\right]A(z,\ell)jA(z,\ell)^*\left[w\quad 1\right]^*\geq 0\}.
\end{equation}
The $j$-monotonicity implies the nesting property of Weyl disks:
$D(z,\ell_2)\subseteq D(z,\ell_1)$ for $\ell_2>\ell_1$;  compare \cite[(2.3), (2.4)]{EGL} and the surrounding discussion.
Note that  $D(z,\ell)$ is a subset of $\overline{\bbD}$ due to $A(z,0)=I.$ 

It turns out that $\cap_{\ell\in(0,\infty)}D(z,\ell)$ is either a disk or a single point, which is known as the limit circle/point dichotomy. If $\sup_x\int_x^{x+1}|\varphi(t)| \, \rmd t<\infty$, then we are at the limit point case \cite{EGL}. Define
$$s_+(z)=\cap_{\ell\in(0,\infty)}D(z,\ell)$$
to be the unique common point.
The function $s_+$ is known to be a Schur function, that is, an analytic map $\bbC^+\to\bbD.$
There is an analogous definition of $s_-$ for the left-half-line, which can be obtained by reflecting through the origin.

\subsection{Reflectionless Potentials}
Let $\sfE$ be the spectrum of the system \eqref{eq:ArovDirac} and $\Omega=\bbC\setminus \sfE$.
Since $\sfE \subseteq \bbR$ is closed, we can write its complement (in $\bbR$) as a countable disjoint union
\[
\bbC\setminus \sfE = \bigcup_j (a_j,b_j).
\]
\begin{definition}[Reflectionless]\label{def:reflectionless}
The pair of Schur functions $(s_+,s_-)$ is said to be a \emph{reflectionless pair} with spectrum $\sfE$ if both  extend to meromorphic single-valued functions on $\Omega$ with properties:
\begin{enumerate}[label={\rm(\alph*)}]
    \item symmetry property $\overline{s_{\pm}(\overline{z})}=1/s_{\pm}(z)$;
    \item reflectionless property $\overline{s_+(\xi+ i 0)}=s_{-}(\xi + i 0)$ for a.e. $\xi\in \sfE$;
    \item $1-s_+s_-\neq 0$ in $\bbR\setminus \sfE.$
\end{enumerate}
\end{definition}
Let $\mathcal{S}(\sfE)$ be the set of $s_+$ belonging to such pairs with the topology of locally uniform convergence of $\bbC$-valued maps on $\Omega$. 
Let $\mathcal{S}_{A}(\sfE)$ be the set of $s_+$ such that the corresponding $s_-$ in the pair satisfies the normalization condition $s_-(i)=0,$ which is a compact subset of $\mathcal{S}(\sfE)$, compare \cite{BLY2}.

\begin{definition}
The potential $\varphi\in L^1_{\rm loc}$ belongs to $\mathcal{R}(\sfE)$ if the spectrum of of its associated  Dirac operator $\Lambda_\varphi$ equals $\sfE$ and its associated Schur function $s_+$ belongs to $\mathcal{S}_{A}(\sfE)$.
In general, if $\sfE \subseteq \sigma(\Lambda_\varphi)$ and
\[ \overline{s_+(\xi+ i 0)}=s_{-}(\xi + i 0) \text{ for a.e.\  }\xi\in \sfE, \] 
we say that $\varphi$ is \emph{reflectionless on} $\sfE$.\footnote{According to \cite{PC09CMP}, a reflectionless measure whose support is a weakly homogeneous set $\sfE\subset \bbR$ is always absolutely continuous. In this work, $\sfE$ is assumed to be homogeneous, so we drop the condition that for every $\varphi\in\mathcal{R}(\sfE)$ $\Lambda_\varphi$ has purely a.c. spectrum; compare \cite{VolbergYuditskii2014, DamanikYuditskii2016Adv}.}
\end{definition} 
We endow $\mathcal{R}(\sfE)$ with the {\it strong resolvent topology.}
The resolvent function of $\Lambda_\varphi$  is related to the Schur functions by the following  \begin{equation}\label{eq:resolventFunc}
R(z) = i \frac{(1-s_+(z))(1-s_-(z))}{1-s_+(z)s_-(z)}.
\end{equation}
Note that it differs from the previously defined Green's function $G_{1,1}$ in \eqref{eq.GreenFunc} by a constant factor $-2$. 

The following notions are standard, e.g. \cite{Garnett2007Book,Hasumi1983}.
\begin{definition}
    Let $\Omega$ be a given domain, $\mathcal{N}(\Omega)$ be the Nevanlinna class, and $\mathcal{N}^+(\Omega)$ be the Smirnov class. 
    Denote the anti-linear involution $\sharp$ acting on functions by 
\begin{equation}\label{eq:involution}
f_\sharp(z)
=\overline{f(\bar{z})}.\end{equation}
We will also need the {\it pseudocontinuation} of a function. Let $f\in \mathcal{N}(\Omega)$, we say $f$ has a pseudocontinuation if there exists $f_*\in \mathcal{N}(\Omega)$ such that
\begin{equation}\label{eq:pseudoCont}
    f_*(z)=\overline{f(z)}\quad \text{for a.e. }z\in\partial\Omega.
\end{equation}
\end{definition}

\begin{lemma}[Bessonov--Luki\'c--Yuditskii \cite{BLY2}]
If $s_+\in\mathcal{S}(\sfE)$, then $R(z)$ is a Herglotz function, analytic on $\bbC\setminus \sfE$, with the symmetry $R_{\sharp}=R$ and satisfies $\arg R(\xi + i 0)=\frac{\pi}{2}$ for a.e.\ $\xi\in \sfE.$
\end{lemma}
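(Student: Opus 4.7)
I would prove the four assertions (analyticity on $\Omega:=\bbC\setminus\sfE$, the symmetry $R_\sharp=R$, the Herglotz property, and the boundary behavior on $\sfE$) essentially in reverse order of difficulty, with the symmetry and boundary values being essentially direct computations from Definition~\ref{def:reflectionless}, and the Herglotz property reduced to a well-known Cayley/Schur-to-Herglotz fact.

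For the symmetry, I would compute directly: since $\overline{s_\pm(\bar z)}=1/s_\pm(z)$,
\begin{align*}
R_\sharp(z)
&=-i\,\frac{(1-\overline{s_+(\bar z)})(1-\overline{s_-(\bar z)})}{1-\overline{s_+(\bar z)\,s_-(\bar z)}}
=-i\,\frac{(1-1/s_+(z))(1-1/s_-(z))}{1-1/(s_+(z)s_-(z))}.
\end{align*}
Clearing the factor $s_+(z)s_-(z)$ from numerator and denominator and tracking the sign gives $R_\sharp(z)=R(z)$. For the boundary behavior, I would set $a:=s_+(\xi+i0)$ at an a.e.\ $\xi\in\sfE$ where the reflectionless condition (b) applies, so $s_-(\xi+i0)=\bar a$, and then
\[
R(\xi+i0)=i\,\frac{(1-a)(1-\bar a)}{1-|a|^2}=i\,\frac{|1-a|^2}{1-|a|^2},
\]
which has argument exactly $\pi/2$ on the full-measure subset of $\sfE$ where $|a|<1$.

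For the Herglotz property, the cleanest route is to introduce the auxiliary Cayley transforms $m_\pm(z):=i(1+s_\pm(z))/(1-s_\pm(z))$, which are Herglotz on $\bbC^+$ because each $s_\pm$ is Schur there. A direct algebraic manipulation, identical in spirit to the computation deriving $\mathcal{M}=i(1+s_+s_-)/(1-s_+s_-)$ from $(m_+m_--1)/(m_++m_-)$, yields the identity
\[
R(z)=\frac{-2}{m_+(z)+m_-(z)}.
\]
Since $m_++m_-$ is a sum of Herglotz functions with $\Im(m_++m_-)>0$ on $\bbC^+$ (excluding the trivial case where both $s_\pm$ are unimodular constants, in which the statement is vacuous), the function $-2/(m_++m_-)$ is Herglotz.

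The delicate point — and in my view the main obstacle — is establishing full analyticity on $\Omega$, in particular excluding spurious singularities in the gaps $\bbR\setminus\sfE$ and at potential poles of $s_\pm$ produced by the reflection symmetry in $\bbC^-$. On $\bbC^+$, both $s_\pm$ are analytic with $|s_\pm|<1$ strictly (or unimodular constant), so $1-s_+s_-$ is nonvanishing there, and analyticity on $\bbC^-$ follows from $R_\sharp=R$. On gaps $(a_j,b_j)$, condition (c) rules out zeros of $1-s_+s_-$, so the only remaining issue is poles of $s_+$ or $s_-$ inherited from the meromorphic continuation. I would handle these by a local expansion: at a point $z_0\in\Omega$ where $s_+(z_0)=\infty$, factoring $s_+$ out of numerator and denominator gives $R(z)\to i(1-s_-(z_0))/s_-(z_0)$, which is finite provided $s_-(z_0)\neq 0$; the compatibility between the poles and zeros of $s_\pm$ forced by the symmetry in (a) and the nonvanishing condition in (c) ensures this cancellation is never obstructed, and the symmetric case where both $s_\pm$ have poles at the same $z_0$ is handled analogously. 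Thus $R$ extends analytically to all of $\Omega$.
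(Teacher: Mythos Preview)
The paper does not supply its own proof of this lemma; it is quoted verbatim as a result from \cite{BLY2}, and the text immediately moves on to a remark recording the identities $m_\pm=i(1+s_\pm)/(1-s_\pm)$ and $R=-2/(m_++m_-)$. So there is nothing in the paper to compare against; your proposal \emph{is} a proof, and it is essentially correct. In fact your key step for the Herglotz property---passing to $m_\pm$ and writing $R=-2/(m_++m_-)$---is exactly the identity the paper records in that remark.

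Two small sharpenings for the analyticity paragraph. First, on the real gaps condition (a) already forces $|s_\pm(x)|=1$ (since $\overline{s_\pm(x)}=1/s_\pm(x)$ for real $x$), so $s_\pm$ have no poles on $\bbR\setminus\sfE$ at all; combined with (c), analyticity on the gaps is immediate and no local-expansion argument is needed there. Second, for $\bbC^-$ you can avoid the case analysis entirely: once $R_\sharp=R$ is established as an identity of meromorphic functions on $\Omega$ and $R$ is shown analytic on $\bbC^+$, analyticity on $\bbC^-$ follows automatically from $R(z)=\overline{R(\bar z)}$. If you prefer the direct route, your worry about $s_-(z_0)=0$ at a pole $z_0\in\bbC^-$ of $s_+$ is unfounded for a cleaner reason than you state: by (a) a zero of $s_-$ in $\bbC^-$ would correspond to a pole of $s_-$ in $\bbC^+$, contradicting the Schur bound $|s_-|<1$ there. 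So $s_\pm$ are zero-free on $\bbC^-$, and your limit $R(z_0)=i(1-s_-(z_0))/s_-(z_0)$ is always finite.
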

\begin{remark}
Let $m_{\pm}(z)$ be the Weyl-Tichmarsh $m$-function.
Definition~\ref{def:reflectionless} converts to the Weyl-Tichmarsh form by the substitutions \begin{equation}\label{eq:mFunc}
m_\pm(z)
= i \frac{1+s_\pm(z)}{1-s_\pm(z)},~R(z)=\frac{-2}{m_+(z)+m_-(z)}.
\end{equation}
 Condition (2) of Definition~\ref{def:reflectionless} is then equivalent to \begin{equation}\label{eq:reflectionlessMfunc}
m_+(\xi+i0)=-\overline{m_-(\xi+i0)}.
\end{equation}
In particular, a zero of $R(z)$ corresponds to a pole of $m_{\pm}(z)$.
\end{remark}
Reflectionlessness in the absolutely continuous part of the spectrum is known to be true:

\begin{theorem}[Bessonov--Luki\'c--Yuditskii \cite{BLY1}]\label{thm:reflecAc}
If the potential $\varphi$ is uniformly almost periodic,  then it is reflectionless on its a.c.\ spectrum, that is, $\overline{s_+(\xi+i 0)} = s_-(\xi+ i 0)$ for a.e.\ $\xi \in \Sigma_{\varphi,{\rm ac}}$.
\end{theorem}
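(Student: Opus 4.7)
The plan is to adapt Kotani's classical argument for Schr\"odinger operators to the Dirac setting, and then to pass from an almost sure statement on the hull to the desired pointwise statement for $\varphi$ via a closedness (Remling-type) argument. First I would set up the almost-periodic hull: since $\varphi$ is uniformly almost periodic, $\Omega = \overline{\{\varphi(\cdot - T) : T \in \bbR\}}$, closed in the uniform topology, is a compact abelian group carrying a unique translation-invariant probability measure $\mu$, with $\bbR$ acting uniquely ergodically by translation. This yields an ergodic family $\{\Lambda_{\varphi_\omega}\}_{\omega \in \Omega}$ of Dirac operators whose spectrum, absolutely continuous spectrum, and Lyapunov exponent $\gamma(\lambda)$ are $\mu$-a.s.\ deterministic. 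By the Ishii--Pastur--Kotani theorem (in its Dirac form), $\Sigma_{\ac}$ coincides with the essential closure of $\{\lambda\in\bbR : \gamma(\lambda)=0\}$.

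Second, I would establish the Dirac version of Kotani's identity linking $\gamma$ to the Weyl $m$-functions $m_{\pm,\omega}$. Transfer matrices for \eqref{eq:DiracLinearSys} lie in $\SU(1,1)$, which is conjugate to $\SL(2,\bbR)$ via the Cayley transform, so the symplectic machinery underlying the classical derivation transfers directly. The aim is to prove, after averaging over $\omega \in \Omega$,
\[
\gamma(\lambda) = -\tfrac{1}{2}\bohrmean_{\omega}\, \Re\bigl( m_{+,\omega}(\lambda + i0) + m_{-,\omega}(\lambda + i0)\bigr)
\]
for Lebesgue-a.e.\ $\lambda$, with the boundary values guaranteed by Fatou's theorem applied to the Herglotz functions $m_{\pm,\omega}$. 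Restricting to $\{\gamma=0\}$ and using that $\Re m_{\pm,\omega}(\lambda+i0)$ is $\omega$-integrable and $\bbR$-invariant, the vanishing of the Bohr mean of a nonnegative-type combination forces $m_{+,\omega}(\lambda+i0) = -\overline{m_{-,\omega}(\lambda+i0)}$ for $\mu$-a.e.\ $\omega$ and Lebesgue-a.e.\ $\lambda$ in $\{\gamma = 0\}$. Via \eqref{eq:reflectionlessMfunc} this is precisely the reflectionless identity $\overline{s_+(\xi+i0)}=s_-(\xi+i0)$.

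Third, to upgrade from ``$\mu$-a.e.\ $\omega$'' to the specific $\varphi$, I would invoke a Remling-type closedness principle for Dirac operators: the set of potentials reflectionless on a fixed closed set $\sfE$ is closed in the strong resolvent topology. Since unique ergodicity and minimality ensure that the translation orbit of $\varphi$ is dense in $\Omega$, one can approximate $\varphi$ by translates $\varphi(\cdot - T_n)$ chosen from the $\mu$-full-measure reflectionless set; the closedness then promotes the reflectionless property on $\Sigma_{\ac}=\Sigma_{\varphi,\ac}$ to $\varphi$ itself.

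The main obstacle is the Dirac analog of Kotani's identity in Step 2. In the Schr\"odinger case the argument rests on Weyl-disk geometry and the subharmonicity of $\log\lVert T(\lambda+i\epsilon,x)\rVert$; for Dirac, the $j$-inner matrix framework of Section~\ref{canonicalSys} and the subordinacy input from Theorem~\ref{t:JLDiracGenBC} and Theorem~\ref{thm.acMeas} must be woven together to reproduce the identity with the correct signs and symmetries, since the $\SU(1,1)$ gauge mixes the roles of $\Re$ and $\Im$ relative to the $\SL(2,\bbR)$ picture. A secondary difficulty is the Dirac Remling theorem: proving that the reflectionless class is closed requires uniform local control on the boundary behavior of $s_\pm$ along sequences in the hull, which is compatible with Definition~\ref{def:reflectionless} but not automatic and demands the full-line Weyl-disk limit-point structure to propagate through limits.
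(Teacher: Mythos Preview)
The paper does not prove this theorem; it is quoted without proof as a result of \cite{BLY1}. So there is no ``paper's own proof'' to compare against.

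Your overall strategy---Kotani theory on the hull to get reflectionlessness for $\mu$-a.e.\ $\omega$, then a Remling-type closedness argument to pass to every $\omega$, in particular to $\varphi$ itself---is the correct one, and is essentially the route \cite{BLY1} takes (the Dirac/canonical-systems adaptation of the Kotani--Remling machinery). You also correctly identify the two real difficulties: the Dirac Kotani identity and the closedness of the reflectionless class.

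There is, however, a concrete error in the identity you wrote in Step~2. For Dirac operators the Floquet exponent is not $-\tfrac12\bohrmean_\omega(m_++m_-)$; compare \eqref{eq.floquet}, which in view of $G_{1,1}=1/(m_++m_-)$ reads
\[
w(z)=-\tfrac12\,\bohrmean_\omega\bigl[(z-\Im\varphi)\,(m_{+,\omega}(z)+m_{-,\omega}(z))\bigr].
\]
Taking real parts at $z=\lambda+i0$ therefore gives $\gamma(\lambda)=\tfrac12\bohrmean_\omega[(\lambda-\Im\varphi)\Re(m_++m_-)]$, and the weight $(\lambda-\Im\varphi)$ is not sign-definite. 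So the step ``vanishing of the Bohr mean of a nonnegative-type combination forces $m_+=-\overline{m_-}$'' does not go through from your displayed formula: you have not actually exhibited the nonnegative quantity. In the Schr\"odinger case that quantity comes from the $\tfrac{\partial\gamma}{\partial(\Im z)}$ computation together with the $L^2$-norm identity for Weyl solutions (the analog here is Lemma~\ref{lem:weylSolNormgenbc}); for Dirac one must redo that computation in the $\SU(1,1)$ gauge, and the correct nonnegative object involves $\Im m_\pm$ and $\lvert m_++m_-\rvert^{-2}$ rather than $\Re(m_++m_-)$ alone. You flag this as the main obstacle, which is accurate, but the placeholder identity you wrote would mislead a reader into thinking the step is more direct than it is.
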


\subsection{The Abel Map}

Let us first introduce the explicit construction of $\mathcal{D}(\sfE)$ and the map from $\mathcal{S}_A(\sfE)$ to $\mathcal{D}(\sfE)$. 
\begin{definition}[Divisors]\label{def:divisor}
Let $\sfE$ be as above, define $\mathcal{D}(\sfE)$  by
\begin{equation}\label{eq:divisor}
\mathcal{D}(\sfE)
=\prod_{j\in\bbN}\mathcal{I}_j,
\end{equation}
where $\mathcal{I}_j$ is a circle corresponding to gluing two copies of the $j$th gap together at the endpoints, that is,
$$
\mathcal{I}_j
=([a_j,b_j] \times \{\pm\}) / \sim,$$
where the equivalence relation is given by $(a_j,+) \sim (a_j,-)$ and $(b_j,+) \sim (b_j,-)$.
\end{definition}
Let us describe an explicit construction of the map $\mathcal{B}:\mathcal{S}_A(\sfE)\to\mathcal{D}(\sfE)$.
Since $R$ is strictly increasing in each gap $(a_j,b_j)$ \cite[Lemma 3.9]{BLY2}, there exists at most one $\mu_j\in(a_j,b_j)$ such that $R(\mu_j)=0$. 
Suppose first that $R(\mu_j)=0$ for some $\mu_j \in (a_j,b_j)$.
Since $1-s_+(z)s_-(z)\neq 0$, this implies  either $s_+(\mu_j)=1$ or $s_-(\mu_j)=1$,
so in this case, $(\mu_j,\epsilon_j)$ is determined by $s_{\epsilon_j}(\mu_j)=1$. 
If $R$ never vanishes in $(a_j,b_j)$ then either $R(z)>0$ on $(a_j,b_j)$, in which case we put $\mu_j=a_j$, or $R(z)<0$ on $(a_j,b_j)$, in which case we put $\mu_j=b_j$.
The above construction then defines the map $\mathcal{S}_A(\sfE)\to \mathcal{D}(\sfE)$:
\begin{equation}\label{eq:traceFormula1}\mathcal{B}: s_+ \to \mathcal{B}(s_+)=D:=\{(\mu_j,\epsilon_j):j\in\bbN\}.
\end{equation}

The map $\mathcal{B}$ can also be understood through the product formula for $R$.
With $\mu_j$ as in the previous paragraph, let 
\begin{equation}\label{eq:divisor1}
\chi(\xi)=\left\{\begin{aligned}
&1/2,&& \exists j: \xi \in(a_j,\mu_j), \\
&-1/2,&& \exists j: \xi \in(\mu_j,b_j), \\
&0,&& \xi \in\sfE.
\end{aligned}
\right.
\end{equation}
In this case, the resolvent function has the Herglotz representation on $\bbC_+$:
\begin{equation*}
R(z) 
= i |R(i)|e^{\int \! \left(\frac{1}{\xi-z}-\frac{\xi}{1+\xi^2}\right)\chi(\xi) \, \rmd\xi}.
\end{equation*}
In case of finite gap length (i.e., when \eqref{eq:finiteGapCond} holds) the two parts of the integrand are separately integrable; combining this with the normalization $R(z) \to i$ as $z\to i\infty$, we conclude
\[
R(z) = i e^{\int\! \frac 1{\xi - z} \chi(\xi) \, \rmd\xi}
\]
which also gives a product representation:
\begin{equation}\label{eq:resolventProd}
R(z)
= i \prod_j\sqrt{\frac{(z-\mu_j)^2}{(z-a_j)(z-b_j)}}.
%R(z)
%= i |R(i)|\prod_j\sqrt{\frac{(z-\mu_j)^2}{(z-a_j)(z-b_j)}}\left(\frac{(1+a_j^2)(1+b_j^2)}{(1+\mu_j^2)^2}\right)^{\frac{1}{4}}.
\end{equation}

In order to introduce the construction of the Abel map in \cite{BLY2}, we need some assumptions on the unbounded closed set $\sfE=\bbR\setminus \cup_{j}(a_j,b_j)$ and the associated Denjoy domain $\Omega=\bbC\setminus \sfE$. 
Notice that $\infty\in\partial\Omega$. 
Fix a spectral gap $(a_0,b_0)$ and pick $\xi^*\in(a_0,b_0)$ for the purpose of normalization.
 Let $\pi_1(\Omega)$ be the fundamental group of the domain $\Omega$ and $\bbT:=\bbR/\bbZ.$ Let $\pi_1(\Omega)^*$ be the group of characters, that is, the set of $\alpha:\pi_1(\Omega)\to\bbT$ satisfying $\alpha(\gamma_1\circ\gamma_2)=\alpha(\gamma_2)\alpha(\gamma_1)$ for $\gamma_1,\gamma_2\in\pi_1(\Omega)$.
\begin{definition}
The domain $\Omega$ is said to be {\it Dirichlet regular} if the potential-theoretic Green's function $G(z,z_0)$  with a logarithmic pole at $z_0$ extends continuously and vanishes everywhere on $\partial\Omega$, for some (and hence for all) $z_0 \in \Omega$.
\end{definition}

\begin{definition}\label{def:WidomDomain}
We say that $\Omega$ is  a {\it Parreau-Widom surface} (or just a {\it PWS}) if for some (hence for all) $z_0\in\Omega$, one has:
\begin{equation}\label{eq:WidomCond}
\sum_{c_j:\nabla G(c_j,z_0)=0}G(c_j,z_0)<\infty.
\end{equation}
 \end{definition}
Historically, this class was first introduced by M.\ Parreau in 1958 and perhaps independently by H.\ Widom in 1971 from different motivations. 
There are some other equivalent definitions, e.g.
$$\int_{0}^\infty B(\alpha,z_0)d\alpha<\infty,$$
where $B(\alpha,z_0)$ is the {\it first Betti number} of the connected domain $$\Omega(\alpha,z_0):=\{z\in\Omega:G(z,z_0)>\alpha\}$$
for each $\alpha>0.$
The fundamental result of Widom \cite{Widom1971Acta} says that the following are equivalent:
\begin{enumerate}
    \item $\Omega$ is a PWS
    \item $\mathcal{H}^\infty(\Omega,\alpha)\neq\{0\}$ for any $\alpha\in \pi_1(\Omega)^*$
    \item $\mathcal{H}^1(\Omega,\alpha)\neq\{0\}$ for any $\alpha\in \pi_1(\Omega)^*$
\end{enumerate}
where the character automorphic Hardy space $\mathcal{H}^p(\Omega,\alpha)=\{f\in \mathcal{H}^p(\Omega): f(\gamma(z))=e^{2\pi i\alpha(\gamma)}f(z)\}$.
Interested reader may refer to \cite[Theorem 2B of Chapter 5]{Hasumi1983} for detailed proof.

\begin{definition}\label{def:homogeneous}
 We say $\sfE \subseteq \bbR$ is {\it homogeneous} in the sense of Carleson if the following condition holds:
 \begin{equation}\label{eq:defHomogeneousSet}\inf_{\lambda\in\sfE}\inf_{0<h<\mathrm{diam}(\sfE)}\frac{|\sfE\cap(\lambda-h,\lambda+h)|}{2h}>0,
 \end{equation}
 where $|\cdot|$ denotes the Lebesgue measure on $\bbR$. 
 \end{definition}
Note that since the spectrum of the Dirac operator is unbounded, this definition is different from the one for bounded set such as the spectrum of Jacobi matrices:
$$\inf_{x\in\sfE}\inf_{0<h\leq 1}\frac{|\sfE\cap(x-h,x+h)|}{2h}>0.$$
Alternatively, for unbounded set $\sfE\subset\bbR$, one can also use the harmonic measure $\frac{1}{1+x^2}dx$ as seen from $i$.\footnote{This difference was pointed out to us by Peter Yuditskii.}

 Let $\sfE$ be a set of positive capacity such that $\Omega=\bbC\setminus \sfE$ is Dirichlet regular. Let $\widetilde{G}(z,z_0)$ be the harmonic conjugate of $G(z,z_0)$, and denote 
 $$
 \Phi_{z_0}(z)=e^{-G(z,z_0)-i\widetilde{G}(z,z_0)}
 $$
 the complex Green's function. Its lift to $\bbD$ is a Blaschke product with zeros $\{\Lambda^{-1}(z_0)\}$. 
 Note that $\widetilde{G}(z,z_0)$ is multi-valued and $|\Phi|$ is single-valued with 
 $$|\Phi_{z_0}(z)|=e^{-G(z,z_0)}.$$
 Denote also $\Phi(z)=\Phi(z,i )$, the complex Green function that has a simple zero at $i$.

We fix a point $\xi_* \in \bbC\setminus \sfE$ and the uniformization $\Lambda:\bbD\to\Omega$ such that $\Lambda(0)=\xi_*$. We let $\Gamma$ denote the corresponding Fuchsian group acting on the unit disk by M\"obius transformations; then $\Gamma \cong \pi_1(\Omega)$ and $\Lambda (z_1)=\Lambda (z_2)$ if and only if $z_2=\gamma(z_1)$ for some $\gamma \in \Gamma$.

A multi-valued function $f$ on $\Omega$ with single-valued modulus $|f|$ is lifted to a single-valued function $F$ on $\bbD$ such that 
 $F=f\circ\Lambda$. 
 There exists $\alpha \in \pi_1(\Omega)^*$ such that
 $$
 F \circ\gamma
 =e^{2\pi i \alpha(\gamma)}F
 \quad \forall \gamma \in \Gamma.
 $$
The function $f$ is said to be {\it character-automorphic} and $\alpha$ is its character.  
Denoting by $f\circ \gamma$ the analytic continuation of the multiplicative function $f$ along the element $\gamma$ of $\pi_1(\Omega)$, this corresponds to 
 $$f \circ \gamma 
 = e^{2\pi i \alpha(\gamma)}f \quad   \forall \gamma\in\pi_1(\Omega).$$
 In this case, we also call $f$ \emph{character automorphic}.

 \begin{definition}
 For every $\alpha\in\pi_1(\Omega)^*$, the character-automorphic Hardy space $\mathcal{H}^p(\Omega,\alpha) $ is the set of character-automorphic functions $f$ with character $\alpha$ whose lift $F = f\circ\Lambda$ belongs to $H^p(\bbD)$.
 \end{definition}
 
 For a Widom domain $\Omega$, $\mathcal{H}^2(\alpha)=\mathcal{H}^2(\Omega,\alpha)$ is a non-trivial reproducing kernel Hilbert space \cite{Widom1971Acta}. 
 %The explicit construction of a linear projection $P:H^{\infty}(\bbD)\to H^\infty(\bbD,\alpha)$ for any $\alpha\in\Gamma$ can be found in \cite{Pommerenke1976}. 
 For every $z_0\in\Omega$, there exists $k^\alpha_{z_0}(z)\in\mathcal{H}^2(\alpha)$ such that for any $f\in \mathcal{H}^2(\alpha)$,
 $$\langle f,k^\alpha_{z_0}\rangle=f(z_0).$$

 Namely, $k^\alpha_{z_0}(z)$ is the kernel of pointwise evaluation at $z_0$, which is a linear functional.

The \emph{Smirnov class} $\mathcal{N}_+(\Omega)$ consists of the analytic functions of the form $f/g$ where $f$ and $g$ are bounded and analytic and $g$ is outer. Let $M$ be the symmetric Martin function at the infinity of $\Omega$, $\Theta$ be an analytic function satisfying $\Im\Theta(z)=M(z)$. Denote $\{c_j\in (a_j,b_j):j\in\bbN\}$ the set critical points of $M$ and let $\mathcal{W}=\prod_j\Phi_{c_j}(z)$ be the Widom function with a character $\beta(\mathcal{W})$. For any $f\in \mathcal{H}^1(\Omega,\beta(\mathcal{W}))$, $\frac{f}{\mathcal{W}}d\Theta$ is a single valued differential in $\Omega$ and moreover $\frac{f\Theta'}{\mathcal{W}}\in\mathcal{N}_+(\Omega)$.
\begin{definition}[\cite{BLY2}]
We say that $\Omega$ satisfies the Direct Cauchy Theorem (DCT) if for every $f\in \mathcal{H}^{1}(\Omega,\beta(\mathcal{W}))$
\[
\oint_{\partial\Omega}\frac{f(z)}{\mathcal{W}(z)}d\Theta(z)=\oint_{\partial\Omega}\frac{f(z)\Theta'(z)}{\mathcal{W}(z)}dz=0.
\]
\end{definition}
There are various definitions of DCT, compare \cite{Hasumi1983, SY97, VolbergYuditskii2014, Yuditskii2011DCTFails,Yuditskii2020}. We are using the one that is consistent with our setting. 
There are examples of PWS that do not satisfy DCT: see \cite{Yuditskii2011DCTFails} and references therein.
  If $\sfE$ is homogeneous, then $\Omega = \bbC\setminus \sfE$ is known to be Dirichlet regular and satisfies DCT, but the converse is not always true; compare \cite{Yuditskii2011DCTFails, SY97}. 
 It is known that if $\Omega$ obeys DCT, then $k^\alpha_{z_0}(z_0)$ is a continuous function of $\alpha\in\pi_1(\Omega)^*$.
Define the $L^2$-normalized reproducing kernel at $z_0=i $ by $K^\alpha(z)=k^\alpha_{i }(z)/\sqrt{k^\alpha_{i }(i )}$.
Recall that $\sharp$ denotes the anti-linear involution \eqref{eq:involution} and that $\Phi(z)$ is the complex Green's function of $\Omega$ normalized at $i $. Let $\{c_j:j\in\bbN\}$ be the set of critical points of the Green's  function.
Given $\{(\mu_j,\epsilon_j):j\in\bbN\}\in \mathcal{D}(\sfE)$, the {\it canonical product} 
\begin{equation}\label{eq.prodRepRepKer}
\mathscr{K}_D(z)
=C\left\{\frac{\Phi(z)}{z-i } \frac{\Phi_{\sharp}(z)}{z+i } \prod_j \frac{z-\mu_j}{\sqrt{1+\mu_j^2}\Phi_{\mu_j}(z)} \frac{\sqrt{1+c_j^2} \Phi_{c_j}(z)}{z-c_j}\right\}^{\frac{1}{2}}\prod_j\Phi_{\mu_j}(z)^{\frac{1+\epsilon_j}{2}}.
\end{equation}
arises from the decomposition \begin{equation}\label{eq:ResolventDecomp}R^{\alpha,\tau}(z)=i\frac{(1-\tau s_+^\alpha)(1-\bar{\tau}s_-^\alpha)}{1-s_+^\alpha s_-^\alpha}=\frac{i}{2}\mathscr{K}\mathscr{K}_*(z-i)(z+i)\Theta',\end{equation}
where $R^{\alpha,\tau}(z)$ is the diagonal Green's function \eqref{eq:resolventFunc},  $\Theta(z)=\widetilde{M}(z)+iM(z)$ is the complex Martin function. It is well defined if $\Omega$ is of Parreau-Widom type and satisfies the {\it finite gap condition}:
 \begin{equation}\label{eq:finiteGapCond}\sum_j(b_j-a_j)=\sum_j\gamma_j<\infty.\end{equation}
We will also need the $k$-th gap length condition ($k$-GLC)
\begin{equation}\label{eq:kGlc}
\sum_j|b_j^{k+1}-a^{k+1}_j|<\infty, \text{ for } k=0,1,\cdots
\end{equation}
In the current context, we only deal with space and time variables and thus only need $k$-GLC for $k=0,1$. 
However,  for higher AKNS hierarchies, the case $k\geq 2$ will be needed.
Since $\Omega$ is multi-connected, $\Theta$ is multi-valued obeying 
\begin{equation}\label{eq:charOfCompCMartin}
\Theta\circ\gamma(z)=\Theta(z)+2\pi\eta(\gamma), \gamma\in \Gamma=\pi_1(\Omega)^*,
\end{equation}
where $\eta:\Gamma\to\bbR$ is an additive character.
It is shown \cite[Lemma 3.12]{BLY2} that $\mathscr{K}_D$ is orthogonal to $\Phi\Phi_{\sharp}\mathcal{H}^2(\alpha-2\beta_{\Phi})$ and therefore can be written as a linear combination $$\mathscr{K}^1_D=C_1K^\alpha-C_2K^\alpha_\sharp,$$
where by $\mathscr{K}_D^C$ we make the dependence of $C$ in \eqref{eq.prodRepRepKer} explicit. In particular, denote $\mathscr{K}_D=\mathscr{K}_D^{1}$. Note that $(\mathscr{K}_D)_\sharp$ is a multiple of $\mathscr{K}_D$, it follows that $|C_1|=|C_2|=1.$ Then it follows that 
\begin{equation}\label{eq.reasonOfRotation}
\mathscr{K}_D^{\overline{C_1}}=\overline{C_1}\mathscr{K}_D=K^\alpha-\tau K^\alpha_\sharp,
\end{equation}
where $\tau=\overline{C_1}C_2.$ Morally $\tau$ stands for a modulation of arguments between $K^\alpha$ and $K^\alpha_\sharp$. Now let $\alpha=\alpha(\mathscr{K}_D)$ be the character of this product.

Then above procedure gives the construction of the Abel map
$\mathcal{A}:\mathcal{D}\to \pi^*_1(\Omega)\times \bbT$
\begin{equation}
\label{eq:abelMap01}
D\mapsto (\alpha(\mathscr{K}_D),\tau).
\end{equation} 
To give the explicit formula of this map, let us consider a set of generators of $\pi_1(\Omega)$. Let $l_k$ be a simple loop that intersects $\bbR\setminus \sfE$ through $(a_0,b_0)$ at $\xi^*$ ``upward" and $(a_k,b_k)$ ``downward" as shown in Figure 1. 
Then $\{l_k\}$ is a generator of the fundamental group $\pi_1(\Omega)$.
\begin{figure}
\begin{tikzpicture}
\draw[dotted] (-3,0) -- (-2.1,0);
\draw[dotted] (2.1,0) -- (3,0);
\draw (-2,0) -- (-0.9,0);
\draw[dotted] (-0.7,0) -- (0.7,0);
\draw (0.9,0)--(2,0);
\draw (0,0) ellipse (1.2cm and 0.6cm);
\node at (-1.4,-0.7) {($a_0,b_0$)};
\node at (1.4,-0.7) {($a_k,b_k$)};
\node at (0,0.45) {$l_k$};
\node at (-1.2,0) {$\uparrow$};
\node at (1.2,0) {$\downarrow$};

\end{tikzpicture}
\caption{generators of $\pi_1(\Omega)$}
\end{figure}
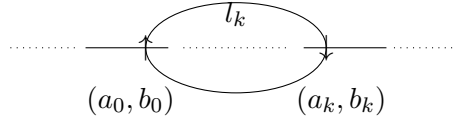

Let $\sfE_k$ be the subset of $\sfE$ that is inside $l_k$ and $\omega(z,F)$ is the harmonic measure of a subset $F\subseteq \sfE.$ Then denote 
\begin{equation}\label{eq:Abel1}
\mathcal{A}_c[D](l_k)=\sum_j\frac{\epsilon_j}{2}\int_{a_j}^{\mu_j}\omega(\rmd t,\sfE_k) \mod \bbZ.
\end{equation}

\begin{theorem}[Bessonov--Luki{\'c}--Yuditskii (2024) \cite{BLY2}]\label{thm:JacobiInver}
If $\Omega=\bbC\setminus \sfE$ is a regular Widom domain that obeys DCT, then
the Abel map $\mathcal{A}:\mathcal{D}(\sfE)\to\pi^*_1(\Omega)\times \bbT$ is a homeomorphism and can be given explicitly 
\begin{equation}\label{eq:Abel2}
\alpha(l_k)=\beta_\Phi(l_k)+\mathcal{A}_{c}[D](l_k)-\mathcal{A}_{c}[D_c](l_k),
\end{equation}
where $\beta_\Phi$ is the character of the complex Green's function $\Phi$ and $D_c=\{(c_j,-1):j\in\bbN\}$. 
The rotation coordinate is obtained by \begin{equation}\label{eq:defRotationFlow}\tau=-\bar{\tau}_*^2\frac{\mathscr{K}_{D_*}(i)}{\mathscr{K}_{D_*}(-i)},\end{equation} where $D_*=\{(\mu_j,-\epsilon_j):j\in\bbN\}$ and $\tau^*$ is a unimodular constant\footnote{The explicit formula for $\tau_*$ is given by the following:
$$\tau_*=\exp\left(-i\arg\left[\frac{\Theta'(i)}{\mathcal{W}(i)}\frac{i}{\Phi'(i)}\right]\right)$$} that is independent of $D$.
\end{theorem}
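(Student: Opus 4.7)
The plan is to verify each assertion of the theorem in turn: (i) the canonical product $\mathscr{K}_D$ is well-defined, (ii) it admits a two-term expansion in reproducing kernels yielding the pair $(\alpha,\tau)$, (iii) the character has the explicit form involving $\mathcal{A}_c[D]$, (iv) the rotation coordinate admits the claimed closed form, and (v) the resulting map is a homeomorphism. I expect most routine work to be in (i)--(iii), with the core conceptual challenge in proving the bijectivity.

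\textbf{Step 1 (well-definedness).} I first show $\mathscr{K}_D$ converges to a single-valued, character-automorphic function. The factors $\tfrac{z-\mu_j}{\sqrt{1+\mu_j^2}\Phi_{\mu_j}(z)}$ and $\tfrac{\sqrt{1+c_j^2}\Phi_{c_j}(z)}{z-c_j}$ individually tend to $1$ outside neighborhoods of $\mu_j, c_j$, and their logarithms can be controlled by $G(\cdot,\mu_j)$ and $G(\cdot,c_j)$. Convergence then follows from the Widom condition \eqref{eq:WidomCond}, which bounds $\sum_j G(c_j,z_0)$, together with the finite gap condition \eqref{eq:finiteGapCond}. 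This simultaneously gives membership $\mathscr{K}_D \in \mathcal{H}^2(\alpha)$ for a suitable character $\alpha$.

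\textbf{Step 2 (two-term representation).} Using the cited orthogonality $\mathscr{K}_D \perp \Phi\Phi_\sharp \mathcal{H}^2(\alpha-2\beta_\Phi)$, I would decompose $\mathcal{H}^2(\alpha) = \bbC K^\alpha \oplus \bbC K^\alpha_\sharp \oplus \Phi\Phi_\sharp \mathcal{H}^2(\alpha - 2\beta_\Phi)$, where the splitting uses that $\Phi(i)=0$ and $\Phi_\sharp(-i)=0$ to identify the orthogonal complement of the $(\pm i)$--evaluation functionals. This forces $\mathscr{K}_D = C_1 K^\alpha - C_2 K^\alpha_\sharp$, and the $\sharp$-symmetry of $\mathscr{K}_D$ (visible from the explicit product) forces $\lvert C_1\rvert = \lvert C_2\rvert = 1$, defining $\tau := \bar C_1 C_2 \in \bbT$.

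\textbf{Step 3 (character formula).} Lifting $\mathscr{K}_D$ to $\bbD$ and computing its monodromy under each generator $l_k$ of $\pi_1(\Omega)$, the contributions split as: the prefactor $\Phi(z)\Phi_\sharp(z)$ contributes the character $\beta_\Phi$ (using self-conjugacy to absorb the $\sharp$ factor); the products over $\mu_j$ contribute an integral of harmonic measure that is exactly $\mathcal{A}_c[D](l_k)$ when interpreted as the change in $\widetilde G$-type harmonic conjugates along $l_k$; the products over critical points contribute $-\mathcal{A}_c[D_c](l_k)$. Summing modulo $\bbZ$ yields \eqref{eq:Abel2}. For the rotation coordinate, I apply the identity $\mathscr{K}_D^{\bar C_1} = K^\alpha - \tau K^\alpha_\sharp$ at $z= i$ and $z=-i$, combined with the $\sharp$-symmetry computation of $\mathscr{K}_{D_*}$ where $D_*$ encodes the dual divisor with flipped signs. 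A careful bookkeeping of normalizing constants produces $\tau = -\bar \tau_*^2 \mathscr{K}_{D_*}(i)/\mathscr{K}_{D_*}(-i)$, with $\tau_*$ the unimodular constant absorbing the $i$-independent Widom/Martin data.

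\textbf{Step 4 (homeomorphism).} Continuity of $\mathcal{A}$ follows since under DCT the reproducing kernels $K^\alpha$ and their evaluations depend continuously on $\alpha$, and the canonical product depends continuously on $D$ in the topology of $\mathcal{D}(\sfE) = \prod_j \mathcal{I}_j$. For injectivity, I would show that $(\alpha,\tau)$ determines $\mathscr{K}_D$ up to a unimodular constant, which in turn determines $R$ via the identity \eqref{eq:ResolventDecomp}; since $R$ has zeros at the $\mu_j$ with signs read off from $\epsilon_j$ via $s_{\epsilon_j}(\mu_j)=1$, one recovers $D$. The target $\pi_1(\Omega)^* \times \bbT$ is Hausdorff and $\mathcal{D}(\sfE)$ is compact (countable product of circles), so a continuous injection is automatically closed; surjectivity then comes from a density argument applied to finite-gap truncations, where the Abel map is classically surjective (the usual Jacobi inversion) and the finite-gap Abel maps converge uniformly to $\mathcal{A}$ by the same Widom/DCT estimates.

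The main obstacle will be \textbf{Step 3}: the explicit character computation requires a delicate tracking of monodromies of multi-valued Blaschke factors and their combination into a single character functional on $\pi_1(\Omega)$, and matching the result against the harmonic-measure expression in $\mathcal{A}_c$. The closed formula for $\tau$ similarly demands that every absolute unimodular constant be tracked through the product \eqref{eq.prodRepRepKer}, and it is here, rather than in the topological bijectivity, that the full strength of DCT enters.
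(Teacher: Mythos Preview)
This theorem is not proved in the paper: it is quoted from \cite{BLY2} (Bessonov--Luki\'c--Yuditskii), as the attribution in the theorem heading indicates. The surrounding text in Section~\ref{canonicalSys} recalls the construction of $\mathscr{K}_D$ and the decomposition \eqref{eq.reasonOfRotation}, then proves the \emph{additional} Lemmas~\ref{lem:lemma1}--\ref{lem:newDefRotCor} and Theorem~\ref{thm:identifyRotCord}, which reinterpret the rotation coordinate $\tau$---but no proof of Theorem~\ref{thm:JacobiInver} itself appears here. So there is no ``paper's own proof'' to compare your proposal against.

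As to your outline on its merits: Steps~1--3 track the construction the paper summarizes and are plausible in broad strokes, though your direct-sum decomposition in Step~2 is imprecisely stated ($K^\alpha$ and $K^\alpha_\sharp$ need not be mutually orthogonal; the correct statement is that the orthogonal complement of $\Phi\Phi_\sharp\mathcal{H}^2(\alpha-2\beta_\Phi)$ is two-dimensional and spanned by them). Step~4 is where your sketch is thinnest: the surjectivity argument in \cite{BLY2} does not proceed by finite-gap approximation and a ``density argument'' as you suggest, but rather by constructing, for each $(\alpha,\tau)$, an explicit Schur function via reproducing kernels and verifying directly that it lies in $\mathcal{S}_A(\sfE)$. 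Your compactness-plus-density route would require showing that the image of $\mathcal{A}$ is closed \emph{and} dense, and the density step is not obvious without already knowing something close to surjectivity. If you want to reconstruct the proof, consult \cite{BLY2} directly.
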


Let us show that indeed $\tau$ stands for the jump of argument of $\mathscr{K}_{D_*}$ from the lower sheet to upper sheet. Recall that $\Phi(z)=\Phi(z,i)$ and that $\Phi_\sharp(-i)>0$, $\mathcal{W}$ is the Widom function $\mathcal{W}(z)=\prod_j\Phi_{c_j}(z),$ where $\{c_j\}$ are critical points of the Green's function, and $\widetilde{\alpha}=\beta_{\Phi}+\beta_{\mathcal{W}}-\alpha$, where $\beta_\Phi,\beta_{\mathcal{W}}$ are the characters of $\Phi,\mathcal{W}$ respectively. Let $\Theta$ be the complex Martin's function with $\Im\Theta=M$, then $\mathcal{W}$ is the inner part of $\Theta'$, see \cite{Pommerenke1976}.
\begin{lemma}[\cite{BLY2} (Corollary~2.13)]\label{lem:lemma1}
Let $\tau_*$ be the same as in \eqref{eq:defRotationFlow}, then $K^{\widetilde{\alpha}},K_\sharp^{\widetilde{\alpha}}$ admit pseudocontinuations (see \eqref{eq:pseudoCont}) given by 
$$(K^{\widetilde{\alpha}})_*=\tau_*\frac{K^\alpha}{\Phi\mathcal{W}},~(K^{\widetilde{\alpha}}_\sharp)_*=\overline{\tau_*}\frac{K^\alpha_\sharp}{\Phi_\sharp\mathcal{W}}.$$
\end{lemma}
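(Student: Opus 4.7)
The plan is to check the identity by first matching characters and boundary moduli, then pinning down the unimodular constant through a normalization at $z=i$. Set
\[
f(z) := \tau_* \frac{K^\alpha(z)}{\Phi(z)\mathcal{W}(z)}.
\]
Since $K^\alpha$ has character $\alpha$ and $\Phi$, $\mathcal{W}$ have characters $\beta_\Phi$, $\beta_{\mathcal{W}}$ respectively, $f$ is character-automorphic with character $\alpha-\beta_\Phi-\beta_{\mathcal{W}}=-\widetilde\alpha$. On the other side, the pseudocontinuation of $K^{\widetilde\alpha}$ is obtained by boundary conjugation, which flips the character, so $(K^{\widetilde\alpha})_*$ must also carry character $-\widetilde\alpha$. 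The characters therefore match.

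Next I would use that both $\Phi$ and $\mathcal{W}$ are inner functions on $\Omega$, so $|\Phi(\xi+i0)|=|\mathcal{W}(\xi+i0)|=1$ a.e.\ on $\sfE$. This gives $|f(\xi+i0)|=|K^\alpha(\xi+i0)|$ a.e. The existence of the pseudocontinuation $(K^{\widetilde\alpha})_*\in \mathcal{N}(\Omega)$ follows from the Widom/DCT hypotheses, which place $K^{\widetilde\alpha}$ in a class where Hasumi's theorem supplies a meromorphic extension of $\overline{K^{\widetilde\alpha}}$ back into $\Omega$. Then the Parseval-type identity for reproducing kernels in the DCT setting yields $|K^{\widetilde\alpha}(\xi+i0)|=|K^\alpha(\xi+i0)|$ a.e.\ on $\sfE$, so the ratio $(K^{\widetilde\alpha})_*/f$ is a character-automorphic function with trivial character and unit boundary modulus on a Widom surface. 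By the uniqueness of the outer/inner decomposition under DCT, this ratio must be a unimodular constant.

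To identify the constant as $1$, I would apply the Direct Cauchy Theorem to the differential $K^{\widetilde\alpha}K^\alpha\,\Theta'/\mathcal{W}$, which is in $\mathcal{N}_+(\Omega)$ because $K^{\widetilde\alpha}K^\alpha$ has character $\alpha+\widetilde\alpha=\beta_\Phi+\beta_{\mathcal{W}}$ matching $\Phi\mathcal{W}$, and because $\mathcal{W}$ is the inner factor of $\Theta'$. Pushing the contour to infinity and collecting the contribution of the zero of $\Phi$ at $z=i$ reduces the identification to the explicit residue computation that defines $\tau_*=\exp(-i\arg[\Theta'(i)/\mathcal{W}(i)\cdot i/\Phi'(i)])$. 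The companion identity $(K^{\widetilde\alpha}_\sharp)_*=\overline{\tau_*}\,K^\alpha_\sharp/(\Phi_\sharp\mathcal{W})$ then follows by applying the anti-linear involution $\sharp$ to the first identity and using $\mathcal{W}_\sharp=\mathcal{W}$ (the Widom function is $\sharp$-symmetric).

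The main obstacle is the dual Parseval identity $|K^{\widetilde\alpha}(\xi+i0)|=|K^\alpha(\xi+i0)|$ a.e.\ on $\sfE$ and the membership $(K^{\widetilde\alpha})_*\in \mathcal{N}(\Omega)$; both rely essentially on DCT and on the infinite-product representation of reproducing kernels via complex Green's functions, and it is precisely these ingredients that make the statement a genuine consequence of the Widom/DCT framework rather than a formal character-counting argument.
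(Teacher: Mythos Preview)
The paper does not prove this lemma; it is quoted verbatim from \cite{BLY2} (Corollary~2.13) and used as a black box. So there is no in-paper proof to compare your attempt against.

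That said, your sketch follows the standard Sodin--Yuditskii template and is broadly on the right track: match characters, match boundary moduli via inner-ness of $\Phi$ and $\mathcal{W}$, then use DCT to force the ratio to be a unimodular constant. You correctly flag the nontrivial step, namely the boundary identity $|K^{\widetilde\alpha}(\xi+i0)|=|K^\alpha(\xi+i0)|$ a.e.\ on $\sfE$, which in the BLY/Sodin--Yuditskii framework is a genuine consequence of DCT (it comes from the duality between $\mathcal{H}^2(\alpha)$ and $\mathcal{H}^2(\widetilde\alpha)$ through the pairing with $\Theta'/\mathcal{W}$). One minor correction: the ratio $(K^{\widetilde\alpha})_*/f$ having trivial character and unit boundary modulus does not by itself force it to be constant on a general Widom surface; you also need that the ratio and its reciprocal are both in $\mathcal{N}_+(\Omega)$ (i.e., no singular inner factors on either side), which again leans on DCT and the specific structure of reproducing kernels. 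Your residue identification of $\tau_*$ at $z=i$ and the $\sharp$-reduction for the second identity are correct in spirit. Since the paper treats this as a citation, your write-up is already more detailed than what appears here; for a full proof you would consult \cite{BLY2} directly.
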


We also need the following

\begin{lemma}\label{lem:lemma2}
Let $\mathscr{K}_*$ be the pseudocontinuation of $\mathscr{K}$, with $\mathscr{K}$ given by the inner-outer factorization \eqref{eq.prodRepRepKer}, then the following is true
$$\Phi\Phi_\sharp \mathcal{W}\mathscr{K}_*=C_1\mathscr{K}_{D_*},$$
where $D_*=\{(\mu_j,-\epsilon_j):j\in\bbN\}$.
\end{lemma}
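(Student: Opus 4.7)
The plan is to compute $\mathscr{K}_*$ by pseudocontinuing both sides of the decomposition \eqref{eq.reasonOfRotation}, namely $\overline{C_1}\,\mathscr{K} = K^\alpha - \tau K^\alpha_\sharp$, and then identify the result with $C_1\mathscr{K}_{D_*}/(\Phi\Phi_\sharp\mathcal{W})$ by comparison with the explicit product formula \eqref{eq.prodRepRepKer} applied to the divisor $D_*$.

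First, I would apply the involutive, anti-linear pseudocontinuation to Lemma \ref{lem:lemma1} to obtain the symmetric formulas
\[
(K^\alpha)_* = \tau_*\,\frac{K^{\widetilde\alpha}}{\Phi\mathcal{W}}, \qquad (K^\alpha_\sharp)_* = \overline{\tau_*}\,\frac{K^{\widetilde\alpha}_\sharp}{\Phi_\sharp\mathcal{W}},
\]
where $\widetilde\alpha = \beta_\Phi + \beta_{\mathcal{W}} - \alpha$. Substituting these into the pseudocontinued form of \eqref{eq.reasonOfRotation} and clearing denominators yields
\[
C_1\,\Phi\Phi_\sharp\mathcal{W}\,\mathscr{K}_* = \tau_*\,\Phi_\sharp\,K^{\widetilde\alpha} - \overline{\tau}\,\overline{\tau_*}\,\Phi\,K^{\widetilde\alpha}_\sharp.
\]

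Second, I would apply the same decomposition \eqref{eq.reasonOfRotation} to $\mathscr{K}_{D_*}$ itself. The crucial identity to verify is that $\mathscr{K}_{D_*}$ has character $\widetilde\alpha$, that is, $\alpha(\mathscr{K}_{D_*}) = \beta_\Phi + \beta_{\mathcal{W}} - \alpha(\mathscr{K}_D)$. This follows from the explicit Abel map formula \eqref{eq:Abel2} together with $\mathcal{A}_c[D_*] \equiv -\mathcal{A}_c[D]\pmod{\bbZ}$---an immediate consequence of \eqref{eq:Abel1} since $D_*$ negates every $\epsilon_j$---and the standard expression of $\beta_{\mathcal{W}}$ in terms of the harmonic measures of the critical points $c_j$, which is exactly how the correction $\mathcal{A}_c[D_c]$ enters \eqref{eq:Abel2}.

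Third, once the character identification is in hand, both sides of the desired identity are character-automorphic of character $\widetilde\alpha$ with matching inner-outer factorizations: the Blaschke products $\prod_j\Phi_{\mu_j}^{(1-\epsilon_j)/2}$ of $\mathscr{K}_{D_*}$ match what results from pseudocontinuing the $\prod_j\Phi_{\mu_j}^{(1+\epsilon_j)/2}$ of \eqref{eq.prodRepRepKer} after absorbing factors from $\Phi\Phi_\sharp\mathcal{W}$, while the square-root outer factor in \eqref{eq.prodRepRepKer} is manifestly preserved by pseudocontinuation up to constants. Their ratio is therefore a bounded nonvanishing character-automorphic function of trivial character on a Widom domain, hence a unimodular constant; evaluating at $z=i$ and invoking \eqref{eq:defRotationFlow} identifies this constant as $C_1$. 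The main obstacle will be the character bookkeeping---verifying $\beta_\Phi + \beta_{\mathcal{W}} - \alpha(D) = \alpha(D_*)$ from \eqref{eq:Abel1}--\eqref{eq:Abel2} together with $\mathcal{W} = \prod_j\Phi_{c_j}$---and pinning down the overall constant via \eqref{eq:defRotationFlow}.
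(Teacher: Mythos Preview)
Your step~4 alone---pseudocontinuing the explicit product \eqref{eq.prodRepRepKer} and matching factors---is essentially the paper's proof, and it is sufficient by itself. The paper proceeds in one line: on $\partial\Omega$ each $\Phi_\bullet$ is unimodular while the real factors $(z-\mu_j)$, $(z-c_j)$, $(z^2+1)$ are unchanged by conjugation, so conjugating \eqref{eq.prodRepRepKer} yields (this is precisely the identity cited from \cite{BLY2})
\[
\overline{\mathscr{K}_D}=\frac{\overline{C}}{C}\,\mathscr{K}_D\,\frac{1}{\Phi\Phi_\sharp\mathcal{W}}\prod_j\Phi_{\mu_j}^{-\epsilon_j}\qquad\text{a.e.\ on }\partial\Omega.
\]
Since $\mathscr{K}_D\prod_j\Phi_{\mu_j}^{-\epsilon_j}$ has the same outer part as $\mathscr{K}_D$ and inner part $\prod_j\Phi_{\mu_j}^{(1-\epsilon_j)/2}$, it equals $\mathscr{K}_{D_*}$ exactly, and the constant $C_1=\overline{C}/C$ drops out with no further work.

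Your steps~1--3, routing through Lemma~\ref{lem:lemma1} and the reproducing-kernel decomposition, are a genuine detour: the paper actually derives the identity $\Phi\Phi_\sharp\mathcal{W}\mathscr{K}_*=\tau_*\Phi_\sharp K^{\widetilde\alpha}-\overline{\tau\tau_*}\Phi K^{\widetilde\alpha}_\sharp$ \emph{after} Lemma~\ref{lem:lemma2}, combining it with Lemma~\ref{lem:lemma1} to reach Lemma~\ref{lem:newDefRotCor}, rather than using it as an input. The ``main obstacle'' you flag---verifying $\alpha(\mathscr{K}_{D_*})=\beta_\Phi+\beta_{\mathcal{W}}-\alpha(\mathscr{K}_D)$ from \eqref{eq:Abel1}--\eqref{eq:Abel2}---is self-inflicted: the direct product comparison never needs to name the character at all, and invoking \eqref{eq:defRotationFlow} to pin down the constant is likewise unnecessary since $\overline{C}/C$ appears directly. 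Your route can be made to work, but it turns a two-line computation into a character-chasing exercise whose only payoff is rederiving \eqref{eq:relationEq}, which the paper obtains anyway as a corollary.
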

\begin{proof}
On $\partial\Omega$, $\mathscr{K}$ and its conjugate are related by the identity \cite[(3.15)]{BLY2},
$$\overline{\mathscr{K}}=\frac{\overline{C}}{C}\mathscr{K}\frac{1}{\Phi\Phi_\sharp\mathcal{W}}\prod_j\Phi_{x_j}^{-\epsilon_j}\quad a.e. \text{ on } \partial\Omega,$$
then the result follows directly by $C_1=\frac{\overline{C}}{C}$ and the product representation \eqref{eq.prodRepRepKer} with $D$ substituted by $D_*$.
\end{proof}

It follows from Lemma~\ref{lem:lemma1} and the decomposition \eqref{eq:ResolventDecomp} that 
\begin{equation}\label{eq:relationEq0}
\mathscr{K}=K^\alpha-\tau K^\alpha_\sharp
\end{equation}
and 
\begin{equation}\label{eq:relationEq}
\Phi\Phi_\sharp\mathcal{W}\mathscr{K}_*=\tau_*\Phi_\sharp K^{\widetilde{\alpha}}-\overline{\tau\tau_*}\Phi K^{\widetilde{\alpha}}_\sharp.
\end{equation}
By Lemma~\ref{lem:lemma2} and \eqref{eq:relationEq} for $z,\overline{z}$, we obtain
$$C_1\mathscr{K}_{D_*}(z)=\tau_*\Phi_\sharp(z)K^{\widetilde{\alpha}}(z)-\overline{\tau\tau_*}\Phi(z)K^{\widetilde{\alpha}}_\sharp(z),$$
$$\tau C_1\mathscr{K}_{D_*}(\overline{z})=\tau\tau_*\overline{\Phi(z)}K^{\widetilde{\alpha}}(\overline{z})-\overline{\tau_*}\Phi(\overline{z})\overline{K^{\widetilde{\alpha}}(z)},$$
where we use $(\cdots)_\sharp(z)=\overline{(\cdots)(\bar{z})}$ in the second equality. Note that $$\overline{C_1}(\mathscr{K}_{D_*})_\sharp(z)=-\tau C_1\mathscr{K}_{D_*}(z),$$ therefore, we have obtained an alternative definition of the rotation coordinate as follows:
\begin{lemma}\label{lem:newDefRotCor}
The rotation coordinate defined in \eqref{eq:defRotationFlow} equals
\begin{equation}\label{eq:newRepRot}\tau=-\frac{\overline{C_1}}{C_1}\frac{(\mathscr{K}_{D_*})_\sharp(z)}{\mathscr{K}_{D_*}(z)}.\end{equation}
Alternatively, an equivalent expression can also be easily obtained from \eqref{eq:relationEq0} 
\begin{equation}\label{eq:newRepRot2}
\tau=-\frac{\mathscr{K}_D(z)}{(\mathscr{K}_D)_\sharp(z)}.
\end{equation}
In particular, $\tau$ is independent of the choice of $z$.
\end{lemma}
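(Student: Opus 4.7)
The plan is to derive both identities by applying the antilinear involution $\sharp$ from \eqref{eq:involution} to relations already set up in the paragraphs preceding the lemma, and to exploit the fact that $\lvert\tau\rvert=1$ (which follows from $\tau = \overline{C_1}C_2$ with $\lvert C_1\rvert = \lvert C_2\rvert = 1$, noted just after \eqref{eq.reasonOfRotation}).

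For \eqref{eq:newRepRot}, I would start from
\[
C_1\mathscr{K}_{D_*}(z)=\tau_*\Phi_\sharp(z)K^{\widetilde{\alpha}}(z)-\overline{\tau\tau_*}\Phi(z)K^{\widetilde{\alpha}}_\sharp(z),
\]
which combines Lemma~\ref{lem:lemma2} with the pseudocontinuation identity \eqref{eq:relationEq}. Applying $\sharp$ termwise (recalling $\sharp^2=\mathrm{id}$ and the definitions of $\Phi_\sharp$ and $K^{\widetilde{\alpha}}_\sharp$) produces
\[
\overline{C_1}(\mathscr{K}_{D_*})_\sharp(z)=\overline{\tau_*}\Phi(z)K^{\widetilde{\alpha}}_\sharp(z)-\tau\tau_*\Phi_\sharp(z)K^{\widetilde{\alpha}}(z).
\]
A short check using $\lvert\tau\rvert=1$ shows that the right-hand side equals $-\tau$ times the right-hand side of the starting identity, so $\overline{C_1}(\mathscr{K}_{D_*})_\sharp(z) = -\tau C_1 \mathscr{K}_{D_*}(z)$, and \eqref{eq:newRepRot} follows upon dividing.

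For \eqref{eq:newRepRot2}, I would apply $\sharp$ directly to \eqref{eq:relationEq0} to obtain $\mathscr{K}_\sharp = K^{\alpha}_\sharp - \overline{\tau} K^{\alpha}$, and then use $\lvert\tau\rvert=1$ to factor this as $\mathscr{K}_\sharp = -\overline{\tau}(K^{\alpha}-\tau K^{\alpha}_\sharp) = -\overline{\tau}\,\mathscr{K}$; rearranging yields the claim. Independence of $z$ in both right-hand sides is automatic because $\tau$ is a constant.

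The only real obstacle is careful bookkeeping: keeping the complex conjugations straight when applying $\sharp$ to each factor, and invoking $\lvert\tau\rvert=1$ at exactly the right step to factor out $-\tau$. All the substantive analytic input has already been encapsulated in Lemmas~\ref{lem:lemma1} and \ref{lem:lemma2}, so beyond these algebraic manipulations no new tools are needed.
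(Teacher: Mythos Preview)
Your proposal is correct and follows essentially the same approach as the paper. The paper derives $\overline{C_1}(\mathscr{K}_{D_*})_\sharp(z)=-\tau C_1\mathscr{K}_{D_*}(z)$ by writing out the identity $C_1\mathscr{K}_{D_*}(z)=\tau_*\Phi_\sharp(z)K^{\widetilde{\alpha}}(z)-\overline{\tau\tau_*}\Phi(z)K^{\widetilde{\alpha}}_\sharp(z)$ at both $z$ and $\bar z$ and comparing, which is exactly your application of $\sharp$ unpacked; your derivation of \eqref{eq:newRepRot2} from \eqref{eq:relationEq0} is likewise what the paper means by ``can also be easily obtained.''
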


Clearly, $\tau$ gives the ``character" of the reproducing kernel $\mathscr{K}_D$ in response to the anti-involution between sheets and is independent of the choice of $z$.

\begin{remark}
The representation \eqref{eq:defRotationFlow} is a consequence of the choice of normalization
$$\Phi(i)=\Phi_\sharp(-i)=0,\Phi_\sharp(i)>0,K^{\widetilde{\alpha}}(i)>0,K^{\widetilde{\alpha}}_\sharp(-i)>0$$
and the symmetry of $|\mathscr{K}_{D_*}|$. This rotation coordinate is new and is the novel part in the theory of NLS compared to existing literature of KdV.
\end{remark}

We will use the identification \eqref{eq:newRepRot2} to show (in Section~\ref{linearabel}) that the flow $\varphi(x_0,t_0)\mapsto \varphi(x_0+x,t_0+t)$ is linearized by the map $\mathcal{A}:\mathcal{D}(\sfE)\to\pi_1(\Omega)^*\times\bbT.$ To this end, we introduce a new definition for the extra component $\tau$ through Abelian integrals that is consistent to the definition of Abel maps on compact Riemann surface, e.g. \cite{GH03,BBEIM1994}. We will connect this new component  to the rotation coordinate $\tau$ defined in Lemma~\ref{lem:newDefRotCor} in Section~\ref{linearabel}. We will take one step further and show that the most fascinating linearization property of the map $\mathcal{A}$, including both components, holds in our situation (see Theorem~\ref{thm:approxFreqCharFlow}, Theorem~\ref{thm:linearzRotFlow}). Since our map $\mathcal{A}:\mathcal{D}(\sfE)\to\pi_1(\Omega)^*\times\bbT$ plays the same role as the Abel map in the case of compact Riemann surface and the generalized Abel map in the case of \cite{SY97}, we will slightly abuse the terminology and call $\mathcal{A}$ the {\it generalized Abel map}. The first component $\mathcal{A}_c$ is already well understood, the major challenge is to understand the new component \eqref{eq:newRepRot} or equivalently \eqref{eq:newRepRot2}.

Let $\sfE\subseteq \bbR$ satisfy the finite gap condition \eqref{eq:finiteGapCond} and the associated Denjoy domain $\Omega$ is of Parreau-Widom type (see Definition~\ref{def:WidomDomain}).
\begin{definition}\label{def:rotFlowAbelMap}
For any $D=\{(\mu_j,\epsilon_j)\}\in \mathcal{D}(\sfE)$, define the formal sum
\begin{equation}\label{eq:rotFlowAbelMap}\mathcal{A}_r(D)=\sum_j-\frac{\epsilon_j}{2}\int_{a_j}^{\mu_j}\omega(\rmd\xi,\sfE_\infty) ~\mod\bbZ,\end{equation}
where $\omega(\lambda,F)$ is the harmonic measure of $\Omega,F\subseteq \sfE$ with respect to an interior point $\lambda\in\Omega$ and $\sfE_\infty=[\xi_*,+\infty)\cap \sfE$.
\end{definition}
This definition is indeed motivated by the following result, which is the main result of this section.
\begin{theorem}\label{thm:identifyRotCord}
    For $\sfE\subset\bbR$ satisfying the finite gap condition \eqref{eq:finiteGapCond}, $\Omega=\bbC\setminus\sfE$ a regular PWS, the following holds
    $$\tau(D)=C \exp(2\pi i \mathcal{A}_r(D)),~ |C|=1.$$
\end{theorem}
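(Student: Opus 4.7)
The plan is to start from the identification $\tau=-\mathscr{K}_D(z)/(\mathscr{K}_D)_\sharp(z)$ supplied by Lemma~\ref{lem:newDefRotCor} and substitute the explicit product formula \eqref{eq.prodRepRepKer}. The natural decomposition is $\mathscr{K}_D(z) = Q(z)\, P_\epsilon(z)$, where $Q(z)$ collects the square-root factor (together with the fixed pieces $\Phi\Phi_\sharp/((z-i)(z+i))$ and the pairs $(z-\mu_j)\Phi_{c_j}/[(z-c_j)\Phi_{\mu_j}]$) while
\[
P_\epsilon(z) = \prod_j \Phi_{\mu_j}(z)^{(1+\epsilon_j)/2}
\]
is the only piece that sees the choice of sheet $\epsilon_j\in\{-1,+1\}$. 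Under this split, $\tau$ factors as a product of two contributions, and the goal is to show that $Q/Q_\sharp$ is a $D$-independent unimodular constant and that $P_\epsilon/(P_\epsilon)_\sharp$ matches $e^{2\pi i \mathcal{A}_r(D)}$ up to another such constant.

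The first step is to control $Q$. Every factor inside the radical is \emph{symmetric} in the sense that its $\sharp$-transform equals itself up to a positive multiplicative constant: one uses $\overline{\Phi(\bar z)}=\Phi_\sharp(z)$, the fact that each $\Phi_{\mu_j}$ and $\Phi_{c_j}$ is real-positive on $\sfE$ in modulus, and that the ratios $(z-\mu_j)/[\sqrt{1+\mu_j^2}\,\Phi_{\mu_j}(z)]$ are outer functions with constant modulus on $\sfE$. Taking the square root with the branch normalized by positivity on an arbitrary fixed gap, the ratio $Q(z)/Q_\sharp(z)$ reduces to a unimodular number that does not depend on the $\epsilon_j$'s (and, after a parallel argument comparing the normalization at $c_j$ and $\mu_j$, not on the $\mu_j$'s either in an essential way that is not absorbed into the constant $C$). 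Therefore the entire divisor dependence of $\tau$ lies in
\[
R_\epsilon(z) := \frac{P_\epsilon(z)}{\overline{P_\epsilon(\bar z)}} = \prod_j \left(\frac{\Phi_{\mu_j}(z)}{\overline{\Phi_{\mu_j}(\bar z)}}\right)^{(1+\epsilon_j)/2}.
\]

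Next, using $\Phi_{\mu_j}(z)=e^{-G(z,\mu_j)-i\widetilde G(z,\mu_j)}$ and the reflection symmetry $G(\bar z,\mu)=G(z,\mu)$ for $\mu\in\bbR$, one gets $\Phi_{\mu_j}(z)/\overline{\Phi_{\mu_j}(\bar z)} = e^{-2i\widetilde G(z,\mu_j)}$ on a consistent branch. The exponent $\sum_j (1+\epsilon_j)\widetilde G(z,\mu_j)$ splits into an $\epsilon$-independent piece that absorbs into $C$, and the divisor-dependent piece $\sum_j \epsilon_j\, \widetilde G(z,\mu_j)$. The key identity needed is the Denjoy-domain period relation
\[
\sum_j \epsilon_j\, \widetilde G(z,\mu_j) \equiv -\pi \sum_j \epsilon_j \int_{a_j}^{\mu_j} \omega(\rmd\xi,\sfE_\infty) \pmod{\pi \bbZ},
\]
whose right-hand side equals $2\pi\, \mathcal{A}_r(D)$ by definition \eqref{eq:rotFlowAbelMap}. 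Substituting back yields $\tau = C\, e^{2\pi i \mathcal{A}_r(D)}$ with $|C|=1$ and $C$ independent of $D$.

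The main obstacle is proving the displayed period relation. I would establish it as follows: represent the positive harmonic function $\omega(\cdot,\sfE_\infty)$ on $\Omega$ in terms of the Green's function by integrating $G(z,\xi)$ against the equilibrium density on $\sfE_\infty$, and then apply the Cauchy--Riemann identity $\rmd\widetilde G(z,\mu)=*\rmd G(z,\mu)$ (with the normal-derivative relationship $P(z,\xi)=(2\pi)^{-1}\partial_{n_\xi}G(z,\xi)$ on $\partial\Omega$). Applying Green's formula on the appropriate cut surface (cutting $\Omega$ along the half-line $[\xi^*,\infty)$ to kill the multi-valuedness of $\widetilde G$ along the open curve going to $\infty$) produces exactly the integral of $\omega(\rmd\xi,\sfE_\infty)$ from $a_j$ to $\mu_j$ as the boundary contribution on the cut. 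The DCT assumption on $\Omega$ (already used to make $\mathcal{A}$ a homeomorphism in Theorem~\ref{thm:JacobiInver}) is what guarantees that the boundary integrals converge and that the $\pi\bbZ$-ambiguity coincides with the natural indeterminacy in defining $\mathcal{A}_r$ modulo $\bbZ$. Carefully bookkeeping the branch choices made for the square root in $Q$ and for $\widetilde G$ is the last piece, and it yields the $D$-independence of the final unimodular constant $C$.
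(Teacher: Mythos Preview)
Your decomposition $\mathscr{K}_D=Q\cdot P_\epsilon$ and the starting point from Lemma~\ref{lem:newDefRotCor} are correct, but the core ``period relation''
\[
\sum_j \epsilon_j\, \widetilde G(z,\mu_j) \equiv -\pi \sum_j \epsilon_j \int_{a_j}^{\mu_j} \omega(\rmd\xi,\sfE_\infty) \pmod{\pi\bbZ}
\]
is ill-posed as written: the left side varies continuously with $z$ while the right side is a fixed number, so this cannot hold for general $z$. Relatedly, your treatment of the $\mu_j$-dependence is incomplete. You split $\sum_j(1+\epsilon_j)\widetilde G(z,\mu_j)$ into an ``$\epsilon$-independent'' piece $\sum_j\widetilde G(z,\mu_j)$ plus the $\epsilon$-piece, and claim the former absorbs into $C$; but that piece still depends on the $\mu_j$'s, so it cannot be absorbed into a $D$-independent constant, and your earlier claim that $Q/Q_\sharp$ is a $D$-independent unimodular constant is likewise unjustified (the square-root factor contains $(z-\mu_j)/\Phi_{\mu_j}(z)$). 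The Green's-formula/cut argument you sketch does not repair this, because it never explains why the particular set $\sfE_\infty=[\xi_*,\infty)\cap\sfE$ should appear for a generic $z$.

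The paper's argument resolves all of this with a single idea you are missing: since $\tau=-\mathscr{K}_D(z)/(\mathscr{K}_D)_\sharp(z)$ is independent of $z$, evaluate it by sending $z\to\infty_+$ (equivalently $\bar z\to\infty_-$) along the Martin boundary. In that limit, using $\Phi_{\mu_j}(z)=e^{-G(\mu_j,z)-i\widetilde G(\mu_j,z)}$ together with the representation $G(\mu_j,z)=\int_{\partial\Omega}\log|z-\xi|\,\omega(\rmd\xi,\mu_j)-\log|z-\mu_j|$, the variation of the phase of $\mathscr{K}_D$ under the sheet involution reduces exactly to $\sum_j(-\tfrac{\epsilon_j}{2})\int_{a_j}^{\mu_j}\omega(\rmd\xi,\sfE_\infty)$; the set $\sfE_\infty$ enters precisely because you are integrating along the open path to $\infty$. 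Two further points: first, the paper devotes half its proof to showing the series defining $\mathcal{A}_r(D)$ converges (majorizing by $\sum_j(G(c_j,\zeta)+\gamma_j)$ via the Widom condition and finite gap length), which you do not address; second, the theorem assumes only that $\Omega$ is a regular PWS with finite gap length, so your invocation of DCT is unnecessary.
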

\begin{proof}
    Let us first show that Definition \ref{def:rotFlowAbelMap} is well defined under the assumptions. We are going to show the series \eqref{eq:rotFlowAbelMap} is majorized by an absolutely convergent series. 
    Let us first state what will go wrong with the idea of the Sodin-Yuditskii \cite{SY97}, as shown in the proof of Theorem \ref{thm:approxFreqCharFlow}. Let $\Pi_\infty^-$ be a path from $-\infty$ to $+\infty$ through the gap where $\xi_*$  locates that is invariant under the anti-involution. The orientation is decided such that $\sfE_\infty$ lies on the left side of $\Pi_\infty^-$. 
    The path $\Pi_\infty^+$ is constructed in the similar way with orientation reversed. We require that $\xi_*\notin \Pi_\infty^\pm$, as indicated in Figure 2.  Since $\infty\in\partial \Omega$, the choice of absolutely convergent series in the proof of Theorem~\ref{thm:approxFreqCharFlow} essentially fails. 
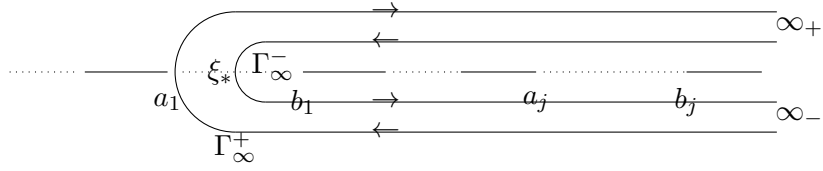
\begin{figure}
\begin{tikzpicture}
\draw[dotted] (-3,0) -- (-2.1,0);
\draw[dotted] (2.1,0) -- (3,0);
\draw (3,0)--(4,0);
\draw[dotted] (4.1,0) -- (6,0);
\draw (6,0)--(7,0);
\draw (-2,0) -- (-0.9,0);
\draw[dotted] (-0.7,0) -- (0.7,0);
\draw (0.9,0)--(2,0);
\draw (7.2,-0.4) -- (0.4,-0.4);
\draw (0.4,0.4) -- (7.2,0.4);
\node at (6,-0.4) {$b_j$};
\node at (4,-0.4) {$a_j$};
\node at (-0.9,-0.4) {$a_1$};
\node at (0.9,-0.4) {$b_1$};
\node at (-0.2,0) {$\xi_*$};
\node at (7.5,0.6) {$\infty_+$};
\node at (7.5,-0.6) {$\infty_-$};
\draw (0.4, -0.4) arc[start angle=270, end angle=90, radius=0.4];

\draw (7.2,-0.8) -- (-0.0,-0.8);
\draw (-0.0,0.8) -- (7.2,0.8);
\draw (-0.0, -0.8) arc[start angle=270, end angle=90, radius=0.8];
\node at (2,-0.4) {$\rightarrow$};
\node at (2,0.4) {$\leftarrow$};
\node at (2,-0.8) {$\leftarrow$};
\node at (2,0.8) {$\rightarrow$};
\node at (0,-1) {$\Gamma_\infty^+$};
\node at (0.5,0.1) {$\Gamma_\infty^-$};
\end{tikzpicture}
\caption{open loop}
\end{figure}
To remedy this,  recall  for any $\zeta\in \Omega$, we have the standard  relation
$$G(z,\zeta)=\int_{\partial\Omega}\log|\xi-\zeta|\omega(\rmd\xi,z)-\log|z-\zeta|.$$
Since Widom condition is independent of the choice of $\zeta\in\Omega$ \cite[Theorem 2]{Widom1971Acta}, we pick $\zeta=i\pi$ for example, let $z=\mu_j,a_j$ respectively and then take the difference, notice that $G(a_j,\zeta)=0$ since $a_j\in\partial\Omega$ and $\Omega$ is regular, we have 
$$G(\mu_j,\zeta)-G(a_j,\zeta)=\int_{\partial\Omega}\log\left|\xi-\zeta\right|(\omega(\rmd\xi,\mu_j)-\omega(\rmd\xi,a_j))-\log\left|\frac{\mu_j-a_j}{\zeta-a_j}\right|.$$
By symmetry, this is equivalent to 
$$G(\mu_j,\zeta)=\int_{\partial\Omega}\log\left|\xi-\zeta\right|(\omega(\mu_j,\rmd\xi)-\omega(a_j,\rmd\xi))-\log\left|1+\frac{\mu_j-a_j}{\zeta-a_j}\right|.$$
It follows from $\log|\zeta-\xi|\geq 1$ that 
$$G(c_j,\zeta)+\gamma_j\geq\left|\int_{a_j}^{\mu_j}\omega(\rmd t,\sfE_\infty)\right|.$$
Therefore, the generalized Abel map \eqref{eq:newAbelMap} 
$$D\mapsto\sum_j-\frac{\epsilon_j}{2}\int_{a_j}^{\mu_j}\omega(\rmd\xi,\sfE_\infty)$$
is majorized if the Widom condition and finite gap length condition hold.

    Next, let us show the equality $\tau(D)=C \exp(2\pi i\mathcal{A}_r(D))$. Note that the definition \eqref{eq:newRepRot2} is independent of the choice $z$, in particular, it is invariant when we send $z$ to the Martin boundary $\infty_+$ (resp. $\overline{z}\to\infty_-$). Note that the complex Green's function $\Phi_{\mu_j}(z)=\exp(-G(\mu_j,z)-i\widetilde{G}(\mu_j,z))$ and $$G(\mu_j,z)=\int_{\partial\Omega}\log|z-\xi|\omega(\rmd\xi,\mu_j)-\log|z-\mu_j|,$$
    since $\tau$ equals the total variation of $\mathscr{K}_D$ with respect to the involution $\overline{(\cdots)(\overline{z})}$, and that $G(\mu_j,z)$ is single-valued for each $j$, direct computation and sending $z\to\infty_+$ (resp. $\overline{z}\to\infty_-$) give the representation \eqref{eq:rotFlowAbelMap}.
\end{proof}

\begin{remark}
The construction of majorants should apply to $\mathcal{A}_c$ as well but requires more restrictive conditions. It seems that the generalized Abel map in this case needs both Widom and finite gap length conditions. We will see in later sections that the analysis of $\mathcal{A}_r$ requires more restrictive conditions on $\sfE$, especially compared to the KdV case.
\end{remark}

We will use the notation $\mathcal{A}=(\mathcal{A}_c,\mathcal{A}_r)$, where $\mathcal{A}_c$ denotes the first component of the Abel map $\mathcal{A}$ belonging to the character group $\pi_1(\Omega)^*$ and $\mathcal{A}_r$ is the second component introduced above belonging to the rotation coordinate. Note that the choice of $\xi_*$ in the definition is irrelevant, different choice clearly results in a translation in $\pi_1(\Omega)^*$. This definition will be studied in Section~\ref{linearabel}. Currently, it should be understood as a way to ``count" the total variation of $\mathscr{K}_D$ along a symmetric path from the infinity on the lower sheet to the infinity on the upper sheet.

\section{Rotation Flow and Dubrovin Vector Fields}
\label{dubrovinFlow}

In this section we consider the Dirac operator with a reflectionless time-dependent potential $\varphi(\cdot,t) \in \mathcal{R}(\sfE)$ and the behavior of its Dirichlet data; in particular, we clarify this behavior at the gap edges. The non-pausing of the translation flow will be explained by combining Lemma~\ref{lemma:GapEdgeSubordinate}. Explicit trace formulas and the corresponding Dubrovin system of differential equations are rigorously derived as well.

\subsection{Rotation Flow and Non-Pausing at Gap Edges}The argument is different than in the Schr\"odinger setting; in the KdV hierarchy, Dirichlet eigenvalues are monotone and have a nonpausing property at gap edges under the translation flow. This is not true in the Dirac/NLS setting, so it requires the introduction of an additional rotation flow
\[
\varphi(\beta,x) = e^{i\beta} \varphi(x), \qquad \beta \in \bbR.
\]
Note that $U$ is an eigenfunction of $\Lambda_\varphi$ if and only if $(\begin{smallmatrix} e^{i\beta} & 0 \\ 0 & 1 \end{smallmatrix}) U$ is an eigenfunction of $\Lambda_{e^{i\beta} \varphi}$ at the same energy $z\in \bbC$. This gives an immediate correspondence between subordinate solutions of the two operators, and between Weyl solutions of the two operators. In particular, the Schur functions depend on $\beta$ as
\[
s_\pm(\beta,z) = e^{\pm i\beta} s_\pm(z).
\]
Thus, if $\varphi\in \mathcal{R}(\sfE)$, then $e^{i\beta} \varphi \in \mathcal{R}(\sfE)$ for all $\beta \in\bbR$. As an action on Schur functions, this was considered in \cite{BLY2}, from where it follows that this flow keeps the character $\alpha$ fixed, while replacing the coordinate $\tau\in \bbT$ by $\tau e^{i\beta}$.
Similarly, $\varphi(\cdot -x) \in \mathcal{R}(\sfE)$ whenever $\varphi \in \mathcal{R}(\sfE)$, so we study the behavior of the Dirichlet data $\{(\mu_j(\beta,x),\epsilon_j(\beta,x))\}$ associated to $e^{i\beta}\varphi(\cdot - x)$ as functions of $\beta$ and $x$. 

We define the phase variables by
\begin{equation}\label{eq:argumentDivisor}
\begin{split}
\mu_j(e^{i\beta},x) & = a_j+(b_j-a_j)\sin^2(y_j(\beta,x)), \\
\epsilon_j & = \sgn \sin(2 y_j(\beta,x))
\end{split}
\end{equation}
so that $y_j:\bbR^2 \to \bbR$ is continuous and obeys $y_j(0,0) \in [0,\pi)$.

Let $G_k=(a_k,b_k)$ be the $k$-th gap in the spectrum, where $k \in \bbZ^d$ is dictated by Theorem~\ref{thm.gapLabel}. 
Recall $$\gamma_k=|G_k| \text{ and } \eta_{jk}=\mathrm{dist}(G_k,G_j).$$
In particular, we have $\eta_{j0}=\mathrm{dist}((a_0,b_0),G_j)$.
Recall by \eqref{eq:Cj} \[C_j=\sup_{z\in G_j}\left(\prod_{G_k<G_j}\frac{a_k-z}{b_k-z}\prod_{G_j<G_k}\frac{b_k-z}{a_k-z}\right)^{\frac{1}{2}}.\]
Clearly, we have $$C_j\leq \left(\prod_{k\neq j}\frac{\eta_{jk}+\gamma_j}{\eta_{jk}}\right)^{1/2}.$$

Equip $\mathcal{D}(\sfE)$ with the topology induced by the norm $$\Vert y\Vert=\sup_j\gamma_j^{\frac{1}{2}}\Vert y_j\Vert_{\bbT}.$$  Since $s_\pm$ are jointly continuous in $(\beta,x)$, so are the Dirichlet data.

The rotation flow allows us to characterize the gap edge case in terms of subordinate solutions, and prove a nonpausing property:

\begin{lemma}\label{lemma:GapEdgeSubordinate}
At a gap edge $E \in \{a_j,b_j\}$, if
\begin{equation} \label{eqn:gapConditionSubordinate}
\sum_{\substack{k\neq j\\ \eta_{j,k} \le 1}} \frac{\gamma_k}{\eta_{j,k}} < \infty,
\end{equation}
there exists an eigensolution $U(x,E)$ of $\Lambda_\varphi$ which is subordinate in both directions. Moreover, $\mu_j(\beta,x) = E$ if and only if
\[
U(x,E) \simeq \binom{1}{e^{i\beta}}.
\]
In particular, for fixed $x$, the set $\{\beta \in \bbR \mid \mu_j(\beta,x)=E\}$ is discrete.
\end{lemma}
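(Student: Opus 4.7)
The plan is to construct the bi-subordinate eigensolution $U$ by matching the two one-sided subordinate solutions at $E$, and then use the translation and rotation symmetries of the Dirac operator to translate the Dirichlet-datum condition $\mu_j(\beta,x)=E$ into a direction condition on the vector $U(x,E)$.

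For the existence of $U$: applying the Corollary following Lemma~\ref{lemmaHerglotzGapEdge} to each half-line operator produces subordinate solutions at $\pm\infty$, whose initial directions are determined by the boundary values $s_\pm(E)=\lim_{\epsilon\downarrow 0}s_\pm(E+i\epsilon)$; these have modulus one because by Lemma~\ref{lemmaHerglotzGapEdge} the half-line Herglotz functions $m_\pm$ have real limits at $E$. The reflectionless condition (Theorem~\ref{thm:reflecAc}) combined with Lemma~\ref{lemmaHerglotzGapEdge} promotes the a.e.\ relation $\overline{s_+(\cdot+i0)}=s_-(\cdot+i0)$ to the pointwise identity $s_+(E)s_-(E)=1$ at the gap edge; under the normalization conventions of Section~\ref{canonicalSys} this identity is precisely the proportionality of the two one-sided subordinate initial vectors, yielding a single eigensolution $U$ subordinate at both $\pm\infty$. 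The gap condition \eqref{eqn:gapConditionSubordinate} enters here as the summability needed to pass continuity of the Schur functions and of the reflectionless relation through the edge $E$.

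For the characterization: the trace formula \eqref{eq:traceFormula1} identifies $\mu_j(\beta,x)$ as the appropriate zero or endpoint of the resolvent $R^{\varphi_{\beta,x}}$ on $[a_j,b_j]$, and the edge case $\mu_j(\beta,x)=E$ is equivalent to $s_+^{\varphi_{\beta,x}}(E)=1$, where $\varphi_{\beta,x}:=e^{i\beta}\varphi(\cdot-x)$. The translation identity $s_+^{\varphi(\cdot-x)}(E)=U_1(x,E)/U_2(x,E)$ (using the sign convention fixed in Section~\ref{dubrovinFlow}) and the rotation covariance $s_+^{e^{i\beta}\psi}(z)=e^{i\beta}s_+^\psi(z)$ then convert this into $e^{i\beta}U_1(x,E)=U_2(x,E)$, which is exactly $U(x,E)\simeq\binom{1}{e^{i\beta}}$.

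For the discreteness claim: for fixed $x$, $U(x,E)\in\bbC^2\setminus\{0\}$, since $U$ is a nontrivial solution of a first-order linear ODE and cannot vanish at any single point. Hence $U(x,E)\simeq\binom{1}{e^{i\beta}}$ pins down $e^{i\beta}$ uniquely when $U_1(x,E)\neq 0$, and is unsatisfiable when $U_1(x,E)=0$; in either case the solution set in $\beta$ is either empty or a single coset of $2\pi\bbZ$, hence discrete. The main technical obstacle is the first step: pushing the a.e.\ reflectionless identity to the single point $E$ requires continuity of $s_\pm$ at the gap edge, which is where both Lemma~\ref{lemmaHerglotzGapEdge} and the summability \eqref{eqn:gapConditionSubordinate} are used; once the identity $s_+(E)s_-(E)=1$ is in hand, the matching of the two one-sided subordinate solutions into a common bi-subordinate eigensolution follows directly from the normalization conventions of Section~\ref{canonicalSys}.
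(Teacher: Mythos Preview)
Your characterization argument (step~2) and the discreteness argument (step~3) are correct in structure, but the existence step has a genuine gap. You assert that Lemma~\ref{lemmaHerglotzGapEdge} together with condition~\eqref{eqn:gapConditionSubordinate} ``promotes the a.e.\ relation $\overline{s_+(\cdot+i0)}=s_-(\cdot+i0)$ to the pointwise identity $s_+(E)s_-(E)=1$ at the gap edge.'' This is the crux, and it is not justified. Lemma~\ref{lemmaHerglotzGapEdge} only says that for a Herglotz function with meromorphic extension across the gap the tangential (gap-side) limit and the normal (vertical) limit at $E$ agree. The reflectionless identity, however, is an a.e.\ statement on the \emph{spectrum} $\sfE$; to push it to the single boundary point $E$ you would need continuity of the boundary function $\xi\mapsto s_\pm(\xi+i0)$ as $\xi\to E$ from \emph{within} $\sfE$, and nothing you cite provides that. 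Your remark that \eqref{eqn:gapConditionSubordinate} is ``the summability needed to pass continuity through the edge'' is not an argument---no mechanism is exhibited. (A smaller point: Lemma~\ref{lemmaHerglotzGapEdge} gives limits of $m_\pm$ in $\bbR\cup\{\infty\}$, not just in $\bbR$; the conclusion $|s_\pm(E)|=1$ survives since $m_\pm(E)=\infty$ corresponds to $s_\pm(E)=1$, but ``real limits'' should read ``limits in $\bbR\cup\{\infty\}$''.)

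The paper reaches $s_+(E)s_-(E)=1$ by a different route that \emph{does} give condition~\eqref{eqn:gapConditionSubordinate} a concrete role. It works with the diagonal Green's element $R=-2/(m_++m_-)$ and its product representation~\eqref{eq:resolventProd}; the summability~\eqref{eqn:gapConditionSubordinate} (together with finite gap length) is precisely what makes the product over $k\neq j$ converge to a finite nonzero limit at $z=E$, so that $|R(\lambda)|\to 0$ or $\infty$ as $\lambda\to E$ from the gap according to whether $\mu_j(\beta,x)$ equals $E$ or lies in the open gap. The paper then exploits the rotation parameter: from the case $\mu_j(\beta,x)=E$ one sees the solution with value $\binom{1}{e^{i\beta}}$ at $x$ is subordinate on at least one half-line, and since each half-line has at most one subordinate direction there are at most two such $\beta$ in $[0,2\pi)$. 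Picking any other $\beta$ forces $|R|\to\infty$, hence $m_+^\beta(E)+m_-^\beta(E)=0$ with both limits finite, hence $s_+^\beta(E)s_-^\beta(E)=1$; since this product is $\beta$-invariant, $s_+(E)s_-(E)=1$. Thus reflectionlessness enters through the product formula for $R$, not through a pointwise limit of the a.e.\ boundary identity. Once this is in hand, your step~2 argument (which is in fact cleaner than the paper's on the ``if and only if'') goes through.
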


\begin{proof}
We include the $x$-dependence of the diagonal Green's function element and denote it by $R(\beta,x;z)$. By the product formula \eqref{eq:resolventProd} for $R(\beta,x;z)$ ,
\[
\lim_{\substack{\lambda\to E \\ \lambda \in (a_j,b_j)}} \lvert R(\beta,x;\lambda) \rvert = \begin{cases}
0 & \mu_j(\beta,x) = E \\
\infty & \mu_j(\beta,x) \neq E
\end{cases}
\]
At first, let us assume that $\beta,x$ are such that $\mu_j(\beta,x) = E$. Using $R = - 1/(m_- + m_+)$, we conclude that $m_+ + m_-$ has limit $\infty$ as $\lambda \to E$ along the gap. By monotonicity of $m_\pm$, this implies that $m_+$ or $m_-$ has limit $\infty$. By Lemma~\ref{lemmaHerglotzGapEdge}, it follows that the eigensolution of $\Lambda_\varphi$ at energy $E$ with $U(x)=\binom 1{e^{i\beta}}$ is subordinate on at least one of the half-lines.

There is at most one subordinate solution $U_\pm$ on each half-line, so there are for every $x$ at most two values of $\beta \in [0,2\pi)$ such that $\mu_j(\beta,x) = E$. Choosing a value of $\beta$ such that $\mu_j(\beta,x) \in (a_j,b_j)$, we then conclude that $(m_+ + m_-)(\beta,x,\lambda) \to 0$ as $\lambda \to E$ along the gap. By monotonicity of $m_\pm$, it follows that $m_\pm$ have finite limits $\pm \cot(\alpha/2)$ for some $\alpha\in (0,2\pi)$. This implies $e^{i\beta} s_\pm$ have finite limits $\sfE^{\mp i \alpha}$, so the eigensolution of $\Lambda_\varphi$ with $U(x) = (\begin{smallmatrix}
1 \\  e^{i(\beta+\alpha)}    
\end{smallmatrix})$
is subordinate at both half-lines.
\end{proof}

\begin{lemma}\label{lem:diffEqPhase}
The function $y_j$ is monotone in $\beta$. If $\sfE$ has finite gap length and satisfies \eqref{eqn:gapConditionSubordinate}
the function $y_j$ is differentiable and its derivative satisfies
\begin{equation}\label{eqn:rotationflow}
\frac{\partial y_j}{\partial\beta} = -\frac 12 W_j(\mu)
\end{equation}
where
\[
W_j=W_j(\mu)=\prod_{\ell\neq j}\sqrt{\frac{(a_\ell-\mu_j)(b_\ell-\mu_j)}{(\mu_\ell-\mu_j)^2}}
\]
with the branch of square root taken so that $W_j > 0$.
\end{lemma}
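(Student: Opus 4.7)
The plan is to first establish monotonicity in full generality, then derive differentiability together with the explicit formula under the stronger hypotheses, and finally address the passage through gap edges. Suppose first that $\mu_j \in (a_j,b_j)$ with $\epsilon_j = +1$, so $s_+(\mu_j) = 1$; the case $\epsilon_j = -1$ is parallel, with $s_+$ replaced by $s_-$. Since $s_\pm(\beta,z) = e^{\pm i\beta} s_\pm(z)$ and $s_+$ is analytic on $\Omega$, implicit differentiation of $s_+(\beta,\mu_j(\beta)) = 1$ yields $\mu_j'(\beta) = -i/\partial_z s_+(\beta,\mu_j)$. A direct computation from $R(z) = i(1-s_+)(1-s_-)/(1-s_+s_-)$, using the hypothesis $s_-(\mu_j) \neq 1$ from item (c) of Definition~\ref{def:reflectionless}, gives $R'(\mu_j) = -i\,\partial_z s_+(\beta,\mu_j)$, so $\mu_j'(\beta) = -1/R'(\mu_j)$. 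The analogous calculation for $\epsilon_j=-1$ yields $\mu_j'(\beta) = +1/R'(\mu_j)$, and in either case $R'(\mu_j) > 0$ since $R$ is strictly increasing in each gap (it is Herglotz; see \cite[Lemma 3.9]{BLY2} recalled in Section~\ref{canonicalSys}). Hence $\mu_j(\beta)$ is strictly monotone on every sub-interval of $\beta$ where $\mu_j \in (a_j,b_j)$, and through the parametrization $\mu_j = a_j + \gamma_j \sin^2 y_j$ (whose Jacobian $\gamma_j \sin(2y_j)$ carries sign $\epsilon_j$), this forces $y_j$ to be strictly decreasing; monotonicity extends to the gap edges by continuity of the Schur functions in $\beta$.

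To obtain the explicit formula under the finite gap length and summability hypotheses, I use the product representation \eqref{eq:resolventProd}, squared to give the meromorphic identity $R(z)^2 = -\prod_\ell (z-\mu_\ell)^2/[(z-a_\ell)(z-b_\ell)]$. Expanding both sides near $z = \mu_j$ and matching the coefficients of $(z-\mu_j)^2$ produces
\[
R'(\mu_j)^2 = \frac{1}{(\mu_j-a_j)(b_j-\mu_j)\,W_j(\mu)^2},
\]
and positivity of $R'(\mu_j)$ determines the sign. Combining with the expression for $\mu_j'(\beta)$ from the previous paragraph (and its $\epsilon_j = -1$ analogue) yields
\[
\partial_\beta \mu_j = -\epsilon_j\sqrt{(\mu_j-a_j)(b_j-\mu_j)}\,W_j(\mu).
\]
Differentiating the parametrization in \eqref{eq:argumentDivisor} and using $\epsilon_j = \sgn\sin(2y_j)$ gives $\partial_\beta\mu_j = 2\epsilon_j\sqrt{(\mu_j-a_j)(b_j-\mu_j)}\,\partial_\beta y_j$; equating the two expressions and canceling the common factor produces $\partial_\beta y_j = -\tfrac{1}{2}W_j(\mu)$ throughout the gap interior.

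The delicate step is the exceptional $\beta$ for which $\mu_j(\beta) \in \{a_j,b_j\}$: the radical $\sqrt{(\mu_j-a_j)(b_j-\mu_j)}$ vanishes, the sign $\epsilon_j$ is ambiguous, and the interior argument degenerates. Lemma~\ref{lemma:GapEdgeSubordinate}, whose hypothesis \eqref{eqn:gapConditionSubordinate} is implied by the present assumptions, guarantees that this exceptional set of $\beta$'s is discrete for each fixed $x$. Joint continuity of $y_j$ in $\beta$, inherited from the Schur functions, together with continuity of $W_j$ in $\mu$ under the finite gap length and summability hypotheses, allows $\partial_\beta y_j$ to be extended continuously through these exceptional points; a standard mean value argument then upgrades the pointwise identity to classical differentiability with the claimed derivative on all of $\bbR$. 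The branch convention $W_j(\mu) > 0$ finally ensures $\partial_\beta y_j < 0$, matching the monotonicity conclusion. The main technical obstacle is exactly this passage through gap edges, where the degeneration of the defining equation for $\mu_j$ must be reconciled with the sign change of $\epsilon_j$; both the subordinate-solution input from Lemma~\ref{lemma:GapEdgeSubordinate} and the summability condition on the gap separations are essential to close the argument.
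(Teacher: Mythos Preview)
Your proof is correct and follows essentially the same route as the paper: implicit differentiation in the gap interior, the product representation \eqref{eq:resolventProd} to compute $R'(\mu_j)$, and passage through the discrete gap-edge set via Lemma~\ref{lemma:GapEdgeSubordinate} and continuity. A small organizational plus is that you establish monotonicity first from the Herglotz property $R'(\mu_j)>0$ alone, so that claim stands without invoking the product formula, whereas the paper deduces monotonicity only after deriving \eqref{eqn:rotationflow}.
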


\begin{proof}
Let us first consider points with $\mu_j(\beta) \in (a_j,b_j)$. By implicit differentiation, the definition $R(\beta,x,\mu_j(\beta,x))=0$ implies
\[
\frac{d\mu_j}{d\beta} = - \frac{\partial R / \partial \beta}{\partial R / \partial z} (\beta,x,\mu_j(\beta,x)).
\]
A direct calculation gives 
\[
\frac{\partial R}{\partial \beta}(e^{i\beta},\mu_j) = \frac{e^{i\beta}s_+ - e^{-i\beta} s_-}{1-s_+ s_-} = \epsilon_j.
\]
Differentiating the product representation \eqref{eq:resolventProd} and evaluating at $\mu_j$ gives
\[
\frac{\partial R}{\partial z}(e^{i\beta},\mu_j) =\frac 1{\sqrt{(\mu_j - a_j)(b_j - \mu_j)}}  \prod_{k\neq j} \sqrt{ \frac{(\mu_j - \mu_k)^2}{(\mu_j -a_k)(\mu_j - b_k)} },
\]
where we are taking positive square roots of positive numbers. Combining this with
\begin{align*}
\frac{d\mu_j}{dy_j} & = (b_j - a_j) \epsilon_j \sqrt{1- \cos^2(2y_j)} \\
& = 2 \epsilon_j \sqrt{(b_j-\mu_j)(\mu_j - a_j)}
\end{align*}
proves \eqref{eqn:rotationflow}.

This, together with continuity, shows monotonicity of $y_j$. 
Since $W_j$ is a continuous function of $\mu$, the right-hand side of \eqref{eqn:rotationflow} is globally a $C^1$ function. Since we already know differentiability and \eqref{eqn:rotationflow} away from a discrete set, by continuity, $y_j$ is differentiable everywhere and \eqref{eqn:rotationflow} holds.
\end{proof}

%%Lax pair formalism %%%%%%%%%%%%%%%%%%%%%%%%%%%%%%%%%%%%%%%%%%%%%%%%%%%%%%%%%%

\subsection{Trace Formula and Dubrovin Vector Fields}
Suppose that the potential $\varphi(x,t)$ depends differentiably on a time variable $t$, and $\Lambda_t=\Lambda_{\varphi(\cdot,t)}$ is the corresponding family of Dirac operators. 
Then $\varphi(x,t)$ is a solution of the NLS \begin{equation}\label{eq:NLS2}
i\varphi_t=-\partial^2_{x}\varphi+2\varphi^2\overline{\varphi},~\varphi(x,0)=\varphi(x)
\end{equation}
if and only if $$\partial_t \Lambda_t=[B,\Lambda_t],$$
where $[B,L]=BL-LB$ is the commutator of $L$ with the operator 
$$B=\begin{bmatrix}2i\partial_x^2-i\varphi_1\varphi_2&\varphi_1'+2\varphi_1\partial_x\\\varphi_2'+2\varphi_2\partial_x&-2i\partial_x^2+i\varphi_1\varphi_2\end{bmatrix}, \varphi_1=\overline{\varphi_2}=\varphi.$$
It can be shown that $B$ is skew-adjoint, that is, $B^*=-B$. 
Let $V(t)$ be the flow 
$$
\partial_t V=BV, \quad V(0)=I,
$$
then $V(t)$ is a family of unitary operators.
Direct computations show that  $$\partial_t(V^*(t)\Lambda_tV(t))=V^*(\partial_t\Lambda-[B,\Lambda_t])V=0,$$
which implies that $\Lambda_t$ is an isospectral family. This is the classical Lax-pair formalism \cite{Lax1968,ZS1972}. A scheme of introducing the associated AKNS hierarchy \cite{AKNS} is put in Appendix~\ref{sec:AKNS}, interested readers may refer to \cite{GH03} for more details.

Define the scalar fields on $\mathcal{D}(\sfE)$:
\begin{equation}\label{eq:scalarField1}
Q_1(y)=\sum_{k}(a_k+b_k-2\mu_k).
\end{equation}
\begin{equation}\label{eq:scalarField2}
Q_2(y)=\sum_k(a_k^2+b_k^2-2\mu_k^2).
\end{equation}
Our next goal is to derive the Dubrovin vector fields that describe the dependence of the Dirichlet data $y(x,t)$ on $x,t$ respectively. We will first deal with the case $\mu_j\in (a_j,b_j)$ and then extend the result to the gap edges $\mu_j\in \{a_j,b_j\}$.

Let $\varphi\in\mathcal{R}(\sfE)$ be continuous and  $s_\pm(z)$ be the reflectionless pair of Schur functions corresponding to the potential $\varphi$ and $\{(\mu_j,\epsilon_j)\}$ be the corresponding Dirichlet data. 
The following result is known as the {\it trace formula}. Similar result for one dimensional Dirac operators was claimed by \cite{Zamonov1985} and used in \cite{ME97}.
Since we could not find an available proof, to be consistent with our context, we decide to give a proof.
\begin{lemma}\label{lem:traceFormula}
Assume that $\sum_j\gamma_j<\infty,$ for the divisor $D=\{(\mu_j,\epsilon_j)\}$ and the associated phase variable $y$ in \eqref{eq:argumentDivisor}, 
\begin{equation}\label{eq:traceEqRe}\Re\varphi=-\frac{1}{2}Q_1(y),\end{equation}
\begin{equation}\label{eq:traceEqIm}\partial_x \Im\varphi+(\Im\varphi)^2=\frac{1}{2}Q_2(y).
\end{equation}
\end{lemma}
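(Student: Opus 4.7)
The plan is to compare two asymptotic expansions of the diagonal resolvent $R(z)$ (regarded as depending on the base point $x$ at which the Schur functions are computed) as $z\to i\infty$, and to read off both trace formulas by matching coefficients of $1/z$ and $1/z^2$.

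First I would use the product representation \eqref{eq:resolventProd}, which is available under $\sum_j\gamma_j<\infty$. Taking the branch of $\log(R/i)$ that vanishes at $i\infty$ and expanding $\log(1-w/z)=-\sum_{n\ge 1}w^n/(nz^n)$ termwise gives
\[
\log\bigl(R(z)/i\bigr)=\sum_{n\ge 1}\frac{1}{nz^n}\sum_j\Bigl(\tfrac{a_j^n+b_j^n}{2}-\mu_j^n\Bigr),
\]
whose first two $j$-sums equal $Q_1(y)/2$ and $Q_2(y)/4$ respectively. Exponentiating,
\[
R(z)/i \;=\; 1 + \frac{Q_1(y)}{2z} + \frac{1}{z^2}\Bigl(\frac{Q_2(y)}{4}+\frac{Q_1(y)^2}{8}\Bigr) + O(z^{-3}).
\]

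Second, I would derive an asymptotic expansion of $R(z)$ from Riccati equations. Setting $s_+(z,x)=\Psi^+_1/\Psi^+_2$ and $s_-(z,x)=\Psi^-_2/\Psi^-_1$ (so that $s_\pm\to 0$ as $z\to i\infty$, consistent with the free operator) and differentiating in $x$ via $\Lambda_\varphi\Psi^\pm=z\Psi^\pm$ yields
\[
\partial_x s_+ = -2izs_+ + i\varphi + i\bar\varphi\,s_+^2, \qquad \partial_x s_- = 2izs_- - i\bar\varphi - i\varphi\,s_-^2.
\]
Inserting $s_\pm=\sum_{n\ge 1} s_n^\pm z^{-n}$ and solving recursively gives $s_1^+=\varphi/2$, $s_2^+=i\varphi'/4$, $s_1^-=\bar\varphi/2$, $s_2^-=-i\bar\varphi'/4$. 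Substituting into \eqref{eq:resolventFunc} and using $\varphi+\bar\varphi=2\Re\varphi$, $\varphi'-\bar\varphi'=2i(\Im\varphi)'$, and $\varphi\bar\varphi=|\varphi|^2$ produces
\[
R(z)/i \;=\; 1 - \frac{\Re\varphi}{z} + \frac{(\Im\varphi)'+|\varphi|^2}{2z^2} + O(z^{-3}).
\]
Matching the $1/z$ coefficients of the two expansions gives $\Re\varphi=-Q_1(y)/2$, which is \eqref{eq:traceEqRe}; inserting this back and matching the $1/z^2$ coefficients, using $Q_1(y)^2/4=(\Re\varphi)^2$ and $|\varphi|^2=(\Re\varphi)^2+(\Im\varphi)^2$, isolates $(\Im\varphi)'+(\Im\varphi)^2=Q_2(y)/2$, which is \eqref{eq:traceEqIm}.

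The main obstacle is the analytic justification of the second trace formula under only $\sum_j\gamma_j<\infty$. Writing $\delta_j:=\mu_j-(a_j+b_j)/2$, one has
\[
a_j^2+b_j^2-2\mu_j^2 \;=\; \tfrac{\gamma_j^2}{2} - 2\delta_j(a_j+b_j) - 2\delta_j^2,
\]
so the $\gamma_j^2$ and $\delta_j^2$ contributions are absolutely summable but the middle piece need not be, since the spectrum of $\Lambda_\varphi$ is unbounded. I would handle this either by assuming the first-order gap length condition \eqref{eq:kGlc} with $k=1$ (natural in the AKNS context), or by approximating $\sfE$ by finite gap subsets and passing to the limit using continuity of the Abel map on $\mathcal{D}(\sfE)$ and strong resolvent convergence of the associated Dirac operators. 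Controlling the $O(z^{-3})$ remainder uniformly in $x$ in the Riccati expansion is routine.
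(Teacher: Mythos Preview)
Your approach is essentially the same as the paper's: both compare the large-$z$ expansion of $R(z)$ coming from the product formula \eqref{eq:resolventProd} with an expansion coming from the operator side, and read off \eqref{eq:traceEqRe}, \eqref{eq:traceEqIm} from the $z^{-1}$ and $z^{-2}$ coefficients. The only real difference is how the operator-side expansion is obtained: you compute it directly from the Riccati equations for $s_\pm$ (Lemma~\ref{lem:xRiccati}) and substitute into \eqref{eq:resolventFunc}, whereas the paper quotes the asymptotic formulas for $M_{1,1}$ and $M_\pm$ from Clark--Gesztesy \cite{ClarkGesztesy}. Your route is more self-contained; the cited formulas in \cite{ClarkGesztesy} are derived by exactly this kind of Riccati recursion, so the two are doing the same computation.

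Your analytic concern about the convergence of $Q_2$ under only $\sum_j\gamma_j<\infty$ is legitimate, and the paper's proof does not address it either: it writes the $z^{-2}$ coefficient as $\frac i4(Q_2+\frac12 Q_1^2)$ without discussing summability of $\sum_j(a_j^2+b_j^2-2\mu_j^2)$. In the paper's applications the Craig-type conditions (which imply the $1$-GLC) are always in force, so this is harmless there; your suggested fixes (assume \eqref{eq:kGlc} for $k=1$, or pass through finite-gap approximation) are both reasonable ways to make the $z^{-2}$ identity rigorous.
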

\begin{proof}
Recalling the product representation \eqref{eq:resolventProd}, since $$\begin{aligned}R(z)&=i\mathrm{exp}\left(\frac{1}{2}\sum_j(2\log(1-\mu_jz^{-1})-\log(1-a_jz^{-1})-\log(1-b_jz^{-1}))\right)\\
&=i+\frac{i}{2}Q_1(y)z^{-1}+\frac{i}{4}(Q_2(y)+\frac{1}{2}Q_1^2(y))z^{-2}+o(|z|^{-2})\end{aligned}$$
as $|z|\to\infty, \arg z\in(\epsilon,\pi-\epsilon)$ for some $0<\epsilon<\pi/2,$
a comparison of coefficients of $z^{-1}$ term in the following asymptotic expansion of \cite[Theorem 4.10]{ClarkGesztesy}
$$M_{1,1}=\frac{i}{2}+[-\frac{i}{8}(B_{1,1}(x+0)+B_{1,1}(x-0))-(B_{2,2}(x+0)+B_{2,2}(x-0))]z^{-1}+o(|z|^{-1}),$$
with $~B_{1,1}=-B_{2,2}=\Re\varphi$
immediately gives the trace formula for $\Re\varphi.$ Note that their definition of the diagonal Green's function $M_{1,1}$ differs from $R(z)$ by a constant factor $2, i.e.  R(z)=2M_{1,1}$.

To obtain the trace formula for $\Im\varphi$, we need the higher order asymptotic expansion of the diagonal Green's function. 
Recall \cite[(2.76), (2.77),(4.62), (4.63)]{ClarkGesztesy} reduced to $m=1$ and $B_{1,1}=-B_{22}=\Re\varphi,B_{1,2}=B_{2,1}=\Im\varphi$:
$$\frac{1}{2}R(z)=M_{1,1}=\frac{1}{M_--M_+},$$
$$M_\pm=\pm i+\sum_{k=1}^{N}m_{\pm,k}z^{-k}+o(|z|^{-N}),$$
with $$\begin{aligned}
&m_{+,1}=-\Im\varphi+i\Re\varphi=i\varphi,\\
&m_{-,1}=-\Im\varphi-i\Re\varphi=-i\overline{\varphi},\\
&m_{+,2}=\frac{i}{2}(\partial_xm_{+,1}+m_{+,1}^2-2im_{+,1}\varphi),\\
&m_{-,2}=-\frac{i}{2}(\partial_xm_{-,1}+m_{-,1}^2+2im_{-,1}\overline{\varphi} ).
\end{aligned}
$$
As a consequence of these equations, we have the following identities by comparing the coefficient of $z^{-2}$ in the asymptotics of $R(z)$:
$$\frac{i}{4}(Q_2+\frac{1}{2}Q_1^2)=\frac{i}{4}(-(m_{-,1}-m_{+,1})^2-2i(m_{-,2}-m_{+,2})),$$
that is,
$$\frac{\partial\Im\varphi}{\partial x}+(\Im\varphi)^2=\frac{1}{2}Q_2.$$
\end{proof}

We derive the time dependence of the Schur functions by a treatment of \cite{LY20}:
\begin{lemma}\label{lem:RiccatiTime}
Denoting $p=\overline{q}=i\varphi$
the Schur functions $s_\pm$ satisfy the following Riccati-type equations
$$\begin{aligned}-i\partial_t s_+&=b-2as_+-cs_+^2,\\
-i\partial_t s_-&=c+2as_--bs_-^2,\end{aligned}$$
where $a=(\frac{1}{2}pq+z^2),b=(\frac{1}{2}\partial_xq-iqz), c=-(\frac{1}{2}\partial_xp+ipz).$
\end{lemma}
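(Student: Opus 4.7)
The plan is to compute the $t$-evolution of the Weyl solutions $\Psi^\pm(z,\cdot;t)\in L^2(\bbR^\pm)$ of $\Lambda_t\Psi=z\Psi$, to use the fixed normalization at $x=0$ to isolate $\partial_t s_\pm$, and to reduce the resulting identity to the claimed Riccati form.

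First, from the Lax representation $\partial_t\Lambda_t=[B,\Lambda_t]$, a direct calculation gives the identity $(\Lambda_t-z)(\partial_t-B)\Psi=0$ for every solution $\Psi$ of $\Lambda_t\Psi=z\Psi$, so $(\partial_t-B)\Psi$ again solves the eigenvalue equation. Moreover, on such solutions the second derivatives appearing in $B\Psi$ can be eliminated in favor of $\Psi$ and $\Psi'$ by differentiating $\Lambda_t\Psi=z\Psi$, and the first derivatives can then be removed by re-using the eigenvalue equation. This reduction yields a pointwise identity $B\Psi=M(z,x,t)\Psi$, where $M$ is an explicit traceless $2\times 2$ matrix whose diagonal entries are quadratic and off-diagonal entries are linear in $z$; a direct bookkeeping with the explicit form of $B$ identifies $M$, up to signs and an overall factor dictated by convention, with
\[
M(z,x,t) = \begin{pmatrix} -2ia & 2ib \\ 2ic & 2ia \end{pmatrix},
\]
where $a,b,c$ are as in the statement.

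Next, since $\Psi^+(z,\cdot;t)$ remains a Weyl solution of $\Lambda_t-z$ at $+\infty$ for every $t$ (the decay being controlled by the asymptotics of the smoothly evolving potential), the function $(\partial_t-B)\Psi^+=\partial_t\Psi^+-M\Psi^+$ is a solution of the eigenvalue equation that also decays at $+\infty$, hence a scalar multiple $\lambda_+(z,t)\Psi^+$ of the Weyl solution itself. Imposing the normalization $\Psi^+(z,0;t)=(s_+(z,t),1)^\top$ forces $\partial_t\Psi_2^+(z,0;t)=0$, which determines $\lambda_+$ algebraically from $M(z,0,t)$ and $s_+$; substituting into the first component at $x=0$ then produces
\[
\partial_t s_+ = M_{12} + (M_{11}-M_{22})\,s_+ - M_{21}\,s_+^2,
\]
which is precisely $-i\partial_t s_+ = b - 2as_+ - cs_+^2$ after reading off the entries of $M$. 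The equation for $s_-$ follows from the same argument applied on the left half-line, using the analogous normalization of $\Psi^-$ at the origin; the apparent swap of $b$ and $c$ and the sign change in the linear term reflect only the swap of the normalized component.

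The one technically nontrivial step will be the algebraic reduction of $B\Psi$ to $M\Psi$ and the verification that the resulting entries of $M$ match exactly the $a,b,c$ given in the statement. This is a direct but lengthy computation with no conceptual obstacle, and the parallel derivation for the AKNS hierarchy in \cite{LY20} can serve as a template.
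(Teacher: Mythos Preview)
Your approach is essentially the paper's: evolve the Weyl solutions under the second Lax operator and read off the Riccati equation from $s_+=\Psi_+^1/\Psi_+^2$. The paper works directly with the zero-curvature pair $\partial_x\Phi_\pm=U\Phi_\pm$, $\partial_t\Phi_\pm=\widetilde V_2\Phi_\pm$ (your reduction of $B$ to a multiplication operator $M$ on eigensolutions recovers exactly $\widetilde V_2$, modulo the spurious factor of $2$ in your displayed matrix), and the only substantive difference is in how the Weyl property is shown to persist in time. Where you assert that $\partial_t\Psi^+-M\Psi^+$ decays at $+\infty$, the paper instead defines $\Phi_\pm$ as the solution of the joint system with $\Phi_\pm|_{t=0}=\Psi_\pm$ and proves via a Gronwall bound on $\|\Phi_\pm(x,t)\|^2$ that $\Phi_\pm(\cdot,t)\in L^2(\bbR^\pm)$ for all $t\in[0,T]$ under the hypothesis $\varphi,\partial_x\varphi\in L^\infty$; this simultaneously identifies $\Phi_\pm$ as the Weyl solutions and supplies $\partial_t\Phi_\pm=\widetilde V_2\Phi_\pm$ by construction, sidestepping any separate justification of differentiability and decay for $\partial_t\Psi^+$.
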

\begin{proof}
Let $\Psi_{\pm}(x,x_0,t,z)$ be the Weyl solutions of $\Lambda_{\varphi(\cdot,t)} \Psi=z\Psi$ that are square integrable on $\bbR_{\pm}^{x_0}$,
where $\bbR^{x_0}_+=[x_0,\infty)$ and $\bbR^{x_0}_-=(-\infty,x_0]$. The Schur functions are given as follows \cite{EFGL}
\begin{equation}\label{eq:schurFunc}
s_+(x,t,z)=\frac{\Psi_{+}^1}{\Psi_{+}^2},~s_-(x,t,z)=\frac{\Psi_-^2}{\Psi_-^1}.
\end{equation}
Our goal is to compute $\partial_t s_\pm$. Consider the following initial value problem and introduce some notations
\begin{equation}\label{eq:AKNS1}
\left\{\begin{aligned}
&\partial_x\Phi_{\pm}=U(z)\Phi_\pm\\
&\partial_t\Phi_{\pm}=\widetilde{V}_{2}(x,t)\Phi_{\pm}\\
&\Phi_{\pm}(x,x_0,0,z)=\Psi_{\pm}(x,x_0,0,z)
\end{aligned}
\right.
\end{equation}
where $U(z)=\begin{bmatrix}-iz&p\\q&iz\end{bmatrix}$ with $p=\overline{q}=i\varphi$ and by the computations in Appendix~\ref{sec:AKNS}
$$\widetilde{V}_2=i\begin{bmatrix}-\widetilde{G}_2(z)&\widetilde{F}_1(z)\\-\widetilde{H}_1(z)&\widetilde{G}_2(z)\end{bmatrix}=i\begin{bmatrix}-\frac{1}{2}pq-z^2&\frac{1}{2}\partial_x q-iqz\\-\frac{1}{2}\partial_x p-ipz&\frac{1}{2}pq+z^2\end{bmatrix}.$$
Note that the first equation in \eqref{eq:AKNS1} represents the Dirac equation $\Lambda_\varphi\Phi_{\pm}=z\Phi_\pm$. By \eqref{eq:zeroCur}, the defocusing NLS is equivalent to the case $p=\overline{q}$ and $n=1,$ namely the zero-curvature equation
$$U_t-\widetilde{V}_{2,x}+[U,\widetilde{V}_2]=0.$$
Solutions to \eqref{eq:AKNS1} are thoroughly discussed in \cite{BBEIM1994, GH03} as Baker-Akhiezer functions for the finite gap case. More general cases can be treated similarly as in \cite[Remark 2.1]{LY20}.
Let $\Phi_\pm(x,t)$ be  solutions to \eqref{eq:AKNS1}, define $g(x,t)=\Vert\Phi_{\pm}(x,x_0,t,z)\Vert_{\bbC^2}^2=|\Phi_{\pm}^1|^2+|\Phi_{\pm}^2|^2.$
Then $$\partial_t g(x,t)=2\Re\langle\partial_t\Phi_{\pm},\Phi_{\pm}\rangle=2\Re\langle\widetilde{V}_{n+1}\Phi_{\pm},\Phi_{\pm}\rangle\leq 2\Vert\widetilde{V}_{n+1}\Vert g(x,t).$$
Assume that $\varphi,\partial_x\varphi\in L^{\infty}(\bbR)$, the boundedness of $\varphi$  then implies 
$$C:=\sup_{t\in[0,T],x\in\bbR}\Vert \widetilde{V}_{2}\Vert<\infty.$$
By the Gronwall's inequality, we obtain
$$g(x,0)e^{-2Ct}\leq g(x,t)\leq g(x,0)e^{2Ct},$$
for $t\in[0,T]$.
Therefore, 
$$e^{-2Ct}\Vert\Phi_{\pm}(\cdot,x_0,0,z)\Vert^2\leq\int^{\pm\infty}_{x_0}\Vert\Phi_{\pm}(x,x_0,t,z)\Vert^2 \, \rmd x
\leq e^{2Ct}\Vert\Phi_{\pm}(x_0,x_0,0,z)\Vert^2.$$
Since $\Phi_{\pm}=\Psi_{\pm}$ when $t=0$, it follows that $\Phi_{\pm}(\cdot,x_0,t,z)$ are Weyl solutions for $t\in[0,T]$. 
According to the second differential equation of \eqref{eq:AKNS1}, and that since we are in the limit point case, 
$$-i\partial_t s_+=(\frac{1}{2}\partial_x q-iqz)-2(\frac{1}{2}pq+z^2)s_+-(-(\frac{1}{2}\partial_x p+ipz))s_+^2;$$
$$-i\partial_t s_-=-(\frac{1}{2}\partial_x p+ipz)+2(\frac{1}{2}pq+z^2)s_--(\frac{1}{2}\partial_xq-iqz)s_-^2.$$
\end{proof}

We also need the Riccati equation in space variable:
\begin{lemma}\label{lem:xRiccati}
The Schur functions $s_{\pm}$ satisfy the following Riccati type equations in space variable:
$$-i\partial_x s_{+}=\overline{\varphi(x)}s_{+}^2-2zs_++\varphi.$$
$$-i\partial_x s_-=-(\varphi s_-^2-2zs_-+\overline{\varphi}).$$
\end{lemma}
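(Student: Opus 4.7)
The plan is to derive these Riccati equations by direct differentiation of the quotient defining each Schur function, using the eigenvalue equation $\Lambda_\varphi \Psi_\pm = z\Psi_\pm$ as the source of the first-order ODEs satisfied by the components of the Weyl solutions. This is the spatial analogue of Lemma~\ref{lem:RiccatiTime}, and unlike that lemma, it does not require any growth estimate or zero-curvature machinery, since we are only computing an $x$-derivative of quantities whose $x$-dependence is already built into the underlying linear system.

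More concretely, I would first unpack $\Lambda_\varphi \Psi = z \Psi$ componentwise to obtain the system
\begin{align*}
    \partial_x \Psi^1 &= -iz\, \Psi^1 + i\varphi\, \Psi^2, \\
    \partial_x \Psi^2 &= -i\overline{\varphi}\, \Psi^1 + iz\, \Psi^2,
\end{align*}
which is precisely the AKNS form with $p = \overline{q} = i\varphi$ used in the proof of Lemma~\ref{lem:RiccatiTime}. Since this holds for both $\Psi_+$ and $\Psi_-$, the differential equations for the components are the same; only the quotient we form differs between $+$ and $-$.

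For $s_+ = \Psi_+^1/\Psi_+^2$, I would differentiate to get
\[
\partial_x s_+ = \frac{\partial_x \Psi_+^1}{\Psi_+^2} - s_+ \frac{\partial_x \Psi_+^2}{\Psi_+^2} = \bigl(-iz\, s_+ + i\varphi\bigr) - s_+\bigl(-i\overline{\varphi}\, s_+ + iz\bigr) = i\bigl(\overline{\varphi}\, s_+^2 - 2z s_+ + \varphi\bigr),
\]
which, after multiplying by $-i$, yields the first formula in the statement. The computation for $s_- = \Psi_-^2/\Psi_-^1$ is the symmetric one: differentiating the reciprocal-style ratio gives
\[
\partial_x s_- = \frac{\partial_x \Psi_-^2}{\Psi_-^1} - s_- \frac{\partial_x \Psi_-^1}{\Psi_-^1} = \bigl(-i\overline{\varphi} + iz\, s_-\bigr) - s_-\bigl(-iz + i\varphi\, s_-\bigr) = -i\bigl(\varphi\, s_-^2 - 2z s_- + \overline{\varphi}\bigr),
\]
and multiplying by $-i$ produces the second formula.

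There is no serious obstacle here: the argument is a one-line ODE manipulation once the component form of the Dirac equation is written down. The only minor point to keep straight is the sign asymmetry between the two equations, which arises from the opposite conventions in \eqref{eq:schurFunc} (taking $\Psi_+^1/\Psi_+^2$ but $\Psi_-^2/\Psi_-^1$); tracking this sign carefully through the quotient rule is what distinguishes the two Riccati equations and explains the leading minus sign in the second one.
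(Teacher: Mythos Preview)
Your proposal is correct and follows exactly the same approach as the paper: write the Dirac eigenvalue equation componentwise and differentiate the quotients $s_+=\Psi_+^1/\Psi_+^2$, $s_-=\Psi_-^2/\Psi_-^1$. The paper's proof is in fact just a terse version of what you have written out.
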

\begin{proof}
Let $\psi_\pm$ be the Weyl solutions of $\Lambda_\psi=z\psi$, rewritten as
$$\begin{bmatrix}\partial_x\psi_\pm^1\\\partial_x\psi_\pm^2\end{bmatrix}=\begin{bmatrix}-iz&i\varphi\\-i\overline{\varphi}&iz\end{bmatrix}
\begin{bmatrix}\psi_\pm^1\\\psi_\pm^2\end{bmatrix}.$$
The lemma follows directly from the definition  $s_+=\frac{\psi_+^1}{\psi_+^2},s_-=\frac{\psi_-^2}{\psi_-^1}$.
\end{proof}

In order to properly define the vector fields, we assume that the finite gap length condition \eqref{eq:finiteGapCond} holds. This will be eventually superseded by stronger Craig-type conditions that we will introduce. 

The following lemma deal with the case $\mu_j\in (a_j,b_j)$ as we promised.
\begin{lemma}\label{lem:DubrovinEq}
Assume \eqref{eq:finiteGapCond} holds and $\varphi,\partial_x\varphi\in L^\infty(\bbR)$, then for $\mu_j(x,t)\in (a_j,b_j)$, the flow $y_j(x,t)$ defined in \eqref{eq:argumentDivisor} satisfies the {\it Dubrovin type} equations
\begin{equation}\label{eq:spaceDiff}
\frac{\partial y_j}{\partial x}= \Psi_j(y)
\end{equation}
where $\Psi_j:\mathcal{D}(\sfE) \to \bbR$ is given by
\[
\Psi_j = \left\{Q_1(y)/2+\mu_j\right\}W_j
\]
 and
\begin{equation}\label{eq:timeDiff}
\frac{\partial y_j(x,t)}{\partial t}=\left(\frac{Q_2}{4}+\frac{1}{8}Q_1^2-\frac{1}{2}\mu_jQ_1+\mu_j^2\right)W_j
\end{equation}
where $W_j=W_j(\mu)=\prod_{k\neq j}\sqrt{\frac{(a_k-\mu_j)(b_k-\mu_j)}{(\mu_k-\mu_j)^2}}$.
\end{lemma}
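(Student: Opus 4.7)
The strategy is implicit differentiation. In the interior case $\mu_j(x,t)\in(a_j,b_j)$, the definition of the divisor gives $s_{\epsilon_j}(x,t,\mu_j(x,t))=1$ for exactly one of the two Schur functions, while the other does not equal $1$ at $z=\mu_j$. Applying $\partial_x$ (respectively $\partial_t$) to this identity and using the chain rule yields
\[
\frac{d\mu_j}{dx} = -\frac{\partial_x s_{\epsilon_j}}{\partial_z s_{\epsilon_j}}\bigg|_{z=\mu_j},
\qquad
\frac{d\mu_j}{dt} = -\frac{\partial_t s_{\epsilon_j}}{\partial_z s_{\epsilon_j}}\bigg|_{z=\mu_j}.
\]
Converting to $y_j$ via $d\mu_j/dy_j = 2\epsilon_j \sqrt{(\mu_j-a_j)(b_j-\mu_j)}$, which follows directly from \eqref{eq:argumentDivisor}, then yields the Dubrovin equations once the numerators and denominators are computed explicitly.

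The universal denominator is handled uniformly. Squaring the product representation \eqref{eq:resolventProd} and expanding near the simple zero $z=\mu_j$ gives
\[
\partial_z R|_{z=\mu_j} = \frac{1}{W_j \sqrt{(\mu_j-a_j)(b_j-\mu_j)}},
\]
the positive sign being forced by the monotonicity of $R$ on each gap. On the other hand, substituting the local expansion $1-s_{\epsilon_j}(z) = -\partial_z s_{\epsilon_j}|_{\mu_j}(z-\mu_j) + O((z-\mu_j)^2)$ together with $1-s_{-\epsilon_j}(\mu_j) \neq 0$ into $R = i(1-s_+)(1-s_-)/(1-s_+s_-)$ yields the identity $\partial_z s_{\epsilon_j}|_{\mu_j} = i\,\partial_z R|_{\mu_j}$ in both cases $\epsilon_j = \pm 1$.

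For the space derivative, Lemma~\ref{lem:xRiccati} evaluated at $s_{\epsilon_j}=1$ and $z=\mu_j$ gives $\partial_x s_{\epsilon_j}|_{\mu_j} = -2i\epsilon_j (\mu_j - \Re\varphi)$, where the $\epsilon_j$ reflects the opposite signs of the Riccati equations for $s_+$ and $s_-$. Plugging into the implicit differentiation formula, the two factors of $\epsilon_j$ (one from $\partial_x s_{\epsilon_j}$, one from $d\mu_j/dy_j$) cancel, and the trace formula $\Re\varphi = -Q_1/2$ from Lemma~\ref{lem:traceFormula} produces \eqref{eq:spaceDiff}. For the time derivative, substitute $p=i\varphi,\ q=-i\overline{\varphi}$ into $-i\partial_t s_+ = b - 2as_+ - cs_+^2$ from Lemma~\ref{lem:RiccatiTime} and evaluate at $s_+=1$, $z=\mu_j$; the simplifications $pq = |\varphi|^2$, $p+q = -2\Im\varphi$, $p-q = 2i\Re\varphi$, combined with $|\varphi|^2 = (\Re\varphi)^2 + (\Im\varphi)^2$ and both trace formulas \eqref{eq:traceEqRe}, \eqref{eq:traceEqIm}, turn $\partial_t s_+|_{\mu_j,s_+=1}$ into a purely algebraic expression in $Q_1, Q_2, \mu_j$. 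The analogous computation for $s_-$ differs by the expected sign, and the two $\epsilon_j$ cancellations as in the space case yield \eqref{eq:timeDiff}.

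The main obstacle is careful bookkeeping of signs: those arising from $\epsilon_j$, from the opposite signs in the Riccati equations for $s_+$ and $s_-$, and from the relation between $\partial_z s_\pm$ and $\partial_z R$ at $\mu_j$. No single step is technically difficult, but an error in any sign would corrupt the final formula. The hypothesis $\varphi,\partial_x\varphi \in L^\infty$ enters only to justify the Riccati equation in time (ensuring that the NLS flow preserves Weyl solutions as used in Lemma~\ref{lem:RiccatiTime}), while the finite gap length condition \eqref{eq:finiteGapCond} guarantees that $Q_1, Q_2$ are absolutely convergent series and that the infinite product defining $W_j$ is well-defined.
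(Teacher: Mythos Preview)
Your proposal is correct and follows essentially the same route as the paper: implicit differentiation combined with the Riccati equations (Lemmas~\ref{lem:RiccatiTime} and~\ref{lem:xRiccati}), the product representation \eqref{eq:resolventProd}, the change of variables $d\mu_j/dy_j$, and the trace formulas. The only cosmetic difference is that you differentiate the relation $s_{\epsilon_j}(\mu_j)=1$ directly, whereas the paper differentiates $R(\mu_j)=0$; since the paper computes $\partial_x R|_{\mu_j}$ and $\partial_t R|_{\mu_j}$ by evaluating the Riccati equations at $s_{\epsilon_j}=1$ anyway, and since you correctly identify $\partial_z s_{\epsilon_j}|_{\mu_j}=i\,\partial_z R|_{\mu_j}$, the two computations are equivalent step by step.
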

\begin{proof}
According to Lemma~\ref{lem:xRiccati} and that $R(\mu_j)=0,s_{\epsilon_j}=1$, 
$$\partial_x R(z)|_{z=\mu_j(x)}=\epsilon_j(2\Re\varphi-2\mu_j).$$
Recall the normalized product representation $$R(z)=i\sqrt{\prod_\ell\frac{(\mu_\ell-z)^2}{(a_\ell-z)(b_\ell-z)}},$$
thus 
\begin{equation}\label{eq:diffInZ}\partial_z R(z)|_{z=\mu_j}=\frac{i}{\sqrt{(a_j-\mu_j)(b_j-\mu_j)}}\sqrt{\prod_{\ell\neq j}\frac{(\mu_\ell-\mu_j)^2}{(a_\ell-\mu_j)(b_\ell-\mu_j)}}.\end{equation}
For $\mu_j\in(a_j,b_j)$, by the implicit function theorem,
$$\frac{\partial \mu_j}{\partial x}=-\frac{\partial_x R|_{z=\mu_j}}{\partial_z R|_{z=\mu_j}}=2i\epsilon_j(\Re\varphi-\mu_j)\sqrt{(a_j-\mu_j)(b_j-\mu_j)}\sqrt{\prod_{\ell\neq j}\frac{(a_\ell-\mu_j)(b_\ell-\mu_j)}{(\mu_\ell-\mu_j)^2}}.$$
Note that $$\partial_x\mu_j=(b_j-a_j)\sin(2y_j)\partial_x y_j=2\epsilon_j\sqrt{(b_j-\mu_j)(\mu_j-a_j)}\partial_x y_j,$$
it follows that 
$$\partial_x y_j=-(\Re\varphi-\mu_j)\sqrt{\prod_{\ell\neq j}\frac{(a_\ell-\mu_j)(b_\ell-\mu_j)}{(\mu_\ell-\mu_j)^2}}=(-\Re\varphi+\mu_j)W_j.$$
This proves \eqref{eq:spaceDiff}.

Recall that $R=i\frac{(1-s_+)(1-s_-)}{1-s_+s_-}$ and that $s_{\epsilon_j}(\mu_j)=1.$ As a consequence of Lemma~\ref{lem:RiccatiTime}, we obtain $$\partial_t R(z)|_{z=\mu_j}=-i\partial_ts_{\epsilon_j}=\epsilon_j(b-c)-2\epsilon_j a,$$
where $a,b,c$ are given by Lemma~\ref{lem:RiccatiTime}. Note that $p=i\varphi,q=-i\overline{\varphi}$,
$$b-c=-\partial_x(\Im\varphi)-2z\Re\varphi,~2a=\Re\varphi^2+\Im\varphi^2+2z^2.$$
Let $z=\mu_j$ and apply the trace formulas in Lemma~\ref{lem:traceFormula}, we have the following
\begin{equation}\label{eq:timeDerivativeResolvent}
\begin{aligned}
\partial_t R(z)|_{z=\mu_j}&=\epsilon_j\{-\partial_x(\Im\varphi)-(\Im\varphi)^2-(\Re\varphi)^2-2z\Re\varphi-2z^2\}\\
&=\epsilon_j\{-\frac{1}{2}Q_2-\frac{1}{4}Q_1^2+\mu_jQ_1-2\mu_j^2\}.
\end{aligned}
\end{equation}

For $\mu_j\in(a_j,b_j)$, similarly, 
$$\frac{\partial\mu_j}{\partial t}=-\frac{\partial_t R|_{z=\mu_j}}{\partial_z R|_{z=\mu_j}}=i\partial_t R(z)|_{z=\mu_j}\sqrt{(a_j-\mu_j)(b_j-\mu_j)}\sqrt{\prod_{\ell\neq j}\frac{(a_\ell-\mu_j)(b_\ell-\mu_j)}{(\mu_\ell-\mu_j)^2}}.$$
Since $$\partial_t\mu_j=(b_j-a_j)\sin(2y_j)\partial_ty_j=2\epsilon_j\sqrt{(b_j-\mu_j)(\mu_j-a_j)}\partial_ty_j,$$
we have 
$$\partial_ty_j=\epsilon_j(-\frac{1}{2}\partial_tR(z)|_{z=\mu_j})\sqrt{\prod_{\ell\neq j}\frac{(a_\ell-\mu_j)(b_\ell-\mu_j)}{(\mu_\ell-\mu_j)^2}}=\left(\frac{Q_2}{4}+\frac{1}{8}Q_1^2-\frac{1}{2}\mu_jQ_1+\mu_j^2\right)W_j.$$
This proves \eqref{eq:timeDiff}.
\end{proof}

We prove that this is also true at gap edges. At this point, we can adapt some arguments from \cite{BDGL}, with $\beta$ in place of the spatial coordinate $x$ and $x$ in place of $t$; in other words, with changes of $\tau$ in the role of a translation flow, and the Dirac translation flow in the role of the KdV flow:

\begin{lemma} \label{lem:dyj/dxPsijy}
Fix a gap $(a_j, b_j)$ of the spectrum $\sfE$ of $\varphi\in\mathcal{R}(\sfE)$. If $\sfE$ has finite gap length and satisfies \eqref{eqn:gapConditionSubordinate}, 
then $y_j$ is a $C^1$ function of $x \in \bbR$ and obeys \eqref{eq:spaceDiff} for all $x\in \bbR$.
\end{lemma}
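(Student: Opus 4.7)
The plan is to extend Lemma \ref{lem:DubrovinEq} from the open set $\Omega_j := \{x \in \bbR : \mu_j(x) \in (a_j,b_j)\}$ to all of $\bbR$. I begin by verifying that $\Psi_j(y) = (Q_1(y)/2+\mu_j)W_j(\mu)$ defines a continuous function on $\mathcal{D}(\sfE)$: the finite gap length condition makes $Q_1$ absolutely convergent, while the hypothesis \eqref{eqn:gapConditionSubordinate} controls $\sum_{k\neq j}\gamma_k/\eta_{jk}$ and makes the product defining $W_j$ absolutely convergent and continuous in $\mu$. In particular $\Psi_j$ is finite and continuous even when $\mu_j \in \{a_j,b_j\}$, since the factor $(\mu_j - a_j)(b_j-\mu_j)$ does not appear in $W_j$.

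Now fix $x_0 \notin \Omega_j$, say $\mu_j(x_0) = a_j$ (the case $\mu_j(x_0)=b_j$ is symmetric). By continuity of $\mu_j$ and the intermediate value theorem, either $x_0$ is an accumulation point of $\Omega_j$, or $\mu_j \equiv a_j$ on some open neighborhood $I \ni x_0$. In the accumulation case, pick $x_n \in \Omega_j$ with $x_n \searrow x_0$; on any subinterval $(x_n,x_m) \subset \Omega_j$, integrating \eqref{eq:spaceDiff} gives $y_j(x_m) - y_j(x_n) = \int_{x_n}^{x_m} \Psi_j(y(s))\,ds$. Letting $x_n \to x_0^+$ and using joint continuity of $y$ together with continuity of $\Psi_j\circ y$ upgrades this to $y_j(x_0+h)-y_j(x_0) = \int_{x_0}^{x_0+h}\Psi_j(y(s))\,ds$, so the right derivative of $y_j$ at $x_0$ exists and equals $\Psi_j(y(x_0))$. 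The left side is handled analogously.

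In the plateau case, $y_j \equiv 0$ on $I$, so $y_j'(x_0) = 0$, and I must show $\Psi_j(y(x_0)) = 0$, equivalently $Q_1(y(x_0))/2 + a_j = 0$. By Lemma \ref{lemma:GapEdgeSubordinate} applied at $\beta=0$, there is a (unique up to scalar) eigensolution $U(\cdot,a_j)$ of $\Lambda_\varphi U = a_j U$, subordinate at both $\pm\infty$, with $U(x,a_j) \propto \binom{1}{1}$ for every $x \in I$. Uniqueness of the subordinate solution forces $U_1(x) = U_2(x)$ pointwise on $I$; adding the two component equations $iU_1' + \varphi U_2 = a_j U_1$ and $-iU_2' + \overline{\varphi} U_1 = a_j U_2$ then produces $(\varphi+\overline\varphi)U_1 = 2 a_j U_1$, i.e.\ $\Re\varphi(x) = a_j$ for $x \in I$. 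The trace formula \eqref{eq:traceEqRe} yields $Q_1(y(x_0))/2 = -a_j$, hence $\Psi_j(y(x_0)) = 0$ as required. Continuity of $y_j'$ across a transition between the two cases follows from this same identity together with continuity of $\Psi_j\circ y$.

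The main obstacle is precisely the plateau case, which has no analog in the Schr\"odinger/KdV treatment of \cite{BDGL}: there the translation flow is non-pausing at gap edges, whereas for Dirac the translation flow genuinely can stall. The resolution relies on the subordinate-solution characterization of Lemma \ref{lemma:GapEdgeSubordinate}, available under \eqref{eqn:gapConditionSubordinate}, which converts a pausing interval into the pointwise identity $\Re\varphi \equiv a_j$ and makes the Dubrovin equation automatically consistent ($y_j' = 0 = \Psi_j(y)$) on such intervals.
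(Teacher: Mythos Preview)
Your plateau argument is correct: on any interval where $\mu_j \equiv a_j$, Lemma~\ref{lemma:GapEdgeSubordinate} forces $U_1 \equiv U_2$ for the subordinate eigensolution, and the eigenvalue equation then gives $\Re\varphi \equiv a_j$, so $\Psi_j(y) = 0$ there. But your accumulation-case argument has a genuine gap. You write ``on any subinterval $(x_n,x_m) \subset \Omega_j$'' and then pass to the limit, tacitly assuming that intervals contained in $\Omega_j$ reach all the way down to $x_0$. Nothing rules out that the zero set $Z_j = \{x : \mu_j(x) \in \{a_j,b_j\}\}$ contains a sequence accumulating at $x_0$, or even a Cantor set; in such cases no interval $(x_0, x_0+h)$ lies entirely in $\Omega_j$, and your limiting step does not produce the integral formula on $(x_0,x_0+h)$. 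Your dichotomy only separates interior points of $Z_j$ from boundary points---it does not establish that $Z_j$ is locally a single interval.

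The paper sidesteps this entirely by using the rotation parameter $\beta$ rather than a direct one-variable analysis in $x$. Since $\partial_\beta y_j = -\tfrac12 W_j$ with $W_j > 0$ (Lemma~\ref{lem:diffEqPhase}), the $\beta$-flow is strictly monotone and never pauses at gap edges; this is exactly the property that the translation flow has in the Schr\"odinger/KdV setting but lacks here. The two-variable argument of \cite{BDGL} (their Lemma~3.5 and Proposition~3.2) then applies verbatim with $(\beta,x)$ in place of $(x,t)$, yielding first the a priori inequality $|y_j(\beta,b)-y_j(\beta,a)| \le \int_a^b|\Psi_j(y(\beta,s))|\,\rmd s$ and then equality, with no need to analyze the structure of $Z_j$. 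Your route can be repaired---extend the plateau identity to every accumulation point of $Z_j$ by noting that $U_1(x_n)=U_2(x_n)$ along $x_n\to x_0$ forces $(U_1-U_2)'(x_0)=0$ and hence $\Re\varphi(x_0)=a_j$, then prove a Lipschitz bound using that $y_j|_{Z_j}$ takes values in the discrete set $\pi\bbZ$---but this is substantially more than what you have written.
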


\begin{proof}
As in \cite[Lemma 3.5]{BDGL}, we have:
\begin{enumerate}[label={\rm(\alph*)}]
\item For any $a< b$,
\[
\lvert y_j(\beta,b) - y_j(\beta,a) \rvert \le \int_a^b \lvert \Psi_j(y(\beta,t)) \rvert \, \rmd t
\]
\item For any $a< b$, if $\Psi_j(y(\beta,t)) \ge 0$ for $t\in (a,b)$, then
\[
0 \le y_j(\beta,b) - y_j(\beta,a) \le \int_a^b \Psi_j(y(\beta,t)) \, \rmd t.
\]
\end{enumerate}
Next, just as in \cite[Prop. 3.2]{BDGL}, we conclude
\[
y_j(\beta,b) - y_j(\beta,a) = \int_a^b \Psi_j(y(\beta,t)) \, \rmd t
\]
so by the fundamental theorem of calculus, the proof is completed.
\end{proof}

%We want to clarify the situation $\mu_{j}(x)=E,$ $\sfE\in\{a_j,b_j\}$.

We need suitable Craig-type conditions \cite{Craig89} on the spectral data $\sfE=\bbR\setminus\left(\cup_{j\in\bbN}(a_j,b_j)\right)$ so that the vector field in \eqref{eq:spaceDiff} and \eqref{eq:timeDiff} are well defined for our purpose. Suppose that $\sfE$ satisfies the Craig-type conditions \eqref{eq:craigCond1}, \eqref{eq:craigCond2}, and \eqref{eq:craigCond3} for some $\delta > 0$.

Equip the tangent space of $\mathcal{D}(\sfE)$ with the norm $$\Vert y\Vert=\sup_{j}\gamma_j^{1/2}|y_j|,$$
where $y=(\cdots,y_j,\cdots)$. Let $\Psi$ be the vector field on $\mathcal{D}(\sfE)$ of the right side of \eqref{eq:spaceDiff} and $\Xi$ be the vector field given by the right side of \eqref{eq:timeDiff}. We are interested in the following initial value problem: 
\begin{equation}\label{eq:DubrovinFlow}\left\{\begin{aligned}
&\partial_x y=\Psi(y),~ \partial_t y=\Xi(y)\\
&y(0,0)=f\in\mathcal{D}(\sfE).
\end{aligned}
\right.
\end{equation}

\begin{prop}\label{prop:LipVecField}
    If $\sfE$ satisfies \eqref{eq:craigCond1}, \eqref{eq:craigCond2} and \eqref{eq:craigCond3}, then $\Psi$ and $\Xi$ are Lipschitz vector fields.
\end{prop}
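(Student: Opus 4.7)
The plan is to estimate the partial derivatives $\partial_{y_\ell}\Psi_j$ and $\partial_{y_\ell}\Xi_j$ and verify that the resulting Jacobians are bounded in the operator norm induced by $\|y\|=\sup_j\gamma_j^{1/2}|y_j|$. By the mean value theorem applied along straight-line paths in $\mathcal{D}(\sfE)$, Lipschitz continuity will reduce to showing
\[
\sup_j \sup_{y\in\mathcal{D}(\sfE)} \sum_\ell \Bigl(\frac{\gamma_j}{\gamma_\ell}\Bigr)^{1/2}\bigl|\partial_{y_\ell}\Psi_j(y)\bigr|<\infty,
\]
together with the analogous bound for $\Xi$. First I would collect preliminary estimates: $|\partial_{y_\ell}\mu_\ell|\le\gamma_\ell$ from $\mu_\ell=a_\ell+\gamma_\ell\sin^2 y_\ell$; the inequality $W_j\le C_j$ follows directly from the definitions, since $\mu_k\in[a_k,b_k]$ gives $|\mu_k-\mu_j|\ge\min(|a_k-\mu_j|,|b_k-\mu_j|)$; and $|Q_1|\le\sum_k\gamma_k<\infty$, $|Q_2|\le C\sum_k(1+\eta_{k0})\gamma_k<\infty$ both follow from \eqref{eq:craigCond1} (using $C_k\ge1$, so that $\sum\gamma_k^{1/2}$ is also summable). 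Consequently $|Q_1/2+\mu_j|\lesssim 1+\eta_{0j}$ and $|Q_2/4+Q_1^2/8-\mu_j Q_1/2+\mu_j^2|\lesssim (1+\eta_{0j})^2$.

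For the off-diagonal derivatives $\ell\neq j$, I would use the identity $\partial_{\mu_\ell}\log W_j=-(\mu_\ell-\mu_j)^{-1}$ to obtain
\[
\bigl|\partial_{y_\ell}\Psi_j\bigr|\le \gamma_\ell W_j\Bigl(1+\frac{|Q_1/2+\mu_j|}{|\mu_\ell-\mu_j|}\Bigr)\lesssim \gamma_\ell C_j\Bigl(1+\frac{1+\eta_{0j}}{\eta_{j\ell}}\Bigr).
\]
Multiplying by $(\gamma_j/\gamma_\ell)^{1/2}$ and summing, the first piece yields $C_j\gamma_j^{1/2}\sum_\ell\gamma_\ell^{1/2}$, bounded by $\sum_\ell C_\ell\gamma_\ell^{1/2}<\infty$ from \eqref{eq:craigCond1}; the second piece is controlled by \eqref{eq:craigCond2} (absorbing missing powers of $C_jC_\ell$ into the constant since $C_j,C_\ell\ge1$).

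The diagonal case $\ell=j$ will be more delicate. I would first exploit a key cancellation: since $Q_1/2=\sum_k(a_k+b_k)/2-\sum_k\mu_k$, one has $\partial_{y_j}(Q_1/2+\mu_j)=0$, so $\partial_{y_j}\Psi_j=(Q_1/2+\mu_j)\,\partial_{y_j}W_j$. Writing
\[
\partial_{\mu_j}\log W_j=\sum_{k\neq j}\Bigl[\frac{1}{\mu_k-\mu_j}-\frac{1}{2}\Bigl(\frac{1}{a_k-\mu_j}+\frac{1}{b_k-\mu_j}\Bigr)\Bigr],
\]
an algebraic computation (combining the three fractions over a common denominator and using $|\mu_k-c_k|\le\gamma_k/2$) shows that each summand is $O(\gamma_k/\eta_{jk}^2)$. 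I would then estimate
\[
\sum_{k\neq j}\frac{\gamma_k}{\eta_{jk}^2}\le\Bigl(\max_{k\neq j}\frac{\gamma_k^{1/2}}{\eta_{jk}}\Bigr)\sum_{k\neq j}\frac{\gamma_k^{1/2}}{\eta_{jk}},
\]
where the maximum is controlled by $C\gamma_j^{\delta-1/2}$ via \eqref{eq:craigCond3} and the sum by a constant times $\gamma_j^{-1/2}/[(1+\eta_{0j}^2)C_j^3]$ via \eqref{eq:craigCond2}. Multiplying by $|Q_1/2+\mu_j|\cdot W_j\cdot|\partial_{y_j}\mu_j|\lesssim(1+\eta_{0j})\,C_j\,\gamma_j$ leaves a uniformly bounded contribution of order $\gamma_j^\delta/[C_j^2(1+\eta_{0j})]$.

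The analysis of $\Xi_j$ will be parallel. The prefactor $Q_2/4+Q_1^2/8-\mu_j Q_1/2+\mu_j^2$ grows quadratically as $O((1+\eta_{0j})^2)$, matching the factor $(1+\eta_{0j}^2)$ in \eqref{eq:craigCond2}; its $y_j$-derivative equals $(2\mu_j-Q_1)\partial_{y_j}\mu_j=O((1+\eta_{0j})\gamma_j)$, contributing a term bounded by $C_j(1+\eta_{0j})\gamma_j$ that is handled again by \eqref{eq:craigCond1}. The remaining derivative structure is identical to the $\Psi$ case, with the factor of $(1+\eta_{0j})$ from the prefactor simply replaced by $(1+\eta_{0j})^2$, absorbed by \eqref{eq:craigCond2}. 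The main technical obstacle throughout will be the diagonal derivative $\partial_{y_j}W_j$: neither \eqref{eq:craigCond2} nor \eqref{eq:craigCond3} alone handles the sum $\sum_k\gamma_k/\eta_{jk}^2$, and the combination of max-bound and sum-bound encodes the precise spectral thickness required to make the Dubrovin vector fields globally Lipschitz.
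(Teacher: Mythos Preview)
Your approach matches the paper's: both compute the Jacobian entries and verify $\sup_j\sum_\ell(\gamma_j/\gamma_\ell)^{1/2}\bigl|\partial_{y_\ell}(\cdot)_j\bigr|<\infty$, both exploit the cancellation $\partial_{y_j}(Q_1/2+\mu_j)=0$, and both control $\partial_{y_j}W_j$ through the sum $\sum_{k\neq j}\gamma_k/\eta_{jk}^2$. A minor difference: for this last sum the paper bounds $\gamma_j\gamma_k/\eta_{jk}^2=(\gamma_j^{1/2}\gamma_k^{1/2}/\eta_{jk})^2$ using \eqref{eq:craigCond2} alone (each term and the whole sum are bounded by the same constant), whereas you split it as $\max\times\text{sum}$ and invoke \eqref{eq:craigCond3}; both routes work.

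There is one genuine oversight in your treatment of $\Xi$. For $\ell\neq j$, the derivative of the prefactor $P_j=Q_2/4+Q_1^2/8-\mu_jQ_1/2+\mu_j^2$ is
\[
\partial_{y_\ell}P_j=\gamma_\ell\sin(2y_\ell)\Bigl(\mu_j-\mu_\ell-\tfrac{Q_1}{2}\Bigr),
\]
so it carries a factor $|\mu_j-\mu_\ell|$ that is \emph{not} uniformly bounded (gaps can be arbitrarily far apart). In the $\Psi$ case, by contrast, $\partial_{y_\ell}(Q_1/2+\mu_j)=-\gamma_\ell\sin(2y_\ell)=O(\gamma_\ell)$, so ``the remaining derivative structure is identical'' is not quite correct. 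The patch is routine: use $|\mu_j-\mu_\ell|\lesssim 1+\eta_{0j}+\eta_{0\ell}$ and bound the resulting contribution
\[
C_j\gamma_j^{1/2}(1+\eta_{0j})\sum_{\ell}\gamma_\ell^{1/2}+C_j\gamma_j^{1/2}\sum_{\ell}(1+\eta_{0\ell})\gamma_\ell^{1/2}
\]
via \eqref{eq:craigCond1}. The paper singles out exactly this $\eta_{jk}$-in-numerator term in its explicit estimate for $|\partial_{y_k}\Xi_j|$.
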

\begin{proof}
Recall the trace formula $Q_1,Q_2$ defined in \eqref{eq:scalarField1} and \eqref{eq:scalarField2}. 
For the vector field $\Psi$, the $j$-th component is given by 
$$\Psi_j(y)=(Q_1/2+\mu_j)W_j.$$

Note that $$\begin{aligned}\frac{\partial W_j}{\partial y_j}=&\frac{\gamma_j\sin(2y_j)}{2}\sum_{k\neq j}\left(\prod_{k'\neq j,k}\sqrt{\frac{(a_{k'}-\mu_j)(b_{k'}-\mu_j)}{(\mu_{k'}-\mu_j)^2}}\right)\\
&\times\frac{(b_k-\mu_j)(\mu_k-a_k)+(a_k-\mu_j)(\mu_k-b_k)}{\sgn(\mu_j-\mu_k)(\mu_k-\mu_j)^2\sqrt{(a_k-\mu_j)(b_k-\mu_j)}}
\end{aligned}$$
and $$\left\vert\frac{\partial W_j}{\partial y_j}\right\vert\leq C_j^2\gamma_j\sum_k\frac{\gamma_k}{\eta_{jk}^2}.$$
For $k\neq j$,
$$\frac{\partial W_j}{\partial y_k}=\prod_{\ell\neq j}\sqrt{\frac{(a_\ell-\mu_j)(b_\ell-\mu_j)}{(\mu_\ell-\mu_j)^2}}\frac{\gamma_k\sin(2y_k)}{\mu_j-\mu_k}=W_j\frac{\gamma_k\sin(2y_k)}{\mu_j-\mu_k}$$
and 
$$\left|\frac{\partial W_j}{\partial y_k}\right|\leq \frac{C_j^2\gamma_k}{\eta_{jk}}.$$
For $k\neq j$, 
$$\frac{\partial \Psi_j}{\partial y_k}=\frac{1}{2}\gamma_kW_j\sin(2y_k)\left(-2+\frac{Q_1+2\mu_j}{|\mu_k-\mu_j|}\right).$$
For $k=j$,
$$\frac{\partial\Psi_j}{\partial y_j}=(Q_1/2+\mu_j)\partial_{y_j}W_j.$$
Consequently, $$\begin{aligned}&\left\vert\frac{\partial\Psi_j}{\partial y_k}\right\vert \leq C^2_j\gamma_k\left(1+\frac{\Vert Q_1\Vert_\infty+2\mu_j}{2\eta_{jk}}\right), k\neq j,\\
&\left\vert\frac{\partial \Psi_j}{\partial y_j}\right\vert\leq \gamma_jC_j^2(\Vert Q_1\Vert_\infty/2+\mu_j)\sum_{k\neq j}\frac{\gamma_k}{\eta^2_{jk}}.\end{aligned}$$
Then $$\begin{aligned}\Vert \Psi(y)-\Psi(\widetilde{y})\Vert&=\sup_j\gamma_j^{\frac{1}{2}}\Vert \Psi_j(y)-\Psi_j(\widetilde{y})\Vert\\
&\leq\sup_j\gamma_j^{\frac{1}{2}}\sum_{k}\left\Vert\frac{\partial\Psi_j}{\partial y_k}\right\Vert_{\infty}\Vert y_k-\widetilde{y}_k\Vert\\
&\leq \Vert y-\widetilde{y}\Vert\sup_j\sum_{k}\gamma_j^{\frac{1}{2}}\gamma_k^{-\frac{1}{2}}\left\Vert\frac{\partial\Psi_j}{\partial y_k}\right\Vert_{\infty}.
\end{aligned}$$
We split the sum on the left hand side of the last inequality into two groups $$\sum_{k}\gamma_j^{\frac{1}{2}}\gamma_k^{-\frac{1}{2}}\left\Vert\frac{\partial\Psi_j}{\partial y_k}\right\Vert_{\infty}=\sum_{k\neq j}\gamma_j^{\frac{1}{2}}\gamma_k^{-\frac{1}{2}}\left\Vert\frac{\partial\Psi_j}{\partial y_k}\right\Vert_{\infty}+\sum_{k=j}\gamma_j^{\frac{1}{2}}\gamma_k^{-\frac{1}{2}}\left\Vert\frac{\partial\Psi_j}{\partial y_k}\right\Vert_{\infty}.$$ For the first group, it is straightforward to verify that by \eqref{eq:craigCond2}
$$\sup_j\sum_{k\neq j}\gamma_j^{\frac{1}{2}}\gamma_k^{-\frac{1}{2}}\left\Vert\frac{\partial\Psi_j}{\partial y_k}\right\Vert_{\infty}\leq \sup_j\sum_{k\neq j}\gamma_j^{\frac{1}{2}}\gamma_k^{\frac{1}{2}}C_j^2 \left(1+\frac{\Vert Q_1\Vert_\infty+2(\gamma_0+\eta_{0j})}{2\eta_{jk}}\right)<\infty.$$
For the second group, by \eqref{eq:craigCond1} and \eqref{eq:craigCond2}, $$\begin{aligned}
\sup_j C_j^2(\Vert Q_1\Vert_\infty/2+\gamma_0+\eta_{0j})\sum_{k\neq j}\frac{\gamma_j\gamma_k}{\eta^2_{jk}}<\infty.
\end{aligned}$$
For the vector field $\Xi,$ according to  \eqref{eq:timeDiff} 
$$\Xi_j(y)=\left(\frac{Q_2}{4}+\frac{1}{8}Q_1^2-\frac{1}{2}\mu_jQ_1+\mu_j^2\right)W_j.$$
%2\epsilon_j(0,0)\left\{\frac{dr}{dx}+p^2+r^2-2p\mu_j+\mu_j^2\right\}W_j.$$
Then for $k\neq j$, 
$$\frac{\partial \Xi_j}{\partial y_k}=\gamma_k\sin(2y_k)(\mu_j-\mu_k-Q_1)W_j+\left(\frac{Q_2}{4}+\frac{1}{8}Q_1^2-\frac{1}{2}\mu_jQ_1+\mu_j^2\right)\partial_{y_k}W_j,$$
and for $k=j$,
$$\frac{\partial\Xi_j}{\partial y_j}=\gamma_j\sin(2y_j)(2\mu_j-\frac{3}{2}Q_1)W_j+\left(\frac{Q_2}{4}+\frac{1}{8}Q_1^2-\frac{1}{2}\mu_jQ_1+\mu_j^2\right)\partial_{y_j}W_j.$$
Therefore, for $k\neq j$, we have the estimates
$$\begin{aligned}\left\vert\frac{\partial\Xi_j}{\partial y_k}\right\vert&\leq \gamma_k C_j^2\left((\eta_{jk}+\Vert Q_1\Vert_\infty)+(\frac{\Vert Q_2\Vert_\infty}{4}+\frac{\Vert Q_1\Vert_\infty(|\mu_j|+\Vert Q_1\Vert_\infty/4)}{2}+\mu_j^2)/\eta_{jk}\right).\end{aligned}$$
For $k=j,$ $$\left|\frac{\partial\Xi_j}{\partial y_j}\right|\leq C_j^2\gamma_j\left(
2|\mu_j|+\frac{3}{2}\Vert Q_1\Vert_\infty
+\sum_{k}\frac{\gamma_k}{\eta_{jk}^2}(\frac{\Vert Q_2\Vert_\infty}{4}+\frac{\Vert Q_1\Vert_\infty(|\mu_j|+\Vert Q_1\Vert_\infty/4)}{2}+\mu_j^2)
\right).$$
It follows from above estimates that  $$\begin{aligned}
\Vert \Xi(y)-\Xi(\widetilde{y})\Vert&=\sup_j\gamma_j^{\frac{1}{2}}\Vert \Xi_j(y)-\Xi_j(\widetilde{y})\Vert\\
&\leq \sup_j\gamma_j^{\frac{1}{2}}\sum_k\left\vert\frac{\partial\Xi_j}{\partial y_k}\right\Vert_\infty\Vert y_k-\widetilde{y}_k\Vert\\
&\leq\Vert y-\widetilde{y}\Vert\sup_j\sum_k\gamma_j^{\frac{1}{2}}\gamma_k^{-\frac{1}{2}}\left\Vert\frac{\partial\Xi_j}{\partial y_k}\right\Vert_\infty.
\end{aligned}$$
Similarly, we split the sum on the right hand side of  the last inequality into two groups: $$\sum_k\gamma_j^{\frac{1}{2}}\gamma_k^{-\frac{1}{2}}\left\Vert\frac{\partial\Xi_j}{\partial y_k}\right\Vert_\infty=\sum_{k\neq j}\gamma_j^{\frac{1}{2}}\gamma_k^{-\frac{1}{2}}\left\Vert\frac{\partial\Xi_j}{\partial y_k}\right\Vert_\infty+\sum_{k=j}\gamma_j^{\frac{1}{2}}\gamma_k^{-\frac{1}{2}}\left\Vert\frac{\partial\Xi_j}{\partial y_k}\right\Vert_\infty.$$ 
For the first group, combining \eqref{eq:craigCond1}, \eqref{eq:craigCond2} and \eqref{eq:craigCond3} gives 
$$\begin{aligned}&\sup_j\sum_{k\neq j}\gamma_j^{\frac{1}{2}}\gamma_k^{-\frac{1}{2}}\left\Vert\frac{\partial\Xi_j}{\partial y_k}\right\Vert_\infty\\
&\leq \sup_j\sum_{k\neq j}\gamma_j^{\frac{1}{2}}\gamma_k^{\frac{1}{2}}C_j^2\left((\eta_{jk}+\Vert Q_1\Vert_\infty)+\frac{1}{\eta_{jk}}(\frac{\Vert Q_2\Vert_\infty}{4}+\frac{\Vert Q_1\Vert_\infty(|\mu_j|+\Vert Q_1\Vert_\infty/4)}{2}+\mu_j^2)\right)\\&<\infty.\end{aligned}$$
For the second group, it is straightforward to verify that under the conditions \eqref{eq:craigCond1} and \eqref{eq:craigCond2}
$$
\sup_{j}C_j^2\gamma_j\left(
2|\mu_j| + \frac{3}{2}\Vert Q_1\Vert_\infty + \sum_{k}\frac{\gamma_k}{\eta_{jk}^2}
\left(\frac{\Vert Q_2\Vert_\infty}{4} + \frac{\Vert Q_1\Vert_\infty(|\mu_j|+\Vert Q_1\Vert_\infty/4)}{2}+\mu_j^2\right)
\right)<\infty.$$
\end{proof}
We need to discuss the regularity of the function $y(x,t)$ defined in \eqref{eq:argumentDivisor}.
\begin{prop}\label{prop:regularityOfDirichletData}
Suppose $\varphi(x,t)$ obeys \eqref{eq:nls} and satisfies $\varphi,\partial_x\varphi\in L^\infty(\bbR\times [0,T])$ for some $T>0$. 
Then, the flow $y(x,t)$ satisfies the following properties:
\begin{enumerate}[label={\rm(\alph*)}]
\item $y(x,t)$ is jointly continuous in $(x,t)$;
\item $y(x,t)$ is differentiable in $t$ and obeys $\partial_t y=\Xi(y)$.
\end{enumerate}
\end{prop}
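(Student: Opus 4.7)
The plan is to establish joint continuity via the Riccati equations for the Schur functions, then obtain pointwise differentiability in $t$ first at interior points (via implicit differentiation, as in Lemma \ref{lem:DubrovinEq}), and finally extend the Dubrovin equation to the gap edges by the integral-comparison argument underlying Lemma \ref{lem:dyj/dxPsijy}.

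For part (a), I would use the Riccati equations of Lemmas \ref{lem:xRiccati} and \ref{lem:RiccatiTime}: since $\varphi, \partial_x\varphi\in L^\infty(\bbR\times[0,T])$, their coefficients are uniformly bounded on compact $z$-sets of $\bbC_+$. Standard ODE theory then yields that for each fixed $z\in\bbC_+$ the Schur functions $s_\pm(z;x,t)$ are jointly $C^1$ in $(x,t)$, and in particular jointly continuous. By the product representation \eqref{eq:resolventProd} of $R(z;x,t)$, together with the fact that $(\mu_j,\epsilon_j)$ are determined by the zero locations of $R$ and by which of $s_\pm$ equals $1$ there, the Dirichlet data depend jointly continuously on $(x,t)$; hence so does $y_j$ via the parametrization \eqref{eq:argumentDivisor}.

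For part (b), at any $(x_0,t_0)$ with $\mu_j(x_0,t_0)\in(a_j,b_j)$, joint continuity from (a) gives a $t$-neighborhood on which $\mu_j$ stays in the open gap. Implicit differentiation of the relation $R(\mu_j(x_0,t);x_0,t)=0$, combined with the computation \eqref{eq:timeDerivativeResolvent} of $\partial_t R|_{z=\mu_j}$ and the expression \eqref{eq:diffInZ} for $\partial_z R|_{z=\mu_j}$, reproduces the calculation in the proof of Lemma \ref{lem:DubrovinEq} and yields $\partial_t y_j(x_0,t)=\Xi_j(y(x_0,t))$ pointwise on this neighborhood. The main obstacle is extending this identity through times at which $\mu_j(x,t)\in\{a_j,b_j\}$, where the implicit function theorem is unavailable. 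My strategy is to adapt the BDGL-style argument already used in Lemma \ref{lem:dyj/dxPsijy}: first establish the one-sided bound
\[
|y_j(x,t_2)-y_j(x,t_1)|\le\int_{t_1}^{t_2}|\Xi_j(y(x,s))|\,ds
\]
by a limiting argument over the open set where $\mu_j$ is interior (using continuity of $y$ at edges and boundedness of $\Xi\circ y$); then, on any subinterval where $\Xi_j(y(x,\cdot))$ has definite sign, upgrade this to the signed identity
\[
y_j(x,t_2)-y_j(x,t_1)=\int_{t_1}^{t_2}\Xi_j(y(x,s))\,ds;
\]
and finally piece such subintervals together to obtain the integral identity globally. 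Since $\Xi\circ y$ is continuous by Proposition \ref{prop:LipVecField} and part (a), the fundamental theorem of calculus then delivers pointwise differentiability $\partial_t y_j=\Xi_j(y)$ everywhere.

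The main conceptual subtlety is the analog of the ``non-pausing'' behavior that Lemma \ref{lemma:GapEdgeSubordinate} provides for the rotation flow: one expects the subordinate-solution characterization to force $\mu_j(x,\cdot)$ not to rest at a gap edge on any positive-measure set, but the integral-comparison strategy sidesteps proving this directly in the variable $t$. Instead, the Dubrovin equation at gap edges is recovered purely from Lipschitzness of $\Xi$ (Proposition \ref{prop:LipVecField}), joint continuity of $y$, and the pointwise identity on the open set where $\mu_j$ is interior, which is the most delicate part of the argument.
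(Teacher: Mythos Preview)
Your proposal is correct and follows essentially the same approach as the paper. For part (b) you invoke exactly the BDGL integral-comparison mechanism that the paper cites (the paper simply says the edge argument ``follows verbatim the proof of \cite[Proposition 3.2]{BDGL}''); for part (a) the paper gives a slightly more direct continuity argument at gap edges---using that $R(x_0,t_0,\cdot)$ is strictly increasing on $(a_j,b_j)$ and continuous in $t$ to trap $\mu_j(x_0,t)$ near $\mu_j(x_0,t_0)$---whereas you go through continuity of the Schur functions, but both routes are standard and equivalent in content.
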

Since the proof of differentiability at a spectral edge follows verbatim the proof of \cite[Proposition 3.2]{BDGL} we focus on the proof of continuity. 
\begin{proof}
According to \eqref{eq:DubrovinFlow} and Proposition~\ref{prop:LipVecField}, it remains to clarify the case $\mu_j(x,t)\in \{a_j,b_j\}$.
Recalling \eqref{eq:resolventFunc}, we conclude that $R(x,t,z)$ is differentiable with respect to $t$. Suppose that $\mu_j(x_0,t_0)=b_j$, the other case can be dealt with similarly. Since $R(x_0,t_0,z)$ is strictly increasing in $(a_j,b_j)$, for any $\epsilon>0$, $R(x_0,t_0,\mu_j-\epsilon)<0.$ By continuity of $R$ in $t$, there exists $\delta>0$ such that $R(x_0,t,b_j-\epsilon)<0$ for every $|t-t_0|<\delta$. This implies that $\mu_j(x_0,t_0)-\epsilon\leq\mu_j(x_0,t)\leq \mu_j(x_0,t_0)$ for $|t-t_0|<\delta$. Therefore, $\mu_j(x,t)$ is continuous in $t.$

\end{proof}

\section{Linearization of the Translation and NLS Flow}
\label{linearabel}

We would like to emphasize that the linearization of the translation flow for NLS is more difficult than the linearization of KdV flow, due to the extra component of the Abel map. 
The following example illustrates the situation and highlights some of the key new features.

\begin{ex}\label{ex:ex1}
For $c>0$, the reflectionless Dirac operators with $\sfE = \bbR \setminus (-c,c)$ are constants $\varphi(x) = c e^{i\beta}$, $\beta\in\bbR$. The corresponding Schur functions are
\[
s_+(z) = e^{i\beta} \left( \frac zc - \sqrt{ \frac{z^2}{c^2} - 1} \right)
\]
with branch of square root on $\bbC \setminus \sfE$ such that $s_+(z) \to 0$ as $z\to \infty$.

The solutions of the NLS with constant initial data $\varphi \equiv e^{i\beta} c$ are
\[
\varphi(x,t) = e^{-2ic^2 t} e^{i\beta} c.
\]
This is solved by noting that $\varphi(x, t)$ remains independent of $x$ for each fixed $t$ and solving the simple ODE that results from dropping the $\partial_x^2$ term from NLS. 
Note that in this case the character group is trivial. The $t$-dependence is entirely captured by the change of $\tau$ with $t$, which we read off as $\tau(t) = e^{-2ic^2 t} e^{i\beta}$.
\end{ex}

However, we will see the extra freedom provided by the rotation coordinate allows us to correctly formulate the uniqueness and almost periodicity through the trace formula \eqref{eq:traceEqRe}. In fact, although our trace formula in Lemma~\ref{lem:traceFormula} shows the difference between real \eqref{eq:traceEqRe} and imaginary \eqref{eq:traceEqIm} parts as in some other literature, compare \cite{ME97,LevitanSargsjan}, a simple rotation which flips the real and imaginary parts indicates that they can actually be treated in the similar way, see Section~\ref{sec:mainproof}. 
For technical reason, we first show the linearization of the character component.
\subsection{Linearization of the Character Flow}
The following result should hold for AKNS hierarchies, but we will only use the case $k=0,1$ which correspond to space and time variables respectively. Since we could not find the exact form that works in our setting, we decide to give a proof. Original proof of approximation theorem for Jacobi and Sturm-Liouville operators  can be found in \cite{SY97,EVY19}. See also \cite[Section 4]{EVY19} for discussions of convergence of many interesting objects.
\begin{theorem}\label{thm:approxFreqCharFlow} Let $\sfE\subseteq\bbR$ be a closed set satisfying the $k$-GLC \eqref{eq:kGlc} whose associated domain $\Omega=\bbC\setminus\sfE$ is a regular PWS. Let $\Theta_k(\lambda)$ be the Abelian integrals of the second kind of order $k=0,1,2,\cdots$ such that \footnote{It is easy to check that the $k$-GLC and Widom condition \eqref{eq:WidomCond} implies the convergence of the integral.}
 $$\Im\Theta_k(\lambda)
=\Im\lambda^{k+1}+\int_{\bbR\setminus \sfE}G(\xi,\lambda)\, \rmd\xi^{k+1}.$$ Then the character component of the Abel map $\mathcal{A}_c:\mathcal{D}(\sfE)\to \pi_1(\Omega)^*$ conjugates the flow $\varphi(x_0,t_0)\mapsto \varphi(x_0+x,t_0+t)$ into a linear flow in $\pi_1(\Omega)^*$. Moreover, the $B$-periods of $\Theta_k, k=0,1$ give the full set of frequencies of this flow.
\end{theorem}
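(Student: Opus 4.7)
The plan is to follow the Sodin--Yuditskii approximation paradigm of \cite{SY97,EVY19}, adapted to the Dirac/NLS setting. I would first approximate $\sfE$ by finite gap spectra $\sfE^{(n)}$ obtained by closing all but the gaps $(a_j,b_j)$, $0\le j\le n$. For each $\sfE^{(n)}$ the associated Denjoy domain double-covers a compact hyperelliptic Riemann surface $\mathcal{R}^{(n)}$ of genus $n$, and the classical theory of Dubrovin and Its--Matveev then shows that the Dubrovin flow for the Dirichlet data, composed with the Abel map of holomorphic differentials, straightens into a linear trajectory on the Jacobian $\mathrm{Jac}(\mathcal{R}^{(n)})$. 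The velocity vectors of the $x$- and $t$-flows can be identified with the $B$-period vectors of the normalized Abelian differentials of the second kind $\rmd\Theta_0^{(n)}$ (with a double pole at $\infty$) and $\rmd\Theta_1^{(n)}$ (triple pole at $\infty$), by comparing our Dubrovin vector fields in Lemma~\ref{lem:DubrovinEq} with the classical finite gap formulas through the Riemann bilinear relations.

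The second stage of the proof would be the passage to the limit. Under the $k$-GLC together with the Widom condition, the representation $\Im\Theta_k(\lambda)=\Im\lambda^{k+1}+\int_{\bbR\setminus\sfE}G(\xi,\lambda)\,\rmd\xi^{k+1}$ converges absolutely, so $\Theta_k^{(n)}\to\Theta_k$ locally uniformly on $\Omega$, and consequently the $B$-periods $\oint_{l_k}\rmd\Theta_j^{(n)}$ converge, for each fixed $k$, to the corresponding $B$-periods of $\rmd\Theta_j$. On the divisor side, Proposition~\ref{prop:LipVecField} supplies a uniform Lipschitz bound for the vector fields $\Psi,\Xi$ under the Craig-type conditions, and the bound transfers to $\sfE^{(n)}$ because closing a gap merely suppresses the corresponding component of the field. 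This uniform control provides Gronwall-type estimates allowing one to approximate the divisor flow on $\sfE$ by the divisor flows on $\sfE^{(n)}$ in the topology of $\mathcal{D}(\sfE)$. Since the character component $\mathcal{A}_c:\mathcal{D}(\sfE)\to\pi_1(\Omega)^*$ is a homeomorphism by Theorem~\ref{thm:JacobiInver}, convergence of divisors yields convergence of characters, and the finite gap linearization passes to the limit coordinate by coordinate:
\[
\mathcal{A}_c[\mathcal{B}(\varphi(\cdot+x,\cdot+t))](l_k)=\mathcal{A}_c[\mathcal{B}(\varphi)](l_k)+x\oint_{l_k}\rmd\Theta_0+t\oint_{l_k}\rmd\Theta_1\pmod{\bbZ}.
\]

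The hard part will be the joint control of the infinite-genus approximation: one must choose the exhaustion $\sfE^{(n)}\nearrow\sfE$ so that the Widom and $k$-GLC conditions are preserved uniformly in $n$, and so that the tail of the character (over loops $l_k$ with $k>n$) is equicontinuous. I would reuse the harmonic-measure majorant established in the proof of Theorem~\ref{thm:identifyRotCord}, now applied to $\omega(\rmd t,\sfE_k)$ rather than $\omega(\rmd t,\sfE_\infty)$, in order to bound tails of the Abel map uniformly by an absolutely convergent series controlled by the Widom sum and $\sum_j\gamma_j^{k+1}$. A secondary technical point, which I expect to be straightforward but requires care, is matching the harmonic-measure expression for $\mathcal{A}_c(l_k)$ in \eqref{eq:Abel2} with the classical finite-gap Abel map image via holomorphic differentials, so that the limiting frequencies coincide with the $B$-periods of $\Theta_0$ and $\Theta_1$ respectively; this identification is an instance of the Riemann bilinear relations combined with the normalization of the complex Green's function $\Phi$.
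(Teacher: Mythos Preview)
Your overall architecture---finite gap approximation, linearization on each $\mathcal{R}^{(n)}$ via the classical Abel--Jacobi theory, then passage to the limit---matches the paper's. The identification of the frequencies with $B$-periods of $\Theta_k$ via convergence $\Theta_k^{(n)}\to\Theta_k$ is also the paper's route, and the paper carries this out through convergence of Green's functions $G^{(N)}\to G$ on compacts (Landkof) together with the $k$-GLC to control the integral $\int_{\bbR\setminus\sfE}G(\xi,\lambda)\,\rmd\xi^{k+1}$.

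There is, however, a hypothesis mismatch in your second stage. You propose to control the convergence of divisor flows through the Lipschitz bound of Proposition~\ref{prop:LipVecField} and Gronwall. That proposition requires the Craig-type conditions \eqref{eq:craigCond1}--\eqref{eq:craigCond3}, which are strictly stronger than the hypotheses of the theorem (only $k$-GLC and regular PWS). Under the stated hypotheses you have no uniform Lipschitz constant for $\Psi,\Xi$, so the Gronwall step is not available. The paper sidesteps this entirely: it never compares divisor trajectories. Instead it works directly with the harmonic-measure formula \eqref{eq:Abel1} for $\mathcal{A}_c^{(N)}$ and $\mathcal{A}_c$, and proves the coordinatewise convergence $\mathcal{A}_c^{(N)}\to\mathcal{A}_c$ by (i) locally uniform convergence $\omega(\cdot,\sfE_j^{(N)},\Omega^{(N)})\to\omega(\cdot,\sfE_j,\Omega)$, and (ii) a majorant for the series $\sum_i\int_{a_i}^{\mu_i}\omega(\rmd\xi,\sfE_j)$ obtained from the maximum principle: choosing nested loops $\Gamma_j^\pm$ around $\sfE_j$ one gets $\max\{\omega(\lambda,\sfE_j),1-\omega(\lambda,\sfE_j)\}\le h_i/\min\{m_j^+,m_j^-\}$ on the $i$th gap, so the Widom sum $\sum_i h_i<\infty$ alone dominates the tail. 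This is precisely the majorant you allude to from the proof of Theorem~\ref{thm:identifyRotCord}, but it is the \emph{primary} mechanism here, not a secondary tail estimate. If you promote that argument and drop the Gronwall step, your proof goes through under the stated hypotheses.
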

\begin{proof} 
Let $\sfE^{(N)}=\bbR\setminus\left(\cup_{1\leq j\leq N}(a_j,b_j)\right)$ and $\Omega^{(N)}=\bbC\setminus \sfE^{(N)}$. Let $\{l_j\}$ be a set of generators of $\pi_1(\Omega)$ and note that $\{l_j:1\leq j\leq N\}$ is a set of generators of $\pi_1(\Omega^{(N)})$. A character $\alpha^{(N)}\in\pi_1(\Omega^{(N)})^*$ can be naturally extended as a character in $\pi_1(\Omega)^*$ by:
\begin{equation}\label{eq:extendDivisor}
\alpha^{(N)}(l_j)=0, \quad j >N.
\end{equation}
This provides a natural embedding $\pi_1(\Omega^{(N)})^*\hookrightarrow\pi_1(\Omega)^*$. A divisor $D^{(N)}\in\mathcal{D}(\sfE^{(N)})$ can be extended as a divisor in $\mathcal{D}(\sfE)$ by simply taking $D=D^{(N)}\cup\{b_j:j>N\}$. Conversely, a divisor $D\in \mathcal{D}(\sfE)$ can be projected to a divisor $D^{(N)}\in \mathcal{D}(\sfE^{(N)})$ in this manner. It follows that 
\begin{equation}\label{eq:divisorCharConverge}
\alpha^{(N)}\to\alpha,\quad D^{(N)}\to D\quad \text{as}\quad  N\to\infty.
\end{equation}

Let $\mathcal{A}_c^{(N)}:\mathcal{D}(\sfE^{(N)})\to\pi_1(\Omega^{(N)})^*$ be the Abel map in the finite gap case. It can be extended as above a map $\mathcal{D}(\sfE)\to\pi_1(\Omega)^*$. Our first goal is to show that 
\begin{equation}\label{eq:abelMapConverge}
\mathcal{A}_c^{(N)}\to\mathcal{A}_c\quad\text{and} \quad w^{(N)}_{k,j}\to w_{k,j}\quad \text{as}\quad N\to\infty,
\end{equation}
where $w^{(N)}_{k,j}$ and $w_{k,j}$ are defined as follows.
Normalized Abelian integrals of second kind on $\Omega^{(N)}$ are defined as\footnote{These Abelian integrals are normalized so that the first $N$ $B$-periods coincide with those of $\Theta_k$'s, see \cite{GesztesyYuditskii2006} for more details.} $$\Im\Theta^{(N)}_k=\Im\lambda^{k+1}+\int_{\bbR\setminus \sfE^{(N)}}G^{(N)}(\xi,\lambda) \, \rmd\xi^{k+1},$$
where $G^{(N)}(\xi,\lambda)$ is the potential theory Green's function of the domain $\Omega^{(N)}$. It is known that $\Omega^{(N)}$'s are always regular and of Parreau-Widom type and obeys DCT. $G^{(N)}(\xi,\lambda)$ vanishes on $\sfE^{(N)}$ and concave in each component of $\bbR\setminus \sfE^{(N)}$. It follows that the harmonic conjugate $\widetilde{G}^{(N)}(\xi,\lambda)$ of $G^{(N)}$ is non-decreasing on each component of $\sfE^{(N)}$ and remains constant in each component of $\bbR\setminus \sfE^{(N)}$. Then $2w_{k,j}^{(N)},2w_{k,j}$ are indeed the total variations of $\Theta^{(N)}_k,\Theta_k$ along the closed curve $l_j$. 

To show the first convergence of \eqref{eq:abelMapConverge}, we need to show that for every $j$, 
$$
\sum_{1\leq i\leq N}\frac{\epsilon_i}{2}\int_{a_i}^{\mu_i}\omega(\rmd\xi,\sfE^{(N)}_j,\Omega^{(N)})\to\sum_{i}\frac{\epsilon_i}{2}
\int_{a_i}^{\mu_i}\omega(\rmd\xi,\sfE_j,\Omega)\quad \text{as}\quad N\to\infty.$$
It suffices to prove the convergence $\omega(\xi,\sfE_j^{(N)},\Omega^{(N)})\to\omega(\xi,\sfE_j,\Omega)$ for every $j$ and that the sequence $\sum_{1\leq i\leq N}\frac{\epsilon_i}{2}\int_{a_i}^{\mu_j}\omega(\rmd\xi,\sfE^{(N)}_j,\Omega^{(N)})$ is majorized by a convergent series.
Since $\sfE$ is homogeneous, it follows that $\Omega$ is regular in the sense of potential theory and of Parreau-Widom type, compare \cite{SY97}.
Consider the harmonic functions $\omega(\xi,\sfE^{(N)}_j,\Omega^{(N)})$ and $\omega(\lambda,\sfE_j,\Omega)$, since $\Omega^{(N)},\Omega$ are regular, it follows from \cite[Theorem 5.14]{Landof1972} that $\omega(\lambda,\sfE^{(N)}_j,\Omega^{(N)})$ converges to $\omega(\lambda,\sfE_j,\Omega)$ uniformly on compacts of $\Omega$.

Let $\xi_*\in (a_1,b_1)$ and $N>j.$ The choice of $(a_1,b_1)$ is nothing essential. Let $\Gamma_j^\pm$ be loops indicated by the following picture 
\begin{figure}
\begin{tikzpicture}
\draw[dotted] (-3,0) -- (-2.1,0);
\draw[dotted] (2.1,0) -- (3,0);
\draw (3,0)--(4,0);
\draw[dotted] (4.1,0) -- (6,0);
\draw (6,0)--(7,0);
\draw (-2,0) -- (-0.9,0);
\draw[dotted] (-0.7,0) -- (0.7,0);
\draw (0.9,0)--(2,0);
\draw (2.5,0) ellipse (2.2cm and 1cm);
\draw (2.5,0) ellipse (3cm and 1.5cm);
\node at (6,-0.4) {$b_j$};
\node at (4,-0.4) {$a_j$};
\node at (-0.9,-0.4) {$a_1$};
\node at (0.9,-0.4) {$b_1$};
\node at (5,1) {$\Gamma_j^+$};
\node at (3.5,0.2) {$\Gamma_j^-$};
\node at (0,0) {$\xi_*$};
\node at (2.5,1.5) {$\rightarrow$};
\node at (2.5,1) {$\leftarrow$}; 
\node at (2.4,0.4) {$\Pi_j^-$};
\node at (5.8,1) {$\Pi_j^+$};
\end{tikzpicture}
\caption{closed loop}
\end{figure}
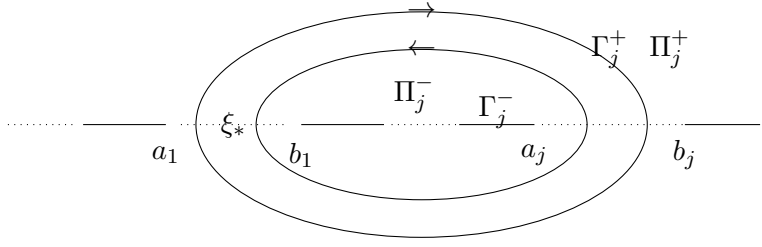
Let $\Pi_j^+$ be the connected component vicinity of $\infty$ that lies on the left side of $\Gamma_j^+$ and $\Pi_j^-$ be the vicinity of $\sfE_j^{(N)}$ that lies on the left side of $\Gamma_j^-$. Assume that $\Pi_j^+\cap\Pi_j^-=\Gamma_j^+\cap\Gamma_j^-=\emptyset$ and  $\xi_*\notin\Gamma_j^\pm$. 
Define $m_j^\pm=\min_{\lambda\in\Gamma_j^\pm}G(\xi_*,\lambda).$
By the Maximum Principle for harmonic functions on $\Pi_j^\pm\cap \Omega$,
$$\begin{aligned}
&m_j^-\omega(\lambda,E_j)\leq G(\xi_*,\lambda), \quad \lambda\in\Pi_j^+\\
&m_j^+(1-\omega(\lambda,E_j)\leq G(\xi_*,\lambda),\quad \lambda\in \Pi_j^-.
\end{aligned}$$
Indeed, these inequalities can be easily verified on the boundaries of $\Pi_j^\pm$ and $\Omega$ and thus follow from the maximum principle for harmonic functions. 
For each $\lambda\in(a_i,b_i)$ it follows that 
$$\max\{\omega(\lambda,E_j),1-\omega(\lambda,E_j)\}\leq \frac{G(\xi_*,\lambda)}{\min\{m_j^+,m_j^-\}}\leq \frac{h_i}{\min\{m_j^+,m_j^-\}},$$
since $h_i$ is the critical value of $G(\xi_*,\lambda)$ in $(a_i,b_i)$.  Therefore, the series \begin{equation}\label{eq:majorantSeries}\frac{1}{\min\{m_j^+,m_j^-\}}\sum_ih_i
\end{equation} is the majorant since $\Omega$ is of Parreau-Widom type and thus the conditon $\sum_i h_i<\infty$ holds. The linearization property of the Abel map follows from the constructions of normalized Abelian differentials and the Dubrovin system of differential equations. This can be made explicit in the finite gap case by \cite[Theorem F.10]{GH03}. The infinite gap case follows from above approximation process.

Our next goal is to prove the second convergence in \eqref{eq:abelMapConverge}, namely, the convergence of frequencies.
It is known that in the finite gap case, $\Theta^{(N)}_k$ coincides with the normalized Abelian integrals of second kind of order $k$.  Then the computation \cite[eq. (F.46)]{GH03} shows that the frequency vector $\partial_{t_k}\mathcal{A}_c(D(x,t))$ equals the $B$ periods of the normalized Abelian integral of the second of order $k$, where we interpret $t_0=x,t_1=t$. One may refer to \cite{GH03} for an argument through Riemann-Theta function, or \cite{SY1995} for an argument through conformal mapping. 

For the case $k=0$ we can understand $\Theta_k^{(N)},\Theta_k$ as conformal mappings from the upper half plane $\bbC_+$ to some comb domains $\Pi_N,\Pi$ and majorants of a symmetric class $K_\varphi$ \cite{LevinPart2}. Let $v^{(N)}_k,v_k$ be the imaginary parts of $\Theta^{(N)}_k,\Theta_k$ respectively. Then the convergence \begin{equation}\label{eq:convergeOfImMaj}
v_k^{(N)}\to v_k\quad \text{as}\quad N\to\infty
\end{equation}
indicates that variation of the real parts of $\Theta^{(N)}_k$ converges to the variation of the real part of $\Theta_k$. However, this argument fails on the case $k>0$ since $\Theta_k$ is not a conformal mapping from $\bbC_+$ onto such a comb domain. Interested reader may refer to \cite{SY1995} for an argument of convergence \eqref{eq:convergeOfImMaj} via \cite[Theorem 2.6]{LevinPart2} and \cite[Theorem 3.2]{LevinPart3} in the case $k=0$.

In the general case $k\geq 0$, to prove \eqref{eq:convergeOfImMaj}, by \cite[Theorem 5.14]{Landof1972} it suffices to prove 
\begin{equation}\label{eq:convergeOfGreenFunc}
\lim\limits_{N\to\infty}G^{(N)}(\xi,\lambda)\to G(\xi,\lambda).
\end{equation}
But this is a consequence of the convergence of the corresponding harmonic measures on compacts of a regular domain $\Omega$ we have discussed and the condition $$\int_{\bbR\setminus \sfE}G(\xi,\lambda) \, \rmd\xi^{k+1}<\infty.$$ This completes the proof of the theorem.
\end{proof}
The following result follows as a direct consequence.
\begin{coro}\label{thm:characterFlow}
There exists $\eta,\eta^{(1)}\in\bbR^\infty$ such that the image of  of the translation flow 
$\varphi(x_0,t_0)\to\varphi(x_0+x,t_0+t)$ under the Abel map $\mathcal{A}_c$ given by Theorem~\ref{thm:JacobiInver} in the character group $\pi_1(\Omega)^*$ is a linear flow
$$\zeta+x\eta+t\eta^{(1)},$$
where $\zeta=\mathcal{A}_c\circ\mathcal{B}(\varphi)\in\pi_1(\Omega)^*$ is a character independent of $x,t$.
\end{coro}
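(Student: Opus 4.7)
The plan is to deduce the corollary almost directly from Theorem~\ref{thm:approxFreqCharFlow}, which already contains the two ingredients we need: linearization of the space-time flow under $\mathcal{A}_c$, and the identification of the frequency vectors with $B$-periods of the Abelian integrals of the second kind $\Theta_0$ and $\Theta_1$. The corollary is essentially a packaging of that statement into the form needed in Theorem~\ref{thm:main}, with the initial phase singled out and the two frequency vectors named $\eta$ and $\eta^{(1)}$.

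First I would record the Dubrovin picture: the initial potential $\varphi$ determines a divisor $\mathcal{B}(\varphi)\in\mathcal{D}(\sfE)$ via the trace map from \eqref{eq:traceFormula1}, and its image under the character component of the generalized Abel map gives a distinguished point $\zeta:=\mathcal{A}_c\circ\mathcal{B}(\varphi)\in \pi_1(\Omega)^*$. Next I would invoke Theorem~\ref{thm:approxFreqCharFlow}, applied to the two commuting flows (translation, corresponding to $k=0$, and NLS, corresponding to $k=1$ in the AKNS hierarchy), to conclude that the composed map
\[
(x,t)\mapsto \mathcal{A}_c\circ\mathcal{B}\bigl(\varphi(\cdot+x,t)\bigr)
\]
is an affine function of $(x,t)$ into $\pi_1(\Omega)^*$, hence of the form $\zeta+x\eta+t\eta^{(1)}$ for some $\eta,\eta^{(1)}\in \bbR^\infty$.

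To identify the frequency vectors explicitly, I would use the second assertion of Theorem~\ref{thm:approxFreqCharFlow}: the vector $\eta$ has components equal to the $B$-periods of $\Theta_0$ along the generators $\{l_k\}$ of $\pi_1(\Omega)$ specified in Figure~1, and analogously $\eta^{(1)}$ is the vector of $B$-periods of $\Theta_1$. In the finite-gap truncation $\sfE^{(N)}$ this is standard, and can be read off from the classical formulas (e.g. \cite[Theorem F.10]{GH03}) describing the NLS/translation flow as linearization of the Abel map on the Jacobian with frequencies given by normalized second-kind differentials; the infinite-gap case then follows from the approximation scheme $\mathcal{A}_c^{(N)}\to \mathcal{A}_c$ and $w_{k,j}^{(N)}\to w_{k,j}$ already established in the proof of Theorem~\ref{thm:approxFreqCharFlow}.

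Since the corollary is a packaging statement, there is no substantive obstacle here beyond the one overcome in Theorem~\ref{thm:approxFreqCharFlow} itself, namely justifying the passage to the infinite-gap limit using $k$-GLC for $k=0,1$ and the Widom condition; those hypotheses were embedded in that theorem, so in the corollary they are simply inherited. The only nontrivial bookkeeping is to confirm that the initial value of the linear flow is exactly $\zeta=\mathcal{A}_c\circ\mathcal{B}(\varphi)$, which is immediate from evaluating at $(x,t)=(0,0)$.
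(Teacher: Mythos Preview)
Your proposal is correct and takes essentially the same approach as the paper: the paper's proof is a single line identifying $\eta_j$ with the $B_j$-period of $\Theta_0$ and $\eta^{(1)}_j$ with the $B_j$-period of $\Theta_1$, which is exactly what you extract from Theorem~\ref{thm:approxFreqCharFlow}.
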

\begin{proof}
It can be identified that $\eta_j$ equals the $B_j$ period of $\Theta_0$ and $\eta^{(1)}_j$ equals the $B_j$ period of $\Theta_1.$
\end{proof}
\subsection{Linearization of the Rotation Coordinate} Compared to the KdV and Schr\"odinger analogue, the Abel map in our literature generates an extra coordinate which does not belong to the character group $\pi_1(\Omega)^*.$ Indeed, one may check that $U_\tau=\mathrm{diag}(\tau^{1/2},\tau^{-1/2})$ induces a scaling in $L^2(\bbR,\bbC^2)$ which is equivalent to the isospectral transform $\varphi\mapsto\tau^{-1}\varphi$ in the space of potentials. We understand this phenomenon as the consequence of the scaling invariance of the NLS and the Dirac operator.
Although in the stationary case, $\tau$ can be chosen independent of the space variable $x$, compare \cite{BLY2}, its time dependence is not trivial in non-stationary case. In this case, $\tau$ is necessarily a function of $x,t$ which reveals the intrinsic rotating effect in the solution. The following theorem gives a detailed description of this effect.

%\ll{Think about this} Let $\Omega^*$ be the Martin compactification, for any $b\in \Omega^*$, let $M(z,b)$ be the Martin function on $\Omega\times\Omega^*$ normalized at a interior point $z_*\in \Omega$ by $M(z_*,b)=1.$
%Since the AL condition holds for Dirac operators \cite{EGL}, associated to $\infty\in\partial\Omega$ there are two points of infinity $\infty_\pm$ on $\partial^M\Omega$. Let $M_{\infty_\pm}(z)=M(z,\infty_\pm)$ be the Martin functions associated to these points.
\begin{theorem}\label{thm:linearzRotFlow}Assume that $\Omega=\bbC\setminus \sfE$ is a PWS and $\sfE$ satisfies the finite gap length condition \eqref{eq:finiteGapCond},
Assume the following conditions 
\begin{equation}\label{eq:gapMomentCond}\int_{\bbR\setminus \sfE}M(\xi) \rmd\xi^{k+1}<\infty, k=0,1,\end{equation}
hold,
then the second component $\mathcal{A}_r$ of the generalized Abel map defined by Definition~\ref{def:rotFlowAbelMap} conjugates the translation $\varphi(x_0,t_0)\mapsto\varphi(x_0+x,t_0+t)$ into a linear flow in $\bbT$. Moreover, the rotation coordinate can be specified as
\[\tau(x,t)=C\exp(-2\pi i(\vartheta_0x+\vartheta_1 t)),~|C|=1,\]
the frequencies are precisely 
\[\vartheta_k=2\int_{\bbR\setminus \sfE}\widetilde{M}(\xi,\infty_+)-(-\widetilde{M}(\xi,\infty_-)) \, \rmd\xi^{k+1},~k=0,1.
\]
\end{theorem}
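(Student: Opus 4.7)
The plan is to differentiate $\mathcal{A}_r$ along the translation and NLS Dubrovin flows established in Section~\ref{dubrovinFlow}, show that the derivatives are constants on $\mathcal{D}(\sfE)$, and identify these constants with the stated Abelian integrals. The result will then follow by integrating and invoking Theorem~\ref{thm:identifyRotCord}, with the sign discrepancy between $\exp(2\pi i\mathcal{A}_r)$ and $\exp(-2\pi i(\vartheta_0 x+\vartheta_1 t))$ absorbed into the unimodular constant $C$.

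First I would verify that Definition~\ref{def:rotFlowAbelMap} gives an absolutely convergent sum under the moment hypothesis \eqref{eq:gapMomentCond} and \eqref{eq:finiteGapCond}: exactly as in Theorem~\ref{thm:identifyRotCord}, one majorizes the $j$-th term by $G(c_j,\zeta)+\gamma_j$, so the Widom condition together with the finite gap length yields absolute convergence, and $\mathcal{A}_r$ defines a continuous map $\mathcal{D}(\sfE)\to\bbT$.

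Next, I would compute $\partial_x \mathcal{A}_r$ and $\partial_t \mathcal{A}_r$ term-by-term using the Dubrovin equations \eqref{eq:spaceDiff}, \eqref{eq:timeDiff} from Lemma~\ref{lem:DubrovinEq}. Writing $d\omega(\xi,\sfE_\infty)=\rho_\infty(\xi)\,d\xi$ for the density of the harmonic measure of $\sfE_\infty$ on the gap, one has
\[
\partial_x\mathcal{A}_r = -\sum_j \tfrac{\epsilon_j}{2}\,\rho_\infty(\mu_j)\,\partial_x\mu_j
= -\sum_j \tfrac{\epsilon_j}{2}\,\rho_\infty(\mu_j)\cdot 2\epsilon_j\sqrt{(b_j-\mu_j)(\mu_j-a_j)}\,\Psi_j(y),
\]
and analogously for $\partial_t$ with $\Xi_j(y)$ in place of $\Psi_j(y)$. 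The key observation is that $\rho_\infty(\xi)$ coincides up to a sign with the jump of $\widetilde{M}_{\infty_+}+\widetilde{M}_{\infty_-}$ across the gap, because both are harmonic conjugates of functions vanishing on $\sfE$ with a symmetric singularity at $\infty_+,\infty_-$. Plugging in $\Psi_j,\Xi_j$ and using the product formula for $\partial_zR$ and the trace formulas of Lemma~\ref{lem:traceFormula}, the sums telescope into contour integrals of $(1/R(z))$ against $d\Theta_k$ around a dumbbell enclosing all gaps; by moving contours and taking residues at $\infty_\pm$, one obtains the claimed identity $\partial_x\mathcal{A}_r\equiv -\vartheta_0$, $\partial_t\mathcal{A}_r\equiv -\vartheta_1$ (in particular, the derivatives are constant on $\mathcal{D}(\sfE)$).

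To make the residue manipulation rigorous in the infinite-gap setting, I would adopt the finite-gap approximation scheme used in the proof of Theorem~\ref{thm:approxFreqCharFlow}. On $\sfE^{(N)}$, the corresponding rotation coordinate is governed by classical Jacobi inversion on the hyperelliptic curve of genus $N-1$; the linearization and the identification of the frequency with the $B$-period of the Abelian integral of the second kind associated to the normalized differential at $\infty_\pm$ is then standard (compare \cite[Theorem~F.10]{GH03}). The convergences $\omega^{(N)}(\cdot,\sfE_\infty^{(N)})\to\omega(\cdot,\sfE_\infty)$ on compacts of $\Omega$ and $G^{(N)}\to G$ provided by \cite[Theorem~5.14]{Landof1972}, together with the majorant \eqref{eq:majorantSeries} adapted to the pair $(\infty_+,\infty_-)$ using $M(\xi)\,d\xi^{k+1}$ as the dominant, yield
\[
\mathcal{A}_r^{(N)}\to\mathcal{A}_r,\qquad \vartheta_k^{(N)}\to\vartheta_k\quad (k=0,1),
\]
and hence the linearity and the frequency formula persist in the limit. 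Theorem~\ref{thm:identifyRotCord} then converts the linear evolution of $\mathcal{A}_r$ into the claimed formula for $\tau(x,t)$.

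The main obstacle is the identification step in paragraph two: one must establish constancy of $\partial_x\mathcal{A}_r$ and $\partial_t\mathcal{A}_r$ on $\mathcal{D}(\sfE)$ and match the constants with integrals of $\widetilde{M}(\cdot,\infty_\pm)$. The obstacle is resolved precisely by the finite-gap approximation, where the match is a classical Riemann bilinear identity, combined with the moment conditions \eqref{eq:gapMomentCond} ensuring uniform-in-$N$ majorization so one can interchange limit and differentiation. The $k=1$ case is more delicate than $k=0$ because the Abelian integral $\Theta_1$ has a stronger singularity at $\infty_\pm$, which is exactly why the stronger moment condition $\int M(\xi)\,d\xi^2<\infty$ is imposed.
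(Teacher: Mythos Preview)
Your rigorous strategy---finite-gap approximation where the linearization is classical, then passage to the limit using harmonic-measure and Green-function convergence together with a Widom/moment majorant---is exactly the paper's proof. The paper skips your heuristic direct-differentiation paragraph entirely and works from the outset on the approximants $\sfE^{(N)}$, citing \cite[eq.~4.3.6]{BBEIM1994} for the finite-gap rotation frequency and then invoking the convergence arguments you describe.

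One point to sharpen: you call the finite-gap identification a ``$B$-period'' and cite \cite[Theorem~F.10]{GH03}, but the rotation frequency is \emph{not} a period over a closed cycle. It is an integral over an open arc from $\infty_-$ to $\infty_+$ on the hyperelliptic curve (the paper's Figure~2 and formula \eqref{eq:finiteFrequency}); this is precisely the novelty the paper stresses, since the Dirac isospectral torus has one more dimension than the character group. Your description ``adapted to the pair $(\infty_+,\infty_-)$'' is on the right track, but the reference and the telescoping/residue language in your second paragraph do not quite match what actually happens---the finite-gap identity comes from the Riemann bilinear relations for differentials of the second kind with poles at $\infty_\pm$, not from a contour deformation of $1/R(z)$.
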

\begin{proof}
Let $\sfE_N=\bbR\setminus \cup_{i=1}^{N+1}(a_i,b_i)$ and $\mathcal{R}_N=\{(z,w):w^2=\prod_{i=1}^{N+1}(z-a_i)(z-b_i)\}\cup\{\infty_+,\infty_-\}$. $\mathcal{R}_N$ is the two-sheet covering of $\Omega_N=\bbC\setminus \sfE_N$. Let $\{A_i,B_i\}_{i=1}^N$ be a homology basis of $\mathcal{R}_N$ constructed as follows. Pick $\xi_*\in (a_0,b_0)$ as the base point, let $A_j$ be the equivalent class of cycles circling around $(a_j,b_j)$ in the upper sheet and  let $B_j$ be the cycle starting at $\xi_*$ and goes through $(a_j,b_j)$ downward and return to $\xi_*$ in the lower sheet. Let $\{\omega_j^{(N)}\}_{j=1}^N$ be a system of Abelian integrals of the first kind normalized by 
\begin{equation}\label{eq:normalAbelFirst}\int_{A_j}\! \rmd\omega^{(N)}_k=\delta_{jk}.\end{equation}
Let $\omega^{(N)}(z_0,F_k)$ be the harmonic measure of $F_k$, the intersection of $\sfE$ with the (bounded) domain enclosed by the loop $B_k$. Then the normalization condition \eqref{eq:normalAbelFirst} indicates that 
$$\omega^{(N)}_k=\omega^{(N)}(\xi,F_k)+i\widetilde{\omega}^{(N)}(\xi,F_k),$$
where $\widetilde{\omega}^{(N)}(\xi,F_k)$ is the harmonic conjugate of $\omega^{(N)}(\xi,F_k).$
Let $M^{(N)}_{k}(\lambda)$ be the Abelian integral of second kind of $\Omega_N$ and let $M^{(N)}(\xi,\lambda)=\frac{G^{(N)}(\xi,\lambda)}{G^{(N)}(i,\lambda)}$ be the Martin kernel  such that 
$$M^{(N)}_{k}(\lambda)=\Im\lambda^{k+1}+\int_{\bbR\setminus \sfE_N}M^{(N)}(\xi,\lambda)\, \rmd\xi^{k+1},k=0,1.$$
Let $\Theta^{(N)}_{k}$ be the complex Martin function 
$$\Theta^{(N)}_k(\lambda)=iM^{(N)}_k(\lambda)-\widetilde{M}^{(N)}_k(\lambda).$$
Let $\Omega_{N,k,\pm}^{(2)}$ be the  Abelian integrals of the second kind on upper and lower sheets of $\mathcal{R}_N$ of order $k$ with the only pole at $\infty_\pm$ respectively  with asymptotics
$$\Omega^{(2)}_{N,k,\pm}(\lambda)=\lambda^{k+1}+O(1)+O(1/\lambda), \lambda\to\infty_\pm.$$
Note that in local coordinate $\zeta^2=\lambda$ as $\lambda\to\infty_\pm$, 
 $$d\Omega^{(1)}_{N,k,\pm}(\zeta)=d\Omega^{(2)}_{N,k,\pm}(\zeta)-(2k+2)\zeta^{2k+1}d\zeta$$
are Abelian integrals of the first kind.

For the rotation coordinate, in the finite gap case the frequencies are given by 
\begin{equation}\label{eq:finiteFrequency}
\vartheta^{(N)}_{k}
=\int_{\xi_*}^{+\infty} \! \rmd(\Omega^{(1)}_{N,k,+}-\Omega^{(1)}_{N,k,-})-\int_{\xi_*}^{-\infty} \! \rmd(\Omega^{(1)}_{N,k,+}-\Omega^{(1)}_{N,k,-}),
\end{equation}
where $\xi_*$ is an arbitrary point in a gap, compare \cite[eq 4.3.6]{BBEIM1994}. In these integrals, we require the path from $\xi_*$ to $\infty_+$ is related to the path from $\xi_*$ to $\infty_-$ by the anti-involution between sheets and reversed  directions. Different choice of $\xi_*$ clearly results in a coordinate in $\pi_1(\Omega_N)^*.$ Interpreting $\Theta^{(N)}_{k}$ as Abelian integrals of the second kind of order $k=0,1$, see \cite{VolbergYuditskii2016}, we can rewrite \eqref{eq:finiteFrequency} in terms of variations of the harmonic conjugate of Martin function
\begin{equation}\label{eq:GreenFrequency}
\vartheta^{(N)}_k=2\int_{\bbR\setminus \sfE_N}\widetilde{M}^{(N)}(\xi,\infty_+)-(-\widetilde{M}^{(N)}(\xi,\infty_-)) \, \rmd\xi^{k+1}.
\end{equation}
The rotation coordinate in this case is specified by 
\begin{equation}\label{eq:finiteGapRep}
\tau^{(N)}(x,t)=C^{(N)}\exp(-2\pi i(\vartheta_0^{(N)}x+i\vartheta^{(N)}_1t)),\quad |C^{(N)}|=1.
\end{equation}

Our next goal is to relate \eqref{eq:finiteGapRep} to the definition of rotation coordinate in the infinite gap case. In the infinite gap case, recall $\tau=-\frac{\mathscr{K}_D(z)}{(\mathscr{K}_D)_\sharp(z)}$, with $\mathscr{K}_D(z)$ given by \eqref{eq.prodRepRepKer}. By Theorem \ref{thm:identifyRotCord},
\begin{equation}\label{eq:newAbelMap}\tau=\lim\limits_{\lambda\to\infty_+}-\frac{\mathscr{K}_{D}(\lambda)}{(\mathscr{K}_{D})_{\sharp}(\lambda)}=C\exp\left(2\pi i\sum_j-\frac{\epsilon_j}{2}\int_{a_j}^{\mu_j}\omega(\rmd\xi,\sfE_\infty)\right),
\end{equation}
where $\sfE_\infty=(\xi_*,\infty)\cap \sfE.$  The convergence $\tau^{(N)}\to\tau$ follows from the same argument as in the proof of Theorem \ref{thm:approxFreqCharFlow}.
The frequencies are given by the generalized $B$-periods
\begin{equation}\label{eq:infiniteFrequency}\begin{aligned}
\vartheta_{k}&=\lim\limits_{N\to\infty}2\int_{\xi_*}^{+\infty} \! \rmd(\Omega^{(1)}_{N,k,+}-\Omega^{(1)}_{N,k,-})
=2\int_{\bbR\setminus \sfE}\widetilde{M}(\xi,\infty_+)-(-\widetilde{M}(\xi,\infty_-)) \, \rmd\xi^{k+1}\\
&=\lim\limits_{N\to\infty}\vartheta^{(N)}_k  \quad\text{ for } k=0,1.
\end{aligned}
\end{equation}

It can be shown by the approximation process of Theorem~\ref{thm:approxFreqCharFlow},  under the conditions that $\Omega$ is a PWS obeying DCT and \eqref{eq:gapMomentCond}, that \eqref{eq:GreenFrequency} converges to \eqref{eq:infiniteFrequency} and  to the correct limit. The rotation coordinate in the infinite gap case is then specified as \begin{equation}\label{eq:rotationFlowExp}\tau(x,t)=C\exp(-2\pi i(\vartheta_0x+i\vartheta_1t))
\end{equation}
where $C$ is a unimodular constant. %In particular, one can choose a specific gauge such that $$\tau=\tau(t)=C\exp(-i\vartheta_1 t)$$ to be independent of $x$, compare \cite{BLY2}.  
This completes the proof.
\end{proof}

\begin{remark}
    It is important to guarantee the integrability of \eqref{eq:gapMomentCond}. While the modified Widom's condition \eqref{eq:mPRWCond} and the $k$-GLC \eqref{eq:kGlc} are sufficient, they are too strong and necessarily put conditions on the density of states; compare \cite[Theorem 1.1]{EVY19}. In our case that $\sfE$ is a  closed subset of $\bbR$ unbounded from above and below, we are in the Akhiezer-Levin setting, see \eqref{eq:ALcondition}. According to \cite[Lemma 3.2]{EGL}, the symmetric Martin function $M(\xi)$ appeared in \eqref{eq:gapMomentCond} grows at most linearly in each gap. We can easily translate the integrability for $k=0,1$ into the following conditions
    $$\sum_{j}(1+\eta_{0j})\gamma_j<\infty \text{ and } \sum_j(1+\eta_{0j})(|b_j^2-a_j^2|)<\infty.$$
    It turns out that our Craig type conditions are sufficient. In the case of higher AKNS hierarchies, one needs to introduce stronger conditions.
\end{remark}

\section{Quantitative Almost Reducibility and its Spectral Applications}
\label{arac}

In this section, we apply quantitative almost reducibility to prove the purely absolutely continuous spectrum and gap estimates for the Dirac operators.

\subsection{Quantitative Almost Reducibility}\label{qal}

For any $A\in C^{\omega} (\bbT^d, \su(1,1))$, consider the quasiperiodic linear systems represented by \begin{equation} \label{eq:generalQPsyst}
\left\{
\begin{aligned}
& \frac{{\rmd}f}{{\rmd}x}=A(\theta)f\\
&  \frac{{\rmd}\theta}{{\rmd}x}  = \omega
\end{aligned}
\right.
\end{equation} 
and denote this system as $(\omega,A)$. 
For $A_1, A_2\in C^\omega(\bbT^d, \su(1,1))$ and $W\in C^{\omega}(\bbT^d,$ ${\rm PSU}(1,1))$, we say that $(\omega,A_1)$ is {\it conjugated} to $(\omega,A_2)$ by $W$ if 
$$\partial_\omega W=A_1 W- W A_2,$$ 
where $ \partial_\omega W
:=\langle \omega, \nabla W\rangle$. 
We say that $(\omega,A_1)$ is {\it reducible} if it can be conjugated to a constant system $(\omega,A_2)$ with $A_2\in \su(1,1)$;  we say that $(\omega, A_1)$ is {\it almost reducible} if the closure of its analytical conjugacies contains a constant system $(\omega,C)$.

Before introducing the precise quantitative almost reducibility result, let us introduce the parameters inductively. 
For any sufficiently small $\epsilon_{0} > 0$ and any $h > 0$, let us define the following sequences:
\begin{equation}\label{eq:KAMqantities}
\epsilon_{j} = \epsilon_{0}^{2^{j}}, \quad h_{j} = \frac{h}{2^{j}}, \qquad N_{j} = \frac{4^{j+1} \ln \epsilon_{0}^{-1}}{h}.
\end{equation}
Within these parameters, we have the following result:

\begin{prop}[\cite{LYZZ},\cite{DLZ22}]\label{rd1}
Assume that $\kappa, \tau, h > 0$ and $\omega \in \DC_d(\kappa,\tau)$. Let $A_{0} \in \su(1,1)$, $F_{0} \in C^\omega_{h}(\bbT^{d}, \su(1,1))$ with
$$
\Vert F_{0} \Vert_{h} \leq \epsilon_{0}(d,\kappa,\tau,h) \leq D_{0}(1+\left\| A_{0}\right\|^{C_{0}}) \left( \frac{h}{2} \right)^{C_{0}},
$$
where $D_{0} = D_0(\kappa,\tau,d,h)$ and $C_{0}=C_0(d,\tau)$\footnote{The optimal choice of $C_0$ can be found in \cite{LYZZ}.}. Then for any $j \geq 1$, there exists $B_{j} \in C^\omega_{h_{j}}(\bbT^{d}, \mathrm{PSU}(1,1))$ such that
\begin{equation}\label{eq:conjugateEq}
\partial_{\omega}B_j=(A_0+F_0)B_j-B_j(A_j+F_j),
\end{equation}
where $\Vert F_{j} \Vert_{h_{j}}\leq \epsilon_{j}$ and $B_{j}$ satisfies
\begin{align}
\label{normOfB}
\left\| B_{j}\right\|_{0} & \leq \epsilon_{j-1}^{-\frac{1}{192}}, \\
\label{degreeOfB}\vert \deg{B_{j}}\vert & \leq 2N_{j-1}.
\end{align}
More precisely, we have

{\rm (a)}  if for some $m\in \bbZ^{d}$ with  $0 < \vert m \vert < N_{j-1}$, such that 
$$ \Vert 2 \rho( \omega, A_{j-1})- \langle m, \omega \rangle \Vert_{\bbR/\bbZ} < \epsilon_{j-1}^{\frac{1}{15}},$$ we have the following precise expression:
$$
A_{j} =  \begin{bmatrix} i  t_{j} & v_{j} \\ \bar{v}_{j} & -i  t_{j} \end{bmatrix},
$$
where $t_{j} \in \bbR$, $v_{j} \in \bbC$, and $\vert t_{j} \vert \leq \epsilon_{j-1}^{\frac{1}{16}}$, $\vert v_{j} \vert \leq \epsilon_{j-1}^{\frac{15}{16}}$.

{\rm (b)} Moreover, there always exist unitary matrices $U_{j}\in \SL(2,\bbC)$ such that
\begin{equation}\label{e.conj}
U_{j}(A_{j} + F_j(\cdot))U_{j}^{-1} = \begin{bmatrix} i e_{j} & c_{j} \\ 0 &  -i e_{j} \end{bmatrix} + F_{j}(x)
\end{equation}
where $\sfE_{j} \in \bbR \cup i \bbR$, with estimates $\Vert F_{j} \Vert_{h_{j}} \leq \epsilon_{j}$, and
\begin{equation}\label{Btimesc}
\Vert B_{j} \Vert_{0}^{2} \vert c_{j} \vert \leq 8 \Vert A_{0} \Vert.
\end{equation}

\end{prop}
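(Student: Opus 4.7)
The plan is to establish Proposition~\ref{rd1} via an inductive KAM scheme of Newton-iteration (quadratic convergence) type. At each step $j$, we produce a one-step conjugacy $W_j \in C^\omega_{h_j}(\bbT^d, \mathrm{PSU}(1,1))$ converting $(\omega, A_{j-1} + F_{j-1})$ with $\|F_{j-1}\|_{h_{j-1}} \leq \epsilon_{j-1}$ into $(\omega, A_j + F_j)$ with $\|F_j\|_{h_j} \leq \epsilon_{j-1}^2 = \epsilon_j$, and then set $B_j = W_1 W_2 \cdots W_j$. The accompanying estimates \eqref{normOfB}, \eqref{degreeOfB} are propagated by tracking how the norm and degree of each $W_j$ accumulate geometrically.

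The one-step reduction begins by truncating the Fourier series of $F_{j-1}$ at level $N_{j-1}$, absorbing the tail via the analytic decay $\|F - F^{\leq N}\|_{h_j} \leq e^{-(h_{j-1}-h_j)N}\|F\|_{h_{j-1}}$, which with the choice of $N_{j-1}$ in \eqref{eq:KAMqantities} contributes at most order $\epsilon_{j-1}^2$. One then solves the homological equation $\partial_\omega Y - [A_{j-1}, Y] = F_{j-1}^{\leq N} - \hat F_{j-1}(0)$ mode by mode: the small divisors are of the form $i\langle n,\omega\rangle \pm 2i \rho(\omega, A_{j-1})$, controlled by combining the Diophantine condition \eqref{eq.diophantine} with a non-resonance assumption on $2\rho(\omega,A_{j-1})$. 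In the \emph{non-resonant} regime, where $\|2\rho(\omega, A_{j-1}) - \langle m, \omega\rangle\|_{\bbR/\bbZ} \geq \epsilon_{j-1}^{1/15}$ for every $0 < |m| < N_{j-1}$, the solution $Y$ admits estimates $\|Y\|_{h_j} \leq \epsilon_{j-1}^{1-1/192}$, so $W_j = e^Y$ is close to the identity and $A_j = A_{j-1} + \hat F_{j-1}(0)$ requires no significant change.

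In the \emph{resonant} regime, a single exponential conjugacy cannot eliminate the obstructing mode. Following the Puig-type strategy, one first applies a rotation $R_m(x)$ of degree $\pm m$ that shifts the resonant Fourier mode to the zeroth mode; the resulting constant part, after diagonalizing in the basis adapted to $A_{j-1}$, takes exactly the form in case (a), where $t_j$ records the residual distance $\|2\rho(\omega, A_{j-1})- \langle m,\omega\rangle\|_{\bbR/\bbZ}$ (hence $|t_j|\leq \epsilon_{j-1}^{1/16}$) and $v_j$ inherits its size from $\hat F_{j-1}(m)$ (hence $|v_j|\leq \epsilon_{j-1}^{15/16}$). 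The conjugacy $W_j = R_m \circ e^Y$ now has degree $|m| < N_{j-1}$ and norm controlled by $\epsilon_{j-1}^{-1/192}$; the exponent $1/192$ arises from balancing Cauchy losses on the narrower strip $h_j$ against the quadratic small-divisor loss. Summing the at-most-one resonance per step over scales yields $|\deg B_j| \leq 2 N_{j-1}$.

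For part (b), since $A_j + F_j \in \su(1,1)$, a constant unitary $U_j \in \SU(2)$ Schur-triangularizes the constant part $A_j$, whose eigenvalues $\pm i e_j$ lie in $\bbR \cup i\bbR$. The crucial inequality \eqref{Btimesc} follows from comparing energies: applying $U_j B_j^{-1}$ to the cocycle generated by $A_0 + F_0$ and estimating the induced growth of a unit vector via the upper-triangular form produces $|c_j| \cdot \|B_j\|_0^2 \lesssim \sup_{\theta, x\in[0,1]} \|\transfermat(\omega, A_0 + F_0)(x,\theta)\|^2 \lesssim \|A_0\|$. The main obstacle lies precisely in the resonant step: verifying that the composition $B_j = W_1 \cdots W_j$ does not blow up faster than $\epsilon_{j-1}^{-1/192}$ requires a careful bookkeeping of the resonance modes $m_j$ across scales, using that the scales $N_j$ grow geometrically and that the truncation and quadratic convergence schemes are tuned (via the explicit choices of $C_0$, $D_0$) so that a resonance at one scale cannot re-emerge at a subsequent scale. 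Once this bookkeeping is in place, the proposition follows by straightforward induction.
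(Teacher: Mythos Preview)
The paper does not prove this proposition; it is quoted verbatim from \cite{LYZZ} and \cite{DLZ22} (as the attribution in the statement header indicates), and the authors simply invoke it as a black box for the spectral applications in Section~\ref{arac}. So there is no ``paper's own proof'' to compare against.

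Your sketch is a faithful outline of the standard quantitative KAM argument carried out in those references: the Newton-type iteration with strip-shrinking, the homological equation with small divisors $i\langle n,\omega\rangle \pm 2i\rho$, the resonant/non-resonant dichotomy, and the degree-tracking for the rotation conjugacies are exactly the ingredients of the Hou--You/Eliasson scheme that \cite{LYZZ} refines to produce the specific exponents $\frac{1}{192}$, $\frac{1}{15}$, $\frac{1}{16}$, $\frac{15}{16}$. One small point: your justification of \eqref{Btimesc} via transfer-matrix growth is heuristically right but not how the cited papers actually obtain it---there the bound $\|B_j\|_0^2|c_j|\leq 8\|A_0\|$ comes from a direct algebraic computation at the resonant step, comparing the off-diagonal entry before and after the rotation-plus-diagonalization (the norm of $B_j$ absorbs the diagonalizing matrix whose condition number is inversely proportional to $|c_j|$). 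Otherwise your plan matches the literature and would, with the bookkeeping filled in, reproduce the result.
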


\subsection{Purely Absolutely Continuous Spectrum}
As a solid application of our Jitomirskaya--Last type inequality and quantitative almost reducibility, we first prove the purely absolutely continuous spectrum for the Dirac operators.

\begin{theorem}\label{thm:ac}
Assume that $\omega\in \DC_d(\kappa,\tau)$ and $\|\widetilde{\varphi}\|_{h}<\epsilon_0$ for some $h>0$ and sufficiently small $\epsilon_0=\epsilon_0(d,\kappa,\tau,h)$. Then the spectrum of the associated Dirac operator $\Lambda_\varphi$ defined by \eqref{eq:diracOperLambda} is purely absolutely continuous.
\end{theorem}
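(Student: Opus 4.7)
The plan is to adapt Avila's approach \cite{Avila1} to the Dirac setting by combining the Jitomirskaya--Last inequality (Theorem~\ref{thm.acMeas}) with the quantitative almost reducibility of Proposition~\ref{rd1}. The first step is a reduction in spectral terms: by Theorem~\ref{thm.acMeas}, showing that for Lebesgue-a.e.\ $\lambda\in\Sigma_\varphi$ the transfer matrices of the equivalent quasiperiodic system \eqref{eq:DiracLinearSys} satisfy $\sup_{x}\Vert T(\lambda,x,\theta)\Vert<\infty$ yields
\[
\limsup_{\epsilon\downarrow 0}\frac{\mu((\lambda-\epsilon,\lambda+\epsilon))}{2\epsilon}<\infty
\]
at Lebesgue-a.e.\ $\lambda\in\Sigma_\varphi$, forcing $\mu\upharpoonright\Sigma_\varphi$ to be absolutely continuous; combined with the absence of subordinate solutions at such $\lambda$ (Gilbert--Pearson theory), this simultaneously rules out any point or singular continuous component.

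Next, I would identify the generator of \eqref{eq:DiracLinearSys} with $A_{0}(\lambda)+F_{0}(\theta)\in\su(1,1)$, where $A_{0}(\lambda)=\mathrm{diag}(-i\lambda,i\lambda)$ and $F_{0}$ is built from $\widetilde\varphi$. Since $\Vert F_{0}\Vert_{h}\lesssim\Vert\widetilde\varphi\Vert_{h}<\epsilon_{0}$, Proposition~\ref{rd1} produces, at every scale $j\ge 1$, analytic conjugacies $B_{j}(\cdot,\lambda)$ with $\Vert B_{j}\Vert_{0}\le\epsilon_{j-1}^{-1/192}$ and reduced systems $(\omega,A_{j}+F_{j})$ with $\Vert F_{j}\Vert_{h_{j}}\le\epsilon_{j}$. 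The conjugacy relation \eqref{eq:conjugateEq} gives the factorization
\[
T(\lambda,x,\theta)=B_{j}(\theta+\omega x,\lambda)\,T_{j}(\lambda,x,\theta)\,B_{j}(\theta,\lambda)^{-1},
\]
and, for $\lambda$ in the spectrum, the Puig-type triangularization \eqref{e.conj} together with \eqref{Btimesc} keeps the off-diagonal term $c_j$ small enough that $A_j$ remains elliptic (note that in case (a) the bounds $|t_j|\le\epsilon_{j-1}^{1/16}$, $|v_j|\le\epsilon_{j-1}^{15/16}$ already force $A_j$ to have purely imaginary eigenvalues). Iterating along the KAM tower and exploiting the super-exponential decay of $\Vert F_{j}\Vert_{h_{j}}$, one obtains full reducibility of $(\omega, A_0(\lambda)+F_0)$ to a constant elliptic system whenever $\lambda$ eventually avoids the resonant alternative (a) of Proposition~\ref{rd1}, and hence a uniform transfer matrix bound $\sup_x\Vert T(\lambda,x,\theta)\Vert<\infty$ at such $\lambda$.

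The hard part is controlling the persistently resonant set: those $\lambda$ for which the alternative $\Vert 2\rho(\omega,A_{j-1})-\langle m,\omega\rangle\Vert_{\bbR/\bbZ}<\epsilon_{j-1}^{1/15}$ occurs with some $0<|m|<N_{j-1}$ at infinitely many scales. The strategy here is to exploit the Diophantine assumption \eqref{eq.diophantine} together with the gap-labeling theorem (Theorem~\ref{thm.gapLabel}): by strict monotonicity of $\rho$ across gaps and its continuity across the spectrum, each scale-$j$ resonance confines $\lambda$ to a window of Lebesgue measure $O(\epsilon_{j-1}^{1/15})$ clustered near the gap $G_{m}$ with $2\rho(G_m)=\langle m,\omega\rangle$, and the Diophantine lower bound $\kappa|m|^{-\tau}$ keeps these windows well-separated across distinct $m$. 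Since $\sum_{j}N_{j-1}\epsilon_{j-1}^{1/15}$ converges thanks to the double-exponential decay $\epsilon_j=\epsilon_0^{2^j}$ against the polynomial growth of $N_{j-1}$, a Borel--Cantelli argument shows that the set of $\lambda\in\Sigma_\varphi$ resonant at infinitely many scales has Lebesgue measure zero. On its complement the previous paragraph supplies the uniform transfer matrix bound required by the first paragraph, completing the proof.
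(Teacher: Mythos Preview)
Your proposal has a genuine gap at the level of the basic reduction. Showing that the persistently resonant set has \emph{Lebesgue} measure zero, and hence that transfer matrices are bounded for Lebesgue-a.e.\ $\lambda\in\Sigma_\varphi$, does not imply pure absolute continuity: a Lebesgue-null set can perfectly well support the singular part of the spectral measure $\mu$ (think of a point mass, or any singular continuous measure). Neither Theorem~\ref{thm.acMeas} nor Gilbert--Pearson theory helps here, since both only tell you what happens at the good points; the singular component, if any, would live precisely on the bad set you have only controlled in Lebesgue measure. What is needed is that $\mu(\Sigma_\varphi\setminus\mathcal{R})=0$ for the \emph{spectral} measure $\mu$, where $\mathcal R$ is the reducible set.

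The paper closes this gap by estimating $\mu(K_j)$ directly rather than $|K_j|$. The mechanism is: (i) on the resonant set $K_j$ one has a quantitative transfer matrix bound $\sup_{0\le x\le \epsilon_{j-1}^{-1/16}}\|U(\lambda,x,\theta)\|\le C\epsilon_{j-1}^{-1/96}$ (Corollary~\ref{cor.transferGrowth}); (ii) plugging this into the Jitomirskaya--Last inequality (Theorem~\ref{thm.acMeas}) yields $\mu(J_j(\lambda))\le C\epsilon_{j-1}^{1/48}$ for each interval $J_j(\lambda)$ of radius $\sim\epsilon_{j-1}^{2/45}$; (iii) a counting argument using the \emph{lower} bound $\mathcal N(\lambda+\epsilon)-\mathcal N(\lambda-\epsilon)\ge c\epsilon^{3/2}$ (Lemma~\ref{lem:lowerBoundDos}) together with Lemma~\ref{lem:rotEst} shows that $O(N_{j-1})$ such intervals suffice to cover $K_j$. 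This gives $\mu(K_j)\le C\epsilon_{j-1}^{7/384}$, and Borel--Cantelli for $\mu$ finishes the argument. Your proposal omits all three of these ingredients; in particular, your claimed Lebesgue bound ``window of measure $O(\epsilon_{j-1}^{1/15})$'' is itself unjustified, since converting a resonance window in $\rho$ to one in $\lambda$ requires exactly the kind of quantitative control on $\mathcal N$ that Lemma~\ref{lem:lowerBoundDos} provides, and you never invoke it. A secondary issue: eventually non-resonant $\lambda$ are reducible but not necessarily to an \emph{elliptic} matrix; gap edges in $\Sigma_\varphi$ give parabolic reductions with unbounded transfer matrices, and one must argue separately (as the paper does) that this countable set carries no point mass.
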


Recall  the eigenvalue equation:
$
\Lambda_\varphi f=\lambda f
$
 can be rewritten as  
\begin{equation}\label{eq.diffEq}
\left\{
\begin{aligned}
&  f'
=\left(\begin{bmatrix}-i \lambda&0\\0& i  \lambda\end{bmatrix} + \begin{bmatrix}0 & i \varphi(\theta)\\-i \overline{\varphi(\theta)}&0\end{bmatrix}\right)f
=(A_{0}(\lambda)+F_{0}(\theta))f,\\
&\theta'=\omega,
\end{aligned}
\right.
\end{equation}
Thus if $\|\widetilde{\varphi}\|_{h}$ is small enough, such that 
$$
\| F_{0}\|_h \leq \epsilon_{0} \leq D_{0}(1+\left\| A_{0}\right\|^{C_{0}}) \left( \frac{h}{2} \right)^{C_{0}},
$$
then we can apply Proposition~\ref{rd1} to get almost reducibility of \eqref{eq.diffEq}, i.e. there exists $B_{j} \in C^\omega_{h_{j}}(\bbT^{d}, \mathrm{PSU}(1,1))$ such that
\begin{equation}\label{eq:conjugateEq1}
\partial_{\omega}B_j=(A_0+F_0)B_j-B_j(A_j+F_j),
\end{equation}
where $\Vert F_{j}(\theta) \Vert_{h_{j}}\leq \epsilon_{j}$. Moreover, for any $m\in\bbZ$ with $0<|m|<N_{j-1}$, define the resonant set \begin{align}
\label{e.lambdamjdef}
\Lambda_{m}(j) = \{ \lambda \in \Sigma_\varphi : & \Vert 2 \rho( \omega, A_{j-1})- \langle m, \omega \rangle \Vert_{\bbR/\bbZ} < \epsilon_{j-1}^{\frac{1}{15}}  \}.
\end{align}
and then the resonant set at the $j$-th interation step:
\begin{equation}\label{e.Kjdef}
K_{j}=\bigcup_{0<\vert m\vert\leq N_{j-1}}\Lambda_{m}(j)
\end{equation}
The quantitative estimates of Proposition~\ref{rd1}  immediately imply the following:

\begin{lemma}[\cite{DLZ22}]\label{lem:rotEst}
For $\lambda\in K_{j}$, there exists  $n_{j}\in\bbZ^{d}$ with $\vert n_{j}\vert \leq 2N_{j-1}$ such that
$$
\|2\rho(\omega,A_0+F_0)-\langle n_{j},\omega\rangle\|_{\bbR/\bbZ}\leq 2\epsilon_{j-1}^{\frac{1}{15}}.
$$
\end{lemma}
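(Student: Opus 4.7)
The plan is to exploit two standard properties of the fibered rotation number $\rho(\omega,\cdot)$: its Lipschitz continuity in the potential, and its invariance modulo an integer combination of frequencies under conjugation by $C^\omega(\bbT^d,\mathrm{PSU}(1,1))$-valued maps. With those in hand the proof is a short bookkeeping argument driven by Proposition~\ref{rd1}.

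First, I would unfold the definition of $K_j$: for $\lambda \in K_j$ there exists $m \in \bbZ^d$ with $0 < |m| \leq N_{j-1}$ such that
\[
\|2\rho(\omega, A_{j-1}) - \langle m, \omega\rangle\|_{\bbR/\bbZ} < \epsilon_{j-1}^{1/15}.
\]
Since $A_{j-1}$ is constant and $\|F_{j-1}\|_{h_{j-1}}\le \epsilon_{j-1}$, a Lipschitz estimate of the form
\[
|\rho(\omega, A_{j-1} + F_{j-1}) - \rho(\omega, A_{j-1})| \leq C\|F_{j-1}\|_{h_{j-1}} \leq C \epsilon_{j-1},
\]
which is much smaller than $\epsilon_{j-1}^{1/15}$, transfers essentially the same estimate to the perturbed cocycle $(\omega, A_{j-1} + F_{j-1})$, up to a harmless factor of $2$.

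Next I would invoke the conjugation equation \eqref{eq:conjugateEq1}, which identifies $(\omega, A_0+F_0)$ with $(\omega, A_{j-1}+F_{j-1})$ via $B_{j-1}\in C^\omega(\bbT^d,\mathrm{PSU}(1,1))$. Under this action the rotation number transforms as
\[
2\rho(\omega, A_0 + F_0) \equiv 2\rho(\omega, A_{j-1} + F_{j-1}) + \langle \deg B_{j-1}, \omega\rangle \pmod{\bbZ}.
\]
From \eqref{degreeOfB} and the recursion $N_{j-1} = 4 N_{j-2}$ we have $|\deg B_{j-1}| \leq 2N_{j-2} = N_{j-1}/2$, so setting $n_j := m + \deg B_{j-1}$ gives
\[
|n_j| \leq |m| + |\deg B_{j-1}| \leq N_{j-1} + N_{j-1}/2 < 2 N_{j-1},
\]
as required.

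Combining the three steps, for $\lambda \in K_j$,
\[
\|2\rho(\omega, A_0 + F_0) - \langle n_j, \omega\rangle\|_{\bbR/\bbZ} \leq \|2\rho(\omega, A_{j-1} + F_{j-1}) - \langle m, \omega\rangle\|_{\bbR/\bbZ} \leq \epsilon_{j-1}^{1/15} + 2C \epsilon_{j-1} \leq 2 \epsilon_{j-1}^{1/15},
\]
provided $\epsilon_0$ is chosen small enough. The main subtlety is verifying the rotation-number transformation rule under $\mathrm{PSU}(1,1)$-conjugation with the correct sign and factor of $2$ (relating $\rho$ on $\mathrm{PSU}(1,1)$ to its lift on $\mathrm{SU}(1,1)$); this is classical in the quasi-periodic almost reducibility literature \cite{JM82, eliasson, LYZZ, DLZ22} and carries over from the Schr\"odinger to the Dirac setting without modification.
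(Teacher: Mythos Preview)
Your proposal is correct and follows the standard argument: unwind the definition of $K_j$, use Lipschitz continuity of $\rho$ to absorb $F_{j-1}$, apply the conjugation rule for the rotation number under $B_{j-1}\in C^\omega(\bbT^d,\mathrm{PSU}(1,1))$, and bound $|n_j|=|m\pm\deg B_{j-1}|\le N_{j-1}+2N_{j-2}=\tfrac{3}{2}N_{j-1}<2N_{j-1}$. The paper itself gives no proof and simply cites \cite{DLZ22}; your argument is precisely the one that reference supplies.
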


Let $U(\lambda,x,\theta)$ denote the fundamental solution of \eqref{eq.diffEq} that satisfies the condition $U(\lambda,0,\theta)=I$. Then, for any $\lambda\in K_{j}$, we obtain the following estimate for the behavior of $U(\lambda,x,\theta)$, serving as a direct corollary.

\begin{coro}\label{cor.transferGrowth}
For $\lambda \in K_{j}$,  we have
$$
\sup \limits_{0 \leq x \leq \epsilon_{j-1}^{-\frac{1}{16}}} \Vert U(\lambda,x,\theta) \Vert_{0} \leq C \epsilon_{j-1}^{-\frac{1}{96}}\qquad \forall \theta \in \bbT^d.
$$
\end{coro}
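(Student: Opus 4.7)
The plan is to leverage Proposition~\ref{rd1} directly: since $\lambda \in K_j$ lies in some $\Lambda_m(j)$ with $0 < |m| \leq N_{j-1}$, we are in case~(a) of the proposition, which gives an explicit, quantitatively small constant matrix $A_j$, and the conjugation $B_j$ enjoys the norm bound \eqref{normOfB}. Using the conjugacy equation \eqref{eq:conjugateEq}, if $\widetilde U(\lambda,x,\theta)$ denotes the fundamental matrix of the reduced system $\partial_\omega g = (A_j + F_j(\theta + \omega x)) g$, then
\begin{equation*}
U(\lambda,x,\theta) = B_j(\theta + \omega x)\,\widetilde U(\lambda,x,\theta)\,B_j(\theta)^{-1}.
\end{equation*}
So it suffices to estimate $\widetilde U$ on the interval $0 \leq x \leq \epsilon_{j-1}^{-1/16}$ and then multiply by $\|B_j\|_0^2$.

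For the reduced system, on the resonant set Proposition~\ref{rd1}(a) gives
\begin{equation*}
\|A_j\| \leq |t_j| + |v_j| \leq \epsilon_{j-1}^{1/16} + \epsilon_{j-1}^{15/16} \leq 2\,\epsilon_{j-1}^{1/16},
\end{equation*}
and $\|F_j\|_{h_j} \leq \epsilon_j = \epsilon_{j-1}^{2}$, which is much smaller than $\epsilon_{j-1}^{1/16}$ for $\epsilon_0$ small. Hence $\|A_j + F_j(\cdot)\|_0 \leq 3\,\epsilon_{j-1}^{1/16}$. A standard Gronwall-type estimate then yields
\begin{equation*}
\|\widetilde U(\lambda,x,\theta)\| \leq \exp\bigl(3\,\epsilon_{j-1}^{1/16} x\bigr) \leq e^{3}
\end{equation*}
uniformly for $0 \leq x \leq \epsilon_{j-1}^{-1/16}$.

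Combining this with the submultiplicativity of the operator norm and the bound $\|B_j\|_0 \leq \epsilon_{j-1}^{-1/192}$ from \eqref{normOfB},
\begin{equation*}
\|U(\lambda,x,\theta)\| \leq \|B_j\|_0^{2}\,\|\widetilde U(\lambda,x,\theta)\| \leq e^{3}\,\epsilon_{j-1}^{-1/96},
\end{equation*}
which is the claimed bound with $C = e^3$, uniformly in $\theta \in \bbT^d$. There is no serious obstacle here: the argument is a routine application of the almost reducibility; the only point that requires a moment's care is verifying that the time window $\epsilon_{j-1}^{-1/16}$ is precisely balanced against $\|A_j\| \lesssim \epsilon_{j-1}^{1/16}$ to keep $\|\widetilde U\|$ bounded by an absolute constant, which is exactly how the exponents in Proposition~\ref{rd1}(a) were engineered.
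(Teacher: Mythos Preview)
Your proof is correct and follows essentially the same approach as the paper: conjugate via $B_j$, bound the reduced fundamental solution by Gronwall using $\|A_j\|\lesssim\epsilon_{j-1}^{1/16}$ from Proposition~\ref{rd1}(a), and then multiply by $\|B_j\|_0^2\le\epsilon_{j-1}^{-1/96}$. The only cosmetic difference is that the paper separates $A_j$ from $F_j$ via variation of parameters before applying Gronwall, whereas you bound $\|A_j+F_j\|$ directly; your version is in fact slightly cleaner.
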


\begin{proof}
Given that $\lambda\in K_j$, there exists a function $B_j\in C^\omega(\bbT^d,\mathrm{PSU}(1,1))$ that conjugates the system \eqref{eq.diffEq} into $$g'=(A_j(\lambda)+F_j(\theta))g.$$ Let $\widetilde{U}_j(\lambda,x,\theta)$ denote the fundamental solution of this conjugated system. Consequently, we have $$ \widetilde{U}_j(\lambda,x,\theta) = e^{x A_j(\lambda)}\left(id + \int_0^x e^{-tA_j(\lambda) } F_j(\theta+t\omega) \widetilde{U}_j(\lambda,t,\theta) \,\rmd t \right),$$ where now we define $G_j( \lambda,x,\theta)=e^{-x A_j(\lambda)} \widetilde{U}_j(\lambda,x,\theta).$ This leads to $$G_j(\lambda,x,\theta) =id + \int_0^x e^{-tA_j(\lambda) } F_j(\theta+ t \omega) e^{tA_j(\lambda) } G_j(\lambda,t,\theta) \, \rmd t,$$ 
and we introduce $g_j (\lambda,x)= \|G_j(\lambda,x,\theta)\|_0$. 
Noting that $\Vert A_j\Vert_0\leq \epsilon_{j-1}^{\frac{1}{16}}$, we obtain
$$\|e^{-tA_j(\lambda) }\|\leq 2 \qquad \forall t\leq \epsilon_{j-1}^{-\frac{1}{16}}, $$ which in turn implies $$g_j (\lambda,x) \leq 1+ \int_0^x 4 g_j(\lambda,t) \,\rmd t $$ for $x\leq \epsilon_{j-1}^{-\frac{1}{16}}$. 
By applying Gronwall's inequality, we find that $g_j (\lambda,x) \leq e^{4 \epsilon_{j} x}$, which in turn implies $$\sup_{0\leq x\leq \epsilon_{j-1}^{-\frac{1}{16}}}\Vert \widetilde{U}_j(\lambda,x,\theta)\Vert_0\leq C.$$ This, in conjunction with \eqref{normOfB}, yields $$\sup_{0\leq x\leq \epsilon_{j-1}^{-\frac{1}{16}}}\Vert U(\lambda,x,\theta)\Vert_0\leq C\Vert B_j\Vert^2_0\leq C\epsilon_{j-1}^{-\frac{1}{96}}.$$
\end{proof}

Moreover, we obtain zero Lyapunov exponents throughout the spectrum:

\begin{theorem}\label{thm:zeroLyapunov}
Assume that $\omega\in \DC_d(\kappa,\tau)$ and $\|\widetilde{\varphi}\|_{h}<\epsilon_0$ for some $h>0$ and sufficiently small $\epsilon_0=\epsilon_0(d,\kappa,\tau,h)$. Then for every $\lambda\in\Sigma_\varphi,$ we have   $\gamma(\lambda)=0.$
\end{theorem}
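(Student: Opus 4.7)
The plan is to combine the quantitative almost reducibility of Proposition~\ref{rd1} with the absence of uniform hyperbolicity on the spectrum (Corollary~\ref{unre}). Fix $\lambda\in\Sigma_\varphi$. Writing $\Lambda_\varphi f=\lambda f$ as the quasiperiodic linear system \eqref{eq.diffEq}, the smallness $\|\widetilde\varphi\|_h<\epsilon_0$ places us in the scope of Proposition~\ref{rd1}, which supplies for each $j\ge 1$ a conjugacy $B_j\in C^\omega_{h_j}(\bbT^d,\mathrm{PSU}(1,1))$ from $(\omega,A_0+F_0)$ to $(\omega,A_j+F_j)$ with $\|F_j\|_{h_j}\le\epsilon_j$ and $\|B_j\|_0\le\epsilon_{j-1}^{-1/192}$. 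Since $B_j$ is a bounded analytic conjugacy,
\[
\gamma(\lambda)=\gamma(\omega,A_j+F_j)\qquad\text{for every }j.
\]

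I would then split $\Sigma_\varphi$ by the resonance behavior of $\lambda$. When $\lambda\in K_j$ for infinitely many $j$, Corollary~\ref{cor.transferGrowth} gives
\[
\sup_{0\le x\le \epsilon_{j-1}^{-1/16}}\|U(\lambda,x,\theta)\|\le C\epsilon_{j-1}^{-1/96}
\]
along this subsequence. Evaluating at $x_j := \epsilon_{j-1}^{-1/16}\to\infty$ produces
\[
\gamma(\lambda)\le\limsup_{j\to\infty}\frac{\log C+(1/96)|\log\epsilon_{j-1}|}{\epsilon_{j-1}^{-1/16}}=0,
\]
so $\gamma(\lambda)=0$ on this set.

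For $\lambda$ non-resonant past some $j_0$ (no $j>j_0$ with $\lambda\in K_j$), the KAM scheme performs no further resonance block-rotation, and each subsequent correction $W_k=B_kB_{k-1}^{-1}$ is exponentially close to the identity. Summing these corrections, $B_j$ converges to a bounded limit $B_\infty$ that conjugates $(\omega,A_0+F_0)$ into a constant cocycle $(\omega,A_\infty)$, with $A_j\to A_\infty$ and $F_j\to 0$. Hence $\gamma(\lambda)=\gamma(\omega,A_\infty)$, the Lyapunov exponent of a constant $\su(1,1)$-valued cocycle. By Corollary~\ref{unre}, $(\omega,A_0+F_0)$ is not uniformly hyperbolic at $\lambda\in\Sigma_\varphi$, and UH is preserved by bounded conjugation through $B_\infty$, so $(\omega,A_\infty)$ is likewise not UH. For $A_\infty\in\su(1,1)$ this forces $\det A_\infty\ge 0$; the eigenvalues of $A_\infty$ are purely imaginary and $\gamma(\omega,A_\infty)=0$. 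As an alternative that avoids passing to the full limit, I would apply the upper triangular normalization \eqref{e.conj} at step $j_0$ to conjugate $A_{j_0}+F_j$ to $V_{j_0}+\widetilde F_j$, where $V_{j_0}$ is upper triangular with diagonal entries $\pm ie_{j_0}$ and $|c_{j_0}|\le 8\|A_0\|$ from \eqref{Btimesc} together with $\|B_{j_0}\|_0\ge 1$. A non-real $e_{j_0}$ would make $V_{j_0}$ hyperbolic, and by openness of UH under the small $C^0$-perturbation $\widetilde F_j$ the cocycle would be UH for $j$ large, contradicting non-UH. Thus $e_{j_0}\in\bbR$, and a direct Gronwall estimate against the uniformly bounded flow $e^{xV_{j_0}}$ yields $\gamma(\omega,V_{j_0}+\widetilde F_j)\to 0$ as $j\to\infty$.

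The main obstacle is promoting almost reducibility to genuine reducibility in the non-resonant regime: this amounts to controlling the infinite product $B_\infty=B_{j_0}\prod_{k>j_0}W_k$ via the exponentially decaying estimates $\|W_k-I\|\lesssim\epsilon_k$ delivered by the underlying KAM scheme, and verifying that the resulting $B_\infty$ is uniformly bounded so as to transfer UH/non-UH and the Lyapunov exponent. A secondary technical point arises in the borderline parabolic case $e_{j_0}=0$ in the upper triangular approach, where $\|e^{xV_{j_0}}\|$ grows linearly and a two-scale Gronwall (matching the perturbation scale $\epsilon_j$ against a polynomially growing time window $x\sim\epsilon_j^{-1/2}$) is needed to recover the vanishing of the Lyapunov exponent.
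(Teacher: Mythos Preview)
Your proposal is correct and uses the same two-case split as the paper (finitely many versus infinitely many resonant steps), but the roles of the two main ingredients are swapped, and your handling of the resonant case is cleaner. For $\lambda\in K_j$ infinitely often you invoke Corollary~\ref{cor.transferGrowth} directly and read off $\gamma(\lambda)=0$ from the subsequential bound at $x_j=\epsilon_{j-1}^{-1/16}$; this is valid since $\gamma(\lambda)=\inf_{x>0}\frac1x\int\log\|\mathbf T(\lambda,x,\theta)\|\,\rmd\theta$ by subadditivity and the Corollary's bound is uniform in $\theta$. The paper instead proves an upper-triangular normal form (its Claim~1) with $e_j\in\bbR$ for \emph{every} large $j$, then covers $\bbR_+$ by intervals $I_j=(\epsilon_{j-1}^{-1/96},\epsilon_{j-1}^{-1/16})$ to obtain the stronger conclusion $\|\mathbf T(\lambda,x,\theta)\|\le C|x|$ for all large $x$; this extra uniformity is not needed for the theorem but is what the paper's method buys.

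In your non-resonant case your primary route---convergence of the non-resonant KAM corrections to a bounded $B_\infty$ and the observation that non-UH forces $\det A_\infty\ge 0$---is exactly the paper's Case~1 (reducibility \`a la Eliasson). Your alternative sketch has two imprecisions worth noting: the object $A_{j_0}+F_j$ is not literally what later KAM steps produce (rather $A_j+F_j$ with $A_j\to A_\infty$), and the assertion that a non-real $e_{j_0}$ forces UH is only immediate when $|e_{j_0}|$ dominates the perturbation; the paper handles this by separating $|e_j|>\epsilon_j^{1/4}$ (where an additional diagonalization plus \cite[Lemma~3.1]{HY2012} gives genuine UH) from $|e_j|\le\epsilon_j^{1/4}$ (absorbed into the perturbation). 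Since you present this only as an alternative, these do not affect the correctness of your main argument.
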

\begin{proof}
We distinguish the proof into two cases:\\
\smallskip
 \textbf{Case 1: $\lambda\in K_j$ for finitely many $j.$}
In this scenario, there exists an analytic transformation  $B$ that conjugates the system \eqref{eq.diffEq} into $(\omega,A)$, where $A\in \su(1,1)$ is a constant matrix, indicating that \eqref{eq.diffEq} is reducible. Since $\lambda\in\Sigma_\varphi$, Corollary~\ref{unre} implies that $A$ is not hyperbolic. Consequently, either $A$ is elliptic, leading to boundedness of the system \eqref{eq.diffEq}, or $A$ is parabolic, resulting in linear growth of \eqref{eq.diffEq}.\\
\smallskip
 \textbf{Case 2: $\lambda\in K_j$ for infinitely many $j.$} 
 Before estimating the growth of its  fundamental solution, we have the following observation:
 \begin{claim}\label{claim1}
  Suppose that $\lambda\in\Sigma_\varphi,$ then for $j$ sufficiently large, there exists $B_j$  which conjugates the system $(\omega,A_0+ F_0(\cdot))$ into $(\omega,A_j+ \widetilde{F_j}(\cdot))$ 
  where $$A_j+\widetilde{F_j}(\cdot)=\begin{bmatrix}i e_j(\lambda)& c_j\\0&-i e_j(\lambda)\end{bmatrix}+\widetilde{F_j}(\cdot)$$
  with estimates  
  \begin{equation*}
e_j(\lambda)\in\bbR, \quad\Vert \widetilde{F_j}\Vert_0<\epsilon_j^{\frac{1}{4}},\quad \Vert B_j\Vert_{0}^{2} \vert c_{j} \vert \leq 8 \Vert A_{0} \Vert.
  \end{equation*}
 \end{claim}
 \begin{proof}
Let $B_j(\cdot)$, as given by Proposition~\ref{rd1}, transform the system $(\omega,A_0+F_0(\cdot))$ into $(\omega, A_j+F_j(\cdot))$. We denote by $U_j(x,\theta,\lambda)$ the corresponding fundamental solution matrix, as usual.
We focus on the case where $\det(A_j)<0$. In this scenario, $A_j$ possesses two real eigenvalues, which we denote as $\pm e_j(\lambda)=\pm\sqrt{-\det(A_j)}$. Assuming $\sfE_j>\epsilon_j^{\frac{1}{4}}$, there exists a matrix $Q_j$ such that  $$Q_jA_jQ_j^{-1}=\begin{bmatrix}e_j&0\\0&-e_j\end{bmatrix}$$ with $\Vert Q_j\Vert^2\leq \Vert A_j\Vert\epsilon_j^{-\frac{1}{4}}$
(consult for example \cite{NXQ2024}). Therefore, $Q_j$ further conjugates the system $(\omega, A_j+F_j(\cdot))$ into
$$g'=\left(Q_j(A_j+F_j (\cdot))Q_j^{-1}\right)g=\left(\begin{bmatrix}e_j&0\\0&-e_j\end{bmatrix}+ \overline{F_j}(\cdot)\right)g,$$ 
with $e_j>\epsilon_j^{\frac{1}{4}}$ and $\|\overline{F_j}\|\leq \Vert Q_j F_jQ_j^{-1}\Vert< \epsilon_j^{\frac{3}{4}}$.

By \cite[Lemma 3.1]{HY2012}, there exists $\bar{B}_j\in C^{\omega}(\bbT^d, PSU(1,1))$ that further conjugates the system into \begin{equation}\label{con1}
g'
= \begin{bmatrix}
e_j+ f_j(\theta) & 0 \\ 
0 & -e_j-f_j(\theta) 
\end{bmatrix} g,\end{equation}
where $\|f_j\|_0 \leq 2 \epsilon_j^{\frac{3}{4}}$.
This implies that the system is uniformly hyperbolic, which contradicts the assumption that $\lambda\in \Sigma_\varphi$ by Corollary~\ref{unre}. Therefore, for the case $\det(A_j)<0$, we incorporate everything into the perturbative term denoted by $\widetilde{F_j}(\cdot)$, with a slightly worse estimate $\Vert \widetilde{F}_j\Vert_0<\epsilon_j^{\frac{1}{4}}$. We rewrite the system \eqref{con1} as $(\omega,\widetilde{F}_j(\theta) )$.

Let $\widetilde{U}_j(\lambda,x,\theta)$ denote the fundamental solution of $(\omega,A_j+ \widetilde{F_j}(\cdot))$. By the variation of constants formula, we have $$ \widetilde{U}_j(\lambda,x,\theta) = e^{x A_j}\left(id + \int_0^x e^{-tA_j } \widetilde{F}_j(\theta+t\omega) \widetilde{U}_j(\lambda,t,\theta)\, \rmd t \right).$$
Applying the same argument as Corollary~\ref{cor.transferGrowth}, we obtain $$\sup_{0\leq x\leq \epsilon_{j-1}^{-\frac{1}{16}}}\Vert \widetilde{U}_j(\lambda,x,\theta)\Vert_0\leq 2(1+|c_j|x).$$ This leads to $$\sup_{0\leq x\leq \epsilon_{j-1}^{-\frac{1}{16}}}\Vert U(\lambda,x,\theta)\Vert_0\leq 2 \Vert B_j\Vert_0^2 (1+|c_j|x)\leq C (\epsilon_{j-1}^{-\frac{1}{96}}+|x|).$$
Let $I_j=(\epsilon_{j-1}^{-\frac{1}{96}},\epsilon_{j-1}^{-\frac{1}{16}})$. Since $\epsilon_{j+1}=\epsilon_j^2$ and $\{I_j:j\in\bbN_+\}$ covers $\bbR_+$, we have $ \Vert U(\lambda,x,\theta)\Vert_0\leq C|x| $ for $x\in I_j$.
The result follows from this estimate.
\end{proof}

With the claim proved, we are done.
\end{proof}

We also have the standard  $\frac{1}{2}$-H\"older regularity for the integrated density  states measure.
\begin{theorem}[H\"older continuity]\label{thm.holderCon}Assume that $\omega\in \DC_d(\kappa,\tau)$ and $\|\widetilde{\varphi}\|_{h}<\epsilon_0$ for some $h>0$ and sufficiently small $\epsilon_0=\epsilon_0(d,\kappa,\tau,h)$.
There exists a numerical constant $C>0$ with 
$$\mathcal{N}(\lambda+\epsilon)-\mathcal{N}(\lambda-\epsilon)\leq C \epsilon^{1/2}.$$
\end{theorem}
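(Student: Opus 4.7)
The plan is to convert the statement to a H\"older bound on the rotation number via Lemma~\ref{lem:rotAndDensity}, which gives $\mathcal{N}(\lambda+\epsilon)-\mathcal{N}(\lambda-\epsilon) = \frac{1}{\pi}(\rho(\lambda+\epsilon) - \rho(\lambda-\epsilon))$. Hence it suffices to prove $\rho(\lambda+\epsilon) - \rho(\lambda-\epsilon) \leq C\epsilon^{1/2}$ for small $\epsilon>0$, which we do using the quantitative almost reducibility developed in Section~\ref{qal}.

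First, choose the scale $j=j(\epsilon)$ to be the smallest integer with $\epsilon_j \leq \epsilon^2$. By \eqref{eq:KAMqantities} we have $\epsilon_{j-1}$ comparable to $\epsilon$. Apply Proposition~\ref{rd1} to the Dirac cocycle \eqref{eq.diffEq} for each $\lambda'$ in a neighborhood of $\lambda$: the conjugation $B_j(\lambda',\cdot)$ satisfies \eqref{eq:conjugateEq1} with $\|B_j\|_0 \leq \epsilon_{j-1}^{-1/192}$, $|\deg B_j| \leq 2N_{j-1}$, and $\|F_j(\lambda',\cdot)\|_{h_j} \leq \epsilon_j$. Since the rotation number transforms under analytic conjugation by a half-integer combination of the frequency and the degree,
\[
\rho(\omega, A_0(\lambda') + F_0) = \rho(\omega, A_j(\lambda') + F_j(\lambda',\cdot)) + \frac{1}{2}\langle \deg B_j, \omega \rangle.
\]
On any subinterval where $B_j(\lambda',\cdot)$ depends continuously on $\lambda'$, the integer vector $\deg B_j$ is locally constant, so the degree term contributes nothing to the increment of $\rho$.

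For the core estimate, under the resonant alternative (a) of Proposition~\ref{rd1}, $A_j(\lambda')$ has eigenvalues $\pm i e_j(\lambda')$ with $|e_j(\lambda')| \lesssim \epsilon_{j-1}^{1/16}$, and $e_j$ is Lipschitz in $\lambda'$ with a uniform constant; in the non-resonant case $A_j(\lambda')$ is itself controlled via \eqref{e.conj}. Combining the elementary square-root perturbation estimate for the rotation number of an $\essell(2,\bbR)$ cocycle,
\[
|\rho(\omega, A+F) - \rho(\omega, A)| \leq C \|F\|^{1/2}
\]
for constant $A$ and small analytic $F$, with the Lipschitz dependence of $A_j$ on $\lambda'$, we deduce
\[
|\rho(\omega, A_j(\lambda_2) + F_j(\lambda_2,\cdot)) - \rho(\omega, A_j(\lambda_1) + F_j(\lambda_1,\cdot))| \lesssim |\lambda_2 - \lambda_1| + \epsilon_j^{1/2} \lesssim \epsilon^{1/2}.
\]
The main obstacle will be that $\deg B_j$ may jump as $\lambda'$ crosses a resonant energy. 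However, $\rho(\lambda')$ itself is continuous (in fact monotone nondecreasing) in $\lambda'$, so any jump of the degree contribution is cancelled by an opposite jump of $\rho(\omega, A_j + F_j)$; thus the total increment of $\rho$ over $[\lambda-\epsilon,\lambda+\epsilon]$ remains bounded by $C\epsilon^{1/2}$, and a standard finite-cover argument assembling the local estimates completes the proof.
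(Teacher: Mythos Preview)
Your approach differs substantially from the paper's. The paper does not track the rotation number along a real interval at all; instead it bounds the Lyapunov exponent at a \emph{complex} energy and invokes the Thouless formula. Concretely, it shows (by rescaling the conjugacy $B_j$ from Proposition~\ref{rd1}) that for $\lambda\in\Sigma_\varphi$ the system at energy $\lambda+i\epsilon$ is conjugated to one within $C\epsilon^{1/2}$ of a pure rotation, hence $\gamma(\lambda+i\epsilon)\le C\epsilon^{1/2}$; since $\gamma(\lambda)=0$ on the spectrum (Theorem~\ref{thm:zeroLyapunov}), the Thouless formula \eqref{eq.Thouless} gives
\[
\tfrac12\bigl(\mathcal N(\lambda+\epsilon)-\mathcal N(\lambda-\epsilon)\bigr)\le \gamma(\lambda+i\epsilon)-\gamma(\lambda)\le C\epsilon^{1/2}.
\]
This route never needs to know how $B_j$ or $A_j$ varies with $\lambda$.

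Your argument, by contrast, relies on ingredients that Proposition~\ref{rd1} does not supply. You assume $B_j(\lambda',\cdot)$ and $A_j(\lambda')$ depend continuously, even Lipschitz, on $\lambda'$, but the proposition is stated at a fixed energy; the KAM construction makes discrete resonance choices that can change with $\lambda'$, so local constancy of $\deg B_j$ and regularity of $A_j$ are not free. Even if one fixes $B_j$ and transports the perturbation in $\lambda'$ through it, the induced perturbation of the reduced system has size $\|B_j\|_0^2\,|\lambda_2-\lambda_1|$, not $|\lambda_2-\lambda_1|$; when $A_j$ is nearly parabolic the rotation number responds like a square root, producing a term $\|B_j\|_0\,|\lambda_2-\lambda_1|^{1/2}\sim \epsilon_{j-1}^{-1/192}\epsilon^{1/2}$, which already overshoots the claimed $\epsilon^{1/2}$ bound. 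Your displayed estimate $\lesssim |\lambda_2-\lambda_1|+\epsilon_j^{1/2}$ is therefore not justified as written. Finally, the degree-jump and finite-cover step is incomplete: continuity of $\rho$ guarantees that jumps in $\tfrac12\langle\deg B_j,\omega\rangle$ are cancelled by opposite jumps in $\rho(\omega,A_j+F_j)$, but it does not by itself bound the total increment; you would still need to control the number of subintervals and show the local estimates sum to $O(\epsilon^{1/2})$, and you have not done so. These are genuine gaps, and the Thouless-formula route avoids all of them.
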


\begin{proof}
Let us first show the following claim:

\begin{claim}
For any $\epsilon \in \left( \epsilon_j^{\frac{1}{4}}, \epsilon_{j-1}^{\frac{1}{48}} \right)$ sufficiently small, there exists $W \in C^\omega\left(\bbT^d, \mathrm{PSU}(1,1)\right)$ with $\Vert W \Vert_0 \leq C\epsilon^{-\frac{1}{4}}$ and $Q \in C^\omega\left(\bbT^d, \su(1,1)\right)$ such that
$$\partial_\omega W=(A_0+F_0)W-WQ,$$
with estimate $$ \left\Vert Q-\begin{bmatrix}i e_j&0\\0&-i e_j\end{bmatrix}\right\Vert_0\leq C\epsilon^{\frac{1}{2}}.$$
\end{claim}
\begin{proof}
By Claim~\ref{claim1}, there exists $B_j \in C^\omega\left(\bbT^d, \mathrm{PSU}(1,1)\right)$ such that 
$$\partial_\omega B_j=(A_0+F_0)B_j-B_j(A_j+\widetilde{F_j})$$
where $A_j+\widetilde{F_j}(\cdot)=\begin{bmatrix}i e_j& c_j\\0&-i e_j\end{bmatrix}+\widetilde{F_j}(\cdot)$  with estimates  
  \begin{equation}\label{bcj}
e_j(\lambda)\in\bbR, \quad\Vert \widetilde{F_j}\Vert_0<\epsilon_j^{\frac{1}{4}},\quad \Vert B_{j} \Vert_{0}^{2} \vert c_{j} \vert \leq 8 \Vert A_{0} \Vert.
  \end{equation}
  Let $d = \Vert B_j \Vert_0\epsilon^{\frac{1}{4}}$ and 
  define $D=\begin{bmatrix}d^{-1}&0\\0&d \end{bmatrix}$. 
  Then $W= B_j D$  satisfies:
$$
\partial_\omega W
=(A_0+F_0)W-W D^{-1}
\left(\begin{bmatrix}i e_j& c_j\\0&-i e_j\end{bmatrix} + \widetilde{F_j}\right)D.$$
Then by \eqref{bcj}, $Q$ defined by  
\begin{eqnarray*}
Q=D^{-1}\left(\begin{bmatrix}i e_j&c_j\\0&-i e_j\end{bmatrix}+\widetilde{F_j}\right)D
= \begin{bmatrix}i e_j& \Vert B_j\Vert_0^2c_j \epsilon^{\frac{1}{2}} \\0&-i e_j\end{bmatrix}+ D^{-1}\widetilde{F_j}D
\end{eqnarray*}
satisfies the desired conditions.     
\end{proof}
  
For $\lambda \in \Sigma_\varphi$ and any $\epsilon > 0$ sufficiently small, the system $(\omega, A_0(\lambda + i\epsilon) + F_0(\theta))$ can be considered as an $\epsilon$-perturbation of $(\omega, A_0(\lambda) + F_0(\theta))$. It follows that $$
\partial_\omega W = (A_0(\lambda + i\epsilon) + F_0)W - W\widetilde{Q},
$$ where $$
\widetilde{Q} = Q + \epsilon W^{-1}\begin{bmatrix} 1 & 0 \\ 0 & -1 \end{bmatrix}W,
$$ with the estimate 
$$
\left\Vert \widetilde{Q} - \begin{bmatrix} i e_j & 0 \\ 0 & -i e_j \end{bmatrix} \right\Vert_0 \leq C\epsilon^{\frac{1}{2}}.
$$ 
Let $\widetilde{\mathcal{Q}}(\lambda + i\epsilon,x, \theta)$ be the fundamental solution of $(\omega, \widetilde{Q}(\cdot))$. It follows that 
\begin{equation}\label{eq:complexExponent}
\gamma(\lambda + i\epsilon) = \lim\limits_{L \to \infty} \frac{1}{L} \ln \Vert \widetilde{\mathcal{Q}}(\lambda + i\epsilon,L, \theta) \Vert_0 \leq C\epsilon^{\frac{1}{2}}.
\end{equation}

By Theorem~\ref{thm:zeroLyapunov},  we have $\gamma(\lambda)=0$ for $\lambda\in\Sigma_\varphi$.
Applying the Thouless formula \cite[Lemma 3.2]{ES96} yields
\begin{equation}\label{eq.Thouless}
\gamma(z)
=c_0\Re z+c_1+\int_{-1}^{1}\log|\ell-z| \, \rmd \mathcal{N}(\ell)
+\int_{\bbR\setminus(-1,1)}\left(\log\left|1-\tfrac{z}{\ell}\right|+\tfrac{\Re z}{\ell}\right) \, \rmd \mathcal{N}(\ell),
\end{equation}
which implies that 
\begin{eqnarray*}
\gamma(\lambda+i\epsilon)-\gamma(\lambda)
&=&\int_{\bbR}\frac{1}{2}\log\left( 1+\frac{\epsilon^2}{(\ell-\lambda)^2} \right)\,\rmd \mathcal{N}(\ell)\\
&\geq& \frac{1}{2}(\mathcal{N}(\lambda+\epsilon)-\mathcal{N}(\lambda-\epsilon)).
\end{eqnarray*}
By \eqref{eq:complexExponent}, we deduce $\mathcal{N}(\lambda+\epsilon)-\mathcal{N}(\lambda-\epsilon)\leq c \epsilon^{1/2}$. 
Moreover, $\mathcal{N}$ is locally constant in the complement of $\Sigma_\varphi$, which means precisely that $\mathcal{N}$ is $\frac{1}{2}$-H\"{o}lder continuous.
\end{proof}

As a consequence of Proposition~\ref{thm.holderCon} and Theorem~\ref{thm.complexLE}, we obtain a non-degenerate property of the density of states measure $\mathcal{N}(\lambda)$, which is crucial for establishing purely absolutely continuous spectrum. An analogous result for Schr\"odinger operators was first developed by Avila in \cite{Avila1}, and for CMV matrices, it can be found in \cite{DLZ22}. 
Since the argument is standard, we leave it in the appendix.

\begin{lemma}\label{lem:lowerBoundDos}
There exist a constant $c>0$ such that for any $\lambda\in\Sigma_\varphi$ and sufficiently small $\epsilon>0$, 
$$\mathcal{N}(\lambda+\epsilon)-\mathcal{N}(\lambda-\epsilon)\geq c\epsilon^{\frac{3}{2}}.$$
\end{lemma}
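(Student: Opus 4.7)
The plan is to argue by contradiction. Suppose $F(\epsilon):=\mathcal{N}(\lambda+\epsilon)-\mathcal{N}(\lambda-\epsilon)<c\,\epsilon^{3/2}$ for some sufficiently small $c>0$ to be fixed at the end. I would assemble three ingredients already established earlier in Section~\ref{arac}: (i) the vanishing of the Lyapunov exponent on the spectrum, $\gamma(\lambda)=0$ (Theorem~\ref{thm:zeroLyapunov}); (ii) the perturbative lower bound $\gamma(\lambda+i\epsilon)\geq \epsilon$ off the real axis (Proposition~\ref{thm.complexLE}); and (iii) the $\tfrac12$-Hölder continuity $F(s)\leq C_{\mathrm H}s^{1/2}$ (Theorem~\ref{thm.holderCon}).

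Applying the Thouless formula~\eqref{eq.Thouless} at $z=\lambda+i\epsilon$ and at $z=\lambda$, subtracting, using $\gamma(\lambda)=0$, and integrating by parts (the boundary terms vanish by Hölder), one gets
\[
\epsilon \;\leq\; \gamma(\lambda+i\epsilon)\;=\;\int_0^{\infty}\frac{\epsilon^{2}\,F(s)}{s(s^{2}+\epsilon^{2})}\,\rmd s.
\]
I would then split this integral at the two natural scales $s_\ast:=(c/C_{\mathrm H})^{2}\epsilon^{3}$ (the crossover where the Hölder estimate matches the contradiction hypothesis) and $\epsilon$. On $(0,s_\ast)$, using Hölder directly yields a contribution $O(c\epsilon^{3/2})$; on $(s_\ast,\epsilon)$, monotonicity of $F$ combined with $F(s)\leq F(\epsilon)<c\epsilon^{3/2}$ yields a contribution of $O(c\epsilon^{3/2}|\log\epsilon|)$. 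Both can be made smaller than $\epsilon/2$ by choosing $c$ small.

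The main technical obstacle is the far-field piece $\int_\epsilon^{\infty}$, where the naive Hölder bound gives only $O(\epsilon^{1/2})$, which exceeds the target $\epsilon$ and prevents an immediate contradiction. The expected resolution is to refine this tail estimate via the reducibility structure on $\Sigma_\varphi$ encoded in Claim~\ref{claim1} (used in the proof of Theorem~\ref{thm.holderCon}): that claim conjugates the cocycle to an approximate upper-triangular normal form with purely imaginary diagonal entries $\pm i e_j$ and with a transformation $W$ of norm $\|W\|\leq C\epsilon^{-1/4}$. Complexifying the spectral parameter by $i\epsilon$ perturbs this normal form by $\epsilon\,W^{-1}\operatorname{diag}(1,-1)W$, and balancing this perturbation against the imaginary eigenvalues $\pm i e_j$ in an eigenvalue-perturbation argument sharpens the upper bound $\gamma(\lambda+i\epsilon)\leq C\,F(\epsilon)^{2/3}$, where the exponent $2/3$ arises from optimizing the trade-off between $\|W\|^{2}\epsilon$ and the available information on $|e_j|$. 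Combining this refinement with the lower bound $\gamma(\lambda+i\epsilon)\geq \epsilon$ forces $F(\epsilon)^{2/3}\gtrsim \epsilon$, equivalently $F(\epsilon)\gtrsim \epsilon^{3/2}$, contradicting the hypothesis and completing the argument.
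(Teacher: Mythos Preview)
Your Thouless-integral decomposition with shift $i\epsilon$ is set up correctly, and you rightly flag the far-field $\int_\epsilon^\infty=O(\epsilon^{1/2})$ as the obstacle. The genuine gap is in the proposed resolution: the claimed inequality $\gamma(\lambda+i\epsilon)\leq C\,F(\epsilon)^{2/3}$ does not follow from Claim~\ref{claim1}. That claim and the argument of Theorem~\ref{thm.holderCon} conjugate the cocycle at the single parameter value $\lambda$ to a near-constant upper-triangular form, and complexifying by $i\epsilon$ introduces a perturbation of size at most $\Vert W\Vert^2\epsilon\leq C\epsilon^{1/2}$, hence $\gamma(\lambda+i\epsilon)\leq C\epsilon^{1/2}$---a bound purely in terms of the imaginary shift. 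There is no mechanism in that estimate linking the normal-form data $e_j$, $c_j$, or $\Vert W\Vert$ to the increment $F(\epsilon)=\mathcal{N}(\lambda+\epsilon)-\mathcal{N}(\lambda-\epsilon)$, which depends on the spectrum over the whole interval $(\lambda-\epsilon,\lambda+\epsilon)$, not merely on the cocycle at $\lambda$. The only bridge between $\gamma$ and $F$ is the Thouless formula itself, and that expresses $\gamma(\lambda+i\epsilon)$ as an integral of $F(s)$ over \emph{all} scales $s$, so a pointwise hypothesis on $F(\epsilon)$ cannot by itself bound $\gamma(\lambda+i\epsilon)$ from above. (Note also that if your bound were available, it would render the integral splitting superfluous, since $\epsilon\leq\gamma(\lambda+i\epsilon)\leq CF(\epsilon)^{2/3}$ alone finishes the argument.)

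The paper's route is shorter and sidesteps the far-field issue by a different choice of imaginary shift: set $\delta=c\epsilon^{3/2}$ and decompose $\gamma(\lambda+i\delta)$ over the ranges $|\ell-\lambda|\ge1$, $\epsilon\le|\ell-\lambda|<1$, $\epsilon^4\le|\ell-\lambda|<\epsilon$, and $|\ell-\lambda|<\epsilon^4$. With this scaling the H\"older bound makes the first, second, and fourth pieces $o(\delta)$ (for $c$ small and then $\epsilon$ small), so the lower bound $\gamma(\lambda+i\delta)\ge\delta$ from Proposition~\ref{thm.complexLE} forces the third piece to be at least $\delta/20$; but that piece is trivially bounded above by $C\,F(\epsilon)\log\epsilon^{-1}$, giving the conclusion. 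The key idea you are missing is to match the scale of the imaginary shift to the target size of $F(\epsilon)$, not to $\epsilon$.
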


Having assembled these ingredients, we can now finish the proof of absolutely continuous spectrum.

\begin{proof}[Proof of Theorem~\ref{thm:ac}] Let $\mathcal{B}$ be the set of spectral parameters $\lambda\in \Sigma_\varphi$ for which $$\sup_{x}\Vert U(\lambda,x,\theta)\Vert<\infty.$$
According to Theorem~\ref{thm.acMeas}, $\mu|_{\mathcal{B}}$ is absolutely continuous. We define $\mathcal{R}$ as the set of spectral parameters for which the system $(\omega, A_0+F_0(\cdot))$ is reducible. As per a well-known result by Eliasson \cite{eliasson}, the set $\mathcal{R}\setminus\mathcal{B}$ comprises spectral parameters where the system $(\omega, A_0(\lambda)+F_0)$ is conjugate to a parabolic system, corresponding to $2\rho(\lambda)=\langle k,\omega\rangle$ for some $k\in\bbZ^d$, hence this set is at most countable. This implies, in particular, that there are no eigenvalues in $\mathcal{R}$, thus $\mu(\mathcal{R}\setminus\mathcal{B})=0$. To complete the argument, it suffices to demonstrate that $\mu(\Sigma_\varphi\setminus\mathcal{R})=0$.

Let $\epsilon_j,r_j,N_j$ be defined by \eqref{eq:KAMqantities} and $J_{j}(\lambda)$ be an open $2^{\frac{2}{3}} \epsilon_{j-1}^{\frac{2}{45}}$-neighborhood of $\lambda \in K_{j}$  (recall that $K_j$ was defined in \eqref{e.Kjdef}). 
By Theorem~\ref{thm.acMeas} and Corollary~\ref{cor.transferGrowth} we have
\begin{align*}
\mu(J_{j}(\lambda))  \leq \sup_{0\leq s\leq C\epsilon_{j-1}^{-\frac{2}{45}}}\Vert U(\lambda,s,\theta)\Vert^{2}_{0}\vert J_{j}(\lambda)\vert 
 \leq \sup_{0\leq s\leq C\epsilon_{j-1}^{-\frac{1}{16}}}\|U(\lambda,s,\theta)\|^{2}_{0}\vert J_{j}(\lambda)\vert 
 \leq C\epsilon_{j-1}^{\frac{1}{48}},
\end{align*}
where $|\cdot|$ denotes Lebesgue measure. Take a finite subcover such that $\overline{K_{j}}\subseteq \bigcup_{l=0}^{r}J_{i}(\lambda_\ell)$. Refining this subcover if necessary, we may assume that every $\lambda\in\bbR$ is contained in at most 2 different $J_{j}(\lambda_\ell)$. By Lemma~\ref{lem:lowerBoundDos} and Lemma~\ref{lem:rotAndDensity},
$$
\vert \mathcal{N} (J_{j}(\lambda) \vert = \mathcal{N}(\lambda+2^{-\frac{1}{3}}\epsilon_{j-1}^{\frac{2}{45}})- \mathcal{N}
(\lambda-2^{-\frac{1}{3}}\epsilon_{j-1}^{\frac{2}{45}}) \geq 2 c \epsilon_{j-1}^{\frac{1}{15}}.
$$
Meanwhile,  by Lemma~\ref{lem:rotAndDensity} and  Lemma~\ref{lem:rotEst}, if $\lambda\in K_{j}$, we have
$$
\left\| 2\mathcal{N}(\lambda)-\langle n_{j},\omega\rangle\right\|_{\bbR/\bbZ}\leq 2\epsilon_{j-1}^{\frac{1}{15}}
$$
for some $\vert n_{j}\vert <2N_{j-1}$. This implies that $2\mathcal{N}(K_{j})$ can be covered by $2N_{j-1}$ open intervals $T_{s}$ of length $2\epsilon_{j-1}^{\frac{1}{15}}$. Since $\vert T_{s}\vert \leq \frac{1}{c}\vert 2\mathcal{N}(J_{j}(\lambda))\vert$ for any $s$, $\lambda_\ell\in K_{j}$, there are at most $2([\frac{1}{c}]+1)+4$ open intervals $J_{j}(\lambda_\ell)$ such that $2\mathcal{N}(J_{j}(\lambda_\ell))$ intersects $T_{s}$. We conclude that there are at most $2(2([\frac{1}{c}]+1)+4)N_{j-1}$ open intervals $J_{j}(\lambda_{\ell})$ to cover $K_{j}$. Then
\begin{equation}\label{boundOfMeas}
\mu(K_{j})\leq \sum_{j=0}^{r}\mu(J_{j}(\lambda_\ell)) \leq CN_{j-1}\epsilon_{j-1}^{\frac{1}{48}} \leq C\epsilon_{j-1}^{\frac{7}{384}}.
\end{equation}
Since $\epsilon_{j}=\epsilon_{0}^{2^{j}}$ and $\epsilon_{0}$ is small, \eqref{boundOfMeas} implies $
\sum\limits_{j} \mu(\overline{K_{j}}) < \infty.$
As $\Sigma_{\varphi} \backslash \mathcal{R} \subseteq \limsup K_{j}$, then Borel-Cantelli Lemma gives the desired result.

\end{proof}

\subsection{Gap Estimates}
In this subsection, we give the upper bounds of spectral gaps of $\Sigma_\varphi$ in terms of their labels provided by the Gap Labelling Theorem.

Let $\epsilon_0, h>0$ be  positive numbers and assume that $\Vert F_0\Vert_{h}=\sup_{|\Im z|<h}|F_0(z)|<\epsilon_0$ sufficiently small. Assume that $\lambda$ locates at the boundary of possible  spectral gaps, i.e. $2\rho(\lambda)= \langle k,\omega\rangle$, then by well-known result of Eliasson \cite{eliasson},  the system $(\omega,A_0+F_0(\cdot))$ is reducible to  parabolic constant coefficients. More precisely, there exists $B(\theta)\in C^\omega(\bbT^d,\mathrm{PSU}(1,1))$ such that 
\begin{equation}\label{eq.reducible}
\partial_{\omega}B=(A_0(\lambda)+F_0(\theta))B-BA
\end{equation}
where 
\[A=\frac{1}{2}\begin{bmatrix}i \zeta & -i \zeta\\ i \zeta & -i \zeta\end{bmatrix}\] and $\zeta$ is  a real constant. In our application, we will assume that $\lambda$ is the left endpoint of the open spectral gap and thus assume $\zeta>0.$ The case that $\lambda$ is the  a right endpoint can be handled similarly.

In the following, we will apply the classical Moser-P\"oschel argument \cite{MoserPoschel1984,Puig2006}. The above transformation $B$ turns the system $(\omega, A_0(\lambda+\Delta)+F_0(\cdot))$ into a new system $(\omega, A+\Delta P(\cdot))$.
To obtain the explicit expression of $P$,
$$\begin{aligned}
\partial_{\omega}B&=(A_0(\lambda+\Delta)+F_0(\theta))B-B(A+\Delta P(\theta))\\
&=(A_0(\lambda)+F_0(\theta))B-BA.
\end{aligned}$$
As a consequence, 
\begin{equation}
P(\theta)=B^{-1}(\theta)\begin{bmatrix}-i 
 & 0\\ 0 & i \end{bmatrix}B(\theta).
\end{equation}
Let $B(\theta)=\begin{bmatrix}b_{11}(\theta)&b_{12}(\theta)\\\overline{b_{12}(\theta)}&\overline{b_{11}(\theta)}\end{bmatrix}\in SU(1,1)$, where $|b_{11}(\theta)|^2-|b_{12}(\theta)|^2=1.$ Then by direct computations, we have
\begin{equation}\label{eq.PerturbationExp}P(\theta)=\begin{bmatrix}
-i (|b_{11}|^2+|b_{12}|^2) & -2i \overline{b_{11}}b_{12}\\
2 i  b_{11}\overline{b_{12}} & i (|b_{11}|^2+|b_{12}|^2)
\end{bmatrix}.\end{equation}

Obviously, for any $0<r\leq h$,  $\Vert P\Vert_{r}\leq 2\Vert B\Vert_h^2.$
Denote $[\cdot]$ the average of a function over $\bbT^d$ with respect to the Lebesgue measure. 
%Define \begin{equation}\label{eq.uConstant}D_{R,\omega}=2+40\sum_{n\in \bbZ^d\setminus\{0\}}\frac{e^{-\frac{R}{2}}}{|\langle n,\omega\rangle|^3},\end{equation}
Recall $\kappa,\tau$ are the constants of the frequency $\omega$ in \eqref{eq.diophantine}, define
\begin{equation}\label{eq.uConstant}D_{\tau}=2^{3\tau+8}\Gamma(3\tau+1)\end{equation}
where $\Gamma(\cdot)$ denotes the $\Gamma$ function. We have the following averaging lemma, as the proof is quite standard, we again leave it in the Appendix for completeness: 
\begin{lemma}\label{lem.conjugationg} Assume that $\omega \in \DC_d(\kappa,\tau)$ and $|\Delta|<D_\tau^{-1}\kappa^3R^{3\tau+1}\Vert B\Vert_{R}^{-2}$ for some $R>0$, then
there exists $X\in C^\omega(\bbT^d,\mathrm{PSU}(1,1))$ such that 
$$\partial_{\omega}X=(A+\Delta P(\theta))X-X(\widetilde{A}+\Delta^2\widetilde{P}(\theta))$$
where \begin{equation}\label{eq.newConstants}
\begin{aligned}
\widetilde{A}&=A+\Delta[P]=\frac{1}{2}\begin{bmatrix}i \zeta&-i \zeta\\ i \zeta&-i \zeta\end{bmatrix}+\Delta\begin{bmatrix}
- i [|b_{11}|^2+|b_{12}|^2]&-2 i [\overline{b_{11}}b_{12}]\\
2 i [b_{11}\overline{b_{12}}]& i [|b_{11}|^2+|b_{12}|^2]
\end{bmatrix}\\
&=\frac{1}{2}\begin{bmatrix} i (\zeta-2\Delta[|b_{11}|^2+|b_{12}|^2])&- i (\zeta+4\Delta[\overline{b_{11}}b_{12}])\\
 i (\zeta+4\Delta[b_{11}\overline{b_{12}}])&- i (\zeta-2\Delta[|b_{11}|^2+|b_{12}|^2])
\end{bmatrix}.
\end{aligned}
\end{equation}

with the following estimates for $0<r<R$
\begin{equation}\label{eq.estimateZ}
\Vert X-id\Vert_{r}\leq 2D_\tau\kappa^{-3}R^{-(3\tau+1)}\Delta\Vert B\Vert^2_{R}
\end{equation}
and 
\begin{equation}\label{eq.estimateP}
\Vert \widetilde{P}\Vert_{\frac{R}{2}}\leq D^2_\tau\kappa^{-6}R^{-2(3\tau+1)}\Vert B\Vert^4_{R}.
\end{equation}
\end{lemma}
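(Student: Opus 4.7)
The plan is a one-step Moser--P\"oschel averaging. We seek a conjugation $X = \exp(\Delta Y)$ with $Y \in C^\omega(\bbT^d, \su(1,1))$ of zero mean, designed to cancel the oscillating part of the perturbation at first order in $\Delta$. A direct Baker--Campbell--Hausdorff expansion of $X^{-1}(A + \Delta P)X - X^{-1}\partial_\omega X$ shows that the conjugation equation $\partial_\omega X = (A+\Delta P)X - X(\widetilde{A} + \Delta^2 \widetilde{P})$ forces, at first order, the twisted cohomological equation
$$\partial_\omega Y - [A, Y] = P - [P],$$
with $\widetilde{A} = A + \Delta [P]$ as announced. The residual $\widetilde{P}$ then acquires an explicit leading expression of the form $\widetilde{P} = \tfrac12 X^{-1}[P + [P], Y] + O(\Delta)$, the higher-order BCH terms being absorbable into the $O(\Delta)$ correction.

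The heart of the argument is solving this cohomological equation with the correct $\kappa^{-3}$ small-divisor estimate. Passing to Fourier modes on $\bbT^d$, each $n \neq 0$ requires inverting the operator $2\pi i \langle n, \omega\rangle\,\mathrm{Id} - \mathrm{ad}_A$ on $\su(1,1)$. The key structural observation is that the explicit parabolic form of $A$ yields $A^2 = 0$, so $\mathrm{ad}_A$ is nilpotent on the three-dimensional algebra $\su(1,1)$ with $(\mathrm{ad}_A)^3 = 0$, and the inverse reduces to the finite Neumann sum
$$\frac{1}{2\pi i \langle n,\omega\rangle}\sum_{k=0}^{2}\left(\frac{\mathrm{ad}_A}{2\pi i \langle n,\omega\rangle}\right)^{k}.$$
Three applications of the Diophantine bound $|\langle n,\omega\rangle| \geq \kappa|n|^{-\tau}$ yield an operator norm of order $\kappa^{-3}|n|^{3\tau}$, and the Gamma-function sum $\sum_n |n|^{3\tau} e^{-|n|(R-r)} \lesssim \Gamma(3\tau+1)(R-r)^{-(3\tau+1)}$ applied to the exponentially decaying Fourier coefficients of $P - [P]$ produces $\|Y\|_r \lesssim D_\tau \kappa^{-3}(R-r)^{-(3\tau+1)} \|P\|_R$. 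Combining with the elementary bound $\|P\|_R \leq 2\|B\|_R^2$ read off from \eqref{eq.PerturbationExp} then gives \eqref{eq.estimateZ}; the quadratic estimate \eqref{eq.estimateP} follows from the explicit $\widetilde P$ formula, which controls $\|\widetilde P\|_{R/2}$ by $\|X^{-1}\|_{R/2}\|P\|_{R/2}\|Y\|_{R/2}$, thereby squaring the small-divisor factor.

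The smallness hypothesis $|\Delta| < D_\tau^{-1}\kappa^3 R^{3\tau+1}\|B\|_R^{-2}$ is calibrated precisely so that $\|\Delta Y\|_r \leq \tfrac12$, which ensures both convergence of the exponential series defining $X$ and that $X$ takes values in $\mathrm{PSU}(1,1)$ since $Y \in \su(1,1)$. The main technical obstacle will be the Neumann-series bookkeeping: each additional power of $\mathrm{ad}_A$ carries an extra factor of $\|A\|$, which must be absorbed using the reducibility identity $A = B^{-1}(A_0+F_0)B - B^{-1}\partial_\omega B$ so that these factors land cleanly on $\|B\|_R^2$ as stated, rather than generating spurious extra factors of $\|A\|$ or $\zeta$ in the final bound.
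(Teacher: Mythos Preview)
Your proposal is essentially the paper's own approach: solve the cohomological equation $\partial_\omega Y = [A,Y] + \Delta(P - [P])$ in Fourier modes, set $X = \exp Y$, and read off $\widetilde A = A + \Delta[P]$ together with the quadratic remainder $\widetilde P$ from the exponential expansion. The paper carries this out by writing the three scalar equations for $\hat Y_{11}, \hat Y_{12}, \hat Y_{21}$ explicitly in terms of $\zeta$ and $\langle n,\omega\rangle$, obtaining formulas with denominators $\langle n,\omega\rangle^3$; your nilpotency observation $A^2 = 0 \Rightarrow (\mathrm{ad}_A)^3 = 0$ and the finite Neumann sum is the structural reason exactly three powers of the small divisor appear, and is a cleaner way to organize the same computation.

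Your last paragraph, however, misidentifies how the $\|A\|$ factors are handled. The reducibility identity $A = B^{-1}(A_0+F_0)B - B^{-1}\partial_\omega B$ is not invoked for this purpose and would not produce the stated bound. What actually happens is simpler: each power of $\mathrm{ad}_A$ contributes a factor of $|\zeta|$, and in the setting where this lemma is applied one has $|\zeta| < \epsilon_0^{3/4} \ll 1$ (see \eqref{eq.mainEst1}), so these factors are absorbed into the numerical constant. The paper's proof tacitly uses $|\zeta| \lesssim 1$ when passing from the explicit Fourier coefficients \eqref{eq.zCoefficients} to the bound $\|Y\|_{R/2} \leq 20\,\Delta\,\|P\|_R \sum_n |\langle n,\omega\rangle|^{-3} e^{-|n|R/2}$. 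Drop the reducibility detour and simply note that the extra $|\zeta|$ factors are harmless because $\zeta$ is small in context.
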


\bigskip

With Lemma~\ref{lem.conjugationg} in hand,
denote $d(\Delta)$ the determinant of $\widetilde{A}$ in \eqref{eq.newConstants}, then  direct computation gives 
\begin{equation}\label{eq.determinant}
d(\Delta)=
\Delta([|b_{11}|^2+|b_{12}|^2]^2-4|[b_{11}\overline{b_{12}}]|^2)\left(
\Delta-\zeta\frac{[|b_{11}|^2+|b_{12}|^2]+([b_{11}\overline{b_{12}}+\overline{b_{11}}b_{12}])}{[|b_{11}|^2+|b_{12}|^2]^2-4|[b_{11}\overline{b_{12}}]|^2}
\right).
\end{equation}
Clearly, by Cauchy-Schwartz inequality,  $$|b_{11}|^2+|b_{12}|^2+(b_{11}\overline{b_{12}}+\overline{b_{11}}b_{12})\leq \Vert B\Vert^2_{R}.$$
It is crucial to observe that the denominator $[|b_{11}|^2+|b_{12}|^2]^2-4|[b_{11}\overline{b_{12}}]|^2$ is  bounded from below.
Indeed, by the H\"older inequality, $$|[b_{11}\overline{b_{12}}]|^2\leq[|b_{11}|^2][|b_{12}|^2]$$
and since $B\in SU(1,1)$, $$[|b_{11}|^2+|b_{12}|^2]^2-4|[b_{11}\overline{b_{12}}]|^2\geq [|b_{11}|^2-|b_{12}|^2]^2=1.$$

Therefore, we have the following estimates:
\begin{equation}\label{eq.keyEstimate}
\frac{[|b_{11}|^2+|b_{12}|^2]+([b_{11}\overline{b_{12}}+\overline{b_{11}}b_{12}])}{[|b_{11}|^2+|b_{12}|^2]^2-4|[b_{11}\overline{b_{12}}]|^2}\leq \Vert B\Vert^2_{R}.
\end{equation}

Let us briefly describe the idea to obtain upper bounds of the spectral gaps, which was first developed in \cite{LYZZ}.  Assume that $\Vert B\Vert^2_{R}\leq \frac{1}{16}\zeta^{-\nu}$ and we will have $$\zeta\frac{[|b_{11}|^2+|b_{12}|^2]+([b_{11}\overline{b_{12}}+\overline{b_{11}}b_{12}])}{[|b_{11}|^2+|b_{12}|^2]^2-4|[b_{11}\overline{b_{12}}]|^2}\leq \frac{1}{16}\zeta^{1-\nu}.$$
So for  $\Delta\sim \frac{1}{16}\zeta^{1-\nu}$, $\lambda+\Delta$ will belong to the spectrum since $d(\Delta)>0$, which indicates the corresponding gap length is bounded above by $\frac{1}{16}\zeta^{1-\nu}$.
\begin{theorem}[Upper bounds of gaps]\label{Thm.upperBound}
Let $\omega\in \DC_d(\kappa,\tau)$ , $\nu\in(0,\frac{1}{4})$, assume that $\lambda$ is the left endpoint of a spectral gap $G$, and 
there exists $B\in C^\omega(\bbT^d,\mathrm{PSU}(1,1))$ such that
\[\partial_{\omega}B=(A_0+F_0(\theta))B-B\frac{1}{2}\begin{bmatrix} i \zeta&- i \zeta\\ i \zeta&- i \zeta\end{bmatrix}.\]
If for some $R>0$ \begin{equation}\label{eq.gapCondition}\Vert B\Vert_R^4\zeta^{\nu}<2^{-6}D_{\tau}^{-2}\kappa^{6}R^{2(3\tau+1)},\end{equation}
then \begin{equation}\label{eq.gapUpperB}|G|\leq \zeta^{1-\nu}.\end{equation}
\end{theorem}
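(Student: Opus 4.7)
Since $\lambda$ is the left endpoint of $G$, it suffices to produce $\Delta_0 \in (0,\zeta^{1-\nu}]$ with $\lambda+\Delta_0 \in \Sigma_\varphi$; by Corollary~\ref{unre}, this is equivalent to the quasi-periodic system at spectral parameter $\lambda+\Delta_0$ failing to be uniformly hyperbolic. The approach follows the classical Moser--P\"oschel scheme: conjugate by $B$, average by Lemma~\ref{lem.conjugationg}, and analyze the determinant of the averaged constant part.

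First, applying $B$ to the system $(\omega, A_0(\lambda+\Delta)+F_0)$ yields $(\omega, A+\Delta P(\theta))$ with $P$ given by \eqref{eq.PerturbationExp}. The hypothesis \eqref{eq.gapCondition} gives $\Vert B\Vert_R^2 < 2^{-3} D_\tau^{-1} \kappa^3 R^{3\tau+1} \zeta^{-\nu/2}$, so for $\zeta$ in the regime of interest (small enough that $\zeta^{1-3\nu/2}$ absorbs the remaining constants, which is a mild restriction since $\nu<1/4$ makes the exponent $1-3\nu/2$ positive), the smallness condition $\Delta\cdot\Vert B\Vert_R^2 < D_\tau^{-1}\kappa^3 R^{3\tau+1}$ required by Lemma~\ref{lem.conjugationg} holds for every $\Delta \leq \zeta^{1-\nu}$. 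I then apply Lemma~\ref{lem.conjugationg} to obtain $X\in C^\omega(\bbT^d,\mathrm{PSU}(1,1))$ producing the system $(\omega,\widetilde A+\Delta^2\widetilde P)$, with $\widetilde A$ as in \eqref{eq.newConstants} and $\Vert\widetilde P\Vert_{R/2}\le D_\tau^2\kappa^{-6}R^{-2(3\tau+1)}\Vert B\Vert_R^4$.

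Next, the factorization \eqref{eq.determinant} combined with the uniform lower bound $[|b_{11}|^2+|b_{12}|^2]^2-4|[b_{11}\overline{b_{12}}]|^2 \ge 1$ shows $d(\Delta)=C\cdot\Delta\cdot(\Delta-\Delta_*)$ with $C\ge 1$ and, by \eqref{eq.keyEstimate}, the nontrivial root obeys $\Delta_*\le \zeta\Vert B\Vert_R^2 \le 2^{-3}D_\tau^{-1}\kappa^3 R^{3\tau+1}\zeta^{1-\nu/2}$. Setting $\Delta_0:=\zeta^{1-\nu}$, the exponent slack $\zeta^{1-\nu/2}=\zeta^{\nu/2}\cdot\zeta^{1-\nu}$ yields $\Delta_*\le \Delta_0/2$ for $\zeta$ small, hence $d(\Delta_0)\ge \Delta_0^2/2$ and $\widetilde A$ is elliptic with eigenvalues $\pm i\sqrt{d(\Delta_0)}$ of magnitude $\ge \zeta^{1-\nu}/\sqrt 2$.

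The principal obstacle is concluding that $(\omega,\widetilde A+\Delta_0^2\widetilde P)$ is not uniformly hyperbolic despite the residual perturbation. The hypothesis \eqref{eq.gapCondition} is tuned so that
\[
\Delta_0^2\,\Vert\widetilde P\Vert_{R/2}\le 2^{-6}\zeta^{2-3\nu},
\]
which is much smaller than $\sqrt{d(\Delta_0)}\sim \zeta^{1-\nu}$ since $\nu<1/4$ gives $2-3\nu>1-\nu$. A standard perturbation argument for elliptic quasi-periodic systems (diagonalize $\widetilde A$ by a constant conjugation whose norm is controlled by $\Vert\widetilde A\Vert/\sqrt{d(\Delta_0)}$, then apply variation of constants and Gronwall, or equivalently perform one further averaging step to reduce the perturbation) then shows that the full perturbed system has zero Lyapunov exponent and polynomially bounded transfer matrices, ruling out uniform hyperbolicity. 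Corollary~\ref{unre} therefore yields $\lambda+\Delta_0\in\Sigma_\varphi$, so $|G|\le \Delta_0=\zeta^{1-\nu}$, completing the proof.
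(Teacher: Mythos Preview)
Your proposal is correct and follows essentially the same Moser--P\"oschel strategy as the paper: conjugate by $B$, average via Lemma~\ref{lem.conjugationg}, use the determinant factorization \eqref{eq.determinant} together with \eqref{eq.keyEstimate} to obtain $d(\Delta_0)\gtrsim\Delta_0^2$, and then argue that the residual perturbation is too small to restore uniform hyperbolicity. The paper makes your final ``standard perturbation argument'' concrete by explicitly diagonalizing $\widetilde A$ via \cite[Lemma~8.1]{HY2012} with $\Vert U\Vert^2\le 8(\zeta+\Delta\Vert B\Vert_R^2)/\sqrt{d(\Delta)}$ and then bounding the rotation number directly by $|\rho(\omega,\widetilde A+\Delta^2\widetilde P)-\sqrt{d(\Delta)}|\le\Vert U^{-1}\Delta^2\widetilde P\,U\Vert_0\le\tfrac14\Delta$, which yields $\rho>0$ and hence non-hyperbolicity without any further averaging step.
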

\begin{proof}
Let $\Delta=\zeta^{1-\nu}$, applying Lemma~\ref{lem.conjugationg}, the system $(\omega,A_0(\lambda+\Delta)+F_0(\theta))$ is conjugated to $(\omega, \widetilde{A}+\Delta^2\widetilde{P}(\theta))$.
By \eqref{eq.determinant} and \eqref{eq.keyEstimate}, one has $d(\Delta)\geq \Delta(\Delta-\zeta\Vert B\Vert_{R}^2)$. Furthermore, by \eqref{eq.gapCondition}, one has $d(\Delta)\geq \frac{1}{4}\Delta^2>0$. Then, in view of \cite[Lemma 8.1]{HY2012}, there exists $U\in SU(1,1)$ with $\Vert U\Vert^2\leq 8\frac{\zeta+\Delta\Vert B\Vert_{R}^2}{\sqrt{d(\Delta)}}$ such that \begin{equation}U^{-1}\widetilde{A}U=\begin{bmatrix} i \sqrt{d(\Delta)}&0\\0&- i \sqrt{d(\Delta)}\end{bmatrix}.
\end{equation} Moreover, by \eqref{eq.estimateP} and \eqref{eq.gapCondition}, one has the estimate $$\Vert U^{-1}\Delta^2\widetilde{P}U\Vert_{\frac{R}{2}}\leq 8\Delta^2\frac{(\zeta+\Delta\Vert B\Vert_{R}^2)}{\sqrt{d(\Delta)}}D_{\tau}^2\kappa^{-6}R^{-2(3\tau+1)}\Vert B\Vert_{R}^4\leq \frac{1}{4}\Delta.$$ Furthermore, it follows immediately from the definition of rotation number that $$\left|\rho(\omega,U^{-1}(\widetilde{A}+\Delta^2\widetilde{P}(\theta))U)-\sqrt{d(\Delta)}\right| \leq \Vert U^{-1}\Delta^2\widetilde{P}U\Vert_0 \leq \frac{1}{4}\Delta,$$ which in turn implies that $$\rho(\omega,\widetilde{A}+\Delta^2\widetilde{P}(\theta))>\sqrt{d(\Delta)}- \frac{1}{4}\Delta> \frac{1}{4}\Delta>0,$$ therefore, the system $(\omega, \widetilde{A}+\Delta^2\widetilde{P}(\theta))$ is not uniformly hyperbolic and $\lambda+\zeta^{1-\nu}\in\Sigma_\varphi$. Since $\lambda$ is the left endpoint of $G$, $\lambda+\Delta$ crosses the right endpoint of $G$ and enters the spectrum, as a result, $|G|<\zeta^{1-\nu}$ and \eqref{eq.gapUpperB} follows.
\end{proof}

We need the following result from \cite [Theorem 3.1]{LYZZ}, parsed as a $\su(1,1)$ analogue.
\begin{lemma}
Let $\omega\in \DC_d(\kappa,\tau)$, $A_0\in \su(1,1)$, $h>0$. For any $r\in(0,h)$, there exists $\epsilon_*=\epsilon_*(d,\kappa,\tau,|A_0|,h,r)$ such that for any $F_0(\theta)\in C^\omega_{h}(\bbT^d,\su(1,1))$, if $2\rho(\omega,A_0+F_0)=\langle k,\omega\rangle \mod\bbZ $ and $\Vert F_0\Vert_{h}=\epsilon_0<\epsilon_*$, then  there exists $W\in C^\omega_{r}(\bbT^d,\mathrm{PSU}(1,1))$ such that 
\begin{equation}\label{eq.transformation}
\partial_{\omega}W=(A_0+F_0(\theta))W-W\frac{1}{2}\begin{bmatrix} i \zeta&- i \zeta\\ i \zeta&- i \zeta\end{bmatrix}
\end{equation}
with the following estimates
\begin{equation}\label{eq.mainEst1}
|\zeta|<\epsilon_0^{\frac{3}{4}}e^{-2\pi r|k|}
\end{equation}
and for any $r'\in (0,r]$
\begin{equation}\label{eq.mainEst2}
\Vert W\Vert_{r'}\leq De^{\frac{3\pi r'}{2}|k|},
\end{equation}
where $D=D(d,\kappa,\tau,|A_0|,h)$ is a constant.
\end{lemma}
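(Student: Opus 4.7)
The plan is to adapt the argument of \cite[Theorem~3.1]{LYZZ} to the $\su(1,1)$ setting, combining the quantitative almost reducibility of Proposition~\ref{rd1} with a quantitative version of Puig's argument. Choose $\epsilon_*>0$ small enough that Proposition~\ref{rd1} produces the KAM sequence $\{(B_j, A_j, F_j)\}_{j\geq 1}$ with the standard estimates $\Vert F_j\Vert_{h_j}\leq \epsilon_j$, $\Vert B_j\Vert_0\leq \epsilon_{j-1}^{-1/192}$, and $\lvert \deg B_j\rvert \leq 2N_{j-1}$, as in \eqref{normOfB} and \eqref{degreeOfB}.

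The first key step is to locate the KAM iteration at which the prescribed resonance is detected. Let $j_0$ be the minimal integer with $\lvert k\rvert < N_{j_0-1}$; then $N_{j_0-2}\leq \lvert k\rvert < N_{j_0-1}$. Since the rotation number is invariant (mod $\bbZ$) under the conjugation by $B_{j_0-1}$ and the hypothesis gives $2\rho(\omega, A_0+F_0) = \langle k,\omega\rangle \mod \bbZ$, we have
\[
\Vert 2\rho(\omega, A_{j_0-1}) - \langle k,\omega\rangle\Vert_{\bbR/\bbZ} = 0 < \epsilon_{j_0-1}^{1/15},
\]
so the resonance condition of Proposition~\ref{rd1}(a) fires at step $j_0$ with resonant mode $m=k$. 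Consequently $A_{j_0} = \left[\begin{smallmatrix} it_{j_0} & v_{j_0} \\ \bar{v}_{j_0} & -it_{j_0}\end{smallmatrix}\right]$ with $\lvert t_{j_0}\rvert \leq \epsilon_{j_0-1}^{1/16}$, $\lvert v_{j_0}\rvert \leq \epsilon_{j_0-1}^{15/16}$, and $\Vert F_{j_0}\Vert_{h_{j_0}}\leq \epsilon_{j_0}$.

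The second step is Puig's reduction. Proposition~\ref{rd1}(b) furnishes a constant unitary $U_{j_0}$ conjugating $A_{j_0}+F_{j_0}$ into upper-triangular form with diagonal $\pm ie_{j_0}$ and super-diagonal $c_{j_0}$, with $\Vert B_{j_0}\Vert_0^2\lvert c_{j_0}\rvert \leq 8\Vert A_0\Vert$. Because the new rotation number vanishes modulo $\bbZ$, the system cannot be uniformly hyperbolic and, via a further Moser--P\"oschel averaging (Lemma~\ref{lem.conjugationg}) together with the fixed-point argument of \cite[Lemma~8.1]{HY2012}, one absorbs the $\epsilon_{j_0}$-small remainder to obtain an exact parabolic constant part. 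A bounded $\mathrm{PSU}(1,1)$ conjugation then maps the resulting rank-one nilpotent into the canonical form $\tfrac{1}{2}\bigl[\begin{smallmatrix} i\zeta & -i\zeta \\ i\zeta & -i\zeta\end{smallmatrix}\bigr]$; the composite of $B_{j_0}$, $U_{j_0}$ and this last fixed matrix is the desired $W$.

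The final step is the quantitative bookkeeping. Inverting $N_{j_0-1} = 4^{j_0}\log\epsilon_0^{-1}/h$ at $N_{j_0-1}\asymp \lvert k\rvert$ gives $h_{j_0}\asymp h/2^{j_0}$ and $\log\epsilon_{j_0-1}^{-1}\asymp 2^{j_0-1}\log\epsilon_0^{-1}$; combined with the Jordan-invariant nature of $\zeta$ and the $15/16$ exponent on $v_{j_0}$, together with the rotation-number constraint forcing $t_{j_0}^2 - \lvert v_{j_0}\rvert^2$ to be smaller than either term alone, one extracts $\lvert \zeta\rvert \leq \epsilon_0^{3/4}e^{-2\pi r\lvert k\rvert}$ after passing from width $h$ to width $r$. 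For the norm estimate, the dominant contribution to the degree of $W$ comes from the half-resonant rotation $\theta\mapsto e^{i\pi\langle k,\theta\rangle/2}$ extracted at step $j_0$ (of degree $\lvert k\rvert/2$), while $B_{j_0-1}$ has degree at most $2N_{j_0-2}\leq \lvert k\rvert/2$; a Bernstein-type inequality on these character-automorphic band-limited factors then delivers $\Vert W\Vert_{r'}\leq D e^{3\pi r'\lvert k\rvert/2}$ with $D = D(d,\kappa,\tau,\lvert A_0\rvert,h)$.

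The main obstacle I expect is the last paragraph: sharpening the exponent of $\lvert k\rvert$ from the naive $2$ (obtained by directly applying the degree bound $\lvert \deg B_{j_0}\rvert \leq 2N_{j_0-1}\leq 8\lvert k\rvert$) down to $3/2$, which requires separating the explicit half-resonant rotation from the rest of $B_{j_0}$ and using that the residual conjugation $B_{j_0-1}$ carries only degree $\lvert k\rvert/2$; and, parallel to this, verifying that $\lvert \zeta\rvert$ truly satisfies the exponent $3/4$ rather than a weaker $1/2$, which depends on exploiting both components of the block structure of $A_{j_0}$ simultaneously with the exact rotation-number constraint.
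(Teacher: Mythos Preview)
The paper does not supply a proof of this lemma; it simply cites \cite[Theorem~3.1]{LYZZ} and states the result in the $\su(1,1)$ gauge. Your outline correctly identifies the architecture of that argument---quantitative almost reducibility (Proposition~\ref{rd1}) to reach a near-constant cocycle at the resonant scale, followed by Puig's argument to reduce exactly to a parabolic constant---so at the level of strategy you are aligned with the cited source.

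That said, two concrete points in your sketch need repair. First, the rotation number is \emph{not} invariant under conjugation by $B_{j_0-1}\in C^\omega(\bbT^d,\mathrm{PSU}(1,1))$: one has $2\rho(\omega,A_{j_0-1}+F_{j_0-1})=2\rho(\omega,A_0+F_0)-\langle\deg B_{j_0-1},\omega\rangle\bmod\bbZ$, so the resonant mode at step $j_0$ is $m=k-\deg B_{j_0-1}$, not $k$ itself. This does not break the scheme but must be tracked through the degree bookkeeping. Second, your inequality $2N_{j_0-2}\leq\lvert k\rvert/2$ is false: from $N_{j_0-1}=4N_{j_0-2}$ and $\lvert k\rvert<N_{j_0-1}$ one gets $2N_{j_0-2}=N_{j_0-1}/2>\lvert k\rvert/2$, so the degree of $B_{j_0-1}$ is not dominated by the half-resonant rotation in the way you claim. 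Obtaining the sharp exponent $3/2$ in \eqref{eq.mainEst2} and the exponent $3/4$ in \eqref{eq.mainEst1} requires the finer decomposition carried out in \cite{LYZZ}, where the resonant rotation is peeled off explicitly and the residual conjugation is estimated in a width strictly between $r$ and $h$; you have correctly flagged this as the delicate step, but the arithmetic you propose does not close.
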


Once we have these preparations, we obtain the following finer estimates on  the spectral gaps. To be precise, for any $k,k'\in\bbZ^d$ with $k\neq k'$,
let $G_k$ $G_{k'}$ be  spectral gaps with canonical labeling $k$, $k'$, then we have the following: 

\begin{coro}\label{gap-sp}Assume that $\omega\in \DC_d(\kappa,\tau)$ and $\|\widetilde{\varphi}\|_{h}<\epsilon_0$ for some $h>0$ and sufficiently small $\epsilon_0=\epsilon_0(d,\kappa,\tau,|A_0|,h)$.
Then the following  estimates holds for any $r\in (0,h)$
\begin{equation}\label{eq.gapBd}
|G_{k}|\leq \epsilon_0^{\frac{1}{2}}e^{-2\pi r |k|}.
\end{equation}
\begin{equation}\label{eq.gapDist}
\mathrm{dist}(G_k,G_{k'})\geq\frac{4^\tau\kappa^2}{c^2|k'-k|^{2\tau}}.
\end{equation}
\begin{equation}\label{lem:gapPositions}
\mathrm{dist}( \overline{G_k}, \frac{\langle k,\omega\rangle}{2} )\leq \epsilon_0.
\end{equation}

\end{coro}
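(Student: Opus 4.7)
The plan is to establish each of the three inequalities separately. Of the three, \eqref{eq.gapBd} relies most heavily on the reducibility machinery of the previous subsection, while \eqref{eq.gapDist} and \eqref{lem:gapPositions} are relatively direct consequences of the H\"older continuity of the density of states and continuous dependence of the rotation number on the potential, respectively.

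For \eqref{eq.gapBd}, I would combine the reducibility lemma preceding the corollary with Theorem~\ref{Thm.upperBound}. Given $r\in(0,h)$, fix an intermediate $r^\ast=(r+h)/2$ and a small $\nu\in(0,1/4)$ with $r^\ast(1-\nu)\ge r$ and $3(1-\nu)/4\ge 1/2$; both inequalities can be arranged since $r<h$. Applying the reducibility lemma at the left endpoint of $G_k$ with parameter $r^\ast$ yields $W\in C^\omega_{r^\ast}(\bbT^d,\mathrm{PSU}(1,1))$ and $\zeta>0$ with $|\zeta|\le\epsilon_0^{3/4}e^{-2\pi r^\ast|k|}$ and $\Vert W\Vert_{r'}\le De^{3\pi r'|k|/2}$ for $r'\in(0,r^\ast]$. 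Taking $R=r^\ast\nu/4$ in the gap condition gives
\[
\Vert W\Vert_R^4\zeta^\nu\le D^4\epsilon_0^{3\nu/4}e^{(6\pi R-2\pi r^\ast\nu)|k|}=D^4\epsilon_0^{3\nu/4}e^{-\pi r^\ast\nu|k|/2},
\]
which is less than $2^{-6}D_\tau^{-2}\kappa^6R^{2(3\tau+1)}$ once $\epsilon_0=\epsilon_0(d,\kappa,\tau,h)$ is small enough, verifying \eqref{eq.gapCondition}. Theorem~\ref{Thm.upperBound} then produces $|G_k|\le\zeta^{1-\nu}\le\epsilon_0^{3(1-\nu)/4}e^{-2\pi r^\ast(1-\nu)|k|}\le\epsilon_0^{1/2}e^{-2\pi r|k|}$ by the choice of $\nu$ and $r^\ast$.

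Estimate \eqref{eq.gapDist} follows from the Gap Labeling Theorem~\ref{thm.gapLabel}, the Diophantine condition \eqref{eq.diophantine}, and the $1/2$-H\"older regularity of the integrated density of states from Theorem~\ref{thm.holderCon}. For $\lambda\in\overline{G_k}$ and $\lambda'\in\overline{G_{k'}}$, Lemma~\ref{lem:rotAndDensity} and Theorem~\ref{thm.gapLabel} give $\rho(\lambda)-\rho(\lambda')=\langle k-k',\omega\rangle/2$. If we had $|\langle k-k',\omega\rangle|<\kappa/|k-k'|^\tau$, then for $|k-k'|$ large enough this quantity is less than $1/2$, forcing $0$ to be the nearest integer and contradicting \eqref{eq.diophantine}; hence $|\langle k-k',\omega\rangle|\ge\kappa/|k-k'|^\tau$. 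Combining with $|\rho(\lambda)-\rho(\lambda')|\le\pi C|\lambda-\lambda'|^{1/2}$ from Theorem~\ref{thm.holderCon} gives $|\lambda-\lambda'|\ge\kappa^2/(4\pi^2C^2|k-k'|^{2\tau})$, which is of the claimed form with $c$ absorbing the constants. For \eqref{lem:gapPositions}, I compare $\Lambda_\varphi$ with the free Dirac operator, for which $\rho_0(\lambda)=\lambda$. Since the rotation number depends Lipschitz-continuously on the cocycle in the $C^0$ norm (a direct consequence of \eqref{eq:rotnumberDef} and a Gr\"onwall comparison of solutions to \eqref{eq:DiracLinearSys}), one has $|\rho(\lambda)-\lambda|\le\Vert F_0\Vert_0\le\epsilon_0$; combined with $\rho(\lambda)=\langle k,\omega\rangle/2$ on $\overline{G_k}$ from the Gap Labeling Theorem, this yields $|\lambda-\langle k,\omega\rangle/2|\le\epsilon_0$.

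The main obstacle is the parameter balance in \eqref{eq.gapBd}: the constraint $\nu\in(0,1/4)$ in Theorem~\ref{Thm.upperBound} forces a simultaneous choice of $R$, $\nu$, and $r^\ast$ so that the exponential growth of $\Vert W\Vert_R^4$ in $|k|$ is dominated by the exponential decay in $\zeta^\nu$, while still producing the sharp rate $e^{-2\pi r|k|}$ and the sharp prefactor $\epsilon_0^{1/2}$ in the final bound. This is handled by enlarging the analyticity radius to $r^\ast>r$ at the outset and then taking $R$ proportional to $r^\ast\nu$ rather than to $r^\ast$ itself.
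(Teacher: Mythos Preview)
Your proposal is correct and follows essentially the same route as the paper: combine the reducibility lemma with Theorem~\ref{Thm.upperBound} for \eqref{eq.gapBd}, invoke H\"older continuity of $\mathcal{N}$ together with the Diophantine condition for \eqref{eq.gapDist}, and compare rotation numbers with the free system for \eqref{lem:gapPositions}. Two small remarks: for \eqref{eq.gapBd} the paper instead takes $R=\epsilon_0^{(\widetilde r-r)/(13(3\tau+1)\widetilde r)}$ depending on $\epsilon_0$, but your fixed choice $R=r^\ast\nu/4$ works equally well (both choices make the smallness threshold on $\epsilon_0$ implicitly depend on $r$); and for \eqref{eq.gapDist} your contradiction argument only covers large $|k-k'|$, whereas the direct inequality $|\langle n,\omega\rangle|\ge\|\langle n,\omega\rangle\|_{\bbR/\bbZ}\ge\kappa/|n|^\tau$ (just take $j=0$ in \eqref{eq.diophantine}) handles all $n\neq0$ at once, which is how the paper proceeds.
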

\begin{proof}
 In order to apply above results, we only need to pick $\nu=\frac{\widetilde{r}-r}{4\widetilde{r}}$ and $R=\epsilon_0^{\frac{\widetilde{r}-r}{13(3\tau+1)\widetilde{r}}}$, where $\widetilde{r}=\frac{h+r}{2}$. Then for sufficiently small $\epsilon_0$, one can verify that 
$\zeta^\nu\Vert B\Vert^4_{R}<2^{-6}D_{\tau}^{-2}\kappa^6 R^{2(3\tau+1)}$
and therefore \eqref{eq.gapBd} follows from Theorem~\ref{Thm.upperBound}.

On the other hand,  denote $G_k=(\lambda^-_k,\lambda_k^+), G_{k'}=(\lambda_{k'}^-,\lambda_{k'}^+)$, and assume $\lambda_k^+<\lambda_{k'}^{-}$. As $\omega\in \DC_d(\kappa,\tau)$, then it follows from Theorem~\ref{thm.holderCon} that
\begin{equation*}
\mathrm{dist}(G_k,G_{k'})=\lambda_{k'}^--\lambda_{k}^+\geq c^{-2}|\rho(\lambda_{k'}^-)-\rho(\lambda_k^+)|^2\geq\frac{4^\tau\kappa^2}{c^2|k'-k|^{2\tau}}.
\end{equation*}

For any $\lambda \in \overline{G_k}$, where the rotation number of the system $(\omega, A_0(\lambda) + F_0(\theta))$ satisfies $\rho(\omega, A_0(\lambda) + F_0(\theta)) = \frac{\langle k, \omega \rangle}{2}$, it follows directly from the definition of the rotation number that \[ |\rho(\omega, A_0(\lambda) + F_0(\theta)) - \rho(\omega, A_0(\lambda))| \leq \|F_0\|_0 \leq \epsilon_0,\] which is equivalent to \[|\lambda - \frac{\langle k, \omega \rangle}{2}| \leq \epsilon_0.\] We thus complete the proof.
\end{proof}

We need the following result in the Appendix of \cite{BDGL}.
\begin{lemma}
    If $\sfE\subset\bbR$ satisfies the Craig type conditions \eqref{eq:craigCond1}-\eqref{eq:craigCond3}, then $\sfE$ is a Carleson homogeneous set in the sense of Definition \eqref{def:homogeneous}.
\end{lemma}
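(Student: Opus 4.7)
The approach is to adapt the argument from the Appendix of \cite{BDGL}, where this implication is established for the gap structure of reflectionless Schr\"odinger operators under conditions essentially identical to \eqref{eq:craigCond1}--\eqref{eq:craigCond3}. The conclusion is a purely combinatorial/metric statement about the family $\{(a_j,b_j)\}$ and does not depend on the underlying operator, so the same proof transfers with only cosmetic modifications.

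Concretely, fix $\lambda \in \sfE$ and $h > 0$. Homogeneity reduces to showing
\[
\sum_{G_k \subset (\lambda-h,\lambda+h)} \gamma_k \le (1 - c_0) \cdot 2h
\]
for a constant $c_0 > 0$ independent of $(\lambda,h)$; at most two gaps straddle the window boundary, and their contribution is absorbed by the same bound applied on the doubled window $(\lambda-2h,\lambda+2h)$. I would then partition the contained gaps into ``large'' ones with $\gamma_k \ge \alpha h$ and ``small'' ones with $\gamma_k < \alpha h$, for a threshold $\alpha \in (0,1)$ to be chosen. Condition \eqref{eq:craigCond3} forces a quantitative pairwise separation $\eta_{jk} \ge c (\alpha h)^{1-\delta}$ between large gaps, bounding both their cardinality in the window and their total measure by an expression of the form $O(\alpha^{\delta} h)$. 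For the small gaps, a Cauchy--Schwarz argument exploiting \eqref{eq:craigCond1} and \eqref{eq:craigCond2}, in the same spirit as \cite{BDGL}, yields $\sum \gamma_k \le C \alpha \cdot h$ with $C$ depending only on the suprema in the Craig conditions. Choosing $\alpha$ small enough then gives the required strict inequality.

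The main obstacle I anticipate is mild: the weights $(1+\eta_{k0})$ and $(1+\eta_{0j}^2)$ appearing in \eqref{eq:craigCond1}--\eqref{eq:craigCond2} are adapted to the unboundedness of $\sfE$ and measure distances to the fixed reference gap $G_0$, whereas \cite{BDGL} handles the bounded-spectrum case where no such weights appear. To obtain a homogeneity constant that is uniform even for windows far from $G_0$, one reorganizes the Cauchy--Schwarz sums so that the relevant weights re-center on the local window, paying a bounded multiplicative constant that is absorbed into $c_0$; the polynomial weights are harmless for this because only the \emph{local} combinatorics of gaps enters the estimate on any given window.
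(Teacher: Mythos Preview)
Your proposal is correct in spirit and aligns with the paper's own proof, which likewise defers to the Appendix of \cite{BDGL} and notes that the argument there carries over with only cosmetic changes. Your sketch of the large/small gap decomposition is an accurate summary of that argument.

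However, your anticipated obstacle rests on a misconception: the spectrum in \cite{BDGL} (one-dimensional Schr\"odinger operators) is already unbounded above, and the Craig-type conditions there carry the same position weights you attribute to the Dirac setting. No re-centering is needed. The actual modification the paper flags is different and even more minor: the proof in \cite{BDGL} as written restricts to small windows ($0<t<\tfrac12$) and small gaps ($\gamma_j<1$), which yields homogeneity only for $0<h\le 1$, whereas Definition~\ref{def:homogeneous} here demands homogeneity over all scales $0<h<\mathrm{diam}(\sfE)=\infty$. Simply dropping those two restrictions and running the identical argument gives the full statement.
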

We would like to point out that the original proof in \cite{BDGL} works for the definition given by $$
\inf_{x\in\sfE}\inf_{0<h\leq 1}\frac{|\sfE\cap(x-h,x+h)|}{2h}>0.$$
It also works for Definition \ref{def:homogeneous} by removing the requirement ``$0<t<\frac{1}{2},\gamma_j<1$'' in their argument.

As a consequence, we can prove the following as the final result of this section:
\begin{theorem}[Homogeneous spectrum]\label{thm:homoSpectrum}
Assume that $\omega\in \DC_d(\kappa,\tau)$ and $\|\widetilde{\varphi}\|_{h}<\epsilon_0$ for some $h>0$ and sufficiently small $\epsilon_0=\epsilon_0(d,\kappa,\tau,h)$.
Then the spectrum of the Dirac operator $\Lambda_\varphi$ is homogeneous in the sense of Carleson. 
\end{theorem}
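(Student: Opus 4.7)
My plan is to reduce the theorem to verifying the three Craig-type conditions \eqref{eq:craigCond1}--\eqref{eq:craigCond3}, and then to invoke the lemma immediately preceding the theorem statement (which asserts that those conditions force Carleson homogeneity). Thus the entire task becomes: starting from the quantitative gap bounds in Corollary~\ref{gap-sp} and the Diophantine condition on $\omega$, produce the three Craig-type estimates for $\sfE = \Sigma_\varphi$.

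First I would collect the three inputs from Corollary~\ref{gap-sp}: (i) exponential decay of gap lengths, $\gamma_k \le \epsilon_0^{1/2} e^{-2\pi r|k|}$; (ii) polynomial lower bound on gap separations, $\eta_{jk} \ge \frac{4^\tau \kappa^2}{c^2 |k-j|^{2\tau}}$; and (iii) location of the labelled gap, $\mathrm{dist}(\overline{G_k}, \langle k,\omega\rangle/2)\le \epsilon_0$, which together with $|\langle k,\omega\rangle|\le \|\omega\||k|$ yields the crude but sufficient bound $\eta_{k0}\le C(1+|k|)$. The exponential decay in (i) will do essentially all the work: for any polynomial prefactor $P(|k|)$, the sum $\sum_k P(|k|)\gamma_k^{1/2}$ converges provided $\epsilon_0$ is small enough relative to $r$.

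Next I would estimate the auxiliary constants $C_j$. From $C_j \le \prod_{k\ne j}(1+\gamma_k/\eta_{jk})^{1/2}$ and $\log(1+x)\le x$, one has $\log C_j \le \tfrac12 \sum_{k\ne j}\gamma_k/\eta_{jk}$. To bound this sum I would split into two regimes: for $k$ near $j$ (say $|k-j|\le |j|/2$) use the polynomial lower bound (ii), so the contribution is controlled by $\epsilon_0^{1/2}\sum_{|k-j|\le |j|/2} |k-j|^{2\tau} e^{-2\pi r|k|}$; for $k$ far from $j$, the key is that the distance is typically governed by $|\langle k-j,\omega\rangle|/2$ rather than by the worst-case Diophantine bound, so the exponential decay of $\gamma_k$ dominates. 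Under the smallness of $\epsilon_0$, one concludes that $C_j$ grows at most like $\exp\!\bigl(c\,\epsilon_0^{1/2}(1+|j|)^\alpha\bigr)$ for some $\alpha$, which is absorbed by $\gamma_j^{1/2}$ in the Craig sums because any power of $|j|$ is beaten by $e^{-\pi r|j|}$ for $\epsilon_0$ small.

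With the $C_j$ bound in hand, the three Craig conditions follow by direct summation. Condition \eqref{eq:craigCond1} combines (i), (iii), and the $C_j$ bound. Condition \eqref{eq:craigCond3} is easiest: write $\gamma_j^{1/2}\gamma_k^{1/2}\gamma_j^{-\delta}/\eta_{jk} \le c\,\gamma_j^{1/2-\delta}\gamma_k^{1/2}|k-j|^{2\tau}$, and for any $\delta\in(0,1/2)$ the exponential decay in both $|j|$ and $|k|$ beats the polynomial $|k-j|^{2\tau}$. Condition \eqref{eq:craigCond2} is the main obstacle: the factor $C_j^3 C_k^2(1+\eta_{0j}^2)$ must be absorbed, and the inner sum has a $1/\eta_{jk}$ which can be as bad as $|k-j|^{2\tau}$. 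I would split the sum over $k$ by $|k-j|\lessgtr |j|^{1/2}$ (or some similar threshold) and balance the polynomial loss against the exponential decay in the variable that dominates; for $\epsilon_0$ small enough this yields the required uniform bound in $j$. Finally, invoking the preceding lemma converts Craig-type conditions into Carleson homogeneity, completing the proof. The main technical obstacle is precisely the bookkeeping in \eqref{eq:craigCond2}, since both $|j|$ and $|k|$ appear through multiple factors and one must ensure the estimates do not accidentally lose exponential decay when passing from $\eta_{jk}^{-1}$ to the Diophantine bound.
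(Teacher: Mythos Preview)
Your overall strategy—verify the three Craig-type conditions from Corollary~\ref{gap-sp} and then invoke the lemma that these imply Carleson homogeneity—is exactly the route the paper takes. Your collection of inputs (i)–(iii), the derivation of $\eta_{k0}\le C(1+|k|)$ from the gap-location estimate, and your verification of \eqref{eq:craigCond1} and \eqref{eq:craigCond3} all match the paper's argument, and your discussion of \eqref{eq:craigCond2} is if anything more carefully spelled out than the paper's rather terse version.

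The one genuine gap is your handling of the constants $C_j$. The paper does not attempt a direct estimate here; it simply cites \cite[Lemma~6.1]{BDGL} for the statement that $C_j$ grows at most subexponentially in $|j|$. Your sketch, by contrast, uses $\log(1+x)\le x$ to obtain $\log C_j\le\tfrac12\sum_{k\ne j}\gamma_k/\eta_{jk}$ and then splits the sum. The difficulty is that this linearization of the logarithm loses too much: feeding in only the Diophantine separation bound $\eta_{jk}^{-1}\le C|j-k|^{2\tau}$ yields
\[
\log C_j\;\lesssim\;\epsilon_0^{1/2}\sum_{k}|j-k|^{2\tau}e^{-2\pi r|k|}\;\sim\;\epsilon_0^{1/2}|j|^{2\tau},
\]
since the dominant contribution comes from the finitely many $k$ with $|k|$ small (where $\gamma_k\sim\epsilon_0^{1/2}$ and $|j-k|\sim|j|$). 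Because $\tau>d\ge1$, the resulting bound $C_j\lesssim\exp(c\,\epsilon_0^{1/2}|j|^{2\tau})$ is super-exponential in $|j|$, and no smallness of $\epsilon_0$ makes it absorbable by $\gamma_j^{1/2}\sim e^{-\pi r|j|}$. Your remark that ``any power of $|j|$ is beaten by $e^{-\pi r|j|}$'' covers polynomial prefactors, not a factor $\exp(c|j|^{2\tau})$, so the absorption step as written does not close. Invoking the location estimate for ``far'' $k$ does not by itself rescue this, because for resonant $j$ (those with $|\langle j,\omega\rangle|$ near the Diophantine lower bound) the single term $\gamma_0/\eta_{j0}$ already has size $\sim\epsilon_0^{1/2}|j|^{\tau}$. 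A correct direct argument must retain $\log(1+\gamma_k/\eta_{jk})$—which is only $O(\log|j|)$ even when $\gamma_k/\eta_{jk}$ is polynomially large—and combine this with a count of how many gaps $G_k$ can lie genuinely close to $G_j$ on the real line; that is the content of the BDGL lemma the paper invokes.
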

\begin{proof}
We only need to show that the spectrum satisfies the Craig type conditions.
Recall that $C_j$ is defined by \eqref{eq:Cj}. 
By \cite[Lemma 6.1]{BDGL}, $C_j$ grows at most subexponentially. According to \eqref{lem:gapPositions} of  Corollary~\ref{gap-sp},
$\eta_{0k}\leq C|k|$ for some positive constant $C$ that depends on $\Vert\varphi\Vert_\infty$ and $\omega$. We also have $\gamma_k<\epsilon_0^{\frac{1}{2}}e^{-2 \pi r|k|}$ and $\eta_{jk}\geq 4^\tau\kappa^2c^{-2}|k-j|^{-2\tau}$ by \eqref{eq.gapBd} and \eqref{eq.gapDist} respectively. Therefore, \eqref{eq:craigCond1} follows immediately. Note that $\eta^{-1}_{jk}\leq \kappa^{-2}c^2\max\{|k|^{2\tau},|j|^{2\tau}\}$, therefore, $\sum_kC_k\gamma_k\eta_{jk}^{-1}<\infty.$
Moreover, $\gamma_j(1+\eta_{0,j}^2)|j|^{2\tau}\to 0$ as $j\to\infty$, \eqref{eq:craigCond2} follows. \eqref{eq:craigCond3} holds for any $\delta<\frac{1}{2}$. 

\end{proof}

\section{Proofs of Main Results}
\label{sec:mainproof}

In this section, we combine the results developed in previous sections and prove the main theorems, that is, Theorems~\ref{thm:main} and~\ref{thm:main1}. We complete the proof in two separate steps. In the first step we prove the existence and uniqueness. In the second step, we prove the almost periodicity. 

The following result is an analogue of \cite[Proposition 4.3]{BDGL}.
\begin{prop}\label{prop:ExistenceUniqueness}
    Suppose $\sfE$ satisfies conditions \eqref{eq:craigCond1}-\eqref{eq:craigCond3}. For any $f\in \mathcal{D}(\sfE)$, there exists a unique function $y:\bbR^2\to \mathcal{D}(\sfE)$ such that $y(0,0)=f$ and
    $$\partial_xy=\Psi(y),\quad \partial_t y=\Xi(y).$$
    Let $\varphi(x,t)$ be determined by $$\Re\varphi(x,t)=-\frac{1}{2}Q_1(y(x,t)),\quad \Im\varphi(x,t)=-\frac{1}{2}Q_1(y(x,t;i))$$
    where $y(x,t;i)$ is the Dirichlet data of $-i\mathcal{B}^{-1}(f)$. Then $\varphi(x,t)$ solves the NLS. For each $t\in\bbR$, $\varphi(x,t)\in\mathcal{R}(\sfE)$ and $\mathcal{B}(\varphi(x,0))=f.$
\end{prop}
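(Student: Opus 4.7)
The plan is to proceed in three stages: solve the coupled Dubrovin system on $\mathcal{D}(\sfE)$, reconstruct $\varphi$ via trace formulas using a rotation trick for the imaginary part, and finally verify that $\varphi$ satisfies NLS, lies in $\mathcal{R}(\sfE)$ for every $t$, and has initial divisor $f$.

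\textbf{Step 1: Existence and uniqueness of the divisor flow.} With the norm $\Vert y\Vert=\sup_j\gamma_j^{1/2}|y_j|$, the space $\mathcal{D}(\sfE)$ embeds into a Banach space, and by Proposition~\ref{prop:LipVecField} both $\Psi$ and $\Xi$ are globally Lipschitz under \eqref{eq:craigCond1}--\eqref{eq:craigCond3}. The Picard--Lindel\"of theorem then gives unique global solutions of each single-variable ODE $\partial_x y=\Psi(y)$ and $\partial_t y=\Xi(y)$. To assemble a two-parameter flow $y:\bbR^2\to\mathcal{D}(\sfE)$, I would check that $\Psi$ and $\Xi$ commute: under the generalized Abel map $\mathcal{A}=(\mathcal{A}_c,\mathcal{A}_r)$ of Theorem~\ref{thm:JacobiInver} both flows become uniform-velocity linear flows on $\pi_1(\Omega)^\ast\times\bbT$ (by Corollary~\ref{thm:characterFlow} and Theorem~\ref{thm:linearzRotFlow}), and linear translations commute. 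Pulling back through the homeomorphism $\mathcal{A}$ gives the desired jointly defined $y(x,t)$, and invariance of $\mathcal{D}(\sfE)$ under each flow yields global existence.

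\textbf{Step 2: Reconstruction of $\varphi$.} The trace formula \eqref{eq:traceEqRe} of Lemma~\ref{lem:traceFormula} directly gives $\Re\varphi(x,t)=-\tfrac12 Q_1(y(x,t))$. For the imaginary part, I would invoke the rotation flow from Section~\ref{dubrovinFlow}: multiplication $\varphi\mapsto e^{i\beta}\varphi$ preserves $\mathcal{R}(\sfE)$ and acts on the divisor by the flow \eqref{eqn:rotationflow}. Taking $\beta=-\pi/2$ gives $\mathcal{B}(-i\varphi)=:f^{(i)}\in\mathcal{D}(\sfE)$; letting $y(x,t;i)$ be the Dubrovin evolution of $f^{(i)}$, the same trace formula applied to the rotated potential yields $\Re(-i\varphi)=\Im\varphi=-\tfrac12 Q_1(y(x,t;i))$. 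Because the rotation flow commutes with translation and NLS time flow (it corresponds to the $\tau$-coordinate action, and $\mathcal{A}_c$ is unchanged by rotation while $\mathcal{A}_r$ is only translated), this second evolution is well defined and consistent with a single $\varphi(x,t)$.

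\textbf{Step 3: NLS, reflectionlessness, and initial condition.} On the open set where all $\mu_j(x,t)\in(a_j,b_j)$, I would differentiate the trace formulas \eqref{eq:traceEqRe}--\eqref{eq:traceEqIm} in $x$ and $t$, substitute the Dubrovin equations \eqref{eq:spaceDiff}--\eqref{eq:timeDiff}, and combine with the analogous identities for the rotated divisor $y(x,t;i)$; the resulting identities reproduce the AKNS computation from Lemma~\ref{lem:RiccatiTime} and Appendix~\ref{sec:AKNS}, giving $i\varphi_t=-\varphi_{xx}+2|\varphi|^2\varphi$. Gap-edge behavior is then handled by Lemma~\ref{lem:dyj/dxPsijy} and Proposition~\ref{prop:regularityOfDirichletData}, which provide joint continuity and differentiability of $y$. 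Reflectionlessness for each fixed $t$ follows from the fact that $\mathcal{B}$ is a bijection $\mathcal{R}(\sfE)\to\mathcal{D}(\sfE)$: since $y(\cdot,t)$ lies in $\mathcal{D}(\sfE)$, the potential $\mathcal{B}^{-1}(y(\cdot,t))$ is reflectionless, and one checks that this potential is exactly $\varphi(\cdot,t)$ via the trace formulas. Finally $\mathcal{B}(\varphi(\cdot,0))=y(0,0)=f$ by construction.

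\textbf{Main obstacle.} The hard part is the gap-edge analysis and the rotation coupling. The Dubrovin system is classically derived only where $\mu_j$ lies in the interior of the gap, and in the Dirac/NLS setting the translation flow does not enjoy the nonpausing property used in the KdV case. The resolution is new to this paper: the rotation flow of Lemma~\ref{lemma:GapEdgeSubordinate}, combined with Lemma~\ref{lem:dyj/dxPsijy}, supplies the missing nonpausing at gap edges and legitimizes the trace-formula based reconstruction there. A secondary subtlety is that the imaginary part of $\varphi$ cannot be read from $y(x,t)$ alone, because a single divisor determines the reflectionless potential only up to a global phase; the rotation coordinate $\mathcal{A}_r$ (Theorem~\ref{thm:identifyRotCord}) is exactly the bookkeeping needed for this additional degree of freedom, and it is encoded here through the auxiliary evolution $y(x,t;i)$.
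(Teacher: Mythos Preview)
Your proposal has two genuine gaps that the paper's proof avoids by a different route.

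\textbf{Circularity in Step~1.} You invoke Corollary~\ref{thm:characterFlow} and Theorem~\ref{thm:linearzRotFlow} to prove that $\Psi$ and $\Xi$ commute. But those results are stated for the flow $\varphi(x_0,t_0)\mapsto\varphi(x_0+x,t_0+t)$ on potentials; they presuppose that an NLS solution with reflectionless time-slices already exists, which is precisely what Proposition~\ref{prop:ExistenceUniqueness} is meant to establish. Without an independent proof that the abstract Dubrovin fields $\Psi,\Xi$ on $\mathcal{D}(\sfE)$ commute, your construction of a two-parameter $y(x,t)$ satisfying both equations is incomplete. Likewise, the claim that ``rotation commutes with NLS time flow'' in Step~2 rests on the same unproved existence.

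\textbf{Verification of NLS in Step~3.} Differentiating the trace formulas and substituting Dubrovin does reproduce NLS in the finite-gap case, but in the infinite-gap setting the resulting sums require justification (termwise differentiation, rearrangement of infinite products). You do not carry this out, and the paper does not supply the needed estimates for a direct argument.

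\textbf{How the paper proceeds.} The paper sidesteps both issues by finite-gap approximation, following \cite[Proposition~4.3]{BDGL}. One defines truncated vector fields $\tilde\Psi^N,\tilde\Xi^N$ on $\mathcal{D}(\sfE)$ built from the finite-gap data on $\sfE_N=\bbR\setminus\cup_{j\le N}(a_j,b_j)$, solves first in $t$ then in $x$, and applies the trace formula to obtain $\varphi^N$. For each $N$, the fact that $\varphi^N$ solves NLS and is reflectionless is classical algebro-geometric input (\cite[Theorem~3.37, Lemma~3.8]{GH03}, replacing the KdV references \cite[Theorem~1.48, Lemma~1.16]{GH03} used in \cite{BDGL}). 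The Lipschitz estimates of Proposition~\ref{prop:LipVecField} then give $\tilde y^N\to y$ and $\varphi^N\to\varphi$ uniformly on compacts, so NLS passes to the limit. Reflectionlessness of $\varphi(\cdot,t)$ is obtained not through $\mathcal{B}^{-1}$ but through strong resolvent convergence $\Lambda_{\varphi^N}\to\Lambda_\varphi$ combined with \cite[Lemma~5.2]{Craig89}; the identity $\mathcal{B}(\varphi(\cdot,0))=f$ is read off from convergence of the diagonal Green's function on each gap. Uniqueness, as you say, is the Lipschitz estimate. Your rotation idea for $\Im\varphi$ is correct and is what the paper uses, but it enters only at the trace-formula step, not as a separate flow to be analyzed.
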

\begin{proof}
    The proof follows the same argument as that of \cite[Proposition 4.3]{BDGL} with cited results \cite[Theorem 1.48, Lemma 1.16]{GH03} replaced by \cite[Theorem 3.37, Lemma 3.8]{GH03}. Let us give a brief sketch to highlight the different places. Let $\sfE_{N},\Omega_N,\mathcal{R}_N$ be chosen as in the proof of Theorem \ref{thm:approxFreqCharFlow} and Theorem \ref{thm:linearzRotFlow}. Let $\varphi^N(x,t)$ be the finite gap solution associated to the Dirichlet data $y^N\in \mathcal{D}(\sfE_N)$ and vector fields $\Psi^N,\Xi^N$. Introduce $\mathbf{p}:\mathcal{D}(\sfE)\to\mathcal{D}(\sfE_N)$ by $\mathbf{p}(y)(j)=y_j$,  $|j|\leq N.$ Let $\tilde{\Psi}^N, \tilde{\Xi}^N$ be vector fields on $\mathcal{D}(\sfE)$ defined by 
    $$
        \tilde{\Psi}^N_j=\left\{
        \begin{aligned}
        &\Psi^N_j(\mathbf{p}(y))&|j|\leq N,\\
        &\Psi_j(\mathbf{p}(y))&|j|>N.
        \end{aligned}\right.
    $$

    $$
    \tilde{\Xi}^N_j=\left\{
    \begin{aligned}
    &\Xi_j^N(\mathbf{p}(y))&|j|\leq N,\\
    &\Xi_j(\mathbf{p}(y))&|j|>N.
    \end{aligned}
    \right.
    $$
    Let $\tilde{y}^N(x,t)$ be determined in the following order. Define $\tilde{y}^N(0,0)=f.$ Then for $x=0$ solve 
    $$\partial_t\tilde{y}^N=\tilde{\Xi}^N(\tilde{y}^N),~\forall t\in\bbR.$$
    Then for each $t$ solve 
    $$\partial_x\tilde{y}^N=\tilde{\Psi}^N(\tilde{y}^N),~\forall x\in\bbR.$$
    Let $\varphi^N$ be obtained by applying the trace formula to $\tilde{y}^N.$ Then the spectrum of $\Lambda_{\varphi^N}$ equals $\sfE_N$ as a known fact of the finite gap construction; compare \cite{BBEIM1994,GH03}.
    It can be shown that $\tilde{y}^N\to y$ uniformly on compacts of $(x,t)\in\bbR^2.$ It follows that $\varphi^N(x,0)\to \varphi(x,0)$ and $\varphi^N(x,t)\to\varphi(x,t)$ uniformly on compacts. Each application of trace formula in \cite{BDGL} should be replaced by applications of trace formula \eqref{eq:traceEqRe} separately for $\mathcal{B}(\varphi)$ and $\mathcal{B}(-i\varphi)$.
    Recall that \cite[Lemma 3.8]{GH03} says if $\varphi^N$ solves the stationary AKNS hierarchy, then its Dirichlet data obeys the corresponding Dubrovin system of differential equations. Theorem 3.37 of \cite{GH03} says if $\varphi^N$ arises from the trace formula, then it solves the corresponding AKNS hierarchy. Therefore, we should replace \cite[Theorem 1.48]{GH03},\cite[Lemma 1.16]{GH03} in \cite{BDGL} by \cite[Theorem 3.33]{GH03}, \cite[Lemma 3.8]{GH03} respectively.

    Since $\varphi^N\to\varphi$ uniformly on compacts, $\Lambda_{\varphi^N}\to\Lambda_\varphi$ in the sense of strong resolvent. This implies the uniform convergence of the diagonal Green's function on compacts. Since $\sfE=\cap_N\sfE_N$, by Lemma 5.2 of \cite{Craig89}, which is a discussion for general Herglotz functions, $\varphi(x,0)\in\mathcal{R}(\sfE)$ and for each $t$, $\varphi(x,t)\in\mathcal{R}(\sfE)$.

    The claim $\mathcal{B}(\varphi)=f$ follows from the analysis of the behavior of the diagonal Green's function on each gap $(a_j,b_j), |j|\leq N$ and the uniform convergence on compacts as $N\to\infty.$
    Uniqueness is a consequence of the Lipschitz continuity of the vector fields $\Psi,\Xi.$
\end{proof}

We have established existence and uniqueness. Let us now prove almost periodicity by showing the following result.

\begin{prop}\label{prop:uniformHomeo}
    Suppose that  $\sfE\subset\bbR$ is a closed set satisfying the Craig type conditions \eqref{eq:craigCond1}-\eqref{eq:craigCond3}. Then the map $\mathcal{M}=\mathcal{B}^{-1}\circ \mathcal{A}^{-1}:\pi_1(\Omega)^*\times\bbT\to \mathcal{R}(\sfE)$ is a homeomorphism if $\mathcal{R}(\sfE)$ is equipped with the uniform topology.
\end{prop}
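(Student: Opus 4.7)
Under the Craig conditions \eqref{eq:craigCond1}--\eqref{eq:craigCond3}, the spectrum $\sfE$ is homogeneous in the sense of Carleson (as invoked in Theorem~\ref{thm:homoSpectrum}, via the argument from the appendix of \cite{BDGL}). Homogeneity implies that $\Omega = \bbC \setminus \sfE$ is Dirichlet regular, of Parreau--Widom type, and obeys DCT. Consequently Theorem~\ref{thm:JacobiInver} applies: $\mathcal{A} : \mathcal{D}(\sfE) \to \pi_1(\Omega)^* \times \bbT$ is a homeomorphism. In particular $\pi_1(\Omega)^* \times \bbT$ is compact. The map $\mathcal{B} : \mathcal{R}(\sfE) \to \mathcal{D}(\sfE)$ coming from \eqref{eq:traceFormula1} is a bijection (the inverse is encoded by the canonical product $\mathscr{K}_D$), so $\mathcal{M} = \mathcal{B}^{-1} \circ \mathcal{A}^{-1}$ is a bijection $\pi_1(\Omega)^* \times \bbT \to \mathcal{R}(\sfE)$. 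The strategy is therefore to prove that $\mathcal{M}$ is continuous when the target is given the $L^\infty$-topology; continuity on a compact Hausdorff source then automatically upgrades to a homeomorphism.

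The plan for continuity is to reconstruct $\varphi = \mathcal{M}(\alpha,\tau)$ pointwise via the trace formula coupled to the linearizations of the translation and rotation flows. For each $x \in \bbR$, set
\[
(\alpha_x,\tau_x) := \bigl(\alpha + x\eta,\ \tau e^{-2\pi i x \vartheta_0}\bigr),
\]
using the spatial frequencies from Corollary~\ref{thm:characterFlow} and Theorem~\ref{thm:linearzRotFlow} with $t=0$. The translated potential $\varphi(\cdot + x)$ has Abel coordinates $(\alpha_x,\tau_x)$, and the rotated potential $-i\varphi(\cdot+x) = e^{-i\pi/2}\varphi(\cdot+x)$ has Abel coordinates $(\alpha_x,-i\tau_x)$ by the rotation action of Section~\ref{dubrovinFlow}. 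Applying the trace formula \eqref{eq:traceEqRe} to these two potentials gives the explicit formulas
\[
\Re \varphi(x) = -\tfrac12 Q_1\bigl(\mathcal{A}^{-1}(\alpha_x,\tau_x)\bigr), \qquad
\Im \varphi(x) = -\tfrac12 Q_1\bigl(\mathcal{A}^{-1}(\alpha_x,-i\tau_x)\bigr),
\]
where the identity $\Im \varphi = \Re(-i\varphi)$ is used to bypass the Ricatti expression \eqref{eq:traceEqIm}; this is precisely the novelty that the rotation coordinate allows, as emphasized in the remarks following Theorem~\ref{thm:main}.

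Continuity now follows from a compactness argument. The bound $|a_k+b_k-2\mu_k| \le \gamma_k$ together with \eqref{eq:craigCond1} shows that $Q_1 : \mathcal{D}(\sfE) \to \bbR$ is well-defined and continuous in the norm $\|y\| = \sup_k \gamma_k^{1/2}\|y_k\|_{\bbT}$. Hence $Q_1 \circ \mathcal{A}^{-1}$ is continuous on the compact space $\pi_1(\Omega)^* \times \bbT$, and therefore uniformly continuous with respect to any translation-invariant metric. The translations $T_x : (\alpha,\tau) \mapsto (\alpha + x\eta,\tau e^{-2\pi i x \vartheta_0})$ are isometries of this metric, so the family $\{Q_1 \circ \mathcal{A}^{-1} \circ T_x\}_{x \in \bbR}$ is equicontinuous in $(\alpha,\tau)$, uniformly in $x$. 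This gives continuity of $(\alpha,\tau) \mapsto \Re\mathcal{M}(\alpha,\tau)$ as a map into $L^\infty(\bbR)$, and the same argument handles the imaginary part.

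Surjectivity is immediate: for $\varphi \in \mathcal{R}(\sfE)$, setting $(\alpha,\tau) = \mathcal{A}(\mathcal{B}(\varphi))$ reproduces $\varphi$ at every $x$ via the above formulas (this is what Corollary~\ref{thm:characterFlow} and Theorem~\ref{thm:linearzRotFlow} assert). Injectivity follows from bijectivity of $\mathcal{A}$ and $\mathcal{B}$. The main obstacle in the whole argument is the \emph{uniform-in-$x$} step: the trace formula \eqref{eq:traceEqRe} is local in nature and only recovers $\varphi$ at a single base point, so one must combine it with the linearization of both the translation flow (character component) and the rotation flow (extra $\bbT$-coordinate). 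Compactness of $\pi_1(\Omega)^* \times \bbT$ together with the isometric nature of $T_x$ is exactly what converts pointwise continuity of the trace formula into continuity in the $L^\infty$ norm, closing the argument.
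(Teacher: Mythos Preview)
Your proof is correct and follows essentially the same approach as the paper: both arguments use that the Craig conditions imply homogeneity (hence $\Omega$ is regular PWS with DCT), invoke the linearization of the translation/rotation flows so that the shift $T_x$ acts by isometries on $\pi_1(\Omega)^*\times\bbT$, and then combine compactness with uniform continuity of $\mathcal{A}^{-1}$ and the trace formula to upgrade pointwise convergence to $L^\infty$-convergence. Your presentation is slightly more streamlined in phrasing the uniformity via equicontinuity and in making the rotation trick $\Im\varphi = \Re(-i\varphi)$ explicit, but the substance is the same.
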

\begin{proof}
    First note that the Craig type conditions implies that $\sfE$ is homogeneous, the finite gap length condition, $k$-th GLC for $k=1,2$ and that $\Omega=\bbC\setminus\sfE$ is a regular PWS. 
    We need to show that for any $(\alpha^n,\tau^n)$ that converges to $(\alpha,\tau)$ with respect to the metric induced by $$\Vert(\alpha,\tau)\Vert=\sum_{j\geq 1}\frac{1}{2^j}|\alpha_j|_\bbT+|\tau|,$$
    $\varphi_n(x,t)=\mathcal{B}^{-1}\circ\mathcal{A}^{-1}((\alpha^n(x,t),\tau^n(x,t)))\to \varphi(x,t)=\mathcal{B}^{-1}\circ\mathcal{A}^{-1}((\alpha(x,t),\tau(x,t)))$ with respect to the metric 
    $$\Vert\varphi\Vert_0=\sup_{(x,t)\in\bbR^2}|\varphi(x,t)|.$$
    Let $$\begin{aligned}&(\alpha^n(x,t),\tau^n(x,t))=(\alpha^n+\eta x+\eta^{(1)}t,\tau^n+\vartheta_0 x+\vartheta_1t)\\
    &(\alpha(x,t),\tau(x,t))=(\alpha +\eta x+\eta^{(1)}t,\tau+\vartheta_0 x+\vartheta_1 t)
    \end{aligned}$$
    be the linear flows of of Theorem \ref{thm:approxFreqCharFlow} and Theorem \ref{thm:linearzRotFlow}. It follows that 
    $$\Vert (\alpha^n,\tau^n)-(\alpha,\tau)\Vert=\Vert (\alpha^n(x,t),\tau^n(x,t))-(\alpha(x,t),\tau(x,t))\Vert \quad \text{ for all } x,t\in\bbR.$$ 
    Let $y^n(x,t)=\mathcal{A}^{-1}(\alpha^n(x,t),\tau^n(x,t)), y(x,t)=\mathcal{A}^{-1}(\alpha(x,t),\tau(x,t))$. Since $\pi_1(\Omega)^*\times\bbT$ is a compact metric space and $\mathcal{A}^{-1}$ is continuous, for any $\epsilon>0$ there exists $\delta>0$ such that for sufficiently large $N>0$, for any $n>N$
    $$\Vert (\alpha^n,\tau^n)-(\alpha,\tau)\Vert<\delta$$
    implies $$\Vert y^n-y\Vert<\epsilon, \text{ namely } \sup_{j}\gamma_j^{1/2}|y^n_j-y_j|_\bbT<\epsilon.$$
    It follows from the uniform continuity of $\mathcal{A}^{-1}$ that $$\sup_{(x,t)\in\bbR^2}\Vert y^n(x,t)-y(x,t)\Vert<\epsilon .$$
    By the finite gap length condition, trace formula \eqref{eq:traceEqRe} is uniformly and absolutely convergent. For any $\epsilon'>0$ pick $M>0,\epsilon>0$ such that $\sum_{|j|>M}4\pi\gamma_j<\frac{1}{2}\epsilon'$ and $2\pi M\epsilon<\epsilon'/2$.
    Since $$|2\mu^n_j-2\mu_j|\leq 2\pi\gamma_j|y^n_j-y_j|_\bbT$$
     for any  $\mu_j^n,\mu_j\in (a_j,b_j)$,
    it follows that $$\sup_{(x,t)\in\bbR^2}|\varphi^n(x,t)-\varphi(x,t)|<\epsilon'.$$
\end{proof}
\begin{proof}[Proof of Theorem \ref{thm:main}]
     For any given $\varphi(x,0)=\varphi(x)$, let $f=\mathcal{B}(\varphi)\in\mathcal{D}(\sfE)$. Let $\varphi(x,t)$ be given by Proposition \ref{prop:ExistenceUniqueness}. This establishes existence and uniqueness.  Since $\mathcal{M}=\mathcal{B}^{-1}\circ\mathcal{A}^{-1}:\pi_1(\Omega)^*\times\bbT\to\mathcal{R}(\sfE)$ is a continuous map with $\mathcal{R}(\sfE)$ equipped with uniform topology, it follows that $\varphi(x,t)$ is uniformly almost periodic.
\end{proof}

Theorem \ref{thm:main1} follows from Theorem \ref{thm:main} by verifying the Craig type conditions. This is shown in Theorem \ref{thm:homoSpectrum}.

\begin{appendix}

\section{Proofs of Preparatory Lemmas}

\subsection{Proof of Lemma~\ref{lem:lowerBoundDos}}

The proof mainly follows the proof of \cite[Lemma 3.11]{Avila1}, we include the proof for completeness.
Let $\delta=c\epsilon^{\frac{3}{2}}$,
since $\gamma(\lambda)=0$ by Theorem~\ref{thm:zeroLyapunov}, we have $\gamma(\lambda+i\delta)=\gamma(\lambda+i\delta)-\gamma(\lambda)$. By the Thouless formula \eqref{eq.Thouless},
$$\gamma(\lambda+i\delta)
=\int_{\bbR}\log\left\vert\frac{\ell-(\lambda+i\delta)}{\ell-\lambda}\right\vert \, \rmd \mathcal{N}(\ell)
=\int_{\bbR}\frac{1}{2}\log\left(1+\frac{\delta^2}{(\ell-\lambda)^2}\right) \,\rmd \mathcal{N}(\ell).$$
We split the integral into four parts: $I_{1}=\int_{\left|\ell-\lambda\right|\ge 1}$, $ I_{2}=\int_{\epsilon\le\left|\ell-\lambda\right|<1 }$, $I_{3}=\int_{\epsilon^{4}\le\left|\ell-\lambda\right|<\epsilon}$ and $I_{4}=\int_{\left|\ell-\lambda\right|<\epsilon^{4}}$. We clearly have $I_1 <c^2\epsilon^3$.

For sufficiently small $\epsilon>0$, by Proposition~\ref{thm.holderCon} we have $I_1 <\frac{2c^2\epsilon^3}{\delta_0^2},$
and
\begin{equation*}
\begin{split}
I_{4}=\sum_{k \geq 4} \int_{\epsilon^{k}>\left|\ell-\lambda\right|\ge\epsilon^{k+1}}  \frac{1}{2}\ln (1+\frac{\delta^{2}}{\left|\ell-\lambda\right|^{2}})\, \rmd \mathcal{N}(\ell) 
\leq \frac{1}{2}\sum_{k \geq 4} \epsilon^{\frac{k}{2}} \ln (1+c^{2} \epsilon^{1-2 k})\leq \epsilon^{\frac{7}{4}}.
\end{split}
\end{equation*}
We also have the estimate
$$
\begin{aligned} I_{2}  \leq \sum_{k=0}^{m} \int_{e^{-k-1}\le|\ell-\lambda|<e^{-k}} \frac{1}{2}\ln (1+\frac{\delta^{2}}{\left|\ell-\lambda\right|^{2}}) \, \rmd \mathcal{N}(\ell)\le\sum_{k=0}^{m}\frac{1}{2}e^{-\frac{k}{2}}\delta^2e^{2k+2}\leq Cc^2\delta,\end{aligned}
$$
with $m=[-\ln \epsilon]$. 
It follows that
\begin{align*}
I_3\ge \gamma(\lambda+i\delta)-C\delta.
\end{align*}

Proposition~\ref{thm.complexLE} implies that $\gamma(\lambda+i\delta) \geq\delta/10$ for sufficiently small $\delta>0$. Since the constant $c$ above is consistent with our choice of $\delta$, we can shrink it such that $I_3\ge\frac{1}{20}\delta$. Since $I_{3} \leq C(\mathcal{N}(\lambda+\epsilon)-\mathcal{N}(\lambda-\epsilon)) \ln \epsilon^{-1},$
the result follows.

\subsection{Proof of Lemma~\ref{lem.conjugationg}}
Let $Y=\sum_{n\in\bbZ}\hat{Y}(n)e^{ i \langle n,x\rangle}\in\su(1,1),$ where $\hat{Y}(n)=\begin{bmatrix}\hat{Y}_{11}(n)&\hat{Y}_{12}(n)\\\hat{Y}_{21}(n)&-\hat{Y}_{11}(n)\end{bmatrix}$ being determined by $$\partial_{\omega}Y=[A,Y]+\Delta(P(x)-[P]).$$
By comparing Fourier coefficients of both sides, one can solve 
\begin{equation}\label{eq.zCoefficients}
\left\{
\begin{aligned}
&\hat{Y}_{11}(n)=\frac{ i \zeta\Delta}{2\langle n,\omega\rangle^3}[(\langle n,\omega\rangle-\zeta)\hat{P}_{21}(n)+(\langle n,\omega \rangle+\zeta)\hat{P}_{12}(n)]+\frac{\langle n,\omega\rangle^2-\zeta^2}{ i \langle n,\omega\rangle^3}\hat{P}_{11}(n)\\
&\hat{Y}_{12}(n)=\frac{ i \Delta}{2\langle n,\omega\rangle^3}[-2\zeta(\langle n,\omega\rangle+\zeta)\hat{P}_{11}(n)+\zeta^2\hat{P}_{21}(n)+(2\langle n,\omega\rangle^2+\zeta^2+2\zeta\langle n,\omega\rangle)\hat{P}_{12}(n)]\\
&\hat{Y}_{21}(n)=\frac{ i \Delta}{2\langle n,\omega\rangle^3}[-2\zeta(\langle n,\omega\rangle-\zeta)\hat{P}_{11}(n)+\zeta^2\hat{P}_{12}(n)+(2\zeta\langle n,\omega\rangle-2\langle n,\omega\rangle^2-\zeta^2)\hat{P}_{21}(n)]
\end{aligned}
\right.
\end{equation}
Define $X=\mathrm{exp}(Y)$. As a consequence of \eqref{eq.zCoefficients},  for $\omega\in \DC(\kappa,\tau)$ we have 
$$\begin{aligned}\Vert Y\Vert_{\frac{R}{2}}&=\sum_{n\in\bbZ^d}|\hat{Y}(n)|e^{\frac{Rn}{2}}
\leq 20\Delta \Vert P\Vert_{R}\sum_{n\in\bbZ}\frac{e^{-\frac{nR}{2}}}{|\langle n,\omega\rangle|^3}\leq 20\Delta\kappa^{-3}\Vert P\Vert_{R}\sum_{n\in\bbZ}e^{-\frac{nR}{2}}|n|^{3\tau}\\
&\leq 40\kappa^{-3}\Delta\Vert P\Vert_{R}\int_{0}^{\infty}x^{3\tau}e^{-\frac{Rx}{2}} \, \rmd x.\end{aligned}$$
The integral in the equation has an upper bound $2^{3\tau+1}\Gamma(3\tau+1)R^{-(3\tau+1)}$. Therefore, we obtain 
$$\Vert Y\Vert_{\frac{R}{2}}\leq 40\kappa^{-3}R^{-(3\tau+1)}\Delta\Vert P\Vert_{R}2^{3\tau+1}\Gamma(3\tau+1)\leq \frac{1}{2}D_{\tau}\kappa^{-3}R^{-(3\tau+1)}\Delta \Vert P\Vert_{R},$$
and
$$\Vert X-id\Vert_{\frac{R}{2}}\leq 2\Vert Y\Vert_{\frac{R}{2}}\leq D_{\tau}\kappa^{-3}R^{-(3\tau+1)}\Delta\Vert P\Vert_{R}.$$
One can read $\Vert P\Vert_R\leq 2\Vert B\Vert_{R}^2$ off \eqref{eq.PerturbationExp}, and therefore $$\Vert X-id\Vert_{\frac{R}{2}}\leq 2\Vert Y\Vert_{\frac{R}{2}}\leq 2D_{\tau}\kappa^{-3}R^{-(3\tau+1)}\Delta\Vert B\Vert^2_{R}$$
and 
moreover, we can determine 
\begin{equation}
\widetilde{P}_{1}=\sum_{m+n\geq 2}\frac{(-Y)^n}{n!}A\frac{Y^m}{m!}+\sum_{m+n\geq 1}\frac{(-Y)^n}{n!}\Delta P(x)\frac{Y^m}{m!}.
\end{equation}
As a result of straightforward computations, $$\Vert \widetilde{P}_{1}\Vert_{\frac{R}{2}}\leq D_{\tau}^2\kappa^{-6}R^{-2(3\tau+1)}\Delta^2\Vert B\Vert^4_{R}. $$
Define $\widetilde{P}$ such that $\Delta^2 \widetilde{P}=\widetilde{P}_{1},$ then it follows that $$\Vert \widetilde{P}\Vert_{\frac{R}{2}}\leq D^2_\tau\kappa^{-6}R^{-2(3\tau+1)}\Vert B\Vert^4_{R}$$
and \eqref{eq.estimateP} follows.

\section{Non-Stationary AKNS Hierarchy} \label{sec:AKNS}
Let $U(z)=\begin{bmatrix}-iz&p\\q&iz\end{bmatrix}$ and
$\widetilde{V}_{n+1}$ is defined as follows:
$$\widetilde{V}_{n+1}=i\begin{bmatrix}-\widetilde{G}_{n+1}(z)&\widetilde{F}_{n}(z)\\-\widetilde{H}_{n}(z)&\widetilde{G}_{n+1}(z)\end{bmatrix},$$
\begin{equation}\label{eq:AKNSCoefficient}\begin{aligned}
&\widetilde{F}_{n}(z)=\sum_{s=0}^{n}\widetilde{f}_{n-s}z^{s},~\widetilde{f}_{0}=-iq,\\
&\widetilde{G}_{n+1}=\sum_{s=0}^{n+1}\widetilde{g}_{n+1-s}z^s,~\widetilde{G}_0=1,\\
&\widetilde{H}_n=\sum_{s=0}^n\widetilde{h}_{n-s}z^s,~\widetilde{H}_0=ip,
\end{aligned}\end{equation}
with the time dependent coefficients $\{\widetilde{f}_\ell,\widetilde{g}_\ell,\widetilde{h}_\ell\}_{\ell\in\bbN_0}$ recursively defined as
\begin{equation}\label{eq:AKNSCoefficient1}\begin{aligned}
&\widetilde{f}_0=-iq,~\widetilde{g}_0=1,~\widetilde{h}_0=ip,\\
&\widetilde{f}_{\ell+1}=(i/2)f_{\ell,x}-iq\widetilde{g}_{\ell+1},~\ell\in\bbN_0,\\
&\widetilde{g}_{\ell+1,x}=p\widetilde{f}_\ell+q\widetilde{h}_\ell,~\ell\in\bbN_0,\\
&\widetilde{h}_\ell=-(i/2)\widetilde{h}_{\ell,x}+ip\widetilde{g}_{\ell+1},~\ell\in\bbN_0;
\end{aligned}\end{equation}

The first a few terms can be computed explicitly as follows
\begin{equation}\label{eq:InitialTerms}
\begin{aligned}
&\widetilde{f}_0=-iq,~\widetilde{f}_1=\frac{1}{2}\partial_x q+c_1(-iq),\\
&\widetilde{f}_2=\frac{i}{4}\partial^2_x q-\frac{i}{2}pq^2+c_1(\frac{1}{2}\partial_x q)+c_2(-iq);\\
&\widetilde{g}_0=1,~\widetilde{g}_1=c_1,\\
&\widetilde{g}_2=\frac{1}{2}pq+c_2;\\
&\widetilde{h}_0=ip,~\widetilde{h}_1=\frac{1}{2}\partial_x p+c_1(ip),\\
&\widetilde{h}_2=-\frac{i}{4}\partial_x^2p+\frac{i}{2}p^2q+c_1(\frac{1}{2}\partial_x p)+c_2(ip).
\end{aligned}
\end{equation}
The constants $\{c_i:i\in\bbN_0\}$ with convention $c_0=1$ are integration constants, the case $c_i=0$ for all $i\in\bbN$ is called {\it homogeneous} AKNS hierarchy. $\widetilde{V}_{n+1}, U$ are differential expressions of order $n+1$ and $1$.
Interested reader may consult to Section 3.2 and Section 3.4 of \cite{GH03} for more detailed explanations.
The time dependent AKNS hierarchy now reads as the following zero-curvature equation
\begin{equation}\label{eq:zeroCur}
\begin{aligned}
U_{t_n}-\widetilde{V}_{n+1,x}+[U,\widetilde{V}_{n+1}]&=0\\
-V_{n+1,x}+[U,V_{n+1}]&=0,
\end{aligned}
\end{equation}
where $U,V_{n+1}$ are the corresponding stationary version of \eqref{eq:AKNSCoefficient}, \eqref{eq:AKNSCoefficient1}. In particular, \eqref{eq:nls} corresponds to the case $n=1$ and $c_1=c_2=0$.

\end{appendix}
\bibliographystyle{abbrv}

\bibliography{main.bib}

\begin{thebibliography}{100}

\bibitem{AKNS}
M.~J. Ablowitz, D.~J. Kaup, A.~C. Newell, and H.~Segur.
\newblock The inverse scattering transform-{F}ourier analysis for nonlinear
  problems.
\newblock {\em Studies in Appl. Math.}, 53(4):249--315, 1974.

\bibitem{AblowitzSegur1981}
M.~J. Ablowitz and H.~Segur.
\newblock {\em Solitons and the inverse scattering transform}, volume~4 of {\em
  SIAM Studies in Applied Mathematics}.
\newblock Society for Industrial and Applied Mathematics (SIAM), Philadelphia,
  PA, 1981.

\bibitem{Agrawal2013book}
G.~P. Agrawal.
\newblock {\em Nonlinear {F}iber {O}ptics, 5th Edition}.
\newblock Academic Press, 2013.

\bibitem{Akhiezer1964}
N.~I. Akhiezer.
\newblock Orthogonal polynomials on a system of intervals and its continual
  analogues.
\newblock {\em Proc. of the 4th All Union Mathematics Congress}, 2:623--628,
  1964.

\bibitem{Akhiezer1971}
N.~I. Akhiezer.
\newblock On an undetermined equation of chebyshev type in problems of
  construction of orthogonal systems. math. physics and.functional analysis.
\newblock {\em Proceed. Inst. Low Temp. Physics. Kharkov}, 2:3--14, 1971.

\bibitem{Akhiezer1978}
N.~I. Akhiezer and I.~M. Glazman.
\newblock Some inverse problems of spectral theory connected with hyperelliptic
  integrals.
\newblock {\em Theory of Linear Operatos in Hilbert Space}, 2:242--283, 1978.

\bibitem{Avila1}
A.~Avila.
\newblock The absolutely continuous spectrum of the almost {M}athieu operator.
\newblock {\em preprint (arXiv:0810.2965)}, 2008.

\bibitem{Avila2015JAMS}
A.~Avila.
\newblock On the {K}otani-{L}ast and {S}chr\"odinger conjectures.
\newblock {\em J. Amer. Math. Soc.}, 28(2):579--616, 2015.

\bibitem{aj1}
A.~Avila and S.~Jitomirskaya.
\newblock Almost localization and almost reducibility.
\newblock {\em J. Eur. Math. Soc.}, 12(1):93--131, 2010.

\bibitem{avila2017sharp}
A.~Avila, J.~You, and Q.~Zhou.
\newblock Sharp phase transitions for the almost {Mathieu} operator.
\newblock {\em Duke Math. J.}, 166:2697--2718, 2017.

\bibitem{avila2016dry}
A.~Avila, J.~You, and Q.~Zhou.
\newblock Dry ten {Martini} problem in the non-critical case.
\newblock {\em preprint arXiv:2306.16254v2}, 2023.

\bibitem{Baker1928}
H.~F. Baker.
\newblock Note on the foregoing paper, "{C}ommutative ordinary differential
  operators", by j. l. burchnall and t. w. chaundy.
\newblock {\em Proc.Roy.Soc.LondonSer.A}, 118:584--593, 1928.

\bibitem{BBEIM1994}
E.~Belokolos, A.~Bobenko, V.~Enol'skii, I.~A.R., and V.~Matveev.
\newblock {\em Algebro-{G}eometric {A}pproach to {N}onlinear {I}ntegrable
  {E}quations}.
\newblock Springer, 1994.

\bibitem{BLY1}
R.~Bessonov, M.~Luki{\'c}, and P.~Yuditskii.
\newblock Reflectionless canonical systems, {I}: {A}rov gauge and right limits.
\newblock {\em Integral Equations and Operator Theory}, 94(1):4, 2021.

\bibitem{BLY2}
R.~Bessonov, M.~Lukić, and P.~Yuditskii.
\newblock Reflectionless canonical systems, {II}. {A}lmost periodicity and
  character-automorphic fourier transforms.
\newblock {\em Advances in Mathematics}, 444:109636, 2024.

\bibitem{BDGL}
I.~Binder, D.~Damanik, M.~Goldstein, and M.~Lukic.
\newblock Almost periodicity in time of solutions of the {K}d{V} equation.
\newblock {\em Duke Math. J.}, 167(14):2633--2678, 2018.

\bibitem{BDGL2025}
I.~Binder, D.~Damanik, M.~Goldstein, and M.~Luki\'c.
\newblock Comb domains of {S}chr\"odinger operators with small quasiperiodic
  potentials.
\newblock {\em arXiv:2504.04200}, 2025.

\bibitem{Bourgain93GAFA1}
J.~Bourgain.
\newblock Fourier transform restriction phenomena for certain lattice subsets
  and applications to nonlinear evolution equations. {I}. {S}chr\"odinger
  equations.
\newblock {\em Geom. Funct. Anal.}, 3(2):107--156, 1993.

\bibitem{Bourgain93GAFA2}
J.~Bourgain.
\newblock Fourier transform restriction phenomena for certain lattice subsets
  and applications to nonlinear evolution equations. {II}. {T}he
  {K}d{V}-equation.
\newblock {\em Geom. Funct. Anal.}, 3(3):209--262, 1993.

\bibitem{Bourgain1999JAMS}
J.~Bourgain.
\newblock Global wellposedness of defocusing critical nonlinear {S}chr\"odinger
  equation in the radial case.
\newblock {\em J. Amer. Math. Soc.}, 12(1):145--171, 1999.

\bibitem{ME97}
A.~Boutet~de Monvel and I.~Egorova.
\newblock On solutions of nonlinear {S}chr\"odinger equations with
  {C}antor-type spectrum.
\newblock {\em Journal D'Analyse Math\'ematique}, 72, 1997.

\bibitem{Cai2022AHP}
A.~Cai.
\newblock The absolutely continuous spectrum of finitely differentiable
  quasi-periodic {S}chr\"odinger operators.
\newblock {\em Ann. Henri Poincar\'e}, 23(12):4195--4226, 2022.

\bibitem{CaiWang2021JFA}
A.~Cai and X.~Wang.
\newblock Polynomial decay of the gap length for {$C^k$} quasi-periodic
  {S}chr\"odinger operators and spectral application.
\newblock {\em J. Funct. Anal.}, 281(3):Paper No. 109035, 30, 2021.

\bibitem{Carleson1983}
L.~Carleson.
\newblock On {$H\sp{\infty }$}\ in multiply connected domains.
\newblock In {\em Conference on harmonic analysis in honor of {A}ntoni
  {Z}ygmund, {V}ol. {I}, {II} ({C}hicago, {I}ll., 1981)}, Wadsworth Math. Ser.,
  pages 349--372. Wadsworth, Belmont, CA, 1983.

\bibitem{ChapoutoKillipVisan}
A.~Chapouto, R.~Killip, and M.~Vi\c{s}an.
\newblock Bounded solutions of {K}d{V}: uniqueness and the loss of almost
  periodicity.
\newblock {\em Duke Math. J.}, 173(7):1227--1267, 2024.

\bibitem{Christ2007}
M.~Christ.
\newblock Power series solution of a nonlinear {S}chr\"odinger equation.
\newblock In {\em Mathematical aspects of nonlinear dispersive equations},
  volume 163 of {\em Ann. of Math. Stud.}, pages 131--155. Princeton Univ.
  Press, Princeton, NJ, 2007.

\bibitem{ClarkGesztesy}
S.~Clark and F.~Gesztesy.
\newblock Weyl-{T}itchmarsh {$M$}-function asymptotics, local uniqueness
  results, trace formulas, and {B}org-type theorems for {D}irac operators.
\newblock {\em Trans. Amer. Math. Soc.}, 354(9):3475--3534, 2002.

\bibitem{CKSTT2004CPAM}
J.~Colliander, M.~Keel, G.~Staffilani, H.~Takaoka, and T.~Tao.
\newblock Global existence and scattering for rough solutions of a nonlinear
  {S}chr\"odinger equation on {$\mathbb{R}^3$}.
\newblock {\em Comm. Pure Appl. Math.}, 57(8):987--1014, 2004.

\bibitem{Craig89}
W.~Craig.
\newblock The trace formula for schrödinger operators on the line.
\newblock {\em Comm. Math. Phys.}, 126:379–407, 1989.

\bibitem{DF2022ESO1}
D.~Damanik and J.~Fillman.
\newblock {\em One-dimensional ergodic {S}chr\"odinger operators---{I}.
  {G}eneral theory}, volume 221 of {\em Graduate Studies in Mathematics}.
\newblock American Mathematical Society, Providence, RI, 2022.

\bibitem{DF2023gap}
D.~Damanik and J.~Fillman.
\newblock Gap labelling for discrete one-dimensional ergodic {S}chr\"odinger
  operators.
\newblock In {\em From complex analysis to operator theory---a panorama},
  volume 291 of {\em Oper. Theory Adv. Appl.}, pages 341--404.
  Birkh\"auser/Springer, Cham, 2023.

\bibitem{DFLY2016DCDS}
D.~Damanik, J.~Fillman, M.~Lukic, and W.~Yessen.
\newblock Characterizations of uniform hyperbolicity and spectra of {CMV}
  matrices.
\newblock {\em Discrete Contin. Dyn. Syst. Ser. S}, 9(4):1009--1023, 2016.

\bibitem{DamanikGoldstein2014}
D.~Damanik and M.~Goldstein.
\newblock On the inverse spectral problem for the quasi-periodic
  {S}chr\"odinger equation.
\newblock {\em Publ. Math. Inst. Hautes \'Etudes Sci.}, 119:217--401, 2014.

\bibitem{DGL2016JST}
D.~Damanik, M.~Goldstein, and M.~Lukic.
\newblock The spectrum of a {S}chr\"odinger operator with small quasi-periodic
  potential is homogeneous.
\newblock {\em J. Spectr. Theory}, 6(2):415--427, 2016.

\bibitem{DGL2017Invent}
D.~Damanik, M.~Goldstein, and M.~Lukic.
\newblock The isospectral torus of quasi-periodic {S}chr\"odinger operators via
  periodic approximations.
\newblock {\em Invent. Math.}, 207(2):895--980, 2017.

\bibitem{DGL2017TAMS}
D.~Damanik, M.~Goldstein, and M.~Lukic.
\newblock A multi-scale analysis scheme on {A}belian groups with an application
  to operators dual to {H}ill's equation.
\newblock {\em Trans. Amer. Math. Soc.}, 369(3):1689--1755, 2017.

\bibitem{DLX2024preprint1}
D.~Damanik, Y.~Li, and F.~Xu.
\newblock Existence, uniqueness and asymptotic dynamics of nonlinear
  {S}chr\"odinger equations with quasi-periodic initial data: {I}. {T}he
  standard {NLS}, 2024.

\bibitem{DLX2024preprint2}
D.~Damanik, Y.~Li, and F.~Xu.
\newblock Existence, uniqueness and asymptotic dynamics of nonlinear
  {S}chr\"odinger equations with quasi-periodic initial data: {II}. {T}he
  derivative {NLS}, 2024.

\bibitem{DLVY}
D.~Damanik, A.~Volberg, M.~Luki\'c, and P.~Yuditskii.
\newblock The {D}eift conjecture: {A} program to construct a counterexample.
\newblock {\em arXiv:2111.09345}, 2021.

\bibitem{DamanikYuditskii2016Adv}
D.~Damanik and P.~Yuditskii.
\newblock Counterexamples to the {K}otani-{L}ast conjecture for continuum
  {S}chr\"odinger operators via character-automorphic {H}ardy spaces.
\newblock {\em Adv. Math.}, 293:738--781, 2016.

\bibitem{ConciniJohnson1987ETDS}
C.~De~Concini and R.~A. Johnson.
\newblock The algebraic-geometric {AKNS} potentials.
\newblock {\em Ergodic Theory Dynam. Systems}, 7(1):1--24, 1987.

\bibitem{DeiftOpenProblem2008}
P.~Deift.
\newblock Some open problems in random matrix theory and the theory of
  integrable systems.
\newblock In {\em Integrable systems and random matrices}, volume 458 of {\em
  Contemp. Math.}, pages 419--430. Amer. Math. Soc., Providence, RI, 2008.

\bibitem{DeiftOpenProblem2017}
P.~Deift.
\newblock Some open problems in random matrix theory and the theory of
  integrable systems. {II}.
\newblock {\em SIGMA Symmetry Integrability Geom. Methods Appl.}, 13:Paper No.
  016, 23, 2017.

\bibitem{DS83}
P.~Deift and B.~Simon.
\newblock Almost periodic schr\"odinger operators iii. the absolutely
  continuous spectrum in one dimension.
\newblock {\em Commun. Math. Phys.}, 90:389--411, 1983.

\bibitem{ds}
E.~Dinaburg and Y.~G. Sinai.
\newblock The one-dimensional {S}chr{\"o}dinger equation with a quasiperiodic
  potential.
\newblock {\em Funct. Anal. Appl.}, 9(4):279--289, 1975.

\bibitem{DinaburgSinai}
E.~I. Dinaburg and J.~G. Sina\u~i.
\newblock The one-dimensional {S}chr\"odinger equation with quasiperiodic
  potential.
\newblock {\em Funkcional. Anal. i Prilo\v zen.}, 9(4):8--21, 1975.

\bibitem{DSS2020JSP}
B.~Dodson, A.~Soffer, and T.~Spencer.
\newblock The nonlinear {S}chr\"odinger equation on {Z} and {R} with bounded
  initial data: examples and conjectures.
\newblock {\em J. Stat. Phys.}, 180(1-6):910--934, 2020.

\bibitem{DSS2021JMP}
B.~Dodson, A.~Soffer, and T.~Spencer.
\newblock Global well-posedness for the cubic nonlinear {S}chr\"odinger
  equation with initial data lying in {$L^p$}-based {S}obolev spaces.
\newblock {\em J. Math. Phys.}, 62(7):Paper No. 071507, 13, 2021.

\bibitem{Dubrovin1975a}
B.~A. Dubrovin.
\newblock The inverse scattering problem for periodic short-range potentials.
\newblock {\em Funkcional. Anal. i Prilo\v zen.}, 9(1):65--66, 1975.

\bibitem{Dubrovin1975b}
B.~A. Dubrovin.
\newblock A periodic problem for the {K}orteweg-de {V}ries equation in a class
  of short-range potentials.
\newblock {\em Funkcional. Anal. i Prilo\v zen.}, 9(3):41--51, 1975.

\bibitem{Novikov1974}
B.~A. Dubrovin and S.~P. Novikov.
\newblock A periodic problem for the {K}orteweg-de {V}ries and
  {S}turm-{L}iouville equations. {T}heir connection with algebraic geometry.
\newblock {\em Dokl. Akad. Nauk SSSR}, 219:531--534, 1974.

\bibitem{ES96}
I.~Egorova and A.~Surkova.
\newblock On the almost periodicity of “nonreflective” dirac operators with
  the cantor spectrum.
\newblock {\em Dopov. Nats. Akad. Nauk Ukra\"ini}, 12:13--15, 1996.

\bibitem{EFGL}
B.~Eichinger, J.~Fillman, E.~Gwaltney, and M.~Luki\'c.
\newblock Limit-periodic {D}irac operators with thin spectra.
\newblock {\em J. Funct. Anal.}, 283(12):Paper No. 109711, 35, 2022.

\bibitem{EGL}
B.~Eichinger, E.~Gwaltney, and M.~Luki{\'c}.
\newblock Stahl--{T}otik regularity for {D}irac operators.
\newblock {\em arXiv:2012.12889}, 2020.

\bibitem{EichingerLukic25}
B.~Eichinger and M.~Luki\'c.
\newblock Stahl-{T}otik regularity for continuum {S}chr\"odinger operators.
\newblock {\em Anal. PDE}, 18(3):591--628, 2025.

\bibitem{eichinger2024necessarysufficientconditionsuniversality}
B.~Eichinger, M.~Lukić, and H.~Woracek.
\newblock Necessary and sufficient conditions for universality limits.
\newblock {\em arXiv:2409.18045}, 2024.

\bibitem{EVY19}
B.~Eichinger, T.~VandenBoom, and P.~Yuditskii.
\newblock Kd{V} hierarchy via abelian coverings and operator identities.
\newblock {\em Trans. Amer. Math. Soc. Ser. B}, 6:1--44, 2019.

\bibitem{eliasson}
L.~Eliasson.
\newblock Floquet solutions for the 1-dimensional quasi-periodic
  {Schr{\"o}dinger} equation.
\newblock {\em Commun. Math. Phys.}, 146(3):447--482, 1992.

\bibitem{GGKM1974}
C.~S. Gardner, J.~M. Greene, M.~D. Kruskal, and R.~M. Miura.
\newblock Korteweg-de{V}ries equation and generalization. {VI}. {M}ethods for
  exact solution.
\newblock {\em Comm. Pure Appl. Math.}, 27:97--133, 1974.

\bibitem{Garnett2007Book}
J.~B. Garnett.
\newblock {\em Bounded analytic functions}, volume 236 of {\em Graduate Texts
  in Mathematics}.
\newblock Springer, New York, first edition, 2007.

\bibitem{GYZ}
L.~Ge, J.~You, and Q.~Zhou.
\newblock Exponential dynamical localization: criterion and applications.
\newblock {\em Ann. Sci. \'{E}c. Norm. Sup\'{e}r. (4)}, 56(1):91--126, 2023.

\bibitem{GH03}
F.~Gesztesy and H.~Holden.
\newblock {\em Soliton equations and their algebro-geometric solutions. Vol1:
  (1 + 1)-dimensional continuous models}, volume~79.
\newblock Cambridge Studies in Advanced Mathematics, Cambridge University
  Press, Cambridge, 2003.

\bibitem{GesztesyYuditskii2006}
F.~Gesztesy and P.~Yuditskii.
\newblock Spectral properties of a class of reflectionless {S}chr\"odinger
  operators.
\newblock {\em J. Funct. Anal.}, 241(2):486--527, 2006.

\bibitem{Neumann1865}
N.~C. Gottfried.
\newblock Lectures on riemann’s theory of abelian integrals, 1865.

\bibitem{GrebertKappeler2014}
B.~Gr\'{e}bert and T.~Kappeler.
\newblock {\em The defocusing {NLS} equation and its normal form}.
\newblock EMS Series of Lectures in Mathematics. European Mathematical Society
  (EMS), Z\"{u}rich, 2014.

\bibitem{GH08JMA}
A.~Gr\"unrock and S.~Herr.
\newblock Low regularity local well-posedness of the derivative nonlinear
  {S}chr\"odinger equation with periodic initial data.
\newblock {\em SIAM J. Math. Anal.}, 39(6):1890--1920, 2008.

\bibitem{GDO2022}
S.~Guo, D.~Damanik, and D.~C. Ong.
\newblock Subordinacy theory for extended {CMV} matrices.
\newblock {\em Sci. China Math.}, 65(3):539--558, 2022.

\bibitem{hadj2012}
S.~Hadj~Amor.
\newblock Regularity of the rotation number for the one-dimensional
  time-continuous {S}chr\"odinger equation.
\newblock {\em Math. Phys. Anal. Geom.}, 15(4):331--342, 2012.

\bibitem{GriffithsKillipVisan2024ForumPi}
B.~Harrop-Griffiths, R.~Killip, and M.~Vi\c{s}an.
\newblock Sharp well-posedness for the cubic {NLS} and m{K}d{V} in
  {$H^s(\mathbb{ R})$}.
\newblock {\em Forum Math. Pi}, 12:Paper No. e6, 86, 2024.

\bibitem{Hasumi1983}
M.~Hasumi.
\newblock {\em Hardy classes on infinitely connected {R}iemann surfaces},
  volume 1027 of {\em Lecture Notes in Mathematics}.
\newblock Springer-Verlag, Berlin, 1983.

\bibitem{HY2012}
X.~Hou and J.~You.
\newblock Almost reducibility and non-perturbative reducibility of
  quasi-periodic linear systems.
\newblock {\em Inventiones mathematicae}, 190(1):209--260, 2012.

\bibitem{ItsMatveev1975a}
A.~R. Its and V.~B. Matveev.
\newblock Hill operators with a finite number of lacunae.
\newblock {\em Funkcional. Anal. i Prilo\v zen.}, 9(1):69--70, 1975.

\bibitem{ItsMatveev1975}
A.~R. Its and V.~B. Matveev.
\newblock Schr\"odinger operators with the finite-band spectrum and the
  {$N$}-soliton solutions of the {K}orteweg-de {V}ries equation.
\newblock {\em Teoret. Mat. Fiz.}, 23(1):51--68, 1975.

\bibitem{JL1999Acta}
S.~Jitomirskaya and Y.~Last.
\newblock Power-law subordinacy and singular spectra. {I}. {H}alf-line
  operators.
\newblock {\em Acta Math.}, 183(2):171--189, 1999.

\bibitem{JZ2022}
S.~Jitomirskaya and S.~Zhang.
\newblock Quantitative continuity of singular continuous spectral measures and
  arithmetic criteria for quasiperiodic {S}chr\"odinger operators.
\newblock {\em J. Eur. Math. Soc. (JEMS)}, 24(5):1723--1767, 2022.

\bibitem{JL2000CMP}
S.~Y. Jitomirskaya and Y.~Last.
\newblock Power law subordinacy and singular spectra. {II}. {L}ine operators.
\newblock {\em Comm. Math. Phys.}, 211(3):643--658, 2000.

\bibitem{JM82}
R.~Johnson and J.~Moser.
\newblock The rotation number for almost periodic potentials.
\newblock {\em Comm. Math. Phys.}, 84:403--438, 1982.

\bibitem{Johnson1986JDE}
R.~A. Johnson.
\newblock Exponential dichotomy, rotation number, and linear differential
  operators with bounded coefficients.
\newblock {\em J. Differential Equations}, 61(1):54--78, 1986.

\bibitem{JonesMarshall1985}
P.~W. Jones and D.~E. Marshall.
\newblock Critical points of {G}reen's function, harmonic measure, and the
  corona problem.
\newblock {\em Ark. Mat.}, 23(2):281--314, 1985.

\bibitem{NXQ2024}
N.~Karaliolios, X.~Xu, and Q.~Zhou.
\newblock Anosov-{K}atok constructions for quasi-periodic {${\rm
  SL}(2,{\mathbb{ R}})$} cocycles.
\newblock {\em Peking Math. J.}, 7(1):203--245, 2024.

\bibitem{KFC2008book}
P.~G. Kevrekidis, D.~J. Frantzeskakis, and R.~Carretero-Gonz\'alez.
\newblock {\em Emergent Nonlinear Phenomena in Bose-Einstein Condensates}.
\newblock Springer, 2008.

\bibitem{KheifetsYuditskii20}
A.~Kheifets and P.~Yuditskii.
\newblock Martin functions of {F}uchsian groups and character automorphic
  subspaces of the {H}ardy space in the upper half plane.
\newblock In {\em Complex function theory, operator theory, {S}chur analysis
  and systems theory---a volume in honor of {V}. {E}. {K}atsnelson}, volume 280
  of {\em Oper. Theory Adv. Appl.}, pages 535--581. Birkh\"auser/Springer,
  Cham, [2020] \copyright 2020.

\bibitem{Kotani1984}
S.~Kotani.
\newblock Ljapunov indices determine absolutely continuous spectra of
  stationary random one-dimensional {S}chr\"odinger operators.
\newblock In {\em Stochastic analysis ({K}atata/{K}yoto, 1982)}, volume~32 of
  {\em North-Holland Math. Library}, pages 225--247. North-Holland, Amsterdam,
  1984.

\bibitem{Kotani1997}
S.~Kotani.
\newblock Generalized {F}loquet theory for stationary {S}chr\"odinger operators
  in one dimension.
\newblock {\em Chaos Solitons Fractals}, 8(11):1817--1854, 1997.

\bibitem{Kotani2018JMPAG}
S.~Kotani.
\newblock Construction of {K}d{V} flow {I}. {$\tau$}-function via {W}eyl
  function.
\newblock {\em J. Math. Phys. Anal. Geom.}, 14(3):297--335, 2018.

\bibitem{Kotani2023PKU}
S.~Kotani.
\newblock Construction of {K}d{V} flow: a unified approach.
\newblock {\em Peking Math. J.}, 6(2):469--558, 2023.

\bibitem{Kotani2024book}
S.~Kotani.
\newblock {\em Korteweg--de {V}ries flows with general initial conditions}.
\newblock Mathematical Physics Studies. Springer, Singapore, [2024] \copyright
  2024.

\bibitem{Landof1972}
N.~S. Landkof.
\newblock {\em Foundations of modern potential theory}, volume Band 180 of {\em
  Die Grundlehren der mathematischen Wissenschaften}.
\newblock Springer-Verlag, New York-Heidelberg, 1972.
\newblock Translated from the Russian by A. P. Doohovskoy.

\bibitem{Lax1968}
P.~D. Lax.
\newblock Integrals of nonlinear equations of evolution and solitary waves.
\newblock {\em Comm. Pure Appl. Math.}, 21:467--490, 1968.

\bibitem{LYZZ}
M.~Leguil, J.~You, Z.~Zhao, and Q.~Zhou.
\newblock Asysmptotic spectral gaps of quasi-periodic schr\"odinger operators.
\newblock {\em Cambridge Journal of Mathematics,
  https://dx.doi.org/10.4310/CJM.241128001445}, 2025.

\bibitem{LSS2024CJM}
D.~Lenz, T.~Spindeler, and N.~Strungaru.
\newblock Abstract almost periodicity for group actions on uniform topological
  spaces.
\newblock {\em Canad. J. Math.}, 76(3):798--829, 2024.

\bibitem{LSS}
D.~Lenz, T.~Spindeler, and N.~Strungaru.
\newblock Pure point diffraction and almost periodicity.
\newblock {\em Israel Journal of Chemistry}, 64(10-11):e202300158, 2024.

\bibitem{LSS2024ETDS}
D.~Lenz, T.~Spindeler, and N.~Strungaru.
\newblock Pure point spectrum for dynamical systems and mean, {B}esicovitch and
  {W}eyl almost periodicity.
\newblock {\em Ergodic Theory Dynam. Systems}, 44(2):524--568, 2024.

\bibitem{LevinPart3}
B.~Y. Levin.
\newblock Classification of closed sets on {${\bf R}$} and representation of a
  majorant. {III}.
\newblock {\em Teor. Funktsi\u i\ Funktsional. Anal. i Prilozhen.}, pages
  21--33, 1989.

\bibitem{LevinPart2}
B.~Y. Levin.
\newblock The connection of a majorant with a conformal mapping. {II}.
\newblock {\em Teor. Funktsi\u i\ Funktsional. Anal. i Prilozhen.}, pages
  3--21, 1989.

\bibitem{Levitan1987}
B.~M. Levitan.
\newblock {\em Inverse {S}turm-{L}iouville problems}.
\newblock VSP, Zeist, 1987.
\newblock Translated from the Russian by O. Efimov.

\bibitem{LevitanSargsjan}
B.~M. Levitan and I.~S. Sargsjan.
\newblock {\em Sturm-{L}iouville and {D}irac operators}, volume~59 of {\em
  Mathematics and its Applications (Soviet Series)}.
\newblock Kluwer Academic Publishers Group, Dordrecht, 1991.
\newblock Translated from the Russian.

\bibitem{DLZ22}
L.~Li, D.~Damanik, and Q.~Zhou.
\newblock Absolutely continuous spectrum for {CMV} matrices with small
  quasi-periodic {V}erblunsky coefficients.
\newblock {\em Trans. Amer. Math. Soc.}, 375:6093--6125, 2022.

\bibitem{Lukic22}
M.~Luki\'c.
\newblock {\em A first course in spectral theory}, volume 226 of {\em Graduate
  Studies in Mathematics}.
\newblock American Mathematical Society, Providence, RI, [2022] \copyright2022.

\bibitem{LY20}
M.~Luki\'c and G.~Young.
\newblock Uniqueness of solutions of the {K}d{V}-hierarchy via {D}ubrovin-type
  flows.
\newblock {\em J. Funct. Anal.}, 279(7):108705, 30, 2020.

\bibitem{MarchenkoOstrovskii75}
V.~A. Mar{\v c}enko and I.~V. Ostrovski{\u i}.
\newblock A characterization of the spectrum of the {H}ill operator.
\newblock {\em Mat. Sb. (N.S.)}, 97(139)(4(8)):540--606, 633--634, 1975.

\bibitem{McKeanTrubowitz1976}
H.~P. McKean and E.~Trubowitz.
\newblock Hill's operator and hyperelliptic function theory in the presence of
  infinitely many branch points.
\newblock {\em Comm. Pure Appl. Math.}, 29(2):143--226, 1976.

\bibitem{McKeanMoerbeke1975}
H.~P. McKean and P.~van Moerbeke.
\newblock The spectrum of {H}ill's equation.
\newblock {\em Invent. Math.}, 30(3):217--274, 1975.

\bibitem{Mei199}
C.~C. Mei.
\newblock {\em The Applied Dynamics of Ocean Surface Waves}.
\newblock World Scientific, 1989.

\bibitem{MoserPoschel194CMH}
J.~Moser and J.~P\"oschel.
\newblock An extension of a result by {D}inaburg and {S}ina\u i\ on
  quasiperiodic potentials.
\newblock {\em Comment. Math. Helv.}, 59(1):39--85, 1984.

\bibitem{MoserPoschel1984}
J.~Moser and J.~P\"oschel.
\newblock An extension of a result by {D}inaburg and {S}ina\u i\ on
  quasiperiodic potentials.
\newblock {\em Comment. Math. Helv.}, 59(1):39--85, 1984.

\bibitem{Neumann1859}
C.~Neumann.
\newblock De problemate quodam mechanico, quod ad primam integralium
  ultraellipticorum classem revocatur.
\newblock {\em Jour. Reine Angew. Math.}, 56:46--63, 1859.

\bibitem{OhWang2020JDE}
T.~Oh and Y.~Wang.
\newblock Global well-posedness of the one-dimensional cubic nonlinear
  {S}chr\"odinger equation in almost critical spaces.
\newblock {\em J. Differential Equations}, 269(1):612--640, 2020.

\bibitem{OhWang2021JAM}
T.~Oh and Y.~Wang.
\newblock Normal form approach to the one-dimensional periodic cubic nonlinear
  {S}chr\"odinger equation in almost critical {F}ourier-{L}ebesgue spaces.
\newblock {\em J. Anal. Math.}, 143(2):723--762, 2021.

\bibitem{PeherstorferYuditskii06}
F.~Peherstorfer and P.~Yuditskii.
\newblock Almost periodic {V}erblunsky coefficients and reproducing kernels on
  {R}iemann surfaces.
\newblock {\em J. Approx. Theory}, 139(1-2):91--106, 2006.

\bibitem{PitaevskiiStringari2003book}
L.~Pitaevskii and S.~Stringari.
\newblock {\em Bose-{E}instein {C}ondensation}.
\newblock Oxford University Press, 2003.

\bibitem{PC09CMP}
A.~Poltoratski and C.~Remling.
\newblock Reflectionless {H}erglotz functions and {J}acobi matrices.
\newblock {\em Comm. Math. Phys.}, 288(3):1007--1021, 2009.

\bibitem{Pommerenke1976}
C.~Pommerenke.
\newblock On the {G}reen's function of {F}uchsian groups.
\newblock {\em Ann. Acad. Sci. Fenn. Ser. A I Math.}, 2:409--427, 1976.

\bibitem{Poschel2011}
J.~P\"oschel.
\newblock Hill's potentials in weighted {S}obolev spaces and their spectral
  gaps.
\newblock {\em Math. Ann.}, 349(2):433--458, 2011.

\bibitem{Puig2006}
J.~Puig.
\newblock A nonperturbative {E}liasson's reducibility theorem.
\newblock {\em Nonlinearity}, 19(2):355--376, 2006.

\bibitem{Remling2007}
C.~Remling.
\newblock The absolutely continuous spectrum of one-dimensional {S}chr\"odinger
  operators.
\newblock {\em Math. Phys. Anal. Geom.}, 10(4):359--373, 2007.

\bibitem{Rybkin08}
A.~Rybkin.
\newblock On the evolution of a reflection coefficient under the {K}orteweg-de
  {V}ries flow.
\newblock {\em J. Math. Phys.}, 49(7):072701, 15, 2008.

\bibitem{Kowalevski89a}
K.~S.
\newblock Sur une propriete du systeme d'equations differentielles qui definit
  la rotation d'un corps solide autour d'un point fixe.
\newblock {\em Acta Math.}, 14:81--93, 1889.

\bibitem{Savin86}
A.~Savin.
\newblock Rotation number for almost periodic dirac operators.
\newblock {\em Vestnik Moskovskogo Universiteta, Matematika}, 41(1):62--65,
  1986.

\bibitem{Simon2011Szego}
B.~Simon.
\newblock {\em Szeg\H o's theorem and its descendants}.
\newblock M. B. Porter Lectures. Princeton University Press, Princeton, NJ,
  2011.
\newblock Spectral theory for $L^2$ perturbations of orthogonal polynomials.

\bibitem{SY1995}
M.~Sodin and P.~Yuditskii.
\newblock Almost periodic {S}turm-{L}iouville operators with {C}antor
  homogeneous spectrum.
\newblock {\em Comment. Math. Helv.}, 70(4):639--658, 1995.

\bibitem{SY97}
M.~Sodin and P.~Yuditskii.
\newblock Almost periodic {J}acobi matrices with homogeneous spectrum,
  infinite-dimensional {J}acobi inversion, and {H}ardy spaces of
  character-automorphic functions.
\newblock {\em J. Geom. Anal.}, 7:387–435, 1997.

\bibitem{SY1994}
M.~L. Sodin and P.~M. Yuditskii.
\newblock Infinite-dimensional real problem of {J}acobi inversion and {H}ardy
  spaces of character-automorphic functions.
\newblock {\em Dokl. Akad. Nauk}, 335(2):161--163, 1994.

\bibitem{Kowalevski89b}
K.~Sophya.
\newblock Sur le probleme de la rotation d'un corps solide autour d'un point
  fixe.
\newblock {\em Acta Math.}, 12:177--232, 1889.

\bibitem{Tsutsumi1987}
Y.~Tsutsumi.
\newblock {$L^2$}-solutions for nonlinear {S}chr\"odinger equations and
  nonlinear groups.
\newblock {\em Funkcial. Ekvac.}, 30(1):115--125, 1987.

\bibitem{VinnikovYuditskii}
V.~Vinnikov and P.~Yuditskii.
\newblock Functional models for almost periodic {J}acobi matrices and the
  {T}oda hierarchy.
\newblock {\em Mat. Fiz. Anal. Geom.}, 9(2):206--219, 2002.

\bibitem{VolbergYuditskii2014}
A.~Volberg and P.~Yuditskii.
\newblock Kotani-{L}ast problem and {H}ardy spaces on surfaces of {W}idom type.
\newblock {\em Invent. Math.}, 197(3):683--740, 2014.

\bibitem{VolbergYuditskii2016}
A.~Volberg and P.~Yuditskii.
\newblock Mean type of functions of bounded characteristic and {M}artin
  functions in {D}enjoy domains.
\newblock {\em Adv. Math.}, 290:860--887, 2016.

\bibitem{Widom1969}
H.~Widom.
\newblock Extremal polynomials associated with a system of curves in the
  complex plane.
\newblock {\em Advances in Math.}, 3:127--232, 1969.

\bibitem{Widom1971Annals}
H.~Widom.
\newblock ${H}^p$ sections of vector bundles over {R}iemann surfaces.
\newblock {\em Ann. of Math. (2)}, 94:304--324, 1971.

\bibitem{Widom1971Acta}
H.~Widom.
\newblock The maximum principle for multiple-valued analytic functions.
\newblock {\em Acta Math.}, 126:63--82, 1971.

\bibitem{Yoc04}
J.-C. Yoccoz.
\newblock Some questions and remarks about {${\rm SL}(2,\mathbf{R})$} cocycles.
\newblock In {\em Modern dynamical systems and applications}, pages 447--458.
  Cambridge Univ. Press, Cambridge, 2004.

\bibitem{YouZhou2015IMRN}
J.~You and Q.~Zhou.
\newblock Simple counter-examples to {K}otani-{L}ast conjecture via
  reducibility.
\newblock {\em Int. Math. Res. Not. IMRN}, pages 9450--9455, 2015.

\bibitem{Yuditskii2011DCTFails}
P.~Yuditskii.
\newblock On the direct {C}auchy theorem in {W}idom domains: positive and
  negative examples.
\newblock {\em Comput. Methods Funct. Theory}, 11(2):395--414, 2011.

\bibitem{Yuditskii2020}
P.~Yuditskii.
\newblock Direct {C}auchy theorem and {F}ourier integral in {W}idom domains.
\newblock {\em J. Anal. Math.}, 141(1):411--439, 2020.

\bibitem{ZS1972}
V.~E. Zakharov and A.~B. Shabat.
\newblock Exact theory of two-dimensional self-focusing and one-dimensional
  self-modulation of waves in nonlinear media.
\newblock {\em Soviet Physics JETP}, 34(1):62--69, 1972.

\bibitem{Zamonov1985}
M.~Z. Zamonov.
\newblock Infinite-zone potentials of the one-dimensional {D}irac operator.
\newblock {\em Dokl. Akad. Nauk Tadzhik. SSR}, 28(11):615--619, 1985.

\end{thebibliography}

\end{document}